\newtheorem{theorem}{Theorem}[section]
\newtheorem{lemma}[theorem]{Lemma}
\newtheorem{corollary}[theorem]{Corollary}
\newtheorem{scholium}[theorem]{Scholium}
\theoremstyle{definition}
\newtheorem{definition}[theorem]{Definition}
\newtheorem{example}[theorem]{Example}
\theoremstyle{remark}
\newtheorem{remark}[theorem]{Remark}
\numberwithin{equation}{section}
\numberwithin{figure}{section}
\newcommand{\mc}{\mathcal}
\newcommand{\lr}{\longrightarrow}
\newcommand{\inv}{^{-1}}
\newcommand{\be}{\begin{equation}}
\newcommand{\ee}{\end{equation}}
\newcommand{\A}{{\mathbb A}}
\newcommand{\B}{{\mathbb B}}
\newcommand{\C}{{\mathbb C}}
\newcommand{\Z}{{\mathbb Z}}
\newcommand{\N}{{\mathbb N}}
\newcommand{\I}{{\mathbb I}}
\newcommand{\K}{{\mathbb K}}
\newcommand{\Q}{{\mathbb Q}}
\newcommand{\X}{{\mathbb X}}
\newcommand{\BH}{{\mathbb H}}
\newcommand{\BD}{{\mathbb D}}
\newcommand{\BB}{{\mathbb B}}
\newcommand{\BG}{{\mathbb G}}
\newcommand{\BT}{{\mathbb T}}
\newcommand{\BL}{{\mathbb L}}
\newcommand{\CB}{{\mathcal B}}
\newcommand{\CF}{{\mathcal F}}
\newcommand{\CG}{{\mathcal G}}
\newcommand{\CH}{{\mathcal H}}
\newcommand{\CJ}{\mc J}
\newcommand{\CK}{{\mathcal K}}
\newcommand{\CM}{{\mathcal M}}
\newcommand{\CO}{{\mathcal O}}
\newcommand{\CR}{{\mathcal R}}
\newcommand{\CT}{{\mathcal T}}
\newcommand{\CV}{{\mathcal V}}
\newcommand{\CW}{{\mathcal W}}
\newcommand{\SF}{{\mathscr F}}
\newcommand{\SH}{{\mathscr H}}
\newcommand{\mf}{\mathfrak}
\newcommand{\fg}{{\mf g}}
\newcommand{\fsl}{{\mathfrak {sl}}}
\newcommand{\fsp}{{\mathfrak {sp}}}
\newcommand{\fso}{{\mathfrak {so}}}
\newcommand{\gl}{{\mathfrak {gl}}}
\newcommand{\osp}{{\mathfrak {osp}}}
\newcommand{\id}{{\rm{id}}}
\newcommand{\ve}{\varepsilon}
\newcommand{\U}{{\rm{U}}}
\newcommand{\End}{{\rm{End}}}
\newcommand{\Hom}{{\rm{Hom}}}
\newcommand{\Sym}{{\rm{Sym}}}
\newcommand{\im}{{\rm{Im}}}
\newcommand{\wt}{\widetilde}
\newcommand{\sdim}{{\rm{sdim}}}
\newcommand{\ot}{\otimes}
\newcommand{\OSp}{{\rm OSp}}
\newcommand{\sdet}{{\rm{sdet}}}
\newcommand{\ATL}{{\mathcal{PTL}}}
\newcommand{\TLBC}{\mathcal{TLB}}
\newcommand{\ad}{{\rm ad}}
\newcommand{\beq}{\begin{eqnarray}}
\newcommand{\eeq}{\end{eqnarray}}
\newcommand{\baln}{\begin{aligned}}
\newcommand{\ealn}{\end{aligned}}
\newcommand{\lra}{\longrightarrow}
\newcommand{\bM}{\mathbf{M}}
\newcommand{\AB}{\mathcal{AB}}
\newcommand{\wh}{\widehat}
\newcommand{\ba}{{\mathbf a}}
\newcommand{\bc}{{\mathbf c}}
\newcommand{\bm}{{\mathbf m}}
\newcommand{\bs}{{\mathbf s}}
\newcommand{\bt}{{\mathbf t}}
\newcommand{\CBC}{\mathcal{CB}}
\newcommand{\HB}{\mathcal{HB}}
\newcommand{\MPB}{\mathcal{MPB}}
\newcommand{\UPB}{\mathcal{UPB}}
\newcommand{\PB}{\mathcal{PB}}
\newcommand{\MTL}{\mathcal{MTL}}
\begin{document}

\normalfont

\title[Polar Brauer categories]{Polar Brauer categories,  infinitesimal braids and Lie superalgebra representations}

\author{G.I. Lehrer and R.B. Zhang}
\address{School of Mathematics and Statistics,
University of Sydney, N.S.W. 2006, Australia}
\email{gustav.lehrer@sydney.edu.au, ruibin.zhang@sydney.edu.au}

\begin{abstract}
We define a class of monoidal categories whose morphisms are diagrams, and which are enhancements
and generalisations of the Brauer category obtained by adjoining infinitesimal braids, ``coupons'' and poles. Properties of these
categories are explored, particularly diagrammatic equations. We construct functors
from certain of them to categories of representations of Lie algebras and superalgebras. Applications
include a diagrammatic construction of the centre of the universal enveloping superalgebra  and certain ``characteristic identities'',
as well as an analysis of certain tensor representations. 
We show how classical diagram categories arising in invariant theory are
special cases of our constructions, placing them in a single unified framework. 
\end{abstract}

\makeatletter
\@namedef{subjclassname@2020}{\textup{2020} Mathematics Subject Classification}
\makeatother
\subjclass[2020]{17B10,17B35,22E46,16D90,16S30,20G05}

\keywords{Diagram category, Lie superalgebra, universal superalgebra, invariant theory, quantum supergroup, characteristic identity}

\maketitle

\tableofcontents


\section{Introduction and background}\label{sect:intro}

We develop a diagrammatic framework for applying the algebra of infinitesimal braids to decompose tensor product modules of the type $V_1\ot\dots\ot V_r$ for Lie algebras and Lie superalgebras with Casimir operators. This provides a natural setting for many of the diagram categories which have played important roles in Lie theory at the classical level, and found applications in theoretical physics. 
Some of the general ideas behind this work are contained in \cite{LZ06} which gives a
unified treatment  of the first fundamental theorems of classical invariant theory,  and \cite{LZ15, LZ17-Ecate} on Brauer categories. The concept of ``polar diagrams''  introduced in \cite{ILZ} in the quantum group setting also plays an important role. 
Below we briefly describe the background of this paper. 

\subsection{Casimir operators and infinitesimal braids}\label{ss:tr}
The work \cite{LZ06} provided a unified treatment of 
the first fundamental theorems of invariant theory of classical Lie algebras and $G_2$ by analysing algebra homomorphisms from the algebra of infinitesimal braids to endomorphism algebras of tensor powers of the natural modules of the respective Lie algebras. Such algebra homomorphisms factor through the Casimir algebras of the  Lie algebras. A generalisation of the treatment to the corresponding quantum groups at generic $q$ was also given in op. cit..

The algebra $T_r$ of infinitesimal braids for a fixed integer $r\ge 2$ (see, e.g., \cite[\S XX.3]{K}) is a unital associative algebra with generating set $\{t_{ij}\mid 1\leq i<j\leq r\}$ and defining relations
\be\label{eq:deftr}
 [t_{ij},t_{k\ell}]=0,\;\;\;[t_{ik}+t_{i\ell},t_{k\ell}]=0,\;\;\; [t_{ij},t_{ik}+t_{jk}]=0,
\ee
where $i,j,k$ and $\ell$ are pairwise distinct indices and the square brackets denote the usual commutator. 
This algebra has some remarkable properties (see Section \ref{sect:BH-properties}). 
It occurs in the literature in several contexts, such as the holonomy of connections
on spaces of configurations (the KZ connection)  and representations of
the pure braid group (a good source is \cite[\S XX]{K}). 

The algebra $T_r$ enters representation theory through  
Casimir algebras of Lie algebras and Lie superalgebras.  
Let $\fg$ be a finite dimensional Lie superalgebra, which admits a non-degenerate ad-invariant bilinear form that is even and supersymmetric. 
It is well known that  its universal enveloping superalgebra $\U(\fg)$ carries a Hopf superalgebra structure, the co-multiplication of which will be 
denoted by $\Delta$. Using this one defines iterated co-multiplication $\Delta^{(r-1)}: \U(\fg)\lra\U(\fg)^{\ot r}$ for each $r\ge 2$, which is a superalgebra homomorphism.

Let $C$ be the quadratic Casimir operator in the centre of $\U(\fg)$. This may be expressed in terms of 
any pair of bases $\{X_a\mid a=1, 2, \dots, \dim(\fg)\}$ and $\{X^a\mid a=1, 2, \dots, \dim(\fg)\}$ of $\fg$, 
which are dual with respect to the non-degenerate ad-invariant bilinear form. We have $C=\sum_{a=1}^{\dim \fg}X_aX^a$.

Define the {\em tempered Casimir operator} $t\in\U(\fg)^{\ot 2}$ by
\be\label{eq:deftr}
t=\frac{1}{2}\left(\Delta(C)-C\ot 1-1\ot C\right),
\ee
which evidently commutes with $\Delta(\U(\fg))$. 
One checks easily that $t$ can be expressed as 
$t=\sum_{\alpha=1}^{\dim(\fg)}X_\alpha\ot X^\alpha$. 
Further, we have the elements $C_{ij} \in\U(\fg)^{\ot r}$, for $1\leq i<j\leq r$, given by
\be\label{eq:cij}
C_{ij}=\sum_{\alpha=1}^{\dim(\fg)}1^{\ot (i-1)}\ot X_\alpha\ot 1^{\ot (j-i-1)}\ot X^\alpha\ot 1^{\ot (r-j)}.
\ee
We refer to the subalgebra $C_r(\fg)$ of $\U(\fg)^{\ot r}$ generated by the elements $C_{ij}$ as the   
{\em Casimir algebra} of $\fg$ of degree $r$. 
It is known (see, e.g., \cite{LZ06}) that 
for all pairwise distinct indices $i, j, k, \ell$, 
\beq\label{eq:C-4-term}
[C_{i j}, C_{k \ell}] = 0, \quad [C_{i k} + C_{i\ell}, C_{k\ell}] = 0, \quad [C_{i j} , C_{i k} + C_{j k}] = 0.
\eeq
Thus one has the following result (see, e.g., \cite[Theorem 2.3]{LZ06}). 
\begin{theorem}\label{thm:TtoC}
The map $t_{ij}\mapsto C_{ij}$ ($1\leq i<j\leq r$) extends uniquely to a surjective algebra homomorphism $\psi:T_r\lr C_r(\fg)$.

\end{theorem}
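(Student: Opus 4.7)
The plan is to leverage the defining presentation of $T_r$: since $T_r$ is the associative algebra on generators $t_{ij}$ subject to the three families of infinitesimal braid relations displayed at the start of the subsection, it suffices to check that the elements $C_{ij} \in \U(\fg)^{\ot r}$ obey the same three families of relations in $\U(\fg)^{\ot r}$. These are precisely the identities stated immediately above the theorem; granted them, the universal property yields a well-defined algebra map $\psi$, and surjectivity onto $C_r(\fg)$ is tautological since $C_r(\fg)$ is by definition the subalgebra generated by the $C_{ij}$.

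The key step is therefore to establish those three $C$-relations. I would first dispose of the disjoint-support relation $[C_{ij}, C_{k\ell}] = 0$ for pairwise distinct $i, j, k, \ell$: both $C_{ij}$ and $C_{k\ell}$ are even (because $t = \sum_\alpha X_\alpha \ot X^\alpha$ is manifestly even) and act non-trivially on disjoint tensor slots, so they commute. For the two four-term relations the essential ingredient is that $t$ commutes with $\Delta(\U(\fg))$, which is immediate from $2t = \Delta(C) - C\ot 1 - 1\ot C$ together with the centrality of $C$: for any $X \in \fg$, each of $\Delta(C)$, $C\ot 1$ and $1\ot C$ commutes with $\Delta(X)$.

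Given this $\Delta$-invariance, I would verify $[C_{ij}, C_{ik} + C_{jk}] = 0$ by restricting attention to the tensor slots $(i,j,k)$ and rewriting
\[
C_{ij} = t \ot 1, \qquad C_{ik} + C_{jk} = \sum_\alpha \Delta(X_\alpha) \ot X^\alpha,
\]
whereupon the commutator collapses to $\sum_\alpha [t, \Delta(X_\alpha)] \ot X^\alpha = 0$. The companion identity $[C_{ik} + C_{i\ell}, C_{k\ell}] = 0$ follows from the symmetric manoeuvre, writing $C_{k\ell} = 1 \ot t$ and $C_{ik} + C_{i\ell} = \sum_\alpha X_\alpha \ot \Delta(X^\alpha)$ in slots $(i,k,\ell)$.

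I expect no genuine obstacle: the only subtlety is the potential intrusion of signs in the super setting, but $t$ is even and $\Delta$ is a superalgebra homomorphism, so the argument goes through with ordinary commutators throughout. The real conceptual content is therefore the $\Delta(\U(\fg))$-invariance of the tempered Casimir, which is the super analogue of the standard fact that $t$ is an invariant in the classical Lie algebra case.
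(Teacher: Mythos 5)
Your proposal is correct and follows essentially the same route as the paper, which defers the four-term identities \eqref{eq:C-4-term} to \cite{LZ06}: there, as in your argument, the disjoint-support relation is immediate from \eqref{eq:cij}, and the other two reduce to the $r=3$ identities $[(\id\ot\Delta)(t),1\ot t]=0$ and $[(\Delta\ot\id)(t),t\ot1]=0$, which follow from $[t,\Delta(\U(\fg))]=0$, itself a consequence of \eqref{eq:deftr} and the centrality of $C$. Your handling of the super signs (evenness of $t$ and of each summand $X_\alpha\ot X^\alpha$) and the appeal to the presentation of $T_r$ for existence, uniqueness and surjectivity match the intended argument.
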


Observe that for all $z\in C_r(\fg)$, we have 
\be\label{eq:tcomm}
[z, \Delta^{(r-1)}(\U(\fg))]=0.
\ee 
This provides a tool for studying endomorphism algebras of tensor products of representations of $\fg$.

\subsection{Enter representations} Let $V_1,V_2,\dots,V_r$ be arbitrary objects in the category $\fg$-Mod of $\fg$-modules. Then $V_1\ot V_2\ot\dots\ot V_r$ is a $\U(\fg)^{\ot r}$-module
in the obvious way, since the iterated co-multiplication $\Delta^{(r-1)}$ defines a $\U(\fg)$-action on $V_1\ot V_2\ot\dots\ot V_r$. This is usually considered to be 
the ``tensor product'' of the representations $V_i$, and analysis of the tensor product is a subject of a significant literature, for particular choices of $V_i$. 

The tensor module $V_1\ot\dots\ot V_r$ restricts to a $C_r(\fg)$-module, 
where the action commutes with the $\U(\fg)$-action by \eqref{eq:tcomm}. Thus we have 
an algebra homomorphism
\be\label{eq:C-end}
\mu_{V_1, \dots, V_r}: C_r(\fg)\lr \End_{\U(\fg)}(V_1\ot\dots\ot V_r).
\ee
The composition of this with the homomorphism $\psi:T_r\lr C_r(\fg)$ defines 
an action of $T_r$ on $V_1\ot\dots\ot V_r$, 
whence we have the algebra homomorphism
\be\label{eq:homend}
\eta_{V_1, \dots, V_r}: T_r\lr \End_{\U(\fg)}(V_1\ot\dots\ot V_r).
\ee

Clearly the utility of \eqref{eq:homend} depends to a large extent on the choice of the representations $V_1,\dots,V_r$.  At this stage they could be completely arbitrary. 
However in \cite{LZ06} it is shown that if $V_1=V_2=\dots=V_r=V$, a ``strongly multiplicity free''  module for a simple Lie algebra $\fg$, then $\eta_{V, \dots, V}(T_r)$ is essentially the full 
commutant of the $\U(\fg)$-module $V^{\ot r}$, and is semi-simple. Thus the module $V^{\ot r}$ may be analysed by studying the structure of $\eta_{V, \dots, V}(T_r)$.

Further, it is shown in \cite[\S 4]{LZ06} how the image of the Brauer algebra in $\End_{\fg}(V^{\ot r})$ appears in $\eta_{V, \dots, V}(T_r)$ when $\fg$ is the orthogonal or symplectic algebra, 
and $V$ the natural $\fg$-module.  This suggests that diagrammatics such as that for the Brauer category could be applied to study applications of $T_r$  in the representation theory of Lie superalgebras. 

\subsection{Diagrammatic method}
Following the pioneering work \cite{Br37}, it is now well established \cite{GL03,LZ10, LZ15, LZ17, LZ17-Ecate, LZZ20, LZ21} (see also \cite{Betal, BCNR, RSo, McS,We05} and references therein) 
that diagrams may be used to analyse certain tensor product modules $V_1\ot\dots\ot V_r$ for  Lie algebras,  Lie superalgebras, and the related quantum groups and quantum supergroups \cite{BGZ90, Z98}. 
In particular \cite{ILZ} shows how to identify the category of representations of 
$\U_q(\fsl_2)$ of the form $M\ot V^{\ot r}$ (where $M$ is a projective Verma module and $V$ is the two dimensional Weyl module) with a certain category of polar tangle diagrams.  

A general method resulting from the diagrammatics which we develop here enables one to investigate certain classes of tensor product modules collectively. One creates diagram categories and develops  functors from them to categories of such modules for $\fg$. The graphical description of morphisms of a diagram category usually leads to simpler descriptions of morphisms of the target category of $\fg$-modules, and hence
 a better understanding of the latter.

A widely studied example is the Brauer category introduced in \cite{LZ15} and functors from it to the categories of tensor powers $V^{\ot r}$ of the natural module for the orthogonal or symplectic Lie algebra, or the orthosymplectic Lie superalgebra.  Similarly, an oriented version of the Brauer category (see, e.g., \cite{LZ23}) may be applied to the study of repeated tensor products of the natural module and its dual for the general linear Lie (super)algebra. 

A modern categorical formulation of the invariant theory of the classical (super) groups \cite{LZ15, LZ17} and the corresponding quantum (super)groups \cite{LZZ20} has been developed using these diagram categories. 
Much new insight has been gained in understanding the endomorphism algebras $\End_{\U(\fg)}(V^{\ot r})$ \cite{LZ12, LZ15, Zy}, e.g., in developing presentations for them in terms of explicit generators and relations. 

In this work we develop general categorical and diagrammatic techniques which may be applied to describe diagrammatically the endomorphisms of tensor product modules $V_1\ot\dots\ot V_r$ for Lie algebras and Lie superalgebras, as well as characteristic identities for quantum family algebras and more generally, the structure of universal enveloping algebras. 

\subsection{Content of this paper} 

\subsubsection{ }
Let $\CM\supset\CV$ be finite sets, and let $\delta_\CV=\{\delta_v\in \K\mid v\in \CV\}$ 
be a set of fixed parameters over a field $\K$. 
We introduce a $\K$-linear diagrammatic monoidal category $\HB_{\CG_1}(\delta_\CV)$, 
which contains a subcategory isomorphic to the Brauer category coloured 
by $\CM$ with parameter set $\delta_\CV$. 
It includes analogues of infinitesimal braids \cite[\S XX]{K} as morphisms, and has a set $\CG_1$ 
of distinguished morphisms (called coupons) as additional input. 
The category $\HB_{\CG_1}(\delta_\CV)$ is designed for the study of modules 
of Lie superalgebras of the type $M_{a_1}\ot M_{a_2}\ot  \dots \ot  M_{a_r}$, 
for $a_1, a_2, \dots, a_r\in \CM$ and $r\in\N=\{0, 1, 2, \dots\}$, 
where any $M_v$ with $v\in\CV$ is a finite dimensional self-dual simple module, 
while the modules $M_a$ for all $a\not\in\CV$ are arbitrary. 

The category $\HB_{\CG_1}(\delta_\CV)$ generalises the infinitesimal symmetric category of \cite[Definition XX.4.1]{K} by including the set $\CG_1$ of coupons; it also differs from the latter in that additional conditions are imposed on infinitesimal braids to deal with the self-dual nature of objects in $\CV$. 
Some very special cases of the category 
with non-empty $\CG_1$ are the enhanced Brauer category\cite{LZ17-Ecate},  
and a category designed for studying $G_2$-representations \cite{BE} (also see \cite{Ros96, Ros04}).

\subsubsection{ }
We study extensively the case $\CM\supset\CV=\{v\}$ and 
$\CG_1=\emptyset$ in Section \ref{sect:hHB},  where
$\HB_{\CG_1}(\delta_\CV)$ is denoted by $\HB(\delta_\CV)$ with $\delta=\delta_v$. 
We define a quotient monoidal category $\wh\HB(\delta)$ of $\HB(\delta_\CV)$ in Definition \ref{def:mpolar}; the origin of which is an interesting algebra homomorphism 
from the algebra $T_r$ of infinitesimal braids to the Brauer algebra $B_r(\delta)$ of degree $r$ (see Lemma \ref{lem:T-H-B}).  

In the case $\CM=\{m, v\}\supsetneq\CV=\{v\}$, the monoidal category 
$\wh\HB(\delta)$ gives rises to a multi-polar Brauer category  $\MPB(\delta)$, whose morphisms have a pictorial representation 
in terms of polar enhancements of Brauer diagrams, referred to as multi-polar Brauer diagrams (see Figure \ref{fig:pdiag} for an example). 
It contains a full subcategory $\PB(\delta)$, called the polar Brauer category, 
 with objects $(m, v^r)$ for all $r\in\N$. Now $\PB(\delta)$  is not a monoidal category in itself, but has the structure of a module category over the usual Brauer category $\CB(\delta)$ as  monoidal category. 
It is shown in Theorem \ref{thm:RSo} that there 
is a category isomorphism (whic is not monoidal) from $\PB(\delta)$ to the affine Brauer category in \cite{RSo} (also see \cite{Betal}). 

A particular quotient category $\MTL(\delta)$ of $\wh\HB(\delta)$ 
is introduced in Section \ref{sect:TL}.  
A full subcategory $\ATL(\delta)\subset \MTL(\delta)$ with objects $(m, v^r)$ 
for all $r\in\N$,  called the polar Temperley-Lieb category, 
is closely related to the affine Temperley-Lieb category \cite{GL98} 
(also see \cite{ILZ}). 
We determine its structure by adapting the treatment 
of the affine Temperley-Lieb category in \cite{ILZ} to this case. 

\subsubsection{ }
We investigate the category $\HB_{\CG_1}(\delta_\CV)$ 
with $\CM\supset\CV=\{v\}$ and a non-trivial 
$\CG_1=\{\check\omega_3:\emptyset\to v^3\}$ (see Section \ref{sect:B-G2}). 
We construct a quotient monoidal category $\wh\HB_{\check\omega_3}(7)$ from this data, 
where the condition $\delta_v=7$ emerges  automatically. The construction is treated in Section \ref{sect:EB-G}

The category $\wh\HB_{\check\omega_3}(7)$ is interesting in its own right, and specifically, 
it is useful for studying representations of the Lie algebra $G_2$. 
In particular, the special case of $\wh\HB_{\check\omega_3}(7)$ with $\CM=\CV=\{v\}$
is isomorphic to the diagram category in \cite{BE} built from a cross product 
over $\C^7$, which is known to be isomorphic to the full subcategory of $G_2$-Mod with objects $(\C^7)^{\ot r}$ for all $r\in \N$.  
It will be very interesting to investigate the structure and applications of 
$\wh\HB_{\check\omega_3}(7)$ 
in the multi-polar setting with $\CM=\{m, v\}\supset \CV=\{v\}$. 

\subsubsection{ }
For any Lie superalgebra $\fg$ with Casimir algebra $C_r(\fg)$,  
let $\CM\supset\CV$ be finite sets of objects in $\fg$-Mod, where 
each $L\in\CV$ is a finite dimensional self-dual simple $\fg$-module.  
Denote by $\CT(\CM)$ the full subcategory of the category $\fg$-Mod whose objects are all possible tensor products of elements of $\CM$. This is a monoidal category with
 monoidal structure given by the usual tensor product of $\fg$-Mod. Let 
$\sdim_\CV=\{\sdim(L)\mid L\in\CV\}$. We construct a monoidal functor 
$\SF: \HB_{\CG_1}(\sdim_\CV) \lra \CT(\CM)$ in Theorem \ref{thm:main}, using the representations of Casimir algebras and  algebras of infinitesimal braids, respectively defined by \eqref{eq:C-end} and \eqref{eq:homend}.   
\subsubsection{ }
Assume that $\fg$ is the orthosymplectic Lie superalgebra $\osp(V; \omega)$. 
In the special case $\CM\supset\CV=\{V\}$, where $V$ is the natural $\fg$-module,   it is shown in Theorem \ref{thm:a-funct} that the monoidal functor factors through $\wh\HB(\sdim)$, via a uniquely defined monoidal functor 
$\wh\SF: \wh\HB(\sdim) \lra \CT(\CM)$.  
Given any $M\in\CM$, we have the corresponding multi-polar Brauer category $\MPB(\sdim)$. The functor $\wh\SF$ restricts to a monoidal functor
$\wh\SF_M: \MPB(\sdim) \lra \CT(M, V)$, which can be further restricted to the polar Brauer category, yielding   
a functor $\CF_M: \PB(\sdim) \lra \CT_M (V)$, 
where $\CT_M (V)$ is the full subcategory of $\CT(M, V)$ 
with objects $M\ot V^{\ot r}$ for all $r\in\N$. 
As shown in Lemma \ref{lem:mod-T}, the functor $\CF_M$ respects the 
module category structure of $\PB(\sdim)$ over the usual Brauer category $\CB(\sdim)$. 

\subsubsection{ }
When $M$ is the universal enveloping superalgebra $\U=\U(\fg)$ itself regarded as a left module,  
the functor $\CF_\U: \PB(\sdim)\lra\CT_\U(V)$ provides an effective tool 
for studying the universal enveloping superalgebra. 

We show in Section \ref{sect:centre-construct} that the centre of $\U(\fg)$ 
can be analysed by investigating $\CF_\U(\End_{\PB(\sdim)}(\U))$; 
in particular, explicit generators of the centre are obtained in this way. 

The restriction $\CF_\U|_1$ of $\CF_\U$ to $\End_{\PB(\sdim)}((M,V))$ has a non-trivial kernel, 
and non-zero elements of $\ker(\CF_\U|_1)$ lead to interesting identities in $\U(\fg)\ot\End_\C(V)$. 
In the special cases when $V=\C^m$ is purely even, or $V=\C^{2n}$ is purely odd,  
which respectively correspond to $\fg$ being $\mathfrak{so}_m$ or $\mathfrak{sp}_{2n}$, 
we obtain in Section \ref{sect:char-id} the characteristic identities for these Lie algebras 
discovered in \cite[\S 4.10]{B} and \cite{BG, G}.  
Characteristic identities are monic polynomial equations over the centre of $\U(\fg)$, 
which are satisfied by $\CF_\U(\BH)$ in the spirit of the Cayley-Hamilton. 
They provide a useful tool for explicit Lie theoretical computations, 
and have been widely applied in mathematical physics 
(see \cite{IWD} for a brief discussion and references).

We note that a notion of quantum family algebras (see Remark \ref{rmk:family}) was introduced by Kirillov in \cite{Ki1, Ki2} (and also Kostant in \cite{Ko}).   
The study of the centres of universal enveloping superalgebras and characteristic identities 
comprises a central part of this theory \cite{Rn}. 
Quantum family algebras are not a major theme in this paper, 
but it should be observed that investigation here  provides a diagram categorical approach to them.

\subsubsection{ }
When $V=\C^{0|2}$ so that $\sdim=-2$, the Lie superalgebra $\osp(V;\omega)$ 
reduces to the ordinary Lie algebra $\fsp_2(\C)$. When $M$ is
 the Verma module $M_\lambda$ with highest weight $\lambda$ or the simple module $L_\lambda$ for $\fsp_2(\C)$, we shall show that
 the functor $\CF_{M}:  \PB(-2)\lra \CT_{M}(V)$ factors through an analogue of the Temperley-Lieb category $\TLBC(-2, \lambda)$ of type $B$. 
 Furthermore, if $M_\lambda$ is projective in category $\mathcal O$ of $\fsp_2(\C)$, we show that $\TLBC(-2, \lambda)$ is isomorphic to $\CT_{M_\lambda}(V)$. 
 This result is a classical analogue of the equivalence between a certain Temperley-Lieb algebra of type B and a category of infinite dimensional $\U_q(\fsl_2)$-modules 
 proved in \cite{ILZ}; it provides an explicit description of homomorphisms in $\CT_{M_\lambda}(V)$.   

\medskip

We remark that affine Brauer categories \cite{BCNR, RSo} 
(at least the oriented case) are the central charge zero cases of 
a more general monoidal category 
known as the Heisenberg category of central charge $k$ \cite{BSW}. There also exists a quantum version of the Heisenberg category. 
We also mention that quantum versions of the polar Brauer categories already appeared in \cite{ILZ}, 
which are subcategories of the un-oriented tangle categories \cite{RT, T} 
involving the braid group of type B. 

\subsubsection{Summary of categories to be defined}\label{sss:list}
 To close this section, we provide a list of the principal categories we shall define, 
 for the reader's orientation and convenience.
Recall that we are given a base field $\K$, sets $\CM\supseteq\CV$ and a parameter set $\delta_{\CV}=\{\delta_v\mid v\in\CV\}$. The objects of all the categories below are certain sequences
$\mathbb{s}=(s_1,s_2,\dots,s_r)$ with $s_i\in\CM$

The categories we shall define are:
\begin{itemize}
\item The coloured Brauer category $\CBC(\delta_\CV)$ (cf. Definition \ref{def:cb}).
\item The enhancement $\CBC_\CG(\delta_\CV)$ of  $\CBC(\delta_\CV)$ obtained by adding ``coupons'' $\CG$ (Definition \ref{def:CBwC}).
\item Let $\CH=\{\BH(a,b)\mid a,b\in\CM\}$ (these are ``infinitesimal braids'') and assume $\CG\supseteq\CH$. Write $\CG=\CH\amalg\CG_1$. 
We then define the coloured Brauer category with infinitesimal braids $\HB_{\CG_1}(\delta_{\CV}):=\CBC_{\CG}(\delta_{\CV})/\CJ$, where $\CJ$
is a certain ideal depending on $\CG_1$ (cf. Definition \ref{def:CB-infB}).
\item $\HB(\delta_\CV)$ is the special case of $\HB_{\CG_1}(\delta_{\CV})$ where $\CG_1=\emptyset$. 
Also $\HB(\delta)$ is the special case of $\HB(\delta_{\CV})$
when $\CV=\{v\}$, a singleton.
\item $\wh\HB(\delta)=\HB(\delta)/\CJ$, where $\CJ$ is an ideal like the one above (Definition \ref{def:mpolar}).
\item $\MPB(\delta)$, the multipolar Brauer category, is the full subcategory of $\wh\HB(\delta)$ where $\CM=\{m,v\}$ and $\CV=\{v\}$ (see \S\ref{sect:mpolar}).
\item $\UPB(\delta)$, the unipolar Brauer category, is the full subcategory of $\MPB(\delta)$ with objects $(v^r,m,v^s)$, $r,s=0,1,2,\dots$. (\S\ref{ss:polar})
\item $\PB(\delta)$, the polar Brauer category, is the full subcategory of $\UPB(\delta)$ with objects $(m,v^r)$, $r=0,1,2,\dots$. ({\it loc. cit.})

\item $\MTL(\delta)=\MPB(\delta)/\langle \Theta\rangle$, the multi-polar Temperley-Lieb category, 
where the tensor ideal $\langle \Theta\rangle$ is generated by the morphism $\Theta$ given by Figure \ref{fig:TL}.

\item $\ATL(\delta)$,  the polar Temperley-Lieb category, which is the full subcategory of $\MTL(\delta)$ with objects $(m, v^r)$ for all $r\in\N$. 
\end{itemize}

\section{Coloured Brauer categories with infinitesimal braids}
%
%

%
%
%
%
%
%
\subsection{Coloured Brauer categories with coupons}\label{sect:CB} 

The material presented in this subsection may be regarded as a specialisation of the theory of 
coloured ribbon graphs (with coupons) \cite{RT, T} to the Brauer category setting. 

We fix a finite set $\CM$ with a subset $\CV$, and let $\K$ be a field. Choose a fixed scalar $\delta_v\in \K$ for each $v\in\CV$ and write $\delta_\CV=\{\delta_v\mid v\in\CV\}$. 
Given a Brauer $(k, \ell)$-diagram $D$ (without free loops) \cite{LZ15}, we mark
the end points at the bottom of $D$ by $1, 2, \dots, k$ from left to right, 
and similarly the top end points by $1, 2, \dots, \ell$ from left to right. 
We say that an arc  is {\em horizontal} if its end points are either 
both at the top or at the bottom of the diagram, 
and is {\em vertical} if it is not horizontal. 

We ``colour'' the Brauer  diagram $D$ as follows. 
We associate each arc of $D$ with an element of $\CM$ (which will be called the colour of the arc) in such a way that horizontal arcs are associated with elements of $\CV$. 
Then the $i$-th bottom end point (resp. $j$-th top end point)  is an end point of a string coloured by some $c_i$ (resp. $c'_j$). Denote the resulting diagram by $\wt{D}$, and let $\bs(\wt{D})=(c_1, c_2, \dots, c_k)$ and $\bt(\wt{D})=(c'_1, c'_2, \dots, c'_k)$. Call $\wt{D}$ a {\em coloured Brauer $(k, \ell)$-diagram} with source $\bs(\wt{D})$ and target $\bt(\wt{D})$.
For any sequences $\bs, \bt$ of elements of $\CM$, 
we denote by $\Hom(\bs, \bt)$ the vector space with basis consisting of 
the coloured Brauer diagrams with source $\bs$ and target $\bt$. 

Given any two coloured Brauer diagrams $A$ and $B$, if $\bt(A)=\bs(B)$, we place $B$ above $A$ and then join the bottom end points of $B$ with the top end points of $A$. This gives rise to a diagram which may contain free loops, each of which is coloured by some element of $\bt(A)=\bs(B)$. We delete these free loops to obtain a coloured Brauer diagram denoted by $B\# A$. If there are $f$ free loops, which are respectively coloured by the elements $\bs(B)_{i_1}$, $\bs(B)_{i_2}$, $\dots$, $\bs(B)_{i_f}$ of $\bs(B)$, we set 
\beq
B\circ A = \prod_{p=1}^f \delta_{\bs(B)_{i_p}} B\# A, 
\eeq
and call this element of $\Hom(\bs(A), \bt(B))$ the composition of $B$ and $A$. We will also write  $B A$ for $B\circ A$ for simplicity.

The juxtaposition $A\ot B$ of two coloured Brauer diagrams $A$ and $B$ is the coloured Brauer diagram obtained by placing $A$ on the left of $B$. 

\begin{definition}\label{def:cb} 
Denote by $\CBC(\delta_\CV)$ the $\K$-linear category whose objects are the sequences (including the empty sequence) of elements of $\CM$, and where $\Hom(\bs, \bt)$ 
is the space of morphisms from an object $\bs$ to an object $\bt$.  The composition of morphisms is defined by the bilinear extension of the  composition of coloured 
Brauer diagrams. Call $\CBC(\delta_\CV)$ a coloured Brauer category. 
\end{definition}

Note that all arcs coloured by elements of $\CM\backslash\CV$ are vertical. 
The following  fact is also easy to see.
\begin{theorem}
The coloured Brauer category $\CBC(\delta_\CV)$ has a strict monoidal structure given by the bi-functor 
$\ot: \CBC(\delta_\CV)\times \CBC(\delta_\CV)
\lra \CBC(\delta_\CV)$, which is defined as follows: 
\begin{enumerate}
\item
for any objects $\bs, \bs'$, we have $\bs\ot \bs'=(\bs, \bs')$, where $(\bs, \bs')$ is the sequence of colours 
obtained by joining $\bs'$ to the back of $\bs$; and 
\item for morphisms, $\ot$ is the bi-linear extension of 
juxtaposition of coloured Brauer diagrams.
\end{enumerate}
\end{theorem}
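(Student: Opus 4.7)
The plan is to verify the three layers of structure required for a strict monoidal category: well-definedness of $\ot$ on objects and morphisms, functoriality (in particular the interchange law), and strict associativity/unit axioms. Since morphism spaces are spanned by coloured Brauer diagrams and composition is defined via geometric stacking, the proof is essentially pictorial, but several consistency checks must be made.

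First I would check that $\ot$ is well-defined. On objects this is immediate: concatenation of finite sequences of elements of $\CM$ is a set-theoretic operation, and the empty sequence $\emptyset$ is a two-sided unit with $(\bs,\emptyset) = \bs = (\emptyset,\bs)$ and $((\bs,\bs'),\bs'')=(\bs,(\bs',\bs''))$, giving strict associativity and unit laws at the level of objects. For morphisms, given coloured Brauer diagrams $A\in\Hom(\bs_1,\bt_1)$ and $B\in\Hom(\bs_2,\bt_2)$, the juxtaposition $A\ot B$ is again a coloured Brauer diagram: its underlying Brauer diagram is the side-by-side juxtaposition (which produces no new loops and in which each arc of $A\ot B$ is still either horizontal or vertical with the same colouring inherited from $A$ or $B$), its source is $(\bs_1,\bs_2)$, and its target is $(\bt_1,\bt_2)$. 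In particular the constraint that horizontal arcs carry colours in $\CV$ is preserved, since no arcs of $A$ are joined to arcs of $B$. Extending bilinearly gives a well-defined $\K$-linear map $\Hom(\bs_1,\bt_1)\times\Hom(\bs_2,\bt_2)\to \Hom((\bs_1,\bs_2),(\bt_1,\bt_2))$.

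Next I would verify functoriality. Identities are preserved because the identity morphism $\id_\bs$ is the diagram consisting only of vertical arcs with matching colours, and the juxtaposition of two such is $\id_{(\bs_1,\bs_2)}$. The essential point is the interchange law
\[
(A_1\ot A_2)\circ(B_1\ot B_2) \;=\; (A_1\circ B_1)\ot(A_2\circ B_2)
\]
for composable pairs. Geometrically, stacking $A_1\ot A_2$ on top of $B_1\ot B_2$ produces the same underlying diagram as stacking $A_i$ on top of $B_i$ separately and then juxtaposing, since no arc of the left half ever meets an arc of the right half. The only subtle point is the bookkeeping of deleted free loops: any free loop produced in the stacking occurs entirely on the left or entirely on the right, and its colour — read from the middle sequence $\bt(B_i)=\bs(A_i)$ — agrees on the two sides of the equation. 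Hence the scalar factors $\prod\delta_{c}$ that are extracted on each side match, and the interchange law holds on basis diagrams; by bilinearity it then holds for all morphisms.

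Finally I would record strictness. Associativity of $\ot$ on morphisms follows from the fact that triple juxtaposition $A\ot B\ot C$ is unambiguously defined as placing the three diagrams side by side, and the empty diagram serves as a strict two-sided unit with $A\ot\id_\emptyset=A=\id_\emptyset\ot A$. The hardest step, though really still routine, is the interchange law, because it is the only place where the loop-removal rule and the colour-parameter assignment $\delta_v$ could in principle interact non-trivially with the tensor product; the key observation making it painless is that juxtaposition never creates or destroys loops, so every loop appearing on either side of the interchange identity comes with the same colour and is counted once. With these checks in place, $\ot$ is a strict monoidal bi-functor on $\CBC(\delta_\CV)$.
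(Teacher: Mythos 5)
Your proposal is correct, and it is exactly the routine verification the paper has in mind: the paper simply asserts the theorem as ``easy to see'' and omits the argument, so your write-up fills in the intended details rather than taking a different route. The one point genuinely worth recording is the one you identify — the interchange law together with the observation that juxtaposition creates no new free loops and that each deleted loop lies wholly on one side with a well-defined colour, so the scalar factors $\prod_p\delta_{\bs(B)_{i_p}}$ agree on both sides — and your treatment of it is accurate.
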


We have the following presentation of the category. 
\begin{theorem} \label{thm:tensor-cat}
As a monoidal category, the objects of $\CBC(\delta_\CV)$ are generated by elements of $\CM$, and the morphisms are generated by $I^a$, $X_{bc}$, $\cap_v$ and $\cup^v$, 
for all $a, b\in \CM$ and $v\in\CV$, which are respectively depicted by the following diagrams: 
\[
\begin{picture}(40, 40)(0,0)
\put(0, 0){\line(0, 1){40}}
\put(5, 0){,}
\put(-10, 20){$a$}
\end{picture}
\begin{picture}(40, 40)(0,0)
\qbezier(0, 40) (0, 40) (25, 0) 
\qbezier(25, 40) (25, 40) (0, 0) 
\put(30, 0){, }
\put(25, 30){$b$ }
\put(-5, 30){$c$ }
\end{picture}
\begin{picture}(120, 40)(-5,0)
\qbezier(20, 0)(40, 60)(60, 0)
\put(65, 0){, }
\put(38, 18){$v$}
\qbezier(95, 40)(115, -20)(135, 40)
\put(140, 0){. }
\put(112, 15){$v$}
\end{picture}
\]
The defining relations among the generators of morphisms are the following. 
\begin{enumerate}
\item Over and under crossings are inverses of each other: 
for all $a, b\in\CM$, 
\[
\setlength{\unitlength}{0.3mm}
\begin{picture}(70, 60)(65, 0)
\qbezier(60, 0)(100, 30)(60, 60)
\qbezier(80, 0)(40, 30)(80, 60)

\put(50, 30){$b$}
\put(85, 30){$a$}
\put(105, 30){$=$}
\end{picture}
\begin{picture}(50, 60)(110, 0)
\put(120, 0){\line(0, 1){60}}
\put(135, 0){\line(0, 1){60}}
\put(110, 30){$a$}
\put(140, 30){$b$}
\put(155, 5){$;$}
\end{picture}
\]

\item Braid relation:  for all $a, b, c\in\C$,

\[
\setlength{\unitlength}{0.3mm}
\begin{picture}(140, 60)(30, 0)
\qbezier(30, 60)(70, 40)(80, 0)
\qbezier(30, 0)(70, 20)(80, 60)
\qbezier(45, 60)(45, 60)(45, 0)

\put(25, 50){$c$}
\put(70, 50){$a$}
\put(36, 30){$b$}

\put(95, 30){$=$}

\qbezier(120, 60)(130, 20)(160, 0)
\qbezier(120, 0)(130, 40)(160, 60)
\qbezier(150, 60)(150, 60)(150, 0)

\put(160, 50){$a$}
\put(112, 50){$c$}
\put(155, 30){$b$}
\end{picture}
\]

\item Straightening relations: for all $v\in\CV$, 
\[
\setlength{\unitlength}{0.25mm}
\begin{picture}(150, 80)(0,0)

\qbezier(0, 0)(10, 80)(20, 30)
\qbezier(20, 30)(30, -30)(40, 70)
\put(50, 30){$=$}

\put(10, 55){$v$}


\put(70, 0){\line(0, 1){70}}

\put(85, 30){$=$}
\put(75, 55){$v$}


\qbezier(105, 70)(115, -30)(125, 30)
\qbezier(125, 30)(135, 80)(145, 0)
\put(135, 55){$v$}
\put(155, 0){;}
\end{picture}
\]

\item Sliding relations: for all $a\in\CM$ and $v\in\CV$, 

\[
\baln
&\setlength{\unitlength}{0.3mm}
\begin{picture}(80, 40)(0,0)
\qbezier(0, 40)(0, 40)(25, 0)
\qbezier(0, 0)(35, 50)(60, 0)
\put(-8, 35){$a$}
\put(32, 15){$v$}
\put(75, 15){$=$}
\end{picture}
\begin{picture}(80, 40)(-20,0)
\qbezier(60, 40)(60, 40)(35, 0)
\qbezier(0, 0)(25, 50)(60, 0)
\put(62, 35){$a$}
\put(22, 15){$v$}
\put(70, 0){,}
\end{picture}
\\
&\\
&\setlength{\unitlength}{0.3mm}
\begin{picture}(80, 40)(0,0)
\qbezier(20, 40)(20, 40)(0, 0)
\qbezier(0, 40)(35, -10)(60, 40)
\put(23, 35){$a$}
\put(32, 6){$v$}
\put(75, 15){$=$}
\end{picture}
\begin{picture}(80, 40)(-20,0)
\qbezier(40, 40)(40, 40)(60, 0)
\qbezier(0, 40)(35, -10)(60, 40)
\put(32, 35){$a$}
\put(32, 6){$v$}
\put(70, 0){;}
\end{picture}
\ealn
\]
 
 \item Twists: for all $v\in\CV$, 
 
 \[
 \setlength{\unitlength}{0.3mm}
 \begin{picture}(80, 40)(0,0)
 \qbezier(0, 40)(0, 40)(40, 0)
 \qbezier(0, 0)(0, 0)(40, 40)

\qbezier(0, 40)(-30, 20)(0, 0)

\put(-15, 40){$v$}
\put(55, 20){$=$}
\end{picture}
\begin{picture}(50, 20)(0,0)
\put(0, 0){\line(0, 1){40}}
\put(0, 40){$v$}
\put(20, 20){$=$}
\end{picture}
 \begin{picture}(40, 40)(0,0)
 \qbezier(0, 40)(0, 40)(40, 0)
 \qbezier(0, 0)(0, 0)(40, 40)

\qbezier(40, 40)(70, 20)(40, 0)

\put(55, 40){$v$}
\put(55, 5){$.$}
\end{picture}
 \]
\end{enumerate}
\end{theorem}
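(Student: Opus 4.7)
The plan is to verify this presentation theorem in the standard three stages for presentations of diagrammatic monoidal categories: (i) show the listed morphisms generate every morphism of $\CBC(\delta_\CV)$; (ii) verify the listed relations actually hold; (iii) prove the relations are complete, in the sense that any two expressions in the generators representing the same coloured Brauer diagram are equal modulo the relations.

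For generation, I would argue as follows. Given a coloured Brauer $(k,\ell)$-diagram $\wt D$, first isotope its underlying diagram into \emph{Morse general position} with respect to the vertical coordinate: each horizontal slice contains either strictly vertical arcs, or a single critical point (a local maximum, local minimum, or transverse crossing of exactly two arcs). Slicing $\wt D$ at heights between successive critical values expresses it as a vertical composition of horizontal strips, each of which is a tensor product $I^{a_1}\otimes\cdots\otimes I^{a_p}$ with at most one factor replaced by one of $X_{bc}$, $\cap_v$, or $\cup^v$. The constraint that horizontal arcs are coloured by $\CV$ ensures that every cap/cup is of the form $\cap_v,\cup^v$ with $v\in\CV$, so all such strips lie in our generating set. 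Verification of the relations (i)--(v) is immediate from the diagrammatic definition of composition and tensor product: both sides of each relation depict coloured Brauer diagrams which are planar-isotopic rel boundary, and any free loops produced on either side are accounted for by the $\delta_v$ scalars (in these particular relations, no loops appear).

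Completeness is the main obstacle. The argument is that two generator-expressions represent the same coloured Brauer diagram iff their underlying Morse-decomposed pictures are related by a finite sequence of \emph{local moves} obtained by passing one critical point past another or by cancelling adjacent critical points. I would enumerate the possible local moves between two adjacent horizontal strips and show each is a consequence of relations (i)--(v): two crossings swapping heights on disjoint strands is the interchange axiom of a monoidal category; a crossing sliding past a third vertical strand is the braid relation (ii); cancellation of a pair of opposite crossings is (i); a cap meeting a cup in a zigzag cancels by (iii); a cap or cup passing over/under a crossing is a sliding relation (iv); a crossing closed off by a cap or cup from one side produces the identity strand by the twist relation (v). Since these exhaust the Morse-theoretic local moves that relate two generic isotopies of the same diagram (the Brauer analogue of the Reidemeister/movie moves, simplified by the fact that crossings in $\CBC(\delta_\CV)$ carry no over/under information), completeness follows.

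The hardest step is completeness, specifically the combinatorial check that relations (i)--(v) really do cover every local Morse move. I would handle it by reducing to the known uncoloured case: the presentation of the ordinary (uncoloured) Brauer category by these same generators and relations (without colour labels) is established in \cite{LZ15}; since composition of coloured diagrams only multiplies by scalars $\delta_v$ coming from \emph{horizontal} arcs, and the relations above produce no closed loops on either side, a move that is valid uncoloured remains valid once colour labels are propagated along arcs. One small point requiring care is that the sliding relations (iv) must be stated and checked for \emph{both} orientations of the cap/cup relative to the crossing, and that a horizontal arc created by isotopy must always carry a colour in $\CV$; this is automatic because in the starting diagram horizontal arcs are already $\CV$-coloured and every local move preserves the set of arcs (only their shape changes).
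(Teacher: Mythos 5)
Your proposal is correct and takes essentially the same route as the paper, which simply says the theorem ``can be proved along the lines of the proof of the corresponding result for the usual Brauer category given in \cite{LZ15}'' and omits all details; your Morse-decomposition argument for generation together with the reduction of the completeness step to the uncoloured presentation of \cite{LZ15}, with colours propagated along arcs, is exactly such an adaptation. The one point worth phrasing more precisely is that in any word in the stated generators an arc coloured by an element of $\CM\setminus\CV$ passes only through identity and crossing strands and hence is monotone, so the straightening and twist relations (which are only available for colours in $\CV$) are never required for such arcs.
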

\begin{proof} 
This can be proved along the lines of the proof of the corresponding result for the usual Brauer category given in \cite{LZ15}. We omit the details. 
\end{proof}

We will need the following notation.  For any object $\bs=(c_1, c_2, \dots, c_k)$ 
of $\CBC(\delta_\CV)$, we denote by $|\bs|=k$ its length, and let 
\[
\bs^\vee=(c_k, c_{k-1}, \dots, c_1), \quad
\bs_j^\vee = (c_1, \dots, c_{j-1}, c_{j+1}, c_{j}, c_{j+2}, \dots, c_k), \   j<k.
\]

An advantage of the construction of $\CBC(\delta_\CV)$ above is its simplicity. It is also very flexible and amenable to generalisations. 
We now generalise it by including additional generators of morphisms (called coupons) as in the treatment of tangle categories in \cite[\S 4.5]{RT} and \cite[\S 1.3]{T}.  

Define a $\K$-linear strict monoidal category as follows. The objects are the sequences of elements of $\CM$; and the generators of morphisms are those of $\CBC(\delta_\CV)$
together with the elements of $\CG=\{g_{\bs \bt}: \bs \to \bt\mid \bs\in \mathbf{S}, \bt \in \mathbf{T}\}$, called {\em coupons}, where $\mathbf{S}$ and  $\mathbf{T}$ 
are two fixed sets of sequences of elements of $\CM$.   
Represent $g_{\bs \bt}$ graphically as 
\[
\begin{picture}(40, 55)(5,20)
\put(10, 60){\line(0, 1){20}}
\put(25, 60){\line(0, 1){20}}
\put(13, 65){\tiny$\dots$}
\put(15, 70){\tiny$\bt$}

\put(0, 60){\line(1, 0){35}}
\put(0, 40){\line(1, 0){35}}
\put(0, 60){\line(0, -1){20}}
\put(35, 60){\line(0, -1){20}}
\put(10, 50){$g_{\bs \bt}$}

\put(10, 40){\line(0, -1){20}}
\put(25, 40){\line(0, -1){20}}
\put(13, 34){\tiny$\dots$}
\put(15, 25){\tiny$\bs$}
\put(45, 25){.}
\end{picture}
\]
Impose the following relations on the generating morphisms:
\begin{enumerate}
\item Brauer relations:  the relations of $\CBC(\delta_\CV)$; 

\item duality of coupons:  the relations depicted in Figure \ref{fig:gvee} if the components of $\bs\in\mathbf{S}$ and $\bt\in\mathbf{T}$ all belong to $\CV$, and $\bt^\vee\in\mathbf{S}$ and $\bs^\vee\in\mathbf{T}$; and  

\item sliding arcs over coupons: the relations depicted in Figure \ref{fig:coupon} for all $\bs\in \mathbf{S}, \bt\in \mathbf{T}$ and any object $\bc$.  
\end{enumerate}

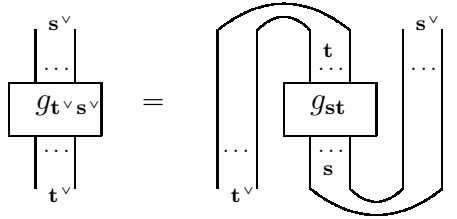
\begin{figure}[h]
\begin{picture}(100, 70)(5,10)
\put(10, 60){\line(0, 1){20}}
\put(25, 60){\line(0, 1){20}}
\put(13, 65){\tiny$\dots$}
\put(15, 80){\tiny$\bs^\vee$}

\put(0, 60){\line(1, 0){35}}
\put(0, 40){\line(1, 0){35}}
\put(0, 60){\line(0, -1){20}}
\put(35, 60){\line(0, -1){20}}
\put(10, 50){$g_{\bt^\vee \bs^\vee}$}

\put(10, 40){\line(0, -1){20}}
\put(25, 40){\line(0, -1){20}}
\put(13, 34){\tiny$\dots$}
\put(15, 15){\tiny$\bt^\vee$}
\put(50, 50){$=$}
\end{picture}
\begin{picture}(40, 70)(5,10)
\put(-10, 80){\line(0, -1){60}}
\put(-25, 80){\line(0, -1){60}}
\qbezier(-10, 80)(0, 90)(10, 80)
\qbezier(-25, 80)(0, 100)(25, 80)

\put(-23, 34){\tiny$\dots$}
\put(-20, 15){\tiny$\bt^\vee$}

\put(10, 60){\line(0, 1){20}}
\put(25, 60){\line(0, 1){20}}
\put(13, 65){\tiny$\dots$}
\put(15, 70){\tiny$\bt$}

\put(0, 60){\line(1, 0){35}}
\put(0, 40){\line(1, 0){35}}
\put(0, 60){\line(0, -1){20}}
\put(35, 60){\line(0, -1){20}}
\put(10, 50){$g_{\bs \bt}$}

\put(10, 40){\line(0, -1){20}}
\put(25, 40){\line(0, -1){20}}
\put(13, 34){\tiny$\dots$}
\put(15, 25){\tiny$\bs$}

\put(45, 80){\line(0, -1){60}}
\put(60, 80){\line(0, -1){60}}
\qbezier(25, 20)(35, 10)(45, 20)
\qbezier(10, 20)(35, 0)(60, 20)

\put(48, 65){\tiny$\dots$}
\put(50, 80){\tiny$\bs^\vee$}

\put(65, 20){.}
\end{picture}
\caption{Duality of coupons}
\label{fig:gvee}\label{fig:dual-g}
\end{figure}

\begin{figure}[h]
\begin{picture}(100, 80)(-25,20)
\put(10, 60){\line(0, 1){35}}
\put(25, 60){\line(0, 1){35}}
\put(13, 65){\tiny$\dots$}
\put(15, 95){\tiny$\bt$}

\put(0, 60){\line(1, 0){35}}
\put(0, 40){\line(1, 0){35}}
\put(0, 60){\line(0, -1){20}}
\put(35, 60){\line(0, -1){20}}
\put(10, 50){$g_{\bs \bt}$}

\put(10, 40){\line(0, -1){25}}
\put(25, 40){\line(0, -1){25}}
\put(13, 34){\tiny$\dots$}
\put(15, 22){\tiny$\bs$}

\qbezier(40, 95)(-20, 75)(-25, 50)
\qbezier(55, 90)(-5, 75)(-10, 50)
\put(-25, 50){\line(0, -1){35}}
\put(-10, 50){\line(0, -1){35}}
\put(-23, 34){\tiny$\dots$}
\put(-20, 15){\tiny$\bc$}

\put(60, 45){$=$}
\end{picture}
\quad 
\begin{picture}(80, 80)(-25,15)
\put(10, 60){\line(0, 1){25}}
\put(25, 60){\line(0, 1){25}}
\put(13, 65){\tiny$\dots$}
\put(15, 85){\tiny$\bt$}

\put(0, 60){\line(1, 0){35}}
\put(0, 40){\line(1, 0){35}}
\put(0, 60){\line(0, -1){20}}
\put(35, 60){\line(0, -1){20}}
\put(10, 50){$g_{\bs \bt}$}

\put(10, 40){\line(0, -1){35}}
\put(25, 40){\line(0, -1){35}}
\put(13, 34){\tiny$\dots$}
\put(15, 3){\tiny$\bs$}

\put(45, 50){\line(0, 1){35}}
\put(60, 50){\line(0, 1){35}}

\qbezier(45, 50)(45, 25)(-20, 10)
\qbezier(60, 50)(50, 20)(-5, 5)

\put(48, 65){\tiny$\dots$}
\put(50, 85){\tiny$\bc$}

\put(65, 10){,}
\end{picture}

\begin{picture}(110, 110)(-25,20)
\put(10, 60){\line(0, 1){35}}
\put(25, 60){\line(0, 1){35}}
\put(13, 65){\tiny$\dots$}
\put(15, 95){\tiny$\bt$}

\put(0, 60){\line(1, 0){35}}
\put(0, 40){\line(1, 0){35}}
\put(0, 60){\line(0, -1){20}}
\put(35, 60){\line(0, -1){20}}
\put(10, 50){$g_{\bs \bt}$}

\put(10, 40){\line(0, -1){25}}
\put(25, 40){\line(0, -1){25}}
\put(13, 34){\tiny$\dots$}
\put(15, 22){\tiny$\bs$}

\qbezier(-10, 95)(40, 85)(60, 50)
\qbezier(-25, 90)(40, 75)(45, 50)
\put(60, 50){\line(0, -1){35}}
\put(45, 50){\line(0, -1){35}}
\put(48, 34){\tiny$\dots$}
\put(50, 15){\tiny$\bc$}

\put(70, 45){$=$}
\end{picture}
\quad 
\begin{picture}(80, 110)(-25,15)
\put(10, 60){\line(0, 1){25}}
\put(25, 60){\line(0, 1){25}}
\put(13, 65){\tiny$\dots$}
\put(15, 90){\tiny$\bt$}

\put(0, 60){\line(1, 0){35}}
\put(0, 40){\line(1, 0){35}}
\put(0, 60){\line(0, -1){20}}
\put(35, 60){\line(0, -1){20}}
\put(10, 50){$g_{\bs \bt}$}

\put(10, 40){\line(0, -1){35}}
\put(25, 40){\line(0, -1){35}}
\put(13, 34){\tiny$\dots$}
\put(15, 3){\tiny$\bs$}
\put(-10, 50){\line(0, 1){35}}
\put(-25, 50){\line(0, 1){35}}
\put(-22, 65){\tiny$\dots$}
\put(-20, 90){\tiny$\bc$}

\qbezier(-10, 50)(0, 15)(60, 15)
\qbezier(-25, 50)(-15, 15)(45, 7)

\put(65, 10){.}
\end{picture}
\caption{Sliding arcs over coupons}
\label{fig:coupon}
\end{figure}
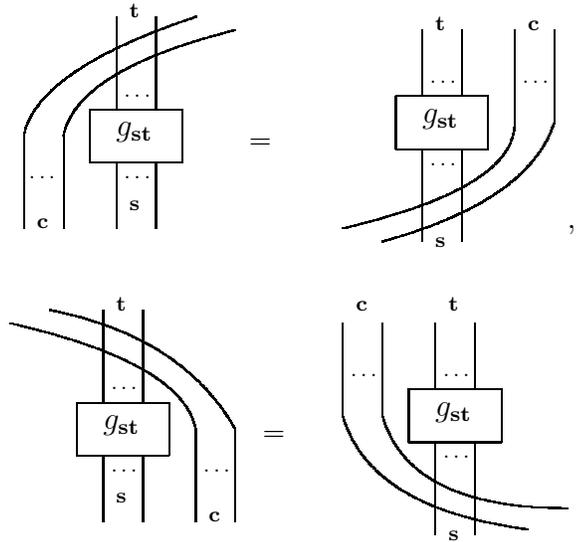

\begin{definition}\label{def:CBwC}
Denote this strict monoidal category by $\CBC_\CG(\delta_\CV)$, and call it a
coloured Brauer category with coupons. 
\end{definition}

\begin{remark}\label{rmk:quot}
We shall consider coloured Brauer categories with explicitly given coupons satisfying further relations.   
\end{remark}

We now introduce the following symbols to denote the particular morphisms depicted in the corresponding diagrams below. 
\begin{enumerate}[(a).]
\item The identity endomorphism $I(\bs): \bs\to\bs$. 
\[
\begin{picture}(70, 25)(5,5)
\put(-45, 10){$I(\bs)\ = \ $}
\put(10, 0){\line(0, 1){20}}
\put(25, 0){\line(0, 1){20}}
\put(13, 10){\tiny$\dots$}
\put(15, 22){\tiny$\bs$}
\put(30, 8){.}
\end{picture}
\]

\item Permutation 
$X_j(\bs): \bs \to \bs_j^\vee$. 
\[
\begin{picture}(175, 30)(5,0)
\put(-40, 10){$X_j(\bs)=$}
\put(10, 0){\line(0, 1){20}}
\put(6, 22){\tiny$c_1$}

\put(15, 10){\tiny$\dots$}

\put(30, 0){\line(0, 1){20}}
\put(24, 22){\tiny$c_{j-1}$}

\qbezier(50, 0)(50, 0)(80, 20)
\qbezier(50, 20)(50, 20)(80, 0)
\put(46, 22){\tiny$c_{j+1}$}
\put(76, 22){\tiny$c_{j}$}

\put(100, 0){\line(0, 1){20}}
\put(94, 22){\tiny$c_{j+2}$}

\put(105, 10){\tiny$\dots$}

\put(125, 0){\line(0, 1){20}}
\put(120, 22){\tiny$c_k$}

\put(130, 0){, }

\put(150, 10){$j < k$. }

\end{picture}
\]

\item Multiple arc permutation $X(\ba, \bs): (\ba, \bs)\to (\bs, \ba)$.
\[
\begin{picture}(150, 50)(-50,0)
\put(-50, 20){$X(\ba, \bs) \ = \ $ }

\put(86, 48){\tiny$\ba$}
\put(28, 48){\tiny $\bs$}
\qbezier(20, 45)(20, 45)(80, 5)
\qbezier(40, 45)(40, 45)(100, 5)
\qbezier(80, 45)(20, 5)(20, 5)
\qbezier(100, 45)(40, 5)(40, 5)

\put(42, 15){\tiny$\dots$}
\put(68, 15){\tiny$\dots$}

\put(26, 0){\tiny$\ba$}
\put(88, 0){\tiny $\bs$}
\end{picture}
\]

\item Multiple arc cap $\Cap(\bs):  (\bs, \bs^\vee)\to \emptyset$ and cup  $\Cup(\bs):  \emptyset\to (\bs, \bs^\vee)$. 
\[
\begin{picture}(150, 30)(-50,0)
\put(-55, 8){$\Cap(\bs) \ = \ $ }
\qbezier(0, 5)(35, 40)(70, 5)

\qbezier(10, 5)(40, 30)(60, 5)
\put(18, -3){\tiny $\bs$}

\put(16, 6){\tiny$\dots$}
\put(43, 6){\tiny$\dots$}

\qbezier(30, 5)(35, 20)(40, 5)
\put(48, -3){\tiny$\bs^\vee$}
\put(75, 5){, }
\end{picture}
\begin{picture}(80, 30)(-55,0)
\put(-55, 8){$\Cup(\bs) \ = \ $ }

\qbezier(0, 15)(35, -20)(70, 15)
\qbezier(10, 15)(40, -10)(60, 15)
\qbezier(30, 15)(35, 0)(40, 15)

\put(18, 18){\tiny $\bs$}
\put(48, 18){\tiny$\bs^\vee$}

\put(18, 10){\tiny$\dots$}
\put(42, 10){\tiny$\dots$}

\put(75, 5){. }
\end{picture}
\]
\end{enumerate}

\subsection{An example: the enhanced Brauer category for $\fso_m$}\label{ex:B-SO}
A very simple example of $\CBC_\CG(\delta_\CV)$ yields the category needed to describe the invariant theory of the special orthogonal group $\rm{SO}_m$. 
We take $\CV=\CM=\{v\}$ with a single colour $v$, and write $\delta$ for $\delta_\CV$ (we adopt this notation whenever $|\CV|=1$). 
Represent a sequence $(v, \dots, v)$ of length $r$ simply by $r$, thus the objects of the category are now the non-negative integers as in the usual Brauer category $\CB(\delta)$. 
Further assume that $\CG$ contains one coupon $\Delta_m: 0\to m$ only for a fixed integer $m>1$, which we depict pictorially as in Figure \ref{fig:Delta}. 
\begin{figure}[h]
\begin{picture}(60, 25)(-30,15)
\put(-38, 20){$\Delta_m=$}
\put(10, 20){\line(0, 1){20}}
\put(25, 20){\line(0, 1){20}}
\put(13, 25){\tiny$\dots$}
\put(15, 30){\tiny$m$}

\put(0, 20){\line(1, 0){35}}

\qbezier(0, 20)(18, -10)(35, 20)
\end{picture}
\caption{The coupon in $\CB_{\Delta_m}(\delta)$}
\label{fig:Delta}
\end{figure}
Denote the category by $\CB_{\Delta_m}(\delta)$. 

The enhanced Brauer category \cite[Definition 5.1]{LZ17-Ecate} can be obtained as a quotient category of $\CB_{\Delta_m}(\delta)$ (for appropriate parameter $\delta$), which we now describe.

Since the usual Brauer category $\CB(\delta)$ is contained in $\CB_{\Delta_m}(\delta)$ as a monoidal subcategory, $\Hom_{\CB_{\Delta_m}(\delta)}(m, m)$ contains the Brauer algebra $B_m(\delta)=\Hom_{\CB(\delta)}(m, m)$ as a subalgebra. 
Recall also that $B_m(\delta)$ contains the group algebra of $\Sym_m$ as a subalgebra. Represent $\Sigma(m) = \sum_{\sigma\in\Sym_m}(-1)^{\ell(\sigma)}\sigma$ by the diagram below.
\[
\begin{picture}(30, 50)(60,0)
\put(60, 15){\line(0, 1){20}}
\put(90, 15){\line(0, 1){20}}
\put(60, 35){\line(1, 0){30}}
\put(60, 15){\line(1, 0){30}}

\put(65, 15){\line(0, -1){15}}
\put(85, 15){\line(0, -1){15}}

\put(65, 35){\line(0, 1){15}}
\put(85, 35){\line(0, 1){15}}

\put(70, 40){\tiny$\dots$}
\put(70, 7){\tiny$\dots$}
\put(70, 0){\tiny$m$}
\put(70, 45){\tiny$m$}

\put(65, 22){\tiny$\Sigma(m)$}

\end{picture}
\]

Now an element $D\in B_m(\delta)$ may be composed with $\Delta_m$,  yielding $D \Delta_m$ as a morphism in $\CB_{\Delta_m}(m)$. Let $\Delta^*=\Cap(m) (\Delta_m \ot I(m))$, then we also have $\Delta_m^* D$ for any $D\in B_m(\delta)$. Represent $\Delta_m^*$ by the diagram below.

\[
\begin{picture}(25, 25)(50,0)

\put(50, 20){\line(1, 0){35}}
\put(60, 20){\line(0, -1){20}}
\put(75, 20){\line(0, -1){20}}
\qbezier(50, 20)(67, 45)(85, 20)

\put(62, 10){\tiny$\dots$}
\put(63, 2){\tiny$m$}
\end{picture}
\]

We impose the following conditions on the coupon,   
\begin{enumerate}
\item $\Delta_m$ is skew symmetric, i.e., it spans  the sign representation of 
$\Sym_m$; and 

\item $\Sigma(m)=\Delta_m \Delta_m^*$, which can be depicted diagrammatically as 
\[
\begin{picture}(70, 50)(60,0)
\put(60, 15){\line(0, 1){20}}
\put(90, 15){\line(0, 1){20}}
\put(60, 35){\line(1, 0){30}}
\put(60, 15){\line(1, 0){30}}

\put(65, 15){\line(0, -1){15}}
\put(85, 15){\line(0, -1){15}}

\put(65, 35){\line(0, 1){15}}
\put(85, 35){\line(0, 1){15}}

\put(70, 40){\tiny$\dots$}
\put(70, 7){\tiny$\dots$}
\put(70, 0){\tiny$m$}
\put(70, 45){\tiny$m$}

\put(65, 22){\tiny$\Sigma(m)$}

\put(100, 22){$=$}

\put(120, 35){\line(1, 0){30}}
\put(120, 15){\line(1, 0){30}}

\put(125, 15){\line(0, -1){15}}
\put(145, 15){\line(0, -1){15}}

\put(125, 35){\line(0, 1){15}}
\put(145, 35){\line(0, 1){15}}

\qbezier(120, 35)(135, 15)(150, 35)
\qbezier(120, 15)(135, 30)(150, 15)

\put(130, 40){\tiny$\dots$}
\put(130, 7){\tiny$\dots$}
\put(130, 0){\tiny$m$}
\put(130, 45){\tiny$m$}

\put(155, 5){.}
\end{picture}
\] 
\end{enumerate}
Denote the resulting monoidal category by $\CB'_{\Delta_m}$.

Note that the first condition implies that $\Delta_m$ is harmonic, i.e., $(\cap\ot I(m-2))\sigma \Delta_m=0$ for all $\sigma\in\Sym_m$.  The arguments in \cite{LZ17-Ecate}  
show that  that condition (2) above implies that $\delta=m$.

By inspecting the definition  of the enhanced Brauer category $\wt{\CB}(m)$ given in \cite[Definition 5.1]{LZ17-Ecate}, one immediately sees the following fact.  
\begin{theorem}
The enhanced Brauer category $\CB'_{\Delta_m}$ is isomorphic as monoidal category  to the enhanced Brauer category $\wt{\CB}(m)$ defined in \cite[Definition 5.1]{LZ17-Ecate}.
\end{theorem}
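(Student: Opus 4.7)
The plan is to verify the asserted isomorphism by matching presentations. First I would recall the presentation of $\wt{\CB}(m)$ from \cite[Definition 5.1]{LZ17-Ecate}: as a monoidal category it is generated by the usual Brauer category generators (crossing, cup, cap) together with a distinguished morphism $\Delta \colon 0 \to m$ subject to a list of relations, the key ones being that $\Delta$ is skew symmetric under the $\Sym_m$-action, harmonic (i.e., killed by any cap applied after any permutation), and satisfies a normalisation relating $\Delta \Delta^*$ to the antisymmetriser $\Sigma(m)$, from which the parameter condition $\delta = m$ is forced.

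Next I would construct a monoidal functor $F \colon \wt{\CB}(m) \to \CB'_{\Delta_m}$ by sending the Brauer generators to themselves and the distinguished morphism $\Delta$ to $\Delta_m$. To show $F$ is well defined, I need to verify that each defining relation of $\wt{\CB}(m)$ holds in $\CB'_{\Delta_m}$. The Brauer relations hold because $\CBC(\delta)$ is a monoidal subcategory of $\CBC_\CG(\delta)$ and hence of $\CB'_{\Delta_m}$. The skew symmetry is imposed by fiat in $\CB'_{\Delta_m}$, and the relation $\Sigma(m)=\Delta_m \Delta_m^*$ is precisely condition (2). The harmonic property, as noted in the excerpt, follows from skew symmetry together with the observation that for $m \ge 2$ any permutation $\sigma$ composed with a cap on two adjacent strands produces an expression symmetric in those two strands, which must therefore vanish on the skew symmetric $\Delta_m$. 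Finally the constraint $\delta = m$ emerges automatically: applying $\Cap(m)$ on top of the equation $\Sigma(m)=\Delta_m\Delta_m^*$ and using the straightening relations yields $\delta \cdot \Sigma(m-1) \otimes \id = m \cdot \Sigma(m-1) \otimes \id$ up to sign combinatorics, matching the argument in \cite{LZ17-Ecate}.

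For the inverse functor $G \colon \CB'_{\Delta_m} \to \wt{\CB}(m)$, I would send the Brauer generators to themselves and the unique coupon $\Delta_m$ to $\Delta$. By the presentation of $\CBC_\CG(\delta)$ established in the coupon framework of \secref{sect:CB}, the universal property guarantees that $G$ is well defined provided the duality-of-coupons and sliding-over-coupons relations (Figures~\ref{fig:dual-g} and \ref{fig:coupon}) are satisfied in $\wt{\CB}(m)$. These are essentially tautological in the enhanced Brauer category because every sliding or duality manipulation of a single coupon $\Delta$ reduces, via the existing Brauer relations, to an identity already present in $\wt{\CB}(m)$; this is the content of the diagrammatic calculus used in \cite{LZ17-Ecate}. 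Having defined both functors on generators in a way that is clearly mutually inverse, it is immediate that $F$ and $G$ are strict monoidal and that $FG = \id$, $GF = \id$.

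I expect the main obstacle to be administrative rather than conceptual: namely, carefully listing the relations of $\wt{\CB}(m)$ from \cite[Definition 5.1]{LZ17-Ecate} and checking them one by one against the relations in $\CB'_{\Delta_m}$, especially confirming that no hidden relation in $\wt{\CB}(m)$ has been omitted from the $\CB'_{\Delta_m}$ side. The genuinely non-trivial checks are the derivation of the harmonic condition from skew symmetry and the forcing of $\delta = m$, both of which can be handled by the arguments of \cite{LZ17-Ecate} essentially verbatim; this is why the excerpt asserts the isomorphism as immediate upon inspection.
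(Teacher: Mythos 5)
Your proposal is correct and follows essentially the same route as the paper, which proves this theorem simply by inspecting and matching the presentation of $\wt{\CB}(m)$ in \cite[Definition 5.1]{LZ17-Ecate} against the generators and imposed relations of $\CB'_{\Delta_m}$ (noting, as you do, that skew symmetry forces harmonicity and that condition (2) forces $\delta=m$ via the arguments of \cite{LZ17-Ecate}). You merely spell out the two mutually inverse monoidal functors explicitly, which the paper leaves as an ``immediately sees'' inspection.
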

\begin{remark}
The category $\CB'_{\Delta_m}$is isomorphic  as monoidal category   to 
the full subcategory $\CT(\C^m)$ of $\rm{SO}_m(\C)$-Mod with objects $(\C^m)^{\ot r}$ for all $r\in\N$, since $\wt\CB(m)$ is known  \cite{LZ17-Ecate} to be isomorphic to $\CT(\C^m)$. 
\end{remark}

\medskip
\subsection{Infinitesimal braids as coupons}
\subsubsection{Coloured Brauer categories with infinitesimal braids}
We shall discuss a particular set of coupons in this section.
Assume that $\CG\supset \CH=\{\BH(a, b): (a, b)\to (a, b)\mid a, b\in\CM\}$.  
We represent $\BH(a, b)$ diagrammatically  as two vertical arcs connected by a wavy line which will be referred to as a {\em connector}, as shown in Figure \ref{fig:BH}.
\begin{figure}[h]
\begin{picture}(30, 40)(0,0)

\put(2, -8){\tiny$a$}
\put(2, 42){\tiny$a$}
\put(5, 0){\line(0, 1){40}}

\put(5, 20){\uwave{\hspace{7mm}}}

\put(23, -8){\tiny$b$}
\put(23, 42){\tiny$b$}
\put(25, 0){\line(0, 1){40}}

\end{picture} 
\caption{The coupon $\BH(a, b)$}
\label{fig:BH}
\end{figure}

By the duality of coupons depicted in Figure \ref{fig:dual-g}, we have, for all $a, b\in\CV$, 
\[
\BH(b, a)=(\Cap(b, a)\ot I(b,a)) (I(b, a)\ot \BH(a, b) \ot I(b, a)) (I(b, a)\ot \Cup(a,b)).
\]

For any $a, b, c$ in $\CM$, we introduce the notation 
\beq
\BH_{1 2}(a, b, c) &:=& \BH(a, b)\ot I^c, \quad \BH_{2 3}(a, b, c):= I^a\ot \BH(b, c).  \\
\BH_{1 3}(a, b, c) &:=& (I^a\ot X_{c b}) \BH_{12}(a, c, b) (I^a \ot X_{b c}). 
\eeq

By sliding the single arc $c$ over the coupon $\BH(a,b)$, as shown in Figure \ref{fig:coupon},  one sees that 
\beq\label{eq-basic hslide}
(I^a\ot X_{cb})(X_{ca}\ot I^b)(\BH_{23}(c,a,b))=(\BH_{12}(a,b,c))(I^a\ot X_{cb})(X_{ca}\ot I^b),
\eeq
Noting that $(I^a\ot X_{cb})\inv = (I^a\ot X_{bc})$ etc, it follows that \eqref{eq-basic hslide}
may be written
\beq\label{eq:conj}
(I^a\ot X_{bc})\BH_{1 2}(a, b, c) ( I^a\ot X_{cb})=(X_{ca}\ot I^b)\BH_{23}(c,a, b) (X_{ca}\ot I^b) . 
\eeq

It is instructive to give the diagrammatic version of \eqref{eq:conj}, as in Figure \ref{fig:conj}

%


\begin{figure}[h]
\begin{tikzpicture}
  \begin{scope}
    \draw (0,1.5) node[left] {$a$} -- (0,0.5) node[left] {$a$} -- (0,-0.5) node[left] {$a$}-- (0,-1.5) node[left] {$a$};%
    \draw (1,1.5) node[left] {$c$} -- (2,0.5) node[right] {$c$} -- (2,-0.5) node[right] {$c$}-- (1,-1.5) node[left] {$c$};%
    \draw (2,1.5) node[right] {$b$} -- (1,0.5) node[left] {$b$} -- (1,-0.5) node[left] {$b$}-- (2,-1.5) node[right] {$b$};%

    \draw[decorate,decoration={snake,amplitude=.4mm,segment length=1.5mm,post length=1mm}] (0,0) -- (1,0);
    {\color{red}
 \draw[densely dotted] (2.5,-0.5) -- (-0.5,-0.5);%
 \draw[densely dotted] (2.5,0.5) -- (-0.5,0.5);%
 }

    \foreach \x in {0,1,2}{%
      \foreach \y in {1.5,0.5,...,-1.5}{%
        \draw[fill] (\x,\y) circle[radius=1.25pt];%
      }%
    }%
  \end{scope}

  \begin{scope}[xshift=4cm]
    \draw (0,1.5) node[left] {$a$} -- (1,0.5) node[right] {$a$} -- (1,-0.5) node[right] {$a$}-- (0,-1.5) node[left] {$a$};%
    \draw (1,1.5) node[right] {$c$} -- (0,0.5) node[left] {$c$} -- (0,-0.5) node[left] {$c$}-- (1,-1.5) node[right] {$c$};%
    \draw (2,1.5) node[right] {$b$} -- (2,0.5) node[right] {$b$} -- (2,-0.5) node[right] {$b$}-- (2,-1.5) node[right] {$b$};%
    \draw(-.9,0)node{$=$};

    \draw[decorate,decoration={snake,amplitude=.4mm,segment length=1.5mm,post length=1mm}] (2,0) -- (1,0);%

{\color{red}
 \draw[densely dotted] (2.5,-0.5) -- (-0.5,-0.5);%
 \draw[densely dotted] (2.5,0.5) -- (-0.5,0.5);%
 }

    \foreach \x in {0,1,2}{%
      \foreach \y in {1.5,0.5,...,-1.5}{%
        \draw[fill] (\x,\y) circle[radius=1.25pt];%
      }%
    }%
  \end{scope}
\end{tikzpicture}

\caption{The relation $\eqref{eq:conj}$}
\label{fig:conj}
\end{figure}
\noindent
where the horizontal red dotted lines indicate 
how a diagram is a composition of sub-diagrams, 
and black dots indicate the end points of the sub-diagrams. 

It follows from Figure \ref{fig:conj} that we have the following alternative formula for $\BH_{13}(a,b,c)$.
\beq
\BH_{13}(a,b,c)=(X_{ba}\ot I^c)(\BH_{23}(b,a,c))(X_{ab}\ot I^c).
\eeq
This is because both sides of \eqref{eq:conj} are equal to $\BH_{13}(a,c,b)$.

\vskip .75truecm

Now for any $a\in\CM$ and $v\in\CV$, let
\[
\baln
&\BH^{rt}(a, v) :=(I^a\ot\cap_v\ot I^v) (\BH(a, v)\ot X_{v v})(I^a\ot\cup^v\ot I^v), \\
&\BH^{lt}(v, a) :=(I^v\ot\cap_v\ot I^a) (X_{v v}\ot \BH(v, a))(I^v\ot\cup^v\ot I^v).
\ealn
\]

\begin{definition}\label{def:CB-infB}
Retain the notation above, and assume that $\CG=\CH\cup\CG_1$. 
Let $\CJ$ be the tensor ideal generated by the following morphisms for all $a, b, c\in\CM$ and $v\in \CV$:
\[
\baln
&(1).\quad X_{b a} \BH(b, a) - \BH(a, b) X_{b a},\\ 
&(2).\quad \BH^{lt}(v, a) +\BH(v, a), \quad \BH^{rt}(a, v) + \BH(a, v), \\
&(3).\quad [\BH_{1 2}(a, b, c), \BH_{1 3}(a, b, c)+ \BH_{2 3}(a, b, c)], \\
&(4).\quad[\BH_{12}(a, b, c) +\BH_{1 3}(a, b, c), \BH_{2 3}(a, b, c)].
\ealn
\] 
Denote by $\HB_{\CG_1}(\delta_\CV)$ the quotient category $\CBC_\CG(\delta_\CV)/\CJ$,  
and refer to it as a {\em coloured Brauer category with infinitesimal braids}. If $\CG_1=\emptyset$, we denote this category by $\HB(\delta_\CV)$.
\end{definition}

\begin{remark}
A justification of the terminology is that the images of $\BH(a, b)$ under representation theoretic functors have properties of 
 infinitesimal braids (see, e.g., \cite[\S XX]{K}). In the terminology of chord diagrams (see op. cit.), the wavy line in a connector is a chord. 
\end{remark}

We will denote by $X_{a b}$, $\BH(a, b)$, $\BH_{1 2}(a, b, c)$ etc. the respective images in $\HB_{\CG_1}(\delta_\CV)$ of the relevant elements. 
The diagrams in $\HB_{\CG_1}(\delta_\CV)$ obtained by composition of $\BH(a, b)$, $X_{a b}$ and $I^a$ for all $a, b\in \CM$ will be 
referred to as  {\em infinitesimal braids}. 

\begin{remark}
The infinitesimal symmetric category defined by \cite[Definition XX.4.1]{K} is very similar to our $\HB_{\emptyset}(\delta_\CV)$ when $\CV=\CM$, but with the crucial difference that 
it does not have the relations $(2)$ on the generators  of $\CJ$. 
These relations are needed to deal with self-duality of  the objects $v\in \CV$ (i.e., the existence of the morphisms $\cup^v$ and $\cap_v$), 
reflecting the ``un-oriented" nature of $\HB_{\emptyset}(\delta_\CV)$. 
\end{remark}

\subsubsection{Graphical depictions of defining relations}
We refer to the relations in $\HB_{\CG_1}(\delta_\CV)$ arising from the generators $(1)$  of $\CJ$
 as symmetries of $\BH(a, b)$, those arising from generators $(2)$ as left skew symmetries of $\BH(v, a)$ and right 
 skew symmetries of $\BH(a, v)$ respectively, and those  arising from the generators $(3)$ and $(4)$ as the four term relations.  The relations may be described diagrammatically as follows.

\noindent $\bullet$ 
Symmetry of $\BH$ as shown in Figure \ref{fig:BH-S} for all $a, b\in\CM$. 

\begin{figure}[h]

\begin{picture}(60, 60)(0,-10)

\put(2, -18){\tiny$a$}
\put(2, 52){\tiny$a$}
\put(5, 10){\line(0, 1){20}}

\qbezier(5, 30)(5, 30)(25, 50)
\qbezier(5, 50)(5, 50)(25, 30)

\put(5, 20){\uwave{\hspace{7mm}}}

\put(23, -18){\tiny$b$}
\put(23, 52){\tiny$b$}
\put(25, 10){\line(0, 1){20}}

\qbezier(5, 10)(5, 10)(25, -10)
\qbezier(5, -10)(5, -10)(25, 10)

\put(40, 16){$=$}
\end{picture} 
\begin{picture}(30, 60)(0,-10)

\put(2, -18){\tiny$a$}
\put(2, 52){\tiny$a$}
\put(5, -10){\line(0, 1){60}}

\put(5, 20){\uwave{\hspace{7mm}}}

\put(23, -18){\tiny$b$}
\put(23, 52){\tiny$b$}
\put(25, -10){\line(0, 1){60}}

\end{picture} 

\caption{Symmetry of $\BH$}
\label{fig:BH-S}
\end{figure}

\noindent $\bullet$  
Left and right skew symmetries of $\BH$ as shown in Figure \ref{fig:skew} for all $a\in\CM$,  $v\in\CV$, 

\begin{figure}[h]
\setlength{\unitlength}{0.25mm}
\begin{picture}(125, 65)(0,0)

\qbezier(40, 40)(50, 55)(62, 40)
\qbezier(40, 20)(50, 5)(62, 20)

\qbezier(40, 40)(30, 5)(20, 0)
\qbezier(40, 20)(30, 55)(20, 60)

\put(62, 20){\line(0, 1){20}}

\put(18, -8){\tiny$v$}
\put(18, 62){\tiny$v$}

\put(60, 33){{\uwave{\hspace{6mm}}}}
\put(84, 0){\line(0, 1){60}}
\put(82, -8){\tiny$a$}
\put(82, 62){\tiny$a$}

\put(100, 27){$= - $}
\end{picture}  
\begin{picture}(40, 65)(-10,0)
\put(0, 0){\line(0, 1){60}}
\put(-2, -8){\tiny$v$}
\put(-2, 62){\tiny$v$}
\put(-3, 33){{\uwave{\hspace{7mm}}}}

\put(29, 0){\line(0, 1){60}}
\put(27, -8){\tiny$a$}
\put(27, 62){\tiny$a$}
\put(38, 5){; }
\end{picture} 
\quad\quad\quad
\begin{picture}(115, 65)(-5,0)
\put(0, 0){\line(0, 1){60}}
\put(-2, -8){\tiny$a$}
\put(-2, 62){\tiny$a$}
\put(-3, 33){{\uwave{\hspace{6mm}}}}

\qbezier(21, 40)(30, 55)(40, 40)
\qbezier(21, 20)(30, 5)(40, 20)
\put(21, 20){\line(0, 1){20}}

\qbezier(40, 40)(50, 5)(60, 0)
\qbezier(40, 20)(50, 55)(60, 60)

\put(62, -8){\tiny$v$}
\put(62, 62){\tiny$v$}

\put(72, 27){$=\ - $}
\end{picture} 
\begin{picture}(65, 65)(-10,0)
\put(0, 0){\line(0, 1){60}}
\put(-2, -8){\tiny$a$}
\put(-2, 62){\tiny$a$}
\put(-3, 33){{\uwave{\hspace{7mm}}}}

\put(29, 0){\line(0, 1){60}}
\put(27, -8){\tiny$v$}
\put(27, 62){\tiny$v$}
\end{picture} 
\caption{Left and right skew symmetries of $\BH$}
\label{fig:skew}
\end{figure}

\noindent $\bullet$  
Four-term relations as shown in Figure \ref{fig:C-4-term} for all $a, b,c\in\CM$.

\begin{figure}[h]
\begin{picture}(50, 70)(0,0)
\put(8, -8){\tiny$a$}
\put(8, 73){\tiny$a$}
\put(23, -8){\tiny$b$}
\put(23, 73){\tiny$b$}
\put(38, -8){\tiny$c$}
\put(38, 73){\tiny$c$}

\put(-30, 40){$i)$}

{
\put(10, 0){\line(0, 1){70}}
}

\put(10, 50){\uwave{\hspace{5mm}}}
\put(10, 25){\uwave{\hspace{5mm}}}

\put(25, 70){\line(0, -1){30}}
\put(40, 70){\line(0, -1){30}}

\qbezier(25, 40)(25, 40)(40, 30)
\qbezier(25, 30)(25, 30)(40, 40)

\put(25, 30){\line(0, -1){15}}
\put(40, 30){\line(0, -1){15}}
\qbezier(25, 15)(25, 15)(40, 0)
\qbezier(25, 0)(25, 0)(40, 15)
\put(45, 30){$-$}
\end{picture}
\begin{picture}(50, 70)(0,0)
\put(8, -8){\tiny$a$}
\put(8, 73){\tiny$a$}
\put(23, -8){\tiny$b$}
\put(23, 73){\tiny$b$}
\put(38, -8){\tiny$c$}
\put(38, 73){\tiny$c$}
{
\put(10, 0){\line(0, 1){70}}
}

\put(10, 50){\uwave{\hspace{5mm}}}
\put(10, 25){\uwave{\hspace{5mm}}}

\qbezier(25, 55)(25, 55)(40, 70)
\qbezier(25, 70)(25, 70)(40, 55)

\put(25, 55){\line(0, -1){15}}
\put(40, 55){\line(0, -1){15}}

\put(25, 30){\line(0, -1){30}}
\put(40, 30){\line(0, -1){30}}
\qbezier(25, 30)(25, 30)(40, 40)
\qbezier(25, 40)(25, 40)(40, 30)
\put(45, 30){$+$}
\end{picture}
\begin{picture}(50, 70)(0,0)
\put(8, -8){\tiny$a$}
\put(8, 73){\tiny$a$}
\put(23, -8){\tiny$b$}
\put(23, 73){\tiny$b$}
\put(38, -8){\tiny$c$}
\put(38, 73){\tiny$c$}
{
\put(10, 0){\line(0, 1){70}}
}

\put(10, 50){\uwave{\hspace{5mm}}}
\put(25, 25){\uwave{\hspace{5mm}}}


\put(25, 70){\line(0, -1){70}}
\put(40, 70){\line(0, -1){70}}

\put(45, 30){$-$}
\end{picture}
\begin{picture}(50, 70)(0,0)
\put(8, -8){\tiny$a$}
\put(8, 73){\tiny$a$}
\put(23, -8){\tiny$b$}
\put(23, 73){\tiny$b$}
\put(38, -8){\tiny$c$}
\put(38, 73){\tiny$c$}
{
\put(10, 0){\line(0, 1){70}}
}

\put(10, 25){\uwave{\hspace{5mm}}}
\put(25, 50){\uwave{\hspace{5mm}}}


\put(25, 70){\line(0, -1){70}}
\put(40, 70){\line(0, -1){70}}

\put(50, 30){$=\ 0$}

\put(80, 10){;}
\end{picture}

\vspace{12mm}

\begin{picture}(50, 70)(0,0)
\put(-30, 40){$ii)$}

\put(8, -8){\tiny$a$}
\put(8, 73){\tiny$a$}
\put(23, -8){\tiny$b$}
\put(23, 73){\tiny$b$}
\put(38, -8){\tiny$c$}
\put(38, 73){\tiny$c$}
{
\put(10, 0){\line(0, 1){70}}
}

\put(10, 50){\uwave{\hspace{5mm}}}
\put(25, 15){\uwave{\hspace{5mm}}}

\qbezier(25, 55)(25, 55)(40, 70)
\qbezier(25, 70)(25, 70)(40, 55)

\put(25, 55){\line(0, -1){15}}
\put(40, 55){\line(0, -1){15}}

\put(25, 30){\line(0, -1){30}}
\put(40, 30){\line(0, -1){30}}
\qbezier(25, 30)(25, 30)(40, 40)
\qbezier(25, 40)(25, 40)(40, 30)
\put(45, 30){$-$}
\end{picture}
\begin{picture}(50, 70)(0,0)
\put(8, -8){\tiny$a$}
\put(8, 73){\tiny$a$}
\put(23, -8){\tiny$b$}
\put(23, 73){\tiny$b$}
\put(38, -8){\tiny$c$}
\put(38, 73){\tiny$c$}
{
\put(10, 0){\line(0, 1){70}}
}

\put(25, 60){\uwave{\hspace{5mm}}}
\put(10, 25){\uwave{\hspace{5mm}}}

\put(25, 70){\line(0, -1){30}}
\put(40, 70){\line(0, -1){30}}

\qbezier(25, 40)(25, 40)(40, 30)
\qbezier(25, 30)(25, 30)(40, 40)

\put(25, 30){\line(0, -1){15}}
\put(40, 30){\line(0, -1){15}}
\qbezier(25, 15)(25, 15)(40, 0)
\qbezier(25, 0)(25, 0)(40, 15)
\put(45, 30){$+$}
\end{picture}
%
\begin{picture}(50, 70)(0,0)
\put(8, -8){\tiny$a$}
\put(8, 73){\tiny$a$}
\put(23, -8){\tiny$b$}
\put(23, 73){\tiny$b$}
\put(38, -8){\tiny$c$}
\put(38, 73){\tiny$c$}
{
\put(10, 0){\line(0, 1){70}}
}

\put(10, 50){\uwave{\hspace{5mm}}}
\put(25, 25){\uwave{\hspace{5mm}}}


\put(25, 70){\line(0, -1){70}}
\put(40, 70){\line(0, -1){70}}

\put(45, 30){$-$}
\end{picture}
\begin{picture}(50, 70)(0,0)
\put(8, -8){\tiny$a$}
\put(8, 73){\tiny$a$}
\put(23, -8){\tiny$b$}
\put(23, 73){\tiny$b$}
\put(38, -8){\tiny$c$}
\put(38, 73){\tiny$c$}
{
\put(10, 0){\line(0, 1){70}}
}

\put(10, 25){\uwave{\hspace{5mm}}}
\put(25, 50){\uwave{\hspace{5mm}}}


\put(25, 70){\line(0, -1){70}}
\put(40, 70){\line(0, -1){70}}

\put(50, 30){$=\ 0$}

\put(80, 10){.}
\end{picture}

\caption{Four-term relations}
\label{fig:C-4-term}
\end{figure}
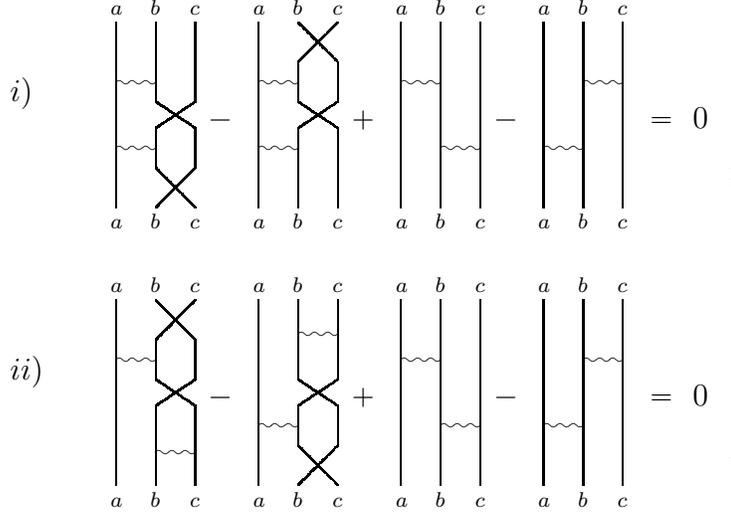  

\subsubsection{Conceptual interpretation of the relations} We next re-interpret the relations above in a more conceptual way.
We shall use the following {\em notation}. Let $P_{12}(a,b,c)=X_{ab}\ot I^c$, $P_{23}(a,b,c)=I^a\ot X_{bc}$. We shall drop the argument $(a,b,c)$
when it is clear from the context, and write merely $P_{12}$ and $P_{23}$. Thus $P_{12}\inv=P_{12}$ should be interpreted as $P_{12}(a,b,c)\inv= P_{12}(b,a,c)$, etc.
The diagrams $P_{12}$ and $P_{23}$ generate, under composition,
a group $P$ isomorphic to $\Sym_3$, of which one element is $P_{13}=P_{13}(a,b,c)=P_{12}\circ P_{23}\circ P_{12}=P_{23}\circ P_{12}\circ P_{23}$.

We also introduce the diagrams $\BH^{(2)}_\ell$ and $\BH^{(2)}_r$ as depicted in Fig. \ref{fig:delt}. 
\begin{figure}[h]
\begin{picture}(100, 60)(-30,-10)

\put(-35, 15){$\BH^{(2)}_\ell\;=$}

\put(2, -18){\tiny$a$}
\put(2, 52){\tiny$a$}
\put(5, -10){\line(0, 1){60}}

\put(5, 35){\uwave{\hspace{7mm}}}
\put(25,15){\uwave{\hspace{7mm}}}

\put(23, -18){\tiny$b$}
\put(23, 52){\tiny$b$}
\put(25, -10){\line(0, 1){60}}
\put(45, -10){\line(0, 1){60}}
\put(43, -18){\tiny$c$}
\put(43, 52){\tiny$c$}
\end{picture}
\begin{picture}(100, 60)(30,-10)
\put(55, 15){$\BH^{(2)}_r\;=$}

\put(92, -18){\tiny$a$}
\put(92, 52){\tiny$a$}
\put(95, -10){\line(0, 1){60}}

\put(95, 15){\uwave{\hspace{7mm}}}
\put(115,35){\uwave{\hspace{7mm}}}

\put( 113, -18){\tiny$b$}
\put(113, 52){\tiny$b$}
\put(115, -10){\line(0, 1){60}}
\put(135, -10){\line(0, 1){60}}
\put(133, -18){\tiny$c$}
\put(133, 52){\tiny$c$}

\end{picture} 

\caption{The diagrams $\BH^{(2)}_\ell,  \BH^{(2)}_r$}
\label{fig:delt}
\end{figure}

\begin{lemma}\label{lem:deltalr}
Given the relations in Figures \ref{fig:conj} and \ref{fig:BH-S}, we have 
\be\label{eq:dlr}
\BH^{(2)}_\ell=P_{13}\BH^{(2)}_rP_{13}\;(\iff \BH^{(2)}_r=P_{13}\BH^{(2)}_\ell P_{13}).
\ee
\end{lemma}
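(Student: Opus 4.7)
My plan is to reduce the identity to a pair of ``$P_{13}$-covariance'' statements for individual $\BH$-factors, then combine. Writing $\BH^{(2)}_r = \BH_{23}(c,b,a)\circ\BH_{12}(c,b,a)$ (with appropriate source colors so that composition with $P_{13}(a,b,c)$ on the right makes sense) and inserting the identity $P_{13}\circ P_{13}=I$ between the two $\BH$-factors, one obtains
\begin{equation*}
P_{13}\,\BH^{(2)}_r\,P_{13}
= \bigl(P_{13}\,\BH_{23}(c,b,a)\,P_{13}\bigr) \circ \bigl(P_{13}\,\BH_{12}(c,b,a)\,P_{13}\bigr).
\end{equation*}
Here the involutivity $P_{13}\circ P_{13}=I$ is built into $\CBC(\delta_\CV)$ through the inverse relation for crossings combined with the braid relation. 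It then suffices to prove the two covariance identities
\begin{equation*}
P_{13}\,\BH_{12}(c,b,a)\,P_{13} = \BH_{23}(a,b,c),\qquad P_{13}\,\BH_{23}(c,b,a)\,P_{13}=\BH_{12}(a,b,c),
\end{equation*}
after which composition immediately yields $\BH_{12}(a,b,c)\circ\BH_{23}(a,b,c) = \BH^{(2)}_\ell$.

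To establish each covariance identity, I would decompose $P_{13}$ using one of its two braid-equivalent forms $P_{13}=P_{12}\circ P_{23}\circ P_{12}$ or $P_{13}=P_{23}\circ P_{12}\circ P_{23}$, whichever is more convenient, and apply the sliding identity \eqref{eq:conj} from Figure \ref{fig:conj}. That identity is precisely the statement that conjugation by $P_{23}$ (respectively $P_{12}$) turns $\BH_{12}$ into a color-shifted copy of $\BH_{23}$ (respectively the reverse). The remaining outer conjugations by a single $P_{ij}$ adjacent to the corresponding $\BH_{ij}$ absorb into the $\BH$-factor via the symmetry relation of Figure \ref{fig:BH-S}, which simply relabels the colors in the connector.

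The conceptual picture behind this is that $P_{13}$ acts by horizontally reflecting the three-strand diagram, exchanging strands $1$ and $3$; under this reflection the diagram $\BH^{(2)}_r$ with its bottom-left and top-right connectors becomes $\BH^{(2)}_\ell$ with top-left and bottom-right connectors, using the symmetry of $\BH$ to reconcile connector orientations. The main obstacle is purely notational bookkeeping of color labels: each elementary crossing $P_{ij}$ simultaneously permutes neighboring entries in the colored word, so the source and target colors of the various $\BH$-factors and crossings must be matched carefully at each stage. No new diagrammatic identity beyond the sliding relation \eqref{eq:conj}, the symmetry of $\BH$, and the braid-group relations already present in $\CBC(\delta_\CV)$ is required.
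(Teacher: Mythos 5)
Your argument is correct, and it rests on exactly the same three ingredients as the paper's own proof (the sliding relation \eqref{eq:conj}, the symmetry of $\BH$ in Figure \ref{fig:BH-S}, and invertibility of the crossings), but it is organised differently: the paper carries out a single diagram chase on the whole two-connector diagram, proving $P_{12}\BH^{(2)}_\ell P_{12}=P_{23}P_{12}\BH^{(2)}_rP_{12}P_{23}$ step by step (Figures \ref{fig:pfdelt1}, \ref{fig:pfdelt2}) and then conjugating by $P_{12}$, whereas you insert $P_{13}P_{13}=I$ between the two factors and reduce to the single-connector covariance identities $P_{13}\BH_{12}P_{13}=\BH_{23}$ and $P_{13}\BH_{23}P_{13}=\BH_{12}$ (colours permuted accordingly). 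Your reduction is the more modular and reusable statement --- it is essentially the $\Sym_3$-equivariance of the generators $\BH_{ij}$ under conjugation by crossings, close in spirit to the Scholium following Corollary \ref{cor:i-ii} --- and each covariance identity does follow by writing $P_{13}$ as a product of three elementary crossings and applying, in turn, the symmetry relation (for the crossing adjacent to both legs of the connector), \eqref{eq:conj} (for the crossing moving one leg), and $P_{ij}^2=I$ for the remaining outer crossing; composing the two identities then gives $\BH_{12}\BH_{23}=\BH^{(2)}_\ell$ as you say. One small caution: \eqref{eq:conj} does not literally say that conjugation by $P_{23}$ turns $\BH_{12}$ into $\BH_{23}$; it says $P_{23}\BH_{12}P_{23}=P_{12}\BH_{23}P_{12}$ (both sides being $\BH_{13}$), so the extra conjugation by the adjacent $P_{12}$ together with $P_{12}^2=I$ is genuinely needed --- your prose gloss of \eqref{eq:conj} is slightly off, but the steps you actually list are the right ones, so this is a matter of colour and source/target bookkeeping rather than a gap.
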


\begin{proof} We refer to the chain of diagrammatic equalities in Figs. \ref{fig:pfdelt1} and \ref{fig:pfdelt2}. The rightmost equality in Fig. \ref{fig:pfdelt1}
can be seen by applying \eqref{eq:conj} to the bottom of the left side and then applying the equality in Fig. \ref{fig:BH-S} to the top left of the left side.
Each of the other equalities follows by applying one of the relations $P_{12}^2=I\ot I\ot I$, \eqref{eq:conj} or the one depicted in Fig. \ref{fig:BH-S}.

The result of this calculation is a proof that $P_{12}\BH^{(2)}_\ell P_{12}=P_{23}P_{12}\BH^{(2)}_rP_{12}P_{23}$, which implies the stated relation by pre- and post-multiplying 
both sides by $P_{12}$.
\end{proof}

\begin{figure}[h]
\begin{picture}(150, 60)(0,-10)

\put(-75, 15){$P_{12}\BH^{(2)}_\ell P_{12}\;=$}

\put(5, -7){\line(0, 1){57}}

\put(5,50){\line(2,1){20}}
\put(25,50){\line(-2,1){20}}

\put(5,-7){\line(2,-1){20}}
\put(25,-7){\line(-2,-1){20}}

\put(5, 40){\uwave{\hspace{7mm}}}
\put(25,10){\uwave{\hspace{7mm}}}

\put(25, -7){\line(0, 1){57}}
\put(45, -7){\line(0, 1){57}}

\put(65, 15){$=$}

\put(95, 30){\line(0, 1){20}}
\put(95, -7){\line(0, 1){27}}

\put(95, 40){\uwave{\hspace{7mm}}}
\put(115,10){\uwave{\hspace{7mm}}}

\put(115, -7){\line(0, 1){27}}
\put(115, 30){\line(0, 1){20}}

\put(135, -7){\line(0, 1){57}}

\put(95,20){\line(4,1){20}}
\put(95,25){\line(4,-1){20}}

\put(95,25){\line(4,1){20}}
\put(95,30){\line(4,-1){20}}

\put(95,50){\line(2,1){20}}
\put(115,50){\line(-2,1){20}}

\put(95,-7){\line(2,-1){20}}
\put(115,-7){\line(-2,-1){20}}

\put(140, 15){$\overset{}{=}$}


\put(165, -7){\line(0, 1){57}}

\put(165, 12){\uwave{\hspace{7mm}}}
\put(165,45){\uwave{\hspace{7mm}}}

\put(185, 3){\line(0, 1){20}}
\put(185, 33){\line(0, 1){17}}

\put(205, 3){\line(0, 1){20}}
\put(205, 33){\line(0, 1){17}}


\put(185,-7){\line(2,1){20}}
\put(185,3){\line(2,-1){20}}

\put(185,23){\line(2,1){20}}
\put(185,33){\line(2,-1){20}}

\put(95,50){\line(2,1){20}}
\put(115,50){\line(-2,1){20}}

\put(95,-7){\line(2,-1){20}}
\put(115,-7){\line(-2,-1){20}}

\end{picture} 

\caption{Proof of Lemma \ref{lem:deltalr}}
\label{fig:pfdelt1}
\end{figure}

\vskip .5truecm

\begin{figure}[h]
\begin{picture}(260, 80)(25,0)

\put(-15, 25){$=$}

\put(5, -7){\line(0, 1){77}}

\put(5, 12){\uwave{\hspace{7mm}}}
\put(5,45){\uwave{\hspace{7mm}}}

\put(25, 3){\line(0, 1){20}}
\put(25, 33){\line(0, 1){17}}

\put(45, 3){\line(0, 1){20}}
\put(45, 33){\line(0, 1){17}}


\put(25,-7){\line(2,1){20}}
\put(25,3){\line(2,-1){20}}

\put(25,60){\line(2,1){20}}
\put(25,70){\line(2,-1){20}}

\put(25,50){\line(2,1){20}}
\put(25,60){\line(2,-1){20}}


\put(25,23){\line(2,1){20}}
\put(25,33){\line(2,-1){20}}



\put(65, 25){$=$}

\put(95, -7){\line(0, 1){30}}
\put(95, 33){\line(0, 1){17}}
\put(95, 60){\line(0, 1){10}}

\put(95, 12){\uwave{\hspace{7mm}}}
\put(115,45){\uwave{\hspace{7mm}}}

\put(115, 3){\line(0, 1){20}}
\put(115, 33){\line(0, 1){17}}

\put(135, 3){\line(0, 1){57}}
\put(135, 33){\line(0, 1){17}}


\put(115,-7){\line(2,1){20}}
\put(115,3){\line(2,-1){20}}

\put(115,60){\line(2,1){20}}
\put(115,70){\line(2,-1){20}}

\put(95,50){\line(2,1){20}}
\put(95,60){\line(2,-1){20}}


\put(95,23){\line(2,1){20}}
\put(95,33){\line(2,-1){20}}


\put(150, 25){$\overset{}{=}$}


\put(175, -7){\line(0, 1){10}}
\put(175, 13){\line(0, 1){37}}
\put(175, 60){\line(0, 1){10}}

\put(175, 25){\uwave{\hspace{7mm}}}
\put(195,45){\uwave{\hspace{7mm}}}

\put(195, 13){\line(0, 1){20}}
\put(195, 33){\line(0, 1){17}}

\put(215, 3){\line(0, 1){57}}
\put(215, 33){\line(0, 1){17}}


\put(195,-7){\line(2,1){20}}
\put(195,3){\line(2,-1){20}}

\put(195,60){\line(2,1){20}}
\put(195,70){\line(2,-1){20}}

\put(175,50){\line(2,1){20}}
\put(175,60){\line(2,-1){20}}


\put(175,3){\line(2,1){20}}
\put(175,13){\line(2,-1){20}}

\put(225, 25){$\overset{}{=}P_{23}P_{12}\BH^{(2)}_rP_{12}P_{23}.$}

\end{picture} 

\caption{Proof of Lemma \ref{lem:deltalr} (cont.)}
\label{fig:pfdelt2}
\end{figure}
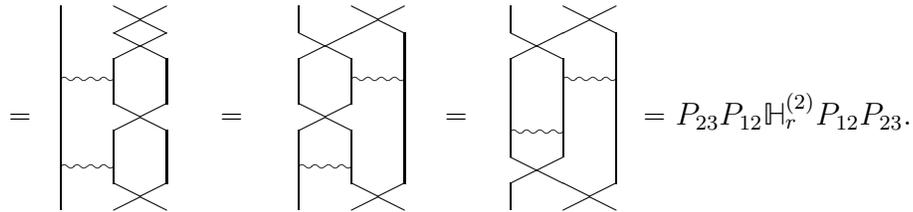

\begin{corollary}\label{cor:i-ii}
The 4-term relations i) and ii) are equivalent.
\end{corollary}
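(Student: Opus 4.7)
The plan is to exploit $P_{13}$-symmetry: I will show that conjugation by $P_{13}$ turns relation $\mathrm{i)}$ for the colour triple $(c,b,a)$ into relation $\mathrm{ii)}$ for $(a,b,c)$ (up to an overall sign), and vice versa. Since the map $(a,b,c)\mapsto (c,b,a)$ is an involution on $\CM^{3}$, this will identify the tensor ideals generated by the two families of $4$-term relations, establishing the claimed equivalence.

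The first step will be to derive the single-strand conjugation rules
\[
P_{13}\BH_{12}(c,b,a)P_{13}=\BH_{23}(a,b,c),\quad P_{13}\BH_{23}(c,b,a)P_{13}=\BH_{12}(a,b,c),\quad P_{13}\BH_{13}(c,b,a)P_{13}=\BH_{13}(a,b,c).
\]
I will obtain these by writing $P_{13}=P_{12}P_{23}P_{12}$ and iterating the slide relation \eqref{eq:conj} together with the symmetry of Figure~\ref{fig:BH-S}, much as in the proof of Lemma~\ref{lem:deltalr}. Note that $P_{13}P_{13}=I$, so conjugation is multiplicative on composites.

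Next, I will apply $P_{13}(-)P_{13}$ to relation $\mathrm{i)}$ for $(c,b,a)$, i.e., $[\BH_{12},\BH_{13}+\BH_{23}](c,b,a)=0$. Using the rules above and multiplicativity, this becomes
\[
[\BH_{23}(a,b,c),\,\BH_{13}(a,b,c)+\BH_{12}(a,b,c)]=-\bigl[\BH_{12}(a,b,c)+\BH_{13}(a,b,c),\,\BH_{23}(a,b,c)\bigr],
\]
which vanishes if and only if relation $\mathrm{ii)}$ for $(a,b,c)$ holds. The converse direction is identical by the involution $P_{13}^{2}=I$. Hence the tensor ideals generated by $\{\mathrm{i)}:(a,b,c)\in\CM^{3}\}$ and $\{\mathrm{ii)}:(a,b,c)\in\CM^{3}\}$ coincide, which is the claim.

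The hard part will be the careful verification of the three single-strand $P_{13}$-conjugation rules, since one must track the colours through the composite $P_{12}P_{23}P_{12}$ of crossings and repeatedly invoke Figures~\ref{fig:conj} and \ref{fig:BH-S}. This is a routine but tedious symmetric-group computation in $\CBC_{\CG}(\delta_{\CV})$; once it is done, the rest of the argument is essentially formal.
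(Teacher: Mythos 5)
Your plan is correct, and it takes a route that differs in organisation from the paper's, although both ultimately rest on the same diagrammatic inputs (the slide relation \eqref{eq:conj} and the symmetry of Figure \ref{fig:BH-S}). The paper first re-expresses relation $i)$ as $P_{12}\Delta P_{12}=-\Delta$ and relation $ii)$ as $P_{23}\Delta P_{23}=-\Delta$, where $\Delta=\BH^{(2)}_\ell-\BH^{(2)}_r=[\BH_{12},\BH_{23}]$, and then transfers one anti-invariance to the other via Lemma \ref{lem:deltalr} ($\BH^{(2)}_\ell=P_{13}\BH^{(2)}_rP_{13}$) and a short manipulation with $P_{12},P_{23}$. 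You instead conjugate the 4-term expression directly by $P_{13}$, after proving the three single-generator rules $P_{13}\BH_{12}(c,b,a)P_{13}=\BH_{23}(a,b,c)$, $P_{13}\BH_{23}(c,b,a)P_{13}=\BH_{12}(a,b,c)$, $P_{13}\BH_{13}(c,b,a)P_{13}=\BH_{13}(a,b,c)$. These rules do hold and follow quickly from the two formulas for $\BH_{13}$ (the definition and the alternative formula, i.e.\ \eqref{eq:conj}) together with Figure \ref{fig:BH-S}; e.g.\ writing $\BH_{13}(c,b,a)=P_{23}\BH_{12}(c,a,b)P_{23}$ and $P_{13}P_{23}=P_{23}P_{12}$ gives the third rule in two lines, and the other two are similar. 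Your rules are finer than, and immediately imply, Lemma \ref{lem:deltalr} (since $P_{13}\BH^{(2)}_r(c,b,a)P_{13}=\BH_{12}(a,b,c)\BH_{23}(a,b,c)=\BH^{(2)}_\ell(a,b,c)$), and they also yield the Scholium ($g\Delta g\inv=\ve(g)\Delta$) directly; what the paper's $\Delta$-reformulation buys is precisely that Scholium plus reuse of Lemma \ref{lem:deltalr}, at the cost of a slightly less transparent final manipulation. Two small points to make explicit when you write this up: each conjugation rule genuinely uses the symmetry of $\BH$ (Fig.\ \ref{fig:BH-S}), which is part of the standing hypotheses here, and because $P_{13}$-conjugation reverses the colour triple, what you prove is the equivalence of the two \emph{families} of relations over all $(a,b,c)\in\CM^3$ (equivalently, equality of the tensor ideals they generate); this is the correct reading of the corollary and matches what the paper's argument, with its suppressed colour bookkeeping, establishes as well.
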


\begin{proof}
Applying \eqref{eq:conj} and then Fig. \ref{fig:BH-S} to the leftmost term in $i)$, we see that it is equal to $P_{12}\BH^{(2)}_\ell P_{12}$. Reflecting this relation 
in a horizontal, we see that the second term in $i)$ is equal to $P_{12}\BH^{(2)}_r P_{12}$. Thus $i)$ is equivalent to 
\beq\label{eq:Delt}
P_{12}\Delta P_{12}=-\Delta, \text{ where }\Delta=\BH^{(2)}_\ell-\BH^{(2)}_r.
\eeq
Similarly, $ii)$ is equivalent to 
\[
P_{23}\Delta P_{23}=-\Delta.
\]

Now observe that 
$$P_{12}\Delta P_{12}=P_{12}(P_{13}\BH^{(2)}_r P_{13}-\BH^{(2)}_r)P_{12}=P_{23}P_{12}\BH^{(2)}_r P_{12}P_{23}-P_{12}\BH^{(2)}_r P_{12}.$$
If we pre- and post-multiply this relation by $P_{23}$, we see that $P_{23}P_{12}\Delta P_{12}P_{23}$ $ =$ $ -P_{12}\Delta P_{12}$. Taking into account the above characterisation
of the relations $i)$ and $ii)$, this shows that $i)$ implies $ii)$. Similarly, $ii)$ implies $i)$, whence the result.
\end{proof}

The above proof yields immediately
\begin{scholium}
The 4-term relations are equivalent to the following statement.
Let $P\cong\Sym_3$ be the group generated by $P_{12}, P_{23}$. Then for $g\in P$, we have $g\Delta g\inv=\ve(g)\Delta,$ where $\ve(g)$ is the sign of the permutation $g$.
\end{scholium}

We also have the following result. 
\begin{lemma} 
Given the symmetry of $\BH(a, b)$ shown in Fig. \ref{fig:BH-S}, 
the left and right skew symmetries of $\BH$ are  equivalent. 
\end{lemma}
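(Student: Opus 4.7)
The plan is to prove the single diagrammatic identity
\[ \BH^{rt}(a, v) = X_{va} \, \BH^{lt}(v, a) \, X_{av}. \]
From this, the equivalence of the two skew symmetries follows immediately. Applying the symmetry of $\BH$ (Figure~\ref{fig:BH-S}) to the pair $(v,a)$ yields $\BH(a, v) = X_{va} \, \BH(v, a) \, X_{av}$, and combining with the identity above gives
\[ \BH^{rt}(a, v) + \BH(a, v) = X_{va} \bigl(\BH^{lt}(v, a) + \BH(v, a)\bigr) X_{av}. \]
Since $X_{va}$ is invertible with inverse $X_{av}$ (Theorem~\ref{thm:tensor-cat}(1)), pre- and post-composition by the crossings is a bijection, and one side vanishes if and only if the other does.

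To establish $\BH^{rt}(a, v) = X_{va} \BH^{lt}(v, a) X_{av}$, I would expand both sides from their definitions and perform a sequence of topological manipulations in $\CBC_\CG(\delta_\CV)$. Geometrically, the diagram on the right-hand side routes the $a$-strand across the $v$-strand, around the cap/cup loop sitting on the left of the $a$-strand in $\BH^{lt}(v,a)$, and back. Using the sliding relations (Theorem~\ref{thm:tensor-cat}(4)) to pull the outer crossings past $\cup^v$ and $\cap_v$, combined with the straightening relations (Theorem~\ref{thm:tensor-cat}(3)) and the symmetry of $\BH$ to commute the crossings through the BH coupon (thereby exchanging $\BH(v,a)$ for $\BH(a,v)$), one transforms the composite into the diagram for $\BH^{rt}(a, v)$, in which the loop instead sits on the right side of the $a$-strand.

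The main obstacle is organising the sequence of topological moves so that each step is justified by an explicit relation of $\CBC_\CG(\delta_\CV)$, while tracking the positions of strands through the manipulation. The diagrammatic interpretation---that conjugation by $X_{va}$ realises a left--right reflection of the diagram, and that such a reflection is compatible with $\BH$ precisely because of its symmetry---makes the identity transparent and guides the explicit sequence of braid, sliding, and straightening moves to use.
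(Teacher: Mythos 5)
Your proposal is correct and follows essentially the same route as the paper: the paper's proof also establishes $X_{va}\,\BH^{lt}(v,a)\,X_{av}=\BH^{rt}(a,v)$ by sliding arcs over the coupon and then invoking the symmetry of $\BH$ to replace $X_{va}\BH(v,a)X_{av}$ by $\BH(a,v)$, from which the equivalence of the two skew symmetries follows since conjugation by the invertible crossings identifies the two generators up to the symmetry relation. Your additional remarks spelling out the bijectivity of conjugation are just a more explicit rendering of the paper's final sentence.
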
 
\begin{proof}



Consider $X_{v a}\BH^{lt}(v, a) X_{a v}$. By sliding arcs over the coupon, this expression may be transformed
 into the form $(I^a\ot\cap_v\ot I^v) (X_{v a}\BH(v, a)X_{a v}\ot X_{v v})(I^a\ot\cup_v\ot I^v)$, which is equal to $\BH^{rt}(a, v)$ by the symmetry of $\BH$. 
 Hence $X_{v a}\BH^{lt}(v, a) X_{a v}= \BH^{rt}(a, v)$, which implies the equivalence of the left and right skew symmetries.
\end{proof}

\begin{remark}
The space $\Hom_{\HB_{\CG_1}(\delta_\CV)}(\bs, \bt)$ of morphisms for arbitrary objects $\bs, \bt\in \HB_{\CG_1}(\delta_\CV)$ 
is $\Z_+$-graded, with the homogeneous subspace $\Hom_{\HB_{\CG_1}(\delta_\CV)}(\bs, \bt)_k$ of degree $k$ spanned by diagrams with $k$ connectors. 
\end{remark}

Write $\End_{\HB_{\CG_1}(\delta_\CV)}(\bs) = \Hom_{\HB_{\CG_1}(\delta_\CV)}(\bs, \bs)$ 
for any object $\bs$. 
This is a $\Z_+$-graded associative algebra. 
Assume that $\bs=(c_1, c_2, \dots, c_r)$. Let $\BH_{i j}(\bs): \bs \to \bs $ be the morphisms given by Figure \ref{fig:Hijr} for $i, j=1, 2, \dots, r$ with $i<j$.  
\begin{figure}[h]
\begin{picture}(180, 70)(-25,-5)
\put(-25, 28){$\BH _{i j}(\bs)\  =$}
{
}

\put(38, -5){\tiny$c_1$}
\put(58, -5){\tiny$c_i$}
\put(160, -5){\tiny$c_r$}
\put(126, -5){\tiny$c_j$}

\put(40, 0){\line(0, 1){60}}
\put(45, 30){...}

\put(60, 0){\line(0, 1){60}}

\put(60, 32){\uwave{\hspace{7mm}}}
\put(80, 22){\line(0, 1){15}}

\qbezier(80, 22)(80, 20)(125, 0)
\qbezier(80, 37)(80, 39)(125, 60)

\put(90, 0){\line(0, 1){60}}
\put(110, 0){\line(0, 1){60}}
\put(95, 30){...}

\put(140, 0){\line(0, 1){60}}
\put(145, 30){...}
\put(160, 0){\line(0, 1){60}}

\put(170, 0){,}
\end{picture} 
\caption{The diagrams $\BH _{i j}(\bs)$}
\label{fig:Hijr}
\end{figure}

Define the elements $\Theta_j(\bs)\in \End_{\HB_{\CG_1}(\delta_\CV)}(\bs)$  by
\[
\Theta_j(\bs)=\sum_{i=1}^{j-1}\BH_{i j}(\bs), \quad 1< j \le r.
\]
Then the following results hold. 
\begin{lemma}\label{lem:Murphy}
Fix any object $\bs$ of length $r$.  
\begin{enumerate}
\item The following relations hold for all distinct $i, j, k=1, 2, \dots, r$.
\[
\baln
&[\BH_{ij}(\bs), \BH_{k\ell}(\bs)]=0,\\
&[\BH_{ik}(\bs)+\BH_{i\ell}(\bs),\BH_{k\ell}(\bs)]=0,\\
&[\BH_{ij}(\bs), \BH_{ik}(\bs)+\BH_{jk}(\bs)]=0.
\ealn
\]

\item The elements $\Theta_i(\bs)$ satisfy the following relations, 
\[
\baln
&[\Theta_i(\bs), \Theta_j(\bs)]=0, \quad \forall i, j, \\
&X_j(\bs_j^\vee)\Theta_i(\bs_j^\vee) X_j(\bs) = 0, \quad j\not\in\{i, i+1\},\\ 
&X_i(\bs_i^\vee)\Theta_i(\bs_i^\vee) X_i(\bs) = \Theta_{i+1}(\bs)  - \BH_{i, i+1}(\bs), \quad i<r. 
\ealn
\]
\end{enumerate}
\end{lemma}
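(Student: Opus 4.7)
The plan is to reduce everything to the defining relations of $\CJ$ in Definition \ref{def:CB-infB}, together with the braid equation \eqref{eq:conj} and the symmetry of $\BH$ in Fig.~\ref{fig:BH-S}.

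For part (1), observe that by construction $\BH_{ij}(\bs)$ is obtained from $\BH(c_i,c_j)\otimes I^{\text{rest}}$ by conjugating with a composition of crossings that brings the $i$-th and $j$-th strands adjacent. Symbolically, if $w_{ij}$ denotes such a crossing morphism, then $\BH_{ij}(\bs) = w_{ij}\inv\bigl(\BH(c_i,c_j)\otimes I^{\text{rest}}\bigr)w_{ij}$; this description is well-defined because different choices of $w_{ij}$ differ by braids that commute with $\BH(c_i,c_j)\otimes I^{\text{rest}}$ by \eqref{eq:conj} together with the symmetry relation. The three four-term relations then follow by treating three cases according to how many of the indices $\{i,j,k,\ell\}$ coincide:
\begin{itemize}
\item If $\{i,j\}\cap\{k,\ell\}=\emptyset$, the two diagrams act on disjoint strands, and commutativity is a consequence of the interchange law in the monoidal category $\CBC_\CG(\delta_\CV)$, after using braids to disentangle the connectors.
\item If $|\{i,j\}\cap\{k,\ell\}|=1$, the claim reduces, after conjugation by a suitable crossing, to the three-strand four-term relation in Fig.~\ref{fig:C-4-term}(i), or equivalently (ii), which are generators of $\CJ$.
\end{itemize}
The main technical point here is checking that conjugation by the chosen crossings sends each of the three three-strand terms in the defining four-term relation to the three terms in the global relation with the prescribed indices; this is routine once the identity \eqref{eq:conj} and its iterated consequences (as in Lemma \ref{lem:deltalr}) are in hand.

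For part (2), the commutativity $[\Theta_i(\bs),\Theta_j(\bs)]=0$ with $i<j$ is the classical Jucys--Murphy argument: expand $[\BH_{ki}(\bs),\Theta_j(\bs)]$ for $k<i$ and note that the only summands in $\Theta_j(\bs)=\sum_{m<j}\BH_{mj}(\bs)$ that do not immediately commute with $\BH_{ki}(\bs)$ by part (1) are $m=k$ and $m=i$; their sum $\BH_{kj}(\bs)+\BH_{ij}(\bs)$ commutes with $\BH_{ki}(\bs)$ by the third four-term relation of part (1) (with indices $(k,i,j)$). Summing over $k<i$ gives $[\Theta_i(\bs),\Theta_j(\bs)]=0$.

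For the conjugation identities, the key computation is that for any $k<\ell$ with $\ell\neq j,j{+}1$ and $k\neq j, j{+}1$, the crossing $X_j$ passes through the $(k,\ell)$-connector without interaction, so $X_j(\bs_j^\vee)\BH_{k\ell}(\bs_j^\vee)X_j(\bs)=\BH_{k\ell}(\bs)$, while if exactly one of $k,\ell$ lies in $\{j,j{+}1\}$ the crossing simply relabels the affected index. Summing over $k<i$ in the definition of $\Theta_i$ then produces, in the case $j\notin\{i-1,i\}$, a permutation of the same set of summands, yielding the stated identity; in the case $j=i$ the remaining summand $\BH_{i,i+1}(\bs)$ arises from the adjacent-index exchange and accounts precisely for the correction term $\Theta_{i+1}(\bs)-\BH_{i,i+1}(\bs)$ via the relabelling $\BH_{ki}\mapsto \BH_{k,i+1}$ for $k<i$.

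The hard part is the careful diagrammatic bookkeeping: both parts rest on translating "conjugation by a braid word" into the graphical sliding moves, and one must verify that the resulting arrangements are genuinely equal in $\HB_{\CG_1}(\delta_\CV)$ rather than merely isotopic up to an ambiguity of crossing order. This is handled uniformly by \eqref{eq:conj} and the symmetry of $\BH$, which together imply that $\BH_{ij}(\bs)$ depends only on the unordered pair $\{i,j\}$ and the underlying object $\bs$.
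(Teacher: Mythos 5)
Your proposal is correct and follows essentially the same route as the paper's (very terse) proof: part (1) is reduced, via the sliding relation \eqref{eq:conj} and the symmetry of $\BH$, to the defining four-term and commutation relations for adjacent strands, and part (2) is deduced from part (1) together with a direct relabelling of the summands of $\Theta_i(\bs)$ under conjugation by $X_j$. One point worth flagging: your computation yields $X_j(\bs_j^\vee)\Theta_i(\bs_j^\vee)X_j(\bs)=\Theta_i(\bs)$ with exceptional indices $j\in\{i-1,i\}$, which is the correct assertion but differs from the printed second relation of part (2) (which reads ``$=0$'' and excludes $j\in\{i,i+1\}$); that discrepancy lies in the statement as printed rather than in your argument, so it is worth noting explicitly rather than silently adjusting the index set.
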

\begin{proof}
To prove part (1), we note that the first relation follows from 
the definition of $\BH_{i j}(\bs)$. The second and third relations follow from the four term relations by sliding arcs over coupons (which are infinitesimal braids in this case). 
Part (2) follows from part (1) using the definition of $\Theta_i(\bs)$. 
\end{proof}
\subsubsection{Further properties of $\BH(a, b)$}\label{sect:BH-properties}
Let us make some further comments on the  properties of $\BH(a, b)$.  
More details can be found in \cite[\S XX]{K}.

We may turn $\BH$ into a ``functorial morphism'' by defining $\BH(\bs, \bt): (\bs, \bt)\lra (\bs, \bt)$ recursively as follows for any objects 
$\bs, \bt$ of $\HB_{\CG_1}(\delta_\CV)$.  If $\bs=\bs'\ot \bs''$, and $\bt=\bt'\ot\bt''$, then
\[
\baln
\BH(\bs'\ot \bs'', \bt)= (I(\bs')\ot X_{\bt, \bs''}) \left(\BH(\bs', \bt)\ot I(\bs'')\right)  (I(\bs')\ot X_{\bs'', \bt}) + I(\bs')\ot \BH(\bs'', \bt), \\
\BH(\bs, \bt'\ot \bt'')= (X_{\bt', \bs}\ot I(\bt'')) \left(I(\bt')\ot \BH(\bs, \bt'')\right)  ( X_{\bs, \bt'}\ot I(\bt'')) + \BH(\bs, \bt')\ot I(\bt'').
\ealn
\]
Then the four-term relations $3)$ and $4)$ generalise to 
\beq
&[\BH(\bs_1, \bs_2)\ot I(\bs_3), \BH(\bs_1\ot \bs_2, \bs_3)]=0, \\
&[\BH(\bs_1, \bs_2\ot \bs_3), I(\bs_1)\ot \BH(\bs_2, \bs_3)]=0, 
\eeq
for any objects $\bs_1, \bs_2$ and $\bs_3$.

The subalgebra $\BT(\bs)$ of $\End_{\HB_{\CG_1}(\delta_\CV)}(\bs)$, generated by the elements $\BH_{ij}(\bs)$ for all $i<j$, has a Hopf algebra structure. 

The co-multiplication $\Delta: \BT(\bs)\lra \BT(\bs)\ot \BT(\bs)$ is defined in the following way. 
Given a single diagram $\BD$ in $\BT(\bs)$ (i.e., $\BD$ is a product of elements $\BH_{i j}(\bs)$), let $Conn(\BD)$ denote the set of connectors in $\BD$. 
If $Conn(\BD)'$ is a subset of $Conn(\BD)$, let $Conn(\BD)''$ be its complement. Denote by $\BD''$ (resp. $\BD'$) the diagram obtained by deleting from $\BD$ the connectors
 in $Conn(\BD)'$ (resp. $\BD''$). Then 
\beq
\Delta(\BD)=\sum_{Conn(\BD)'\subseteq Conn(\BD)} \BD'\ot \BD''. 
\eeq
Here $Conn(\BD)'$ may be empty or $Conn(\BD)$ itself; in particular, if $Conn(\BD)=\emptyset$, we have the unique subsets $Conn(\BD)'=Conn(\BD)''=\emptyset$. In the case $Conn(\BD)=\emptyset$, we have $\BD=I(\bs)$, and $\Delta(I(\bs))=I(\bs)\ot I(\bs)$. 
Note in particular that $\Delta(\BH_{i j}(\bs))=\BH_{i j}(\bs)\ot I(\bs) + I(\bs)\ot \BH_{i j}(\bs)$ for all $i<j$. 

The co-unit $\varepsilon: \BT(\bs)\lra \K$ is the usual augmentation map with $\varepsilon(I(\bs))=1$ and $\varepsilon(\BH_{i j}(\bs))=0$ for all $i<j$. The antipode $S: \BT(\bs)\lra \BT(\bs)$ is an algebra anti-isomorphism such that $S(\BH_{i j}(\bs))=-\BH_{i j}(\bs)$ for all $i<j$. 

\begin{remark}
This Hopf algebra structure may be understood by considering the 
Lie algebra generated by the elements $\BH_{i j}(\bs)$, for $i<j$,  
subject to the relations in Lemma \ref{lem:Murphy}(1). Then 
$\BT(\bs)$ is its universal enveloping algebra,  which has a canonical Hopf algebra structure which is the one described above. 
\end{remark}

\subsection{Representations of the monoidal category $\HB_{\CG_1}(\delta_\CV)$} \label{sect:HB-rep}

 In this section, we construct monoidal functors from $\HB_{\CG_1}(\delta_\CV)$ to certain categories of Lie superalgebra representations. 

We work in the category of $\Z_2$-graded vector spaces over $\C$. Any object $M$ is the direct sum $M=M_{\bar 0}\oplus M_{\bar 1}$ of its even and odd subspaces. 
For any homogeneous element $w\in M_{\bar i}$, denote by $[w]=i$ its degree. 
If $M$ is finite dimensional, write
$\sdim(M): =\dim(M_{\bar 0}) - \dim(M_{\bar 1})$ for the super dimension of $M$. 

The canonical symmetry of the category is the functorial morphism
\beq\label{eq:tau-funct}
&\tau_{M M'}:  M \ot M'\lra M' \ot M
\eeq
for any objects $M, M'$ such that 
$
\tau_{M M'}(w\ot w')=(-1)^{[w][w']}w'\ot w$ for all homogeneous $w, \in M, w'\in M'. 
$
It has the properties
\beq
&& \tau_{M\ot M', M''} = (\tau_{M M''}\ot \id_{M'}) (\id_{M}\ot \tau_{M' M''}),  
	\label{eq:tau1}\\
&&  \tau_{M, M'\ot M''} = (\id_{M'}\ot \tau_{M M''}) (\tau_{M M'} \ot \id_{M''}), 
	\label{eq:tau2}\\
&&(\tau_{M' M''} \ot \id_{M})  (\id_{M'}\ot \tau_{M M''}) (\tau_{M M'} \ot \id_{M''})				\label{eq:tau3}\\  
&& = (\id_{M''}\ot \tau_{M M'})  (\tau_{M M''}\ot \id_{M'}) (\id_{M}\ot \tau_{M' M''}) \nonumber
\eeq
for all $\Z_2$-graded vector spaces  $M, M', M''$.

Let $\fg=\fg_{\bar0}+\fg_{\bar1}$ be a finite dimensional Lie superalgebra over $\C$, which reduces to an ordinary Lie algebra if $\fg_{\bar 1}=0$.  We assume that $\fg$ has a non-degenerate supersymmetric even bilinear form $\kappa:\fg\times \fg\lr\C$ which is $\ad(\fg)$-invariant. Thus we have Casimir algebra $C_r(\fg)$.

Let $L=L_{\bar 0}+ L_{\bar 1}$ be a finite dimensional simple module  
for a Lie superalgebra $\fg$. It is self dual if there exists an even $\fg$-module isomorphism 
$\iota_L: L \lra L^*$, where $L^*$ is the dual space of $L$ with the $\fg$-action defined, 
for any $X\in\fg$ and $\bar{f}\in L^*$,  by 
\[
X.\bar{f}(v) = (-1)^{[x][f]} \bar{f}(X. v), \quad \forall v\in L. 
\]
This isomorphism is unique up to scalar multiples by Schurs lemma. 
It gives rise to two related $\fg$-module (even) homomorphisms in a canonical way. 

One of the homomorphisms, denoted by $\omega_L:  L\ot L\lra \C$, is defined by 
\[
\omega_L(v, v') = \iota_L(v)(v'), \quad \forall v, v'\in L.
\]
This map is either supersymmetric or skew supersymmetric, that is, satisfies either $\omega_L(v, v')= (-1)^{[v][v']}\omega_L(v', v)$ or $\omega_L(v, v')= - (-1)^{[v][v']}\omega_L(v', v)$, for all $v, v'\in L$. By reversing the parity of $L$, we can change this map from being skew supersymmetric to supersymmetric and vice versa. Thus we shall assume that $\omega_L$ is supersymmetric. The other homomorphism, denoted by  $\check{\omega}_L: \C\lra L\ot L$, is defined by 
\[
(\id_V\ot \iota_L)\check{\omega}_L(1) (v)= v, \quad \forall v\in L.
\]
\begin{lemma}\label{lem:duality}
Retain the notation above. For any $\fg$-module $M$ and self dual simple 
$\fg$-module $L$, the canonical symmetry and the $\fg$-module homomorphisms $\omega_L$ and $\check{\omega}_L$ defined above satisfy the following relations
\beq
& \tau_{L L} \check{\omega}_L = \check{\omega}_L, \quad  \omega_L\tau_{L L}=\omega_L, \label{eq:form0}\\
&(\id_L\ot \omega_L) (\check{\omega}_L\ot \id_L)  = \id_L = (\omega_L\ot \id_L) (\id_L\ot \check{\omega}_L), \label{eq:form1} \\
&\omega_L\check{\omega}_L(1)= \sdim(L); \label{eq:form2}\\
&({\omega}_L\ot\id_M) (\id_L\ot \tau_{M L})= (\id_M\ot{\omega}_L) (\tau_{L M}\ot\id_L), \label{eq:form3}\\
&(\tau_{M L}\ot \id_L)(\id_M\ot \check{\omega}_L)=(\id_L\ot \tau_{L M})(\check{\omega}_L\ot \id_M). \label{eq:form4}
\eeq
\end{lemma}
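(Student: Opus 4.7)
The proof is essentially a direct verification, and my plan is to set up coordinates once and then read off all five relations from the evenness and supersymmetry of $\omega_L$. I will fix a homogeneous basis $\{v_i\}$ of $L$ with dual basis $\{w_i\}$ characterised by $\omega_L(w_i,v_j)=\delta_{ij}$; since $\omega_L$ is even, $\omega_L(w_i,v_j)\neq 0$ forces $[w_i]=[v_j]$, so $[w_i]=[v_i]$. Unpacking the definition $(\id_L\ot\iota_L)\check\omega_L(1)=\id_L\in L\ot L^*$ together with $\iota_L(w_i)=v^i$ (where $\{v^i\}$ is the linear dual basis) then gives the explicit formula
\[
\check\omega_L(1)=\sum_i v_i\ot w_i.
\]

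From this formula, both zig-zag identities of \eqref{eq:form1} are immediate: $(\id_L\ot\omega_L)(\check\omega_L\ot\id_L)(v)=\sum_i v_i\,\omega_L(w_i,v)=v$ on the basis, and the other direction is analogous after writing $v$ in the $\{w_j\}$-basis. For \eqref{eq:form2} I compute
\[
\omega_L\check\omega_L(1)=\sum_i\omega_L(v_i,w_i)=\sum_i(-1)^{[v_i][w_i]}\omega_L(w_i,v_i)=\sum_i(-1)^{[v_i]}=\sdim(L),
\]
using supersymmetry and $[v_i]=[w_i]$. The identity $\omega_L\tau_{LL}=\omega_L$ in \eqref{eq:form0} is a one-line consequence of supersymmetry: $\omega_L\tau_{LL}(v\ot v')=(-1)^{[v][v']}\omega_L(v',v)=\omega_L(v,v')$.

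For the companion identity $\tau_{LL}\check\omega_L=\check\omega_L$ I will use the first relation plus uniqueness. Namely, given $\omega_L\tau_{LL}=\omega_L$, I compute
\[
(\id_L\ot\omega_L)\bigl((\tau_{LL}\check\omega_L)\ot\id_L\bigr)
=(\id_L\ot\omega_L\tau_{LL,L})\bigl((\check\omega_L)\ot\id_L\bigr)\cdot(\text{Koszul signs})
=\id_L,
\]
where the Koszul signs collapse because the only basis elements appearing in $\check\omega_L(1)$ are homogeneous pairs $(v_i,w_i)$ of equal parity. Since any element $\xi\in L\ot L$ satisfying $(\id\ot\omega_L)(\xi\ot v)=v$ for all $v$ is uniquely determined (by the zig-zag identity just established), this forces $\tau_{LL}\check\omega_L=\check\omega_L$. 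Alternatively, one can verify this by hand: the matrix of $\check\omega_L(1)$ in the $\{v_i\ot v_j\}$-basis is $B^{-1}$ where $B_{ij}=\omega_L(v_i,v_j)$, and the supersymmetry relation $B^T=DB$ with $D_{ii}=(-1)^{[v_i]}$ implies the analogous supersymmetry of $B^{-1}$.

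Finally, \eqref{eq:form3} and \eqref{eq:form4} are naturality assertions that reduce to sign bookkeeping. For \eqref{eq:form3}, the two sides applied to $v\ot m\ot v'$ produce $(-1)^{[m][v']}\omega_L(v,v')\,m$ and $(-1)^{[v][m]}\omega_L(v,v')\,m$ respectively; these agree because $\omega_L(v,v')\neq 0$ forces $[v]=[v']$, by evenness. The computation for \eqref{eq:form4} is strictly parallel, using $[v_i]=[w_i]$ at the crucial step. The only real obstacle in the whole proof is maintaining a consistent Koszul sign convention throughout; once that is fixed, every relation follows by either a one-line expansion in the basis or the uniqueness argument above.
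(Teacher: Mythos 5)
Your proof is correct and follows essentially the same route as the paper: fix a homogeneous basis with its $\omega_L$-dual, write $\check\omega_L(1)=\sum_i v_i\ot w_i$, and verify each relation by a direct Koszul-sign computation using evenness and supersymmetry of $\omega_L$. The only difference is cosmetic: where the paper simply asserts that $\check\omega_L$ inherits supersymmetry from the choice of $\omega_L$, you derive $\tau_{LL}\check\omega_L=\check\omega_L$ from $\omega_L\tau_{LL}=\omega_L$ via the uniqueness of the element satisfying the zig-zag identity, which is a legitimate (and slightly more explicit) way of filling in that step.
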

\begin{proof} 
We have chosen $\omega_L$ (and hence $\check{\omega}_L$) to be superymmetric, thus \eqref{eq:form0} holds. 

To prove the other relations we use a basis. Choose a homogeneous basis $\{e^a\mid a=1, 2, \dots, \dim(V)\}$, 
and a dual basis $\{e_a\mid a=1, 2, \dots, \dim(V)\}$ with respect to $\omega_L$, that is,  $\omega_L(e_a, e^b)=\delta_{a b}$ for all $a, b$. Then $\check{\omega}_L(1) = \sum_a e^a \ot e_a$. 
Now the relation \eqref{eq:form2} follows from the following calculation:
\[
\omega_L \check{\omega}_L(1)= \sum_a \omega_L(e^a,  e_a) = \sum_a (-1)^{[e_a]} \omega_L( e_a, e^a) = \sum_a (-1)^{[e_a]} =\sdim(L).
\] 
The first equality in \eqref{eq:form1} is the definition of $\check{\omega}_L$ written differently, 
and the relation \eqref{eq:form3} is also clear.  Using the expression of $\check{\omega}_L$ in the bases, the second equality in \eqref{eq:form1} follows easily;
 \eqref{eq:form4} also follows, using some simple linear algebra. 
\end{proof}

Let us fix a finite dimensional simple Lie superalgebra $\fg$, and choose a 
finite set $\CM$ of arbitrary $\U(\fg)$-modules.  Let $\CV\subset \CM$ be a subset consisting of  
finite dimensional self dual simple $\U(\fg)$-modules, where each $L\in \CV$ is equipped with the supersymmetric module homomorphisms $\omega_L$ and $\check{\omega}_L$ discussed above. 
We denote by $\CT(\CM)$ the full subcategory of $\U(\fg)$-modules consisting of repeated tensor products of modules in $\CM$. This is a monoidal category. 

Throughout  we adopt the following notation:  
for any $\U(\fg)$-module $M$, we denote by $\mu_W: \U(\fg)\lra \End_\C(W)$ the corresponding representation of $\U(\fg)$. 

The tempered Casimir operator acts on the tensor product $M \ot M'$ of two $\U(\fg)$-modules by
\beq\label{eq:t-2}
(\mu_M\ot  \mu_{M'})(t): M \ot M'\lra M' \ot M.
\eeq
This is a $\U(\fg)$-module endomorphism by \eqref{eq:tcomm}.

Given $\fg$-modules $M_i$ with $1\le i\le r$, write 
$\mathbf{M}= M_1\ot M_2\ot \dots\ot M_r$.  We have the representation $
\mu_{M_1}\ot \mu_{M_2}\ot\dots\ot \mu_{M_r}: \U(\fg)^{\ot r}\lra \End_\C(\bM)
$
of $\U(\fg)$. 
It follows \eqref{eq:tcomm} that this map sends $C_r(\fg)$ to $\End_\fg(\bM)$, thus its  restriction yields the following representation of the Casimir algebra, 
\beq\label{eq:mu}
\mu_{\bM}:  C_r(\fg)\lra \End_\fg(\bM). 
\eeq

Now we consider the category $\HB_{\CM}(\sdim_\CV)$ with $\sdim_\CV:=\{\sdim(L)\mid L\in\CV\}$.  

\begin{theorem} \label{thm:main}
Retain the above notation. Let $\CM$ be a finite set of $\fg$-modules, and let $\CV$ be a subset of $\CM$ consisting of finite dimensional self dual simple $\fg$-modules.  Then there exists a unique covariant monoidal functor 
\beq
\SF: \HB_{\CG_1}(\sdim_\CV)\lra \CT(\CM), 
\eeq
which maps any sequence $(M_1, \dots, M_r)$ with $M_i\in\CM$ to the $\fg$-module $M_1\ot \dots\ot M_r$, and maps the generators of morphisms in $\HB_{\CG_1}(\sdim_\CV)$ to the following $\fg$-module homomorphisms.
\beq
&I^M\mapsto \id_M, \quad  X_{M M'}\mapsto \tau_{M M'},  \label{eq:funct1}\\
&\BH(M, M') \mapsto (\mu_M\ot  \mu_{M'})(t), 
	\quad \forall M, M'\in\CM,  \label{eq:funct2}\\
& \cap_V \mapsto \omega_V, \quad \cup^V\mapsto \check{\omega}_V, 
	\quad \forall V\in\CV,  \label{eq:funct3} \\
& \SF(g_{\bs \bt})\in \Hom_{\CT(\CM)}(\CF(\bs), \CF(\bt)),  
\  \text{$\forall g_{\bs \bt}\in\CG_1$,  such that}   \label{eq:funct4}
\eeq
the $\fg$-module homomorphism $\SF(g_{\bs^\vee \bt^\vee})$ is equal to 
\[(\SF(\Cap(\bt^\vee))\ot\id_{\SF(\bs^\vee)}) (\id_{\SF(\bt^\vee)} \ot \SF(g_{\bs \bt})\ot  \id_{\SF(\bs^\vee)})(\id_{\SF(\bt^\vee)} \ot \SF(\Cup(\bs)).
\]

\end{theorem}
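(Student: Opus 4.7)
The plan is to establish existence by defining $\SF$ on generators of the strict monoidal category $\CBC_{\CG}(\sdim_\CV)$ via \eqref{eq:funct1}--\eqref{eq:funct4}, extending freely to a strict monoidal functor on $\CBC_{\CG}(\sdim_\CV)$, and then verifying that every defining relation of $\HB_{\CG_1}(\sdim_\CV) = \CBC_{\CG}(\sdim_\CV)/\CJ$ descends to an equality of $\fg$-module homomorphisms in $\CT(\CM)$. Uniqueness is immediate since the listed morphisms generate $\HB_{\CG_1}(\sdim_\CV)$ as a strict monoidal category. The relations to verify fall into three families: the Brauer relations of Theorem \ref{thm:tensor-cat}; the coupon duality and sliding relations (Figures \ref{fig:dual-g} and \ref{fig:coupon}); and the four generating families of $\CJ$ in Definition \ref{def:CB-infB}.

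For the Brauer relations, each becomes a standard identity in the symmetric monoidal category $\CT(\CM)$ that follows from Lemma \ref{lem:duality} combined with the coherence identities \eqref{eq:tau1}--\eqref{eq:tau3} for $\tau$: the invertibility of crossings, the braid identity, the snake identities, and the sliding and twist identities are routine in this setting. The coupon duality relation holds by the very definition of $\SF(g_{\bs^\vee\bt^\vee})$ in \eqref{eq:funct4}, which is the ``bent'' version of $\SF(g_{\bs\bt})$; and the coupon sliding relations reduce to naturality of $\tau$ with respect to the $\fg$-equivariant morphism $\SF(g_{\bs\bt})$, automatic in any symmetric monoidal category.

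For the generators of $\CJ$, the symmetry of $\BH(a,b)$ translates to $\tau_{M_aM_b}\circ(\mu_a\otimes\mu_b)(t) = (\mu_b\otimes\mu_a)(t)\circ\tau_{M_aM_b}$, which is immediate from the supersymmetry identity $\sum_\alpha X_\alpha\otimes X^\alpha = \sum_\alpha (-1)^{[X_\alpha]}X^\alpha\otimes X_\alpha$ in $\U(\fg)^{\otimes 2}$. The two four-term relations become commutation identities in $C_3(\fg)$ once one observes that $\SF(\BH_{ij}(a,b,c)) = \mu_{\bM}(C_{ij})$ on $\bM = M_a\otimes M_b\otimes M_c$; these are then exactly \eqref{eq:C-4-term}. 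The hardest step is the pair of skew symmetries $\BH^{lt}(v,a)+\BH(v,a) = 0$ and $\BH^{rt}(a,v)+\BH(a,v) = 0$ for $v\in\CV$: unpacking $\BH^{lt}$ and $\BH^{rt}$ via $\omega_V$ and $\check\omega_V$ through Lemma \ref{lem:duality}, these reduce to the assertion that bending one leg of the tempered Casimir $t$ along the self-duality pairing of $V$ reverses its overall sign. The reversal is a consequence of the $\ad$-invariance of $\kappa$, which forces the dual $\fg$-action on $V$ induced through $\iota_V: V \to V^*$ to differ from the original by a super-sign. Computing in $\omega_V$-dual bases $\{e_i\},\{e^i\}$ and tracing through $\check\omega_V(1) = \sum_i e^i\otimes e_i$ with careful sign bookkeeping for the super-swap $\tau_{VV}$ yields the required minus sign, after which the monoidality of $\SF$ is immediate from its definition on objects.
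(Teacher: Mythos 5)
Your proposal is correct and follows essentially the same route as the paper's proof: verify that $\SF$ preserves the Brauer relations via Lemma \ref{lem:duality} and the coherence identities for $\tau$, the coupon duality and sliding relations via the definition \eqref{eq:funct4} and naturality of $\tau$, the symmetry and four-term relations via supersymmetry of $t$ and the representation \eqref{eq:mu} of the Casimir algebra together with \eqref{eq:C-4-term}, and the skew symmetries by the dual-basis computation showing that bending one leg of $(\mu_V\ot\mu_M)(t)$ along $\omega_V$, $\check{\omega}_V$ reverses the sign. One small correction: that sign reversal comes from the $\fg$-invariance (together with evenness and supersymmetry) of the form $\omega_V$ on the self-dual module $V$ --- i.e.\ the equivariance of $\iota_V$ --- and not from the $\ad$-invariance of $\kappa$, whose only role is in defining $t$; the $\omega_V$-dual-basis bookkeeping you then describe is exactly the paper's argument.
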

\begin{proof}
The proof amounts to checking that the defining relations among 
the generators of morphisms are preserved by $\SF$.  

There are three types of relations: 
(a) the usual Brauer relations, (b) two relations satisfied by all coupons depicted by Figure \ref{fig:gvee} and Figure \ref{fig:coupon}, and (c) the relations satisfied by infinitesimal braids as coupons. 

It is immediate from Lemma \ref{lem:duality} that all type (a) relations are preserved by $\SF$. 

By definition, $\SF$ preserves the relation given by Figure \ref{fig:gvee} for all elements of $\CG_1$ and $\BH(a, b)$  for all $a, b\in\CM$.  
Note that \eqref{eq:tau1} and \eqref{eq:tau2} imply that 
$\SF(X(\ba, \bs)=\tau_{\SF(\ba) \SF(\bs)}$ for all objects $\ba, \bs$. 
Using this,  one can easily see that $\SF$ preserves the relation in Figure \ref{fig:coupon},  since 
for any $\Z_2$-graded vector spaces $W$, $W_1$ and $W_2$ and linear map $\Psi: W_1\lra W_2$, 
\[
\tau_{W, W_2}(\id_W\ot \Psi) =(\Psi\ot \id_W) \tau_{W, W_1}, \quad 
\tau_{W_2, W}(\Psi\ot \id_W) =(\id_W\ot\Psi) \tau_{W_1, W}.
\]
This shows that $\SF$ preserves the type (b) relations. 

Recall that \eqref{eq:mu} defines a representation of the Casimir algebra. 
Thus it follows from \eqref{eq:C-4-term} that the four-terms relations among infinitesimal braids are preserved by $\SF$.  

Finally, we consider the left and right skew symmetries of $\BH$. 
For any finite dimensional self dual $\fg$-module $V$, we can define the following linear map 
\beq\label{eq:vee}
&& ^\vee: \End_\C(V)\lra \End_\C(V), \\
&& \phi\mapsto \phi^\vee = ({\omega}_V \ot \id_V)(\phi\ot \tau_{V V})(\check{\omega}_V \ot \id_V). \nonumber
\eeq
We have $^\vee\phi:= (\id_V\ot {\omega}_V)(\tau\ot \phi)(\id_V\ot \check{\omega}_V)=\phi^\vee$ since ${\omega}_V$ is supersymmetric. 
Now look at $\phi$ given by $\mu_V(Y)$ with $Y\in\fg$. Define the linear map $\mu^{\vee}_V: \fg\lra \End_\C(V)$ by $\mu^{\vee}_V(Y)= \mu_V(Y)^{\vee}$. 
Using notation introduced in the proof of  Lemma \ref{lem:duality}, we have for all $v\in V$, 
\[ 
\begin{aligned}
\mu^\vee_V(Y) (v)
&= -  \sum_a  (-1)^{[e_a]([v] + [Y])} \omega_V( e^a, \mu(Y) v) e_a
				&&\text{($\fg$ invariance of $\omega_V$)}\\
&= -  \sum_a  (-1)^{[e_a]}\omega_V( e^a, \mu(Y) v) e_a  							
				&&  \text{(evenness of $\omega_V$)} \\
&= -  \sum_a  \omega_V(\mu(Y) v,  e^a) e_a  				
				&&  \text{(supersymmetry of $\omega_V$)} \\
&= -\mu_V(Y)v.  
\end{aligned}
 \]
Hence $\mu^\vee_V(Y) = -\mu_V(Y)$ for all $Y\in \fg$. Therefore,  for all $M\in\CM$,  we have 
\[
\baln
\SF(\BH^{lt}(V, M)) = (\mu^\vee_V\ot \mu_M)(t) = -\SF(\BH(V, M)), \\
\SF(\BH^{rt}(M, V)) = (\mu_M\ot \mu^\vee_V)(t) = -\SF(\BH(M,V)). 
\ealn
\]
This proves that the type (c) relations are also preserved by $\SF$, completing the proof of the theorem.
\end{proof}

\begin{remark}
There exist results analogous to Theorem \ref{thm:main} between the category of coloured ribbon graphs and the category of representations of any quantum group \cite{RT, T} or quantum supergroup \cite{Z94, Z95}.
\end{remark}

\section{Polar enhancements of the Brauer category}\label{sect:hHB}

We study quotient categories of $\HB(\delta_\CV)$ (i.e., $\HB_{\CG_1}(\delta_\CV)$ with $\CG_1=\emptyset$).  
Examples with non-trivial $\CG_1$ will be studied in Section \ref{sect:EB-G}. 
We further assume that $\CM\supset\CV=\{v\}$, and let $\delta=\delta_v$.  

In this case, we adopt the following {\bf convention}.
\begin{remark}\label{rmk:no-v}
Since $v$ is the only element of $\CV$, we can and will drop the label $v$ from all strings coloured by it. This should not cause any confusion. Similarly we shall also  
write $I^v$, $X_{v, v}$, $\cup^v$ and $\cap_v$ simply as  
$I$, $X$, $\cup$ and $\cap$. 
\end{remark}

\subsection{A quotient category of $\HB(\delta)$}

Consider the morphisms $H, E: v^2 \to v^2$ defined by $E= \cup\circ\cap$ and $H=X-E$. We  represent $H$ diagrammatically by Figure \ref{fig:t-image}.
\begin{figure}[h]
\begin{picture}(150, 40)(30,0)

\put(35, 0){\line(0, 1){40}}

\put(35, 20){\line(1, 0){20}}

\put(55, 0){\line(0, 1){40}}

\put(65, 15){$:=$}

\qbezier(90, 0)(90, 0)(115, 35)
\qbezier(90, 35)(90, 35)(115, 0)

\put(125, 15){$-$}

\qbezier(155, 35)(168, 10)(180, 35)
\qbezier(155, 0)(168, 25)(180, 0)
\end{picture} 
\caption{Morphism $H: v^2 \to v^2$}
\label{fig:t-image}
\end{figure}

\begin{lemma}\label{lem:H-skew-sym}
\begin{enumerate}
\item The morphism $H: v^2\to v^2$ satisfies $X H X = H$. 
\item $H$ has {\em left} and {\em right skew symmetries} 
as depicted in Figure \ref{fig:Brauer-H-skew}. 
\item We have
\[
(H-I\ot I)(H+I\ot I)(H-(1-\delta)I\ot I)=0,  
\]
so that $X$ and $E$ can be expressed as polynomials in $H$ unless $\delta=2$.
\end{enumerate}
\end{lemma}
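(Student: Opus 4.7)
The plan is to reduce everything to computations in the Brauer algebra $B_2(\delta)$, whose underlying vector space $\End_{\CBC}(v^2)$ has basis $\{I\otimes I,X,E\}$ with relations $X^2=I\otimes I$, $E^2=\delta E$, $XE=EX=E$ (the last of which follows by sliding a crossing through a cup/cap and using the twist relation). Writing $H=X-E$ throughout, each of the three parts of the lemma becomes a short algebraic manipulation, with the only genuinely diagrammatic input occurring in part (2).

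\textbf{Part (1).} A direct calculation: $XHX=X(X-E)X=X^3-XEX=X-EX=X-E=H$, using $X^2=I\otimes I$ and $XE=EX=E$.

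\textbf{Part (2).} This is the main obstacle, since the definitions of $H^{lt}$ and $H^{rt}$ involve four-strand diagrams. The operation $\phi \mapsto \phi^{lt}:=(I\ot\cap\ot I)(X\ot\phi)(I\ot\cup\ot I)$ is linear on $\End_{\CBC}(v^2)$, so it suffices to evaluate it on the three basis elements. First, $(I\ot I)^{lt}=I\ot I$: this is a twist of the left strand of $v^{\ot 2}$ (the right strand passes straight through), which equals the identity by the twist relation of Theorem \ref{thm:tensor-cat}(5). Next, $X^{lt}$: one traces the strands to see that both inputs are captured by a cap (the composition of the top cap with the two halves of $X\ot X$), while both outputs come from the cup at the bottom, so $X^{lt}=\cup\circ\cap=E$. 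Finally, $E^{lt}$: the cap inside $E$ glues the right input through the original cup to the left output, while the top cap glues the left input through the internal cup of $E$ to the right output; this realises a swap, so $E^{lt}=X$. Hence
\[
H^{lt} = X^{lt}-E^{lt}=E-X=-(X-E)=-H,
\]
establishing the left skew symmetry. The right skew symmetry follows either by the analogous computation or by the lemma in the paragraph preceding the statement, which gives the equivalence of left and right skew symmetries once $X H X=H$ is known (part (1)).

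\textbf{Part (3).} Expand $H^2=(X-E)^2=X^2-XE-EX+E^2=I\ot I-2E+\delta E=I\ot I+(\delta-2)E$. Therefore
\[
(H-I\ot I)(H+I\ot I)=H^2-I\ot I=(\delta-2)E.
\]
Multiplying by $H-(1-\delta)I\ot I$ and using $HE=(X-E)E=E-\delta E=(1-\delta)E$ gives
\[
(H-(1-\delta)I\ot I)(\delta-2)E=(\delta-2)\bigl(HE-(1-\delta)E\bigr)=(\delta-2)\bigl((1-\delta)E-(1-\delta)E\bigr)=0,
\]
which is the claimed cubic identity. Finally, when $\delta\neq 2$, the identity $H^2=I\ot I+(\delta-2)E$ gives $E=(\delta-2)^{-1}(H^2-I\ot I)$, and then $X=H+E$ is also a polynomial in $H$.
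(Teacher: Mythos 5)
Your proof is correct and takes essentially the same route as the paper's: part (1) via $X^3=X$ and $XE=EX=E$, part (2) by rewriting the left-hand side of Figure \ref{fig:Brauer-H-skew} as $(I\ot\cap\ot I)(X\ot H)(I\ot\cup\ot I)$ and evaluating it to $E-X=-H$ with the relations of Theorem \ref{thm:tensor-cat} (you just make the evaluation explicit basis element by basis element), and part (3) by the identical computation $H^2=I\ot I+(\delta-2)E$, $HE=(1-\delta)E$. The only point the paper adds is the remark that when $\delta=2$ one has $H^2=I\ot I$, so $E$ and $X$ genuinely cannot be written as polynomials in $H$; you omit this converse, but it is a one-line observation covered by the lemma's ``unless $\delta=2$'' caveat.
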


\begin{figure}[h]
\setlength{\unitlength}{0.25mm}
\begin{picture}(125, 60)(0,0)

\qbezier(40, 35)(50, 55)(60, 40)
\qbezier(40, 25)(50, 5)(60, 20)

\qbezier(40, 35)(30, 5)(20, 0)
\qbezier(40, 25)(30, 55)(20, 60)

\put(60, 20){\line(0, 1){20}}

\put(60, 30){\line(1, 0){20}}
\put(80, 0){\line(0, 1){60}}

\put(100, 27){$=\  - $}
\end{picture}  
\begin{picture}(85, 60)(-15,0)
\put(0, 0){\line(0, 1){60}}
\put(0, 30){\line(1, 0){30}}

\put(30, 0){\line(0, 1){60}}
\put(47, 27){$=\  $}
\end{picture} 
\begin{picture}(45, 60)(-5,0)
\put(0, 0){\line(0, 1){60}}
\put(0, 30){\line(1, 0){20}}

\qbezier(20, 40)(30, 55)(40, 35)
\qbezier(20, 20)(30, 5)(40, 25)
\put(20, 20){\line(0, 1){20}}

\qbezier(40, 35)(50, 5)(60, 0)
\qbezier(40, 25)(50, 55)(60, 60)

\end{picture} 
\caption{left and right skew symmetries of $H$}
\label{fig:Brauer-H-skew}
\end{figure}

\begin{proof}
(1) is clear, because $X^3=X$ and $XE=E X=E$.

(2) The left side of Fig. \ref{fig:Brauer-H-skew} is equal to $(I\ot\cap\ot I)\circ(X\ot H)\circ(I\ot\cup\ot I)$, which by the relations in Theorem \ref{thm:tensor-cat}
is equal to $E-X=-H$. Right skew symmetry of $H$ can be proven similarly. 

(3) We have 
\beq\label{eq:H-squared}
H^2=  I\ot I -(2-\delta) E,  
\eeq
whence $ H^2 - I\ot I=(\delta- 2) E$. 

Now $HE=(1-\delta)E$; thus $(H-(1-\delta)I\ot I)E=0$, which together with the previous equation, implies the cubic relation in (3).

If $\delta\ne 2$,  $E=(\delta-2)\inv(H^2-I\ot I)$ and $X=H+E$ are clearly expressible as polynomials in $H$.  In case $\delta= 2$, we have $H^2= I\ot I$. It is not possible to express $E$ and $X$
as linear combinations of $H$ and $I\ot I$, so they are not polynomials in $H$. 
\end{proof}

Introduce the following elements  of  $\Hom_{\HB(\delta)}(v^3, v^3)$
\[
H_{12}= H\ot I, \quad H_{23}=I\ot H,  \quad 
H_{1 3} = (I\ot X) (H\ot I) (I\ot X). 
\]
We have the following result. 
\begin{lemma} \label{lem:H-4-t}
The following ``four term relations'' hold in $\HB(\delta)$, 
\[
[H_{12}, H_{1 3} + H_{23}]=0, \quad [H_{12}+ H_{1 3},  H_{23}]=0.
\]
\end{lemma}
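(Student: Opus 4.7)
The plan is to expand $H_{ij}=X_{ij}-E_{ij}$, where $X_{ij}$ denotes the Brauer $(3,3)$-diagram with strands $i$ and $j$ crossing (and the remaining strand vertical), and $E_{ij}$ the diagram with a cup at the bottom and a cap at the top joining positions $i$ and $j$; thus $X_{12}=X\ot I$, $X_{23}=I\ot X$, $X_{13}=(I\ot X)(X\ot I)(I\ot X)$, $E_{12}=(\cup\ot I)(\cap\ot I)$, and similarly for $E_{13},E_{23}$. The two four-term relations are interchanged by the involution of $\End_{\HB(\delta)}(v^3)$ given by conjugation by $X_{13}$ (equivalently, vertical reflection of each Brauer diagram about its centre), which fixes $H_{13}$ and swaps $H_{12}$ with $H_{23}$, so it suffices to prove the first identity $[H_{12},H_{13}+H_{23}]=0$. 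By bilinearity, this in turn splits into the two assertions
\[
[X_{12},H_{13}+H_{23}]=0\qquad\text{and}\qquad [E_{12},H_{13}+H_{23}]=0,
\]
whose difference is $[H_{12},H_{13}+H_{23}]$.

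For the $X_{12}$-bracket, the plan is to verify the pair of conjugation identities
\[
X_{12}\,X_{13}\,X_{12}=X_{23}\qquad\text{and}\qquad X_{12}\,E_{13}\,X_{12}=E_{23};
\]
the first is just the corresponding relation in $\Sym_3$, and the second follows by sliding the two crossings $X_{12}$ past the cap and cup of $E_{13}$, using the sliding and straightening relations of Theorem \ref{thm:tensor-cat}. Subtracting yields $X_{12}H_{13}X_{12}=H_{23}$ and (by the same argument) $X_{12}H_{23}X_{12}=H_{13}$; then $X_{12}^2=I\ot I\ot I$ rearranges these to $X_{12}(H_{13}+H_{23})=(H_{13}+H_{23})X_{12}$.

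For the $E_{12}$-bracket I would prove the sharper claim that $E_{12}(H_{13}+H_{23})=0$ and $(H_{13}+H_{23})E_{12}=0$ each vanish separately. The left-hand vanishing reduces to the two Brauer-algebra identities
\[
E_{12}X_{13}=E_{12}E_{23}\qquad\text{and}\qquad E_{12}X_{23}=E_{12}E_{13},
\]
each verifiable by a single sliding move of a cup past a crossing, after which
\[
E_{12}H_{13}+E_{12}H_{23}=(E_{12}E_{23}-E_{12}E_{13})+(E_{12}E_{13}-E_{12}E_{23})=0.
\]
The right-hand vanishing $(H_{13}+H_{23})E_{12}=0$ follows from the top-to-bottom mirrors $X_{13}E_{12}=E_{23}E_{12}$ and $X_{23}E_{12}=E_{13}E_{12}$ by the same cancellation.

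The principal technical obstacle is the verification of the two key Brauer identities $X_{12}E_{13}X_{12}=E_{23}$ and $E_{12}X_{13}=E_{12}E_{23}$ (and their mirrors). These are standard facts about Brauer diagrams but are not among the primary generators-and-relations listed in Theorem \ref{thm:tensor-cat}; each requires combining several sliding and straightening moves. The cleanest way to carry them out is diagrammatically: draw both sides as Brauer $(3,3)$-diagrams and verify that after isotopy the two diagrams coincide.
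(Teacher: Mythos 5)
Your proposal is correct. All the ingredients check out in the Brauer algebra $B_3(\delta)\subset\End_{\HB(\delta)}(v^3)$: conjugation by $X_{13}$ is an algebra automorphism fixing $H_{13}$ and exchanging $H_{12}$ with $H_{23}$, so the two four-term relations are indeed equivalent; the crossing part $[X_{12},H_{13}+H_{23}]=0$ follows from the $\Sym_3$ relations together with $X_{12}E_{13}X_{12}=E_{23}$ and $X_{12}E_{23}X_{12}=E_{13}$; and the identities $E_{12}X_{13}=E_{12}E_{23}$, $E_{12}X_{23}=E_{12}E_{13}$ (and their upside-down mirrors) do hold, giving your sharper statements $E_{12}(H_{13}+H_{23})=0=(H_{13}+H_{23})E_{12}$. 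Note that since morphisms in $\CBC(\delta_\CV)$ are by definition spanned by coloured Brauer diagrams with diagrammatic composition, these auxiliary identities are verified simply by composing the two diagrams, so the ``technical obstacle'' you flag is harmless.

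Your route differs from the paper's in organization rather than substance. The paper proves the lemma by a single direct calculation: it expands $H_{12}(H_{13}+H_{23})$ and $(H_{13}+H_{23})H_{12}$ and exhibits both as the same explicit signed sum of four diagrams (and likewise for the second bracket), with no reduction between the two relations. Your argument instead splits $H_{12}=X_{12}-E_{12}$, handles the crossing part by pure symmetric-group conjugation, and isolates the cap--cup part as the annihilation statements $E_{12}(H_{13}+H_{23})=0$ and $(H_{13}+H_{23})E_{12}=0$; it then gets the second four-term relation for free from the first via conjugation by $X_{13}$. The computational content is comparable, but your packaging yields a little more: the annihilation identities are essentially the $\delta$-independent core of the relations $E_k(\vartheta_k+\vartheta_{k+1})=(1-\delta)E_k$ and $(\vartheta_k+\vartheta_{k+1})E_k=(1-\delta)E_k$ proved later in the paper (Theorem \ref{thm:JM}), and the conjugation trick mirrors the paper's own observation (Corollary \ref{cor:i-ii}) that the two abstract four-term relations are equivalent. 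The paper's direct computation, on the other hand, is shorter and displays the common value of both products explicitly, which is occasionally useful in later diagram manipulations.
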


\begin{proof} 
A direct calculation shows that 
\[
\begin{aligned}
H_{12}(H_{13}+H_{23})=(H_{13}+H_{23})H_{12}\\
\setlength{\unitlength}{0.3mm}
\begin{picture}(180, 50)(0,0)
\put(-20, 15){$=$}
\qbezier(0, 40)(0, 40)(30, 0)
\qbezier(20, 40)(20, 40)(0, 0)
\qbezier(30, 40)(30, 40)(10, 0)

\put(35, 15){$+$}

\qbezier(50, 40)(50, 40)(70, 0)
\qbezier(60, 40)(60, 40)(80, 0)
\qbezier(50, 0)(50, 0)(80, 40)

\put(85, 15){$-$}

\qbezier(100, 40)(115, 10)(130, 40)
\qbezier(110, 0)(118, 30)(125, 0)
\qbezier(100, 0)(100, 0)(115, 40)

\put(135, 15){$-$}

\qbezier(150, 40)(150, 40)(165, 0)
\qbezier(160, 40)(168, 10)(175, 40)
\qbezier(150, 0)(165, 30)(180, 0)

\put(185, 5){, }
\end{picture} 
\end{aligned}
\]
and  
\[
\begin{aligned}
(H_{12} + H_{13}) H_{23}=H_{23}(H_{12} + H_{13})\\
\setlength{\unitlength}{0.3mm}
\begin{picture}(180, 50)(0,0)
\put(-20, 15){$=$}
\qbezier(0, 40)(0, 40)(30, 0)
\qbezier(20, 40)(20, 40)(0, 0)
\qbezier(30, 40)(30, 40)(10, 0)

\put(35, 15){$+$}

\qbezier(50, 40)(50, 40)(70, 0)
\qbezier(60, 40)(60, 40)(80, 0)
\qbezier(50, 0)(50, 0)(80, 40)

\put(85, 15){$-$}

\qbezier(100, 40)(115, 10)(130, 40)
\qbezier(105, 0)(113, 30)(120, 0)
\qbezier(130, 0)(130, 0)(115, 40)

\put(135, 15){$-$}

\qbezier(180, 40)(180, 40)(165, 0)
\qbezier(155, 40)(162, 10)(170, 40)
\qbezier(150, 0)(165, 30)(180, 0)

\put(185, 5){. }
\end{picture} 
\end{aligned}
\]
This proves the stated four-term relations. 
\end{proof}

The fact that $H$ satisfies the defining relations of $\BH(v, v)$ enables us to introduce the following quotient category of $\HB(\delta)$.

\begin{definition}\label{def:mpolar}
Assume that $\CM\supset\CV=\{v\}$, and set $\delta=\delta_\CV$. 
Let $\CJ$ be the tensor ideal generated by $\BH(v, v) - H$ in $\HB(\delta)$. Define the quotient category 
\[
\wh\HB(\delta)=\HB(\delta)/\CJ.
\]
\end{definition}

\begin{remark}
We shall use the same diagrams and symbols of morphisms in $\HB(\delta)$ to denote their images in $\wh\HB(\delta)$. 
\end{remark}

The following facts are clear. 
\begin{lemma} \label{lem:struct}
\begin{enumerate}
\item
The category $\wh\HB(\delta)$ inherits from  $\HB(\delta)$ a monoidal structure. 

\item
The full subcategory of $\wh\HB(\delta)$ with objects $v^r$ for all $r\ge 0$ is isomorphic to the usual Brauer category $\CB(\delta)$  with parameter $\delta$.  
In particular,   if $\CM=\CV=\{v\}$, then $\wh\HB(\delta)$ coincides with $\CB(\delta)$. 
\end{enumerate}
\end{lemma}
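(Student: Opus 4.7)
My plan is to treat the two parts in turn, with part (2) requiring the bulk of the work.

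\textbf{Part (1).} By definition $\CJ$ is the \emph{tensor} ideal generated by $\BH(v,v)-H$, so it is by construction closed under taking $\mathrm{id}\otimes(-)$ and $(-)\otimes\mathrm{id}$ with arbitrary morphisms of $\HB(\delta)$. For any tensor ideal in a strict monoidal category, the tensor bifunctor descends to the quotient: if $f\equiv f'$ and $g\equiv g'$ mod $\CJ$ then $f\ot g - f'\ot g' = (f-f')\ot g + f'\ot(g-g')\in\CJ$. The associator and unit constraints descend trivially because $\HB(\delta)$ is strict. So $\wh\HB(\delta)$ inherits a strict monoidal structure, and the projection $\HB(\delta)\to\wh\HB(\delta)$ is a strict monoidal functor.

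\textbf{Part (2), reduction to $v$-coloured generators.} First I would observe that any morphism $v^r\to v^s$ in $\CBC_\CG(\delta_\CV)$ (and hence in $\HB(\delta)$) can be expressed as a linear combination of compositions of tensor products of generators from the list $\{I^v, X_{vv}, \cap_v, \cup^v, \BH(v,v)\}$ only. The key point is that each generator preserves (or explicitly introduces/removes in the case of $\cup^v,\cap_v$) the colour of each string: $I^a$, $X_{bc}$ and $\BH(a,b)$ act as identities on the multiset of colours, while $\cup^v,\cap_v$ add/remove only $v$-coloured pairs. Since every boundary point is $v$-coloured, any intermediate string appearing in a composition decomposing such a morphism must also be $v$-coloured, so only the $v$-versions of the generators are needed.

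\textbf{The functor $\CB(\delta)\to\wh\HB(\delta)$ and its inverse.} The inclusion of the generators of $\CB(\delta)$ (Brauer generators on $v$-strings) into $\HB(\delta)$, composed with the quotient, yields a monoidal functor $\Phi:\CB(\delta)\to\wh\HB(\delta)$ whose image lies in the full subcategory on $v^r$-objects; this is well-defined since the Brauer defining relations hold in $\HB(\delta)$. For the inverse, I define a functor $\Psi$ from the full subcategory of $\wh\HB(\delta)$ on $v^r$-objects to $\CB(\delta)$ sending each Brauer generator to itself and sending $\BH(v,v)\mapsto H=X-E$. To see $\Psi$ is well defined I must verify that every relation imposed on the $v$-coloured generators of $\HB(\delta)$ becomes a valid identity in $\CB(\delta)$ after the substitution $\BH(v,v)\mapsto H$. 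The relations to check are:
\begin{enumerate}
\item Brauer relations of Theorem 2.2 restricted to $\CM=\{v\}$ --- hold by definition in $\CB(\delta)$.
\item Symmetry $X\,\BH(v,v)=\BH(v,v)\,X$ --- translates to $XH=HX$, which is Lemma \ref{lem:H-skew-sym}(1).
\item Left/right skew symmetries of $\BH(v,v)$ --- translate to the identities of Figure \ref{fig:Brauer-H-skew}, which are Lemma \ref{lem:H-skew-sym}(2).
\item The four-term relations --- these become the identities of Lemma \ref{lem:H-4-t}.
\item Coupon-sliding relations (Figure \ref{fig:coupon}) applied to $\BH(v,v)$ --- these translate to naturality statements of the form $X_{vv}\otimes I^v\circ I^v\otimes H=\ldots$, which hold automatically because the Brauer category is symmetric monoidal (the braiding $X$ is natural, so it slides past any morphism).
\end{enumerate}
Finally the defining relation $\BH(v,v)=H$ of $\CJ$ is mapped to the tautology $H=H$. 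Thus $\Psi$ descends to a monoidal functor on $\wh\HB(\delta)$, and by construction $\Psi\circ\Phi=\mathrm{id}_{\CB(\delta)}$ and $\Phi\circ\Psi$ is the identity on the full subcategory on $v^r$-objects. The special case $\CM=\CV=\{v\}$ is immediate because then every object is of the form $v^r$.

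\textbf{Main obstacle.} The only non-routine point is verifying exhaustively that the $\BH$-defining relations translate to valid identities in $\CB(\delta)$ under $\BH(v,v)\mapsto H$; the substantive content of this check is already packaged in Lemmas \ref{lem:H-skew-sym} and \ref{lem:H-4-t}, with the coupon-sliding axioms reducing to the naturality of the symmetric braiding $X$ in $\CB(\delta)$. Once this is in hand the isomorphism $\CB(\delta)\cong\wh\HB(\delta)|_{v^r}$ is a formal consequence.
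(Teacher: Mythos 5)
Your proof is correct, and it is essentially the argument the paper leaves implicit: the lemma is stated as "clear" precisely because the substantive input — that $H=X-E$ satisfies the defining relations of $\BH(v,v)$ — is exactly Lemmas \ref{lem:H-skew-sym} and \ref{lem:H-4-t}, which is what your well-definedness check for $\Psi$ invokes; part (1) is the standard fact that a tensor ideal quotient of a strict monoidal category is again strict monoidal, as you say.

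One small item is missing from your checklist for $\Psi$: besides the arc-sliding relations of Figure \ref{fig:coupon}, Definition \ref{def:CBwC} also imposes the duality-of-coupons relation of Figure \ref{fig:gvee} on the coupon $\BH(v,v)$ (its source and target lie in $\CV$, so the hypothesis of that relation is met). Under $\BH(v,v)\mapsto H$ this becomes the assertion that $H$ coincides with its double bend (rotation by $\pi$), which holds in $\CB(\delta)$ because both $X$ and $E=\cup\circ\cap$ have this property; so the check succeeds, but it should be listed. With that added, your argument is complete: the colour-preservation observation does double duty, showing both that the full subcategory on the objects $v^r$ is generated by the $v$-coloured generators and that no extra relations leak in from diagrams passing through non-$v$ colours, since $\Hom(v^r,\bt)=0$ whenever $\bt$ contains a colour other than $v$, so ideal elements built from relation instances involving another colour vanish in these Hom spaces.
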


Note that the four-term relations in $\Hom_{\wh\HB(\delta)}((a, b, c), (a, b, c))$ with at least one colour in $\CV$ simplify considerably. We shall discuss this in more detail in the next section.  

Denote by $H_{i j}\in \wh\HB(\delta)$ the image of $\BH_{i j}(v^r)$ 
(see Figure \ref{fig:Hijr}) for $1\le i< j\le r$. 
Note that all $H_{i j}$ belong to the Brauer algebra 
$B_r(\delta)\subset \End_{\wh\HB(\delta)}(v^r)$. 
The following algebra homomorphism implicitly appeared in \cite{LZ06}. 
\begin{lemma}\label{lem:T-H-B} 
Retain the notation above. 
There is an algebra homomorphism 
\[
T_r \lra B_r(\delta), \quad t_{i j} \mapsto H_{i j}, \ 1\le i<j\le r, 
\]
which is surjective unless $\delta = 2$. 
\end{lemma}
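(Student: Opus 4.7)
The plan is to construct $\psi: T_r \to B_r(\delta)$ by verifying that the $H_{ij}$ satisfy the defining relations of the algebra of infinitesimal braids, and then to show that when $\delta \neq 2$, the images $H_{i,i+1}$ already generate the standard Brauer generators $X_i, E_i$.

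First, to construct the homomorphism, I would verify that the elements $H_{ij} \in B_r(\delta) \subset \End_{\wh\HB(\delta)}(v^r)$ satisfy the three defining relations \eqref{eq:deftr} of $T_r$. Since $H_{ij}$ is by definition the image in $\wh\HB(\delta)$ of $\BH_{ij}(v^r)$, and the ideal $\CJ$ used to form $\wh\HB(\delta)$ does not disturb the four-term identities, the relations of Lemma \ref{lem:Murphy}(1) (applied with $\bs = v^r$) descend to give exactly
\[
[H_{ij}, H_{k\ell}]=0, \quad [H_{ik}+H_{i\ell}, H_{k\ell}]=0, \quad [H_{ij}, H_{ik}+H_{jk}]=0
\]
for pairwise distinct indices. (Alternatively, the local case is Lemma \ref{lem:H-4-t}, and the disjoint-indices commutation is immediate diagrammatically since the two connectors occupy disjoint columns.) By the universal property of $T_r$ defined by generators and relations, the assignment $t_{ij}\mapsto H_{ij}$ therefore extends uniquely to an algebra homomorphism $\psi: T_r \to B_r(\delta)$.

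Next, for surjectivity when $\delta\neq 2$: it is standard that $B_r(\delta)$ is generated as an algebra by the simple transpositions $X_i$ and the simple cap-cups $E_i$ for $i=1,\dots,r-1$ (acting on strands $i,i+1$). By Lemma \ref{lem:H-skew-sym}(3), on two strands we have $H^2 = I\ot I - (2-\delta)E$, so whenever $\delta \neq 2$,
\[
E \;=\; \frac{1}{\delta-2}\bigl(H^2 - I\ot I\bigr),
\]
and consequently $X = H + E$ is also a polynomial in $H$. Applying this locally at strands $i,i+1$, both $E_i$ and $X_i$ are polynomials in $H_{i,i+1} \in \psi(T_r)$. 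Hence $\psi(T_r)$ contains a generating set of $B_r(\delta)$, proving surjectivity.

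There is essentially no obstacle here: the four-term relations are already established in Lemma \ref{lem:Murphy} and the polynomial expression of $E, X$ in terms of $H$ is contained in Lemma \ref{lem:H-skew-sym}(3). The only mild point worth noting (although not required by the statement) is that $\delta = 2$ genuinely fails, since in that case $H^2 = I\ot I$ forces $\psi(T_r)$ into the subalgebra generated by involutive elements and no linear combination of powers of $H$ recovers $E$ — a quick dimension count at $r=2$ confirms the failure of surjectivity there.
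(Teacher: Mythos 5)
Your proposal is correct and follows essentially the same route as the paper: existence of the homomorphism via the four-term relations (Lemma \ref{lem:H-4-t}, or equivalently Lemma \ref{lem:Murphy}(1) applied to $\bs=v^r$ and passed to the quotient), and surjectivity for $\delta\neq 2$ via Lemma \ref{lem:H-skew-sym}(3), which expresses $E$ and hence $X$ as polynomials in $H$ locally at each pair of adjacent strands. The only difference is that you spell out the details the paper leaves implicit, plus a correct but unrequired remark on the failure at $\delta=2$ for $r=2$.
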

\begin{proof}
The existence of the algebra homomorphism is an easy consequence of Lemma \ref{lem:H-4-t}. Another way to view it is to regard the map $t_{i j} \mapsto H_{i j}$ 
as the composition of maps 
$t_{i j} \mapsto  \BH_{i j}(v^r) \mapsto H_{i j}$. 
Then the statement follows from Lemma \ref{lem:Murphy}(1). 

The second statement follows from Lemma \ref{lem:H-skew-sym}. 
\end{proof}

\subsection{The multi-polar Brauer category}\label{sect:mpolar}
If $\CM=\CV=\{v\}$,  
then $\wh\HB(\delta)$ reduces to the  usual Bauer category $\CB(\delta)$ \cite{LZ15}.
 
Let us now consider simplest cases of $\wh\HB(\delta)$ with $\CM\supsetneq\CV$.

\subsubsection{Multi-polar Brauer category}
Fix an element $m\in\CM$ which is not contained in $\CV$. 
Let $\MPB(\delta)$ be the full subcategory of $\wh\HB(\delta)$, whose objects are sequences with all entries in $\{m, v\}$. 
One may also think about $\MPB(\delta)$ independently and regard it as $\wh\HB(\delta)$ in the case $\CM= \{m, v\}\supset\CV=\{v\}$. 

We refer to $\MPB(\delta)$ as the {\em multi-polar Brauer category}, for reasons which will  become clear presently.   

As a monoidal category, $\MPB(\delta)$ is generated by the objects 
$m, v$, and morphisms 
\beq
 I, \ X, \ \cup, \  \cap, \ 
\I=I^m,  \ \BH=\BH(m, v), \  \BH(v, m), \ \BH(m, m), 
\eeq
where we note that $\BH(v, v)$ has been identified with $H$. 


There is a more convenient way of representing diagrams in the category $\MPB(\delta)$ by adapting the notion of ``polar tangle diagrams'' \cite{ILZ} to the present context.
We depict arcs coloured by $m$ as thick arcs, and those coloured by $v$ as thin arcs. 
Then the diagram is automatically endowed with information about the colours of its arcs, thus we can drop the letters for colours.  

\begin{figure}[h]
\setlength{\unitlength}{0.30mm}
\begin{picture}(200, 80)(-20,0)

\put(-22, 60){\line(0, 1){20}}
\put(-22, 70){\uwave{\hspace{7mm}}}
\qbezier(-22, 60)(0, 40)(22, 60)
\put(22, 60){\line(0, 1){20}}

\qbezier(10, 80)(55, 50)(100, 80)

{
\linethickness{.75mm}
\put(0, 0){\line(0, 1){20}}
\put(40, 0){\line(0, 1){20}}
\put(80, 0){\line(0, 1){60}}
\qbezier(0, 20)(0, 20)(40, 40)
\qbezier(80, 60)(120, 80)(120, 80)
\qbezier(0, 40)(40, 20)(40, 20)
\put(40, 40){\line(0, 1){40}}
\put(0, 40){\line(0, 1){40}}
}

\put(0, 10){\uwave{\hspace{13mm}}}
\put(40, 50){\uwave{\hspace{13mm}}}

\qbezier(60, 80)(60, 80)(100, 60)
\qbezier(100, 0)(110, 50)(100, 60) 

\qbezier(60, 0)(90, 60)(120, 0)

\end{picture}
\caption{A multi-polar diagram $(m^2, v, m, v^2)\to (v, m, v^2, m, v^2, m)$}
\label{fig:pdiag}
\end{figure}
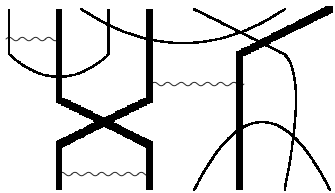

The space of morphisms of $\MPB(\delta)$ is spanned by Brauer diagrams 
coloured by $m$ and $v$ with connectors such that any of them 
connects either two thick arcs, 
or a thick arc and thin arc (but not two thin arcs). 
Figure \ref{fig:pdiag} is an
example of such a diagram $(m^2, v, m, v^2)\to (v, m, v^2, m, v^2, m)$.
We will call such diagrams {\em multi-polar Brauer diagrams}, and hence the name
multi-polar Brauer category for $\MPB(\delta)$. 

We shall use the diagrams shown in Figure \ref{fig:bbH} later.

\begin{figure}[h]
\begin{picture}(80, 40)(0,0)
\put(0, 20){$\BH   =$}
{
\linethickness{1mm}
\put(35, 0){\line(0, 1){40}}
}
\put(35, 20){\uwave{\hspace{7mm}}}

\put(55, 0){\line(0, 1){40}}

\put(65, 5){, }

\end{picture} 
\begin{picture}(120, 40)(-10, 0)
\put(0, 20){$\BH ^T  =$}
{
\linethickness{1mm}
\put(35, 0){\line(0, 1){40}}
}
\put(35, 20){\uwave{\hspace{7mm}}}

\put(55, 8){\line(0, 1){20}}

\qbezier(55, 28)(70, 38)(75, 0)
\qbezier(55, 8)(70, -2)(75, 40)

\put(85, 5){,}
\end{picture}
\begin{picture}(100, 40)(-20, 0)
\put(-20, 20){$(\BH^\ell) ^T  =$}
{
\linethickness{1mm}
\put(35, 0){\line(0, 1){40}}
}
\put(35, 33){\uwave{\hspace{7mm}}}
\put(40, 15){$\vdots$ $\ell$}
\put(35, 12){\uwave{\hspace{7mm}}}

\put(55, 5){\line(0, 1){30}}

\qbezier(55, 35)(70, 40)(75, 0)
\qbezier(55, 5)(70, 0)(75, 40)

\end{picture}
\caption{Diagrams $\BH$, $\BH^T$ and $(\BH^\ell)^T$}
\label{fig:bbH}
\label{fig:H-transpose}
\end{figure}

%

We note in particular that the four-term relations in $\Hom_{\MPB(\delta)}((a, b, c), (a, b, c))$ with all $a=b=c=v$ identically hold because $H$ obeys Lemma \ref{lem:H-4-t}.   
If two of $a, b, c$ are equal to $v$, the four-term relations also simplify considerably. 

\begin{lemma}\label{lem:4-t-id}  The following four-term relation is an identity in $\MPB(\delta)$ 
\[
[\mathbb{H}_{12}(m, v^2)+ \mathbb{H}_{1 3}(m, v^2), \mathbb{H}_{23}(m, v^2)]=0, \quad \forall  m\in\CM.
\]
\end{lemma}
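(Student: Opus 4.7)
The plan is to recognise this statement as the direct image in $\MPB(\delta)$ of the defining four-term relation (4) of Definition \ref{def:CB-infB}, specialised to the triple $(a, b, c) = (m, v, v) \in \CM^3$. The strategy is thus not to prove anything new from scratch, but rather to trace a pre-existing identity through the quotient constructions that produce $\MPB(\delta)$ from $\HB(\delta)$.

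Concretely, in the ambient category $\HB(\delta)$ (that is, $\HB_{\CG_1}(\delta_\CV)$ with $\CG_1 = \emptyset$ and $\CV = \{v\}$), the tensor ideal whose generators include item (4) of Definition \ref{def:CB-infB} has been quotiented out. Specialising that generator to $(a,b,c)=(m,v,v)$ yields the identity
\[
[\BH_{12}(m, v, v) + \BH_{13}(m, v, v), \BH_{23}(m, v, v)] = 0 \qquad \text{in } \HB(\delta).
\]
From the definitions preceding Definition \ref{def:CB-infB}, we have $\BH_{12}(m, v, v) = \BH(m, v) \otimes I^v$, $\BH_{13}(m, v, v) = (I^m \otimes X_{v v})(\BH(m, v) \otimes I^v)(I^m \otimes X_{v v})$, and crucially $\BH_{23}(m, v, v) = I^m \otimes \BH(v, v)$.

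It then remains only to descend this identity through the quotient of Definition \ref{def:mpolar}: in $\wh\HB(\delta)$ the generator $\BH(v, v)$ is identified with $H = X - E$, so $\BH_{23}(m, v, v)$ becomes $I^m \otimes H$, while the other two summands are unaffected by this further quotient. Restricting to the full subcategory $\MPB(\delta) \subset \wh\HB(\delta)$ and invoking Remark \ref{rmk:no-v} to drop the label $v$ from the notation yields exactly the asserted identity. The entire argument is thus a routine book-keeping verification; no significant technical obstacle arises, since the content is simply that a defining relation of $\HB(\delta)$ survives the passage to $\MPB(\delta)$ in its expected simplified form, with the abstract $v$-$v$ infinitesimal braid now replaced by the purely Brauer-theoretic morphism $H$.
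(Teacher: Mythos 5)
Your argument establishes only a vacuous reading of the statement: since the commutator $[\BH_{12}(m,v^2)+\BH_{13}(m,v^2),\BH_{23}(m,v^2)]$ is (for the triple $(m,v,v)$) one of the generators of the ideal $\CJ$ in Definition \ref{def:CB-infB}, it is of course zero in $\HB(\delta)$, hence in the quotient $\wh\HB(\delta)$ and in its full subcategory $\MPB(\delta)$. But that is not what the lemma is asserting. The word ``identity'' is used here in the same sense as in the sentence immediately preceding the lemma, where the all-$v$ four-term relations are said to ``identically hold because $H$ obeys Lemma \ref{lem:H-4-t}'': the claim is that, once $\BH(v,v)$ has been identified with the concrete Brauer morphism $H=X-E$, this particular four-term relation is forced by the remaining structure --- specifically by the (right) skew symmetry $\BH(m,v)^T=-\BH(m,v)$ together with the Brauer relations --- and so imposes no new constraint on $\MPB(\delta)$. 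This is precisely what distinguishes it from the other four-term relation \eqref{eq:4t-right}, which the paper does not claim to be automatic but records as a relation (Figure \ref{fig:4-term}) and uses repeatedly in the later computations. Your proposal supplies no such derivation; indeed, the same words would ``prove'' \eqref{eq:4t-right} to be an identity as well, erasing exactly the distinction the lemma is drawing.

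The missing step is the diagrammatic computation that the paper carries out: write $\BH_{23}(m,v^2)=\I\ot H=\I\ot(X-E)$, expand the four products $\BH_{12}\BH_{23}$, $\BH_{13}\BH_{23}$, $\BH_{23}\BH_{12}$, $\BH_{23}\BH_{13}$ as differences of multi-polar diagrams, and then use the skew symmetry of $\BH(m,v)$ (sliding the free $v$-endpoint of a connector around a cup or a cap changes the sign) to match the terms of $(\BH_{12}+\BH_{13})\BH_{23}$ with those of $\BH_{23}(\BH_{12}+\BH_{13})$. That derivation from skew symmetry --- not the observation that the relation was imposed upstream --- is the substance of the lemma, and it is absent from your proof.
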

\begin{proof} 
This is a consequence of the (right) skew symmetry $\BH(m, v)^T=-\BH(m, v)$ of $\BH(m, v)$,  which can be depicted by  Figure \ref{fig:H-skew}. 
\begin{figure}[h]
\begin{picture}(100, 40)(0,0)
{
\linethickness{1mm}
\put(35, 0){\line(0, 1){40}}
}
\put(35, 20){\uwave{\hspace{7mm}}}

\put(55, 8){\line(0, 1){20}}

\qbezier(55, 28)(70, 38)(75, 0)
\qbezier(55, 8)(70, -2)(75, 40)

\end{picture}
\begin{picture}(100, 40)(20, 0)
\put(0, 16){$=\ - \ $}
{
\linethickness{1mm}
\put(35, 0){\line(0, 1){40}}
}
\put(35, 20){\uwave{\hspace{7mm}}}

\put(55, 0){\line(0, 1){40}}

\put(65, 5){. }

\end{picture} 
\caption{Skew symmetry of  $\BH(m, v)$}
\label{fig:H-skew}
\end{figure}  

The proof of the lemma is entirely straightforward, but it provides a good opportunity to illustrate the power of the skew symmetry of $\BH$. We therefore spell out the details.  

Note that $\BH_{2 3}(m, v^2) = \I\ot H$. Thus we have
\[
\qquad\qquad\qquad
\begin{picture}(180, 40)(-70, 0)
\put(-100, 20){$\BH _{12}(m, v^2) \BH _{23}(m, v^2) \  =$}
{
\linethickness{1mm}
\put(35, 0){\line(0, 1){40}}
}
\put(35, 30){\uwave{\hspace{5mm}}}

\put(50, 40){\line(0, -1){25}}
\put(65, 40){\line(0, -1){25}}

\qbezier(50, 15)(50, 15)(65, 0)
\qbezier(65, 15)(65, 15)(50, 0)


\put(80, 18){$-$}
\end{picture}
\begin{picture}(110, 40)(30, 0)
{
\linethickness{1mm}
\put(35, 0){\line(0, 1){40}}
}
\put(35, 30){\uwave{\hspace{5mm}}}

\put(50, 40){\line(0, -1){25}}
\put(65, 40){\line(0, -1){25}}

\qbezier(50, 15)(57, 5)(65, 15)

\qbezier(50, 0)(57, 10)(65, 0)



\put(80, 5){,}
\end{picture}
\]

\[
\qquad\qquad\qquad
\begin{picture}(180, 40)(-70, 0)
\put(-100, 20){$\BH _{1 3}(m, v^2) \BH _{23}(m, v^2) \  =$}
{
\linethickness{1mm}
\put(35, 0){\line(0, 1){40}}
}
\put(35, 15){\uwave{\hspace{5mm}}}

\put(50, 25){\line(0, -1){25}}
\put(65, 25){\line(0, -1){25}}

\qbezier(50, 25)(50, 25)(65, 40)
\qbezier(65, 25)(65, 25)(50, 40)


\put(80, 18){$-$}
\end{picture}
\begin{picture}(110, 40)(30, 0)
{
\linethickness{1mm}
\put(35, 0){\line(0, 1){40}}
}
\put(35, 22){\uwave{\hspace{5mm}}}

\put(50, 15){\line(0, 1){10}}
\put(65, 15){\line(0, 1){10}}
\qbezier(50, 25)(50, 25)(65, 40)
\qbezier(50, 40)(50, 40)(65, 25)

\qbezier(50, 15)(57, 5)(65, 15)
\qbezier(50, 0)(57, 10)(65, 0)



\put(80, 5){,}
\end{picture}
\]

%
\[
\qquad\qquad\qquad
\begin{picture}(180, 40)(-70, 0)
\put(-100, 20){$\BH _{23}(m, v^2) \BH _{12}(m, v^2) \  =$}
{
\linethickness{1mm}
\put(35, 0){\line(0, 1){40}}
}
\put(35, 15){\uwave{\hspace{5mm}}}

\put(50, 25){\line(0, -1){25}}
\put(65, 25){\line(0, -1){25}}

\qbezier(50, 25)(50, 25)(65, 40)
\qbezier(65, 25)(65, 25)(50, 40)


\put(80, 18){$-$}
\end{picture}
\begin{picture}(110, 40)(30, 0)
{
\linethickness{1mm}
\put(35, 0){\line(0, 1){40}}
}
\put(35, 15){\uwave{\hspace{5mm}}}

\put(50, 0){\line(0, 1){25}}
\put(65, 0){\line(0, 1){25}}

\qbezier(50, 25)(57, 35)(65, 25)

\qbezier(50, 40)(57, 30)(65, 40)



\put(80, 5){,}
\end{picture}
\]

\[
\qquad\qquad\qquad
\begin{picture}(180, 40)(-70, 0)
\put(-100, 20){$\BH _{23}(m, v^2) \BH _{13}(m, v^2) \  =$}
{
\linethickness{1mm}
\put(35, 0){\line(0, 1){40}}
}
\put(35, 30){\uwave{\hspace{5mm}}}

\put(50, 40){\line(0, -1){25}}
\put(65, 40){\line(0, -1){25}}

\qbezier(50, 15)(50, 15)(65, 0)
\qbezier(65, 15)(65, 15)(50, 0)


\put(80, 18){$-$}
\end{picture}
\begin{picture}(110, 40)(30, 0)
{
\linethickness{1mm}
\put(35, 0){\line(0, 1){40}}
}
\put(35, 18){\uwave{\hspace{5mm}}}

\qbezier(50, 40)(57, 25)(65, 40)
\qbezier(50, 20)(57, 35)(65, 20)

\put(50, 20){\line(0, -1){10}}
\put(65, 20){\line(0, -1){10}}

\qbezier(50, 10)(50, 10)(65, 0)
\qbezier(50, 0)(50, 0)(65, 10)

\put(80, 5){.}
\end{picture}
\]
The skew symmetry of $\BH $ implies
\[
\begin{picture}(100, 40)(30, 0)
{
\linethickness{1mm}
\put(35, 0){\line(0, 1){40}}
}
\put(35, 22){\uwave{\hspace{5mm}}}

\put(50, 15){\line(0, 1){10}}
\put(65, 15){\line(0, 1){10}}
\qbezier(50, 25)(50, 25)(65, 40)
\qbezier(50, 40)(50, 40)(65, 25)

\qbezier(50, 15)(57, 5)(65, 15)
\qbezier(50, 0)(57, 10)(65, 0)

\put(80, 20){$ = \ -$}
\end{picture}
\begin{picture}(50, 40)(50, 0)
{
\linethickness{1mm}
\put(35, 0){\line(0, 1){40}}
}
\put(35, 30){\uwave{\hspace{5mm}}}

\put(50, 40){\line(0, -1){25}}
\put(65, 40){\line(0, -1){25}}

\qbezier(50, 15)(57, 5)(65, 15)

\qbezier(50, 0)(57, 10)(65, 0)

\end{picture}
%
\begin{picture}(110, 40)(5, 0)
\put(-8, 18){$\text{and}$}
{
\linethickness{1mm}
\put(35, 0){\line(0, 1){40}}
}
\put(35, 18){\uwave{\hspace{5mm}}}

\qbezier(50, 40)(57, 25)(65, 40)
\qbezier(50, 20)(57, 35)(65, 20)

\put(50, 20){\line(0, -1){10}}
\put(65, 20){\line(0, -1){10}}

\qbezier(50, 10)(50, 10)(65, 0)
\qbezier(50, 0)(50, 0)(65, 10)

\put(80, 20){$=\ - $}
\end{picture}
\begin{picture}(60, 40)(35, 0)
{
\linethickness{1mm}
\put(35, 0){\line(0, 1){40}}
}
\put(35, 15){\uwave{\hspace{5mm}}}

\put(50, 0){\line(0, 1){25}}
\put(65, 0){\line(0, 1){25}}

\qbezier(50, 25)(57, 35)(65, 25)

\qbezier(50, 40)(57, 30)(65, 40)



\put(70, 5){.}
\end{picture}
\]

\noindent
Using these relations in the formulae above, we obtain the identity
\[
(\BH _{12}(m, v^2) + \BH _{13}(m, v^2) )\BH _{23}(m, v^2)=
\BH _{23}(m, v^2)(\BH _{12}(m, v^2) + \BH _{13}(m, v^2) ).
\]
This completes the proof.  
\end{proof}

The four-term relation 
\beq\label{eq:4t-right}
[\BH_{1 2}(m, v^2), \BH_{1 3}(m, v^2)+ \BH_{2 3}(m, v^2)]=0 
\eeq
now can be represented  diagrammatically by Figure \ref{fig:4-term}.

\begin{figure}[h]
\begin{picture}(50, 70)(0,0)
{
\linethickness{1mm}
\put(10, 0){\line(0, 1){70}}
}

\put(10, 50){\uwave{\hspace{5mm}}}
\put(10, 25){\uwave{\hspace{5mm}}}

\put(25, 70){\line(0, -1){30}}
\put(40, 70){\line(0, -1){30}}

\qbezier(25, 40)(25, 40)(40, 30)
\qbezier(25, 30)(25, 30)(40, 40)

\put(25, 30){\line(0, -1){15}}
\put(40, 30){\line(0, -1){15}}
\qbezier(25, 15)(25, 15)(40, 0)
\qbezier(25, 0)(25, 0)(40, 15)
\put(45, 30){$-$}
\end{picture}
\begin{picture}(50, 70)(0,0)
{
\linethickness{1mm}
\put(10, 0){\line(0, 1){70}}
}

\put(10, 50){\uwave{\hspace{5mm}}}
\put(10, 25){\uwave{\hspace{5mm}}}

\qbezier(25, 55)(25, 55)(40, 70)
\qbezier(25, 70)(25, 70)(40, 55)

\put(25, 55){\line(0, -1){15}}
\put(40, 55){\line(0, -1){15}}

\put(25, 30){\line(0, -1){30}}
\put(40, 30){\line(0, -1){30}}
\qbezier(25, 30)(25, 30)(40, 40)
\qbezier(25, 40)(25, 40)(40, 30)
\put(45, 30){$+$}
\end{picture}
\begin{picture}(50, 70)(0,0)
{
\linethickness{1mm}
\put(10, 0){\line(0, 1){70}}
}

\put(10, 50){\uwave{\hspace{5mm}}}

\put(25, 70){\line(0, -1){40}}
\put(40, 70){\line(0, -1){40}}


\put(25, 30){\line(0, -1){15}}
\put(40, 30){\line(0, -1){15}}
\qbezier(25, 15)(25, 15)(40, 0)
\qbezier(25, 0)(25, 0)(40, 15)
\put(45, 30){$-$}
\end{picture}
\begin{picture}(50, 70)(0,0)
{
\linethickness{1mm}
\put(10, 0){\line(0, 1){70}}
}

\put(10, 25){\uwave{\hspace{5mm}}}

\qbezier(25, 55)(25, 55)(40, 70)
\qbezier(25, 70)(25, 70)(40, 55)

\put(25, 55){\line(0, -1){15}}
\put(40, 55){\line(0, -1){15}}

\put(25, 40){\line(0, -1){40}}
\put(40, 40){\line(0, -1){40}}
\put(45, 30){$+$}
\end{picture}
\begin{picture}(50, 70)(0,0)
{
\linethickness{1mm}
\put(10, 0){\line(0, 1){70}}
}

\put(10, 25){\uwave{\hspace{5mm}}}
\qbezier(25, 60)(33, 45)(40, 60)
\put(25, 60){\line(0, 1){10}}
\put(40, 60){\line(0, 1){10}}


\qbezier(25, 40)(33, 55)(40, 40)
\put(25, 40){\line(0, -1){40}}
\put(40, 40){\line(0, -1){40}}
\put(45, 30){$-$}
\end{picture}
\begin{picture}(80, 70)(0,0)
{
\linethickness{1mm}
\put(10, 0){\line(0, 1){70}}
}

\put(10, 50){\uwave{\hspace{5mm}}}

\qbezier(25, 40)(33, 25)(40, 40)
\put(25, 70){\line(0, -1){30}}
\put(40, 70){\line(0, -1){30}}


\qbezier(25, 20)(33, 35)(40, 20)
\put(25, 20){\line(0, -1){20}}
\put(40, 20){\line(0, -1){20}}
\put(45, 30){$=0$.}
\end{picture}
\caption{A four-term relation}
\label{fig:4-term}
\end{figure}

\subsubsection{Polar Brauer category}\label{ss:polar}

The category $\MPB(\delta)$ contains some interesting subcategories which we identify below. 
\begin{enumerate}[i)]
\item Denote by $\UPB(\delta)$ the full subcategory of $\MPB(\delta)$ with objects 
$(v^r, m, v^s)$ for all $r, s\ge 0$,  and refer to it as the {\em unipolar Brauer category} with parameter $\delta$.
\item Denote by $\PB(\delta)$ the full subcategory of $\MPB(\delta)$ with objects $(m, v^r)$ for all $r\ge 0$, and refer to it as the {\em polar Brauer category} with parameter $\delta$. 
\item The category $\UPB(\delta)$ contains $\PB(\delta)$ as a full subcategory.
\end{enumerate}
%
%
Clearly $\UPB(\delta)$ and $\PB(\delta)$ no longer have monoidal structures.  However,  we have the following result. 
\begin{lemma}\label{lem:mod-cat}
Regard $\CB(\delta)$ as the full subcategory of $\MPB(\delta)$ with objects $v^r$ for all $r\in\N$. Then restrictions of the monoidal structure of $\MPB(\delta)$ lead to the following functors: 
\begin{enumerate}
\item $
\PB(\delta)\times \CB(\delta)\lra \PB(\delta),
$
which makes $\PB(\delta)$ a right module category over the monoidal category $\CB(\delta)$; and 
\item $
\CB(\delta)\times \UPB(\delta)\times \CB(\delta)\lra \UPB(\delta),
$
which makes $\UPB(\delta)$ a bi-module category over the monoidal category $\CB(\delta)$.
\end{enumerate}

\end{lemma}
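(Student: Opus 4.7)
The plan is to verify that the restriction of the monoidal product of $\MPB(\delta)$ to the indicated object sets produces well-defined functors satisfying the module category axioms. The reason this works is that although $\PB(\delta)$ and $\UPB(\delta)$ are not themselves closed under the full tensor product of $\MPB(\delta)$, their object sets \emph{are} closed under one-sided tensoring with objects of $\CB(\delta)$.

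First I would check closure on objects. For (1), given $(m,v^r)\in\PB(\delta)$ and $v^s\in\CB(\delta)$, the juxtaposition $(m,v^r)\otimes v^s=(m,v^{r+s})$ lies in $\PB(\delta)$. For (2), given $(v^r,m,v^s)\in\UPB(\delta)$ together with $v^a,v^b\in\CB(\delta)$, the juxtaposition $v^a\otimes(v^r,m,v^s)\otimes v^b=(v^{a+r},m,v^{s+b})$ lies in $\UPB(\delta)$. Closure on morphisms is automatic from the fact that $\PB(\delta)$ and $\UPB(\delta)$ are \emph{full} subcategories of $\MPB(\delta)$: if $f$ is a morphism between objects of $\PB(\delta)$ and $g$ a morphism in $\CB(\delta)\subset\MPB(\delta)$, then $f\otimes g$ is a morphism in $\MPB(\delta)$ between objects of $\PB(\delta)$, hence is a morphism in $\PB(\delta)$. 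The same applies to $\UPB(\delta)$.

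Next I would verify functoriality of these restricted operations. Bifunctoriality for (1) and trifunctoriality for (2) reduce to the interchange law $(f\otimes g)\circ(f'\otimes g')=(f\circ f')\otimes(g\circ g')$ (and its obvious extension to three factors), which already holds in $\MPB(\delta)$ as a strict monoidal category. The module category axioms then follow directly from the strict associativity and unit constraints of the monoidal structure on $\MPB(\delta)$: for (1) one needs $(x\otimes s)\otimes s'=x\otimes(s\otimes s')$ and $x\otimes\emptyset=x$ for $x\in\PB(\delta)$ and $s,s'\in\CB(\delta)$; for (2) one needs in addition the commutativity of the left and right $\CB(\delta)$-actions, i.e.\ $(s\otimes x)\otimes s'=s\otimes(x\otimes s')$ for $x\in\UPB(\delta)$ and $s,s'\in\CB(\delta)$. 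All of these are instances of strict associativity in $\MPB(\delta)$.

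There is no genuine obstacle in the argument; the content of the lemma is really the structural observation that a full subcategory of a strict monoidal category which is closed under one-sided (respectively two-sided) tensoring with objects of a monoidal subcategory automatically inherits a module (respectively bimodule) category structure. Once this is set out, nothing further needs to be verified.
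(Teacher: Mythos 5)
Your proposal is correct and is precisely the routine verification the paper regards as immediate (the lemma is stated without proof there): closure of the object sets under one-sided juxtaposition with $v^s$, closure of morphisms via fullness, and inheritance of functoriality and the module axioms from the strict monoidal structure of $\MPB(\delta)$. Nothing is missing.
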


Note that the morphisms of $\PB(\delta)$ are linear combinations of polar diagrams with a single pole on the left. Such a polar Brauer diagram is shown in Figure \ref{fig:AffD}.  
\begin{figure}[h]
\begin{picture}(120, 190)(-20,0)
{
\linethickness{1mm}
\put(0, 0){\line(0, 1){190}}
}
\put(25, 170){\line(0, 1){20}}
\put(35, 170){\line(0, 1){20}}
\put(55, 170){\line(0, 1){20}}
\put(40, 180){...}

\put(35, 155){\small$D_k$}
\put(20, 150){\line(0, 1){20}}
\put(60, 150){\line(0, 1){20}}
\put(20, 150){\line(1, 0){40}}
\put(20, 170){\line(1, 0){40}}

\put(25, 130){\line(0, 1){20}}
\put(35, 130){\line(0, 1){20}}
\put(55, 130){\line(0, 1){20}}
\put(40, 140){...}

\put(0, 140){\uwave{\hspace{9mm}}}

\put(40, 115){$\vdots$}

\put(0, 100){\uwave{\hspace{9mm}}}

\put(25, 90){\line(0, 1){20}}
\put(35, 90){\line(0, 1){20}}
\put(55, 90){\line(0, 1){20}}
\put(40, 100){...}
\put(35, 75){\small$D_2$}
\put(20, 70){\line(0, 1){20}}
\put(60, 70){\line(0, 1){20}}
\put(20, 70){\line(1, 0){40}}
\put(20, 90){\line(1, 0){40}}

\put(25, 70){\line(0, -1){30}}
\put(35, 70){\line(0, -1){30}}
\put(55, 70){\line(0, -1){30}}
\put(40, 55){...}

\put(0, 60){\uwave{\hspace{9mm}}}

\put(35, 25){\small$D_1$}
\put(20, 20){\line(0, 1){20}}
\put(60, 20){\line(0, 1){20}}
\put(20, 20){\line(1, 0){40}}
\put(20, 40){\line(1, 0){40}}

\put(25, 20){\line(0, -1){20}}
\put(35, 20){\line(0, -1){20}}
\put(55, 20){\line(0, -1){20}}
\put(40, 10){...}

\put(65, 5)

\end{picture} 
\caption{Polar Brauer diagram}
\label{fig:AffD}
\end{figure}
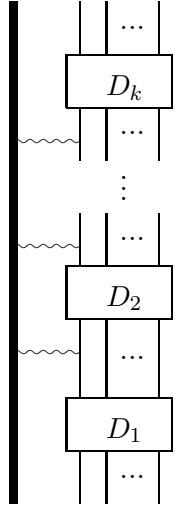
The unique thick arc in each such diagram appears on the left boundary. The 
$D_i$ for all $i=1, \dots, k$ are usual Brauer $(r_{i-1}, r_i)$-diagrams for non-negative integers $r_j$. We refer to the diagram in Figure \ref{fig:AffD} as a {\em polar Brauer $(r_0, r_k)$-diagram}.

Applications to orthosymplectic Lie superalgebras of $\PB(\delta)$ will be investigated later. 

\subsection{The multi-polar enhanced Brauer category}
As we will explain in Remark \ref{rem:funct-F},  while the Brauer category enables us to describe all the morphisms among the tensor powers of the natural module $V$ 
for the orthosymplectic supergroup $\OSp(V; \omega)$, it does not yield all the $\osp(V; \omega)$ morphisms among the tensor powers of the natural module. 
This problem is present even for the orthogonal Lie algebra $\fso_m(\C)$. 
This  problem was resolved in the $\fso_m(\C)$ case in \cite{LZ17-Ecate} by introducing  the enhanced Brauer category described in Section \ref{ex:B-SO}, 
which is isomorphic to the full subcategory of $\fso_m(\C)$-Mod with objects $(\C^m)^{\ot r}$ for all $r\in\N$.  

To study $\osp(V; \omega)$-modules of the type $M_1\ot M_2\ot\dots\ot M_r$ with $M_i$ belonging to some 
given finite set $\CM$ of modules containing $V$,   one will need multi-polar generalisations of enhanced Brauer categories. Such categories are quotient categories of $\HB_{\CG_1}(\delta_\CV)$ with non-trivial $\CG_1$, where $\CV\subset\CM$  is a subset of finite dimensional simple $\osp(V; \omega)$-modules including $V$.

It is entirely straightforward to combine Section \ref{ex:B-SO} and Section \ref{sect:mpolar} 
to create a multi-polar enhanced Brauer category for the orthogonal Lie algebra $\fso_m$. 
We will not give the details here. Instead, we work out a multi-polar enhanced Brauer category 
for $G_2$ in Section \ref{sect:EB-G}, where $\BH(v, v)$ satisfies a quartic polynomial relation,
in analogy with the logarithm of the $R$-matrix acting on the tensor square. 

\;

\subsection{Endomorphism algebras of the multi-polar Brauer category} 

For any usual Brauer diagram $A\in \Hom_{\CB(\delta)}(r, s)$, we write $\A_0= \I \ot A$. In particular, we have $\X_0= \I \ot X$ for $X=X_{v  v}\in \Hom_{\CB(\delta)}(2, 2)$. 
Denote by $\Pi$ and $\amalg$ respectively the polar Brauer $(2, 0)$- and $(0, 2)$-diagrams shown in Figure \ref{fig:A-U}.

\begin{figure}[h]
\begin{picture}(85, 30)(-20, 5)
\put(-30, 10){$\Pi=$}
{
\linethickness{1mm}
\put(0, 0){\line(0, 1){30}}
}
\qbezier(10, 0)(17, 40)(25, 0)
\put(30, 5){,}
\end{picture}
\begin{picture}(30, 30)(-20, 5)
\put(-30, 10){$\amalg=$}
{
\linethickness{1mm}
\put(0, 0){\line(0, 1){30}}
}
\qbezier(10, 30)(17, -10)(25, 30)
\end{picture}
\caption{The polar diagrams  $\Pi$ and $\amalg$}
\label{fig:A-U}
\end{figure}

\noindent
Consider the following elements of $\End_{\MPB(\delta)}(m)$
\beq\label{eq:Z-gener}
 Z_\ell=\Pi (\BH^\ell \ot I)\amalg, \quad  \ell\ge 1, 
\eeq
which are represented graphically in Figure \ref{fig:Z}.
\begin{figure}[h]
\begin{picture}(110, 60)(50, -5)
\put(0, 18){$Z_\ell  =$}
{
\linethickness{1mm}
\put(35, -5){\line(0, 1){55}}
}
\put(35, 35){\uwave{\hspace{7mm}}}
\put(35, 28){\uwave{\hspace{7mm}}}
\put(35, 12){\uwave{\hspace{7mm}}}
\put(44, 13){{\tiny$\vdots$}}

\put(55, 5){\line(0, 1){35}}

\qbezier(55, 40)(70, 50)(73, 22)
\qbezier(55, 5)(70, -5)(73, 22)

\put(95, 15){with $\ell$ connectors.}
\end{picture}
\caption{The central elements $Z_\ell$}
\label{fig:Z}
\end{figure}
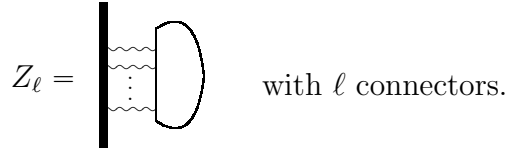

We also have the elements $\wt{Z}_\ell =(\cap\ot\I) (I\ot \BH(m, v)^\ell)(\cup\ot\I)$ for $\ell\in\N$. Graphically $\wt{Z}_\ell$ is represented by the diagram 
obtained by reflecting Figure \ref{fig:Z} in a vertical line. The following then holds. 

\begin{lemma}\label{eq:ZZ}
$Z_\ell =\wt{Z}_\ell$ for all $\ell$. 
\end{lemma}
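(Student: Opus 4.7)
The plan is to transform $\wt Z_\ell$ into $Z_\ell$ by first rewriting $\BH(v,m)$ in terms of $\BH(m,v)$ using the symmetry of $\BH$, and then using the topological moves of the Brauer category to slide the thin loop around the $m$-strand.

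First I would apply the symmetry relation $X_{mv}\BH(m,v)=\BH(v,m)X_{mv}$ (coming from the first generator of the ideal $\CJ$ in Definition \ref{def:CB-infB}), together with the involution $X_{mv}X_{vm}=I^{(v,m)}$, to obtain $\BH(v,m)^\ell=X_{mv}\BH(m,v)^\ell X_{vm}$, and substitute this into the definition of $\wt Z_\ell$. Next, I would invoke the sliding identities
\[
(\cap\ot\I)(I\ot X_{mv})=(\I\ot\cap)(X_{vm}\ot I),\qquad (I\ot X_{vm})(\cup\ot\I)=(X_{mv}\ot I)(\I\ot\cup),
\]
which are instances of the sliding relations of Theorem \ref{thm:tensor-cat}(4); topologically they express that the thin $\cap$ and $\cup$ can slide past the $m$-strand through a crossing. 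This transforms $\wt Z_\ell$ into
\[
(\I\ot\cap)(X_{vm}\ot I)(I\ot\BH(m,v)^\ell)(X_{mv}\ot I)(\I\ot\cup).
\]
The middle factor $(X_{vm}\ot I)(I\ot\BH(m,v)^\ell)(X_{mv}\ot I)$ equals $\BH_{1,3}(m,v,v)^\ell$, by the alternative formula for $\BH_{1,3}$ derived from Figure \ref{fig:conj}, so $\wt Z_\ell=(\I\ot\cap)\BH_{1,3}(m,v,v)^\ell(\I\ot\cup)$.

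To conclude, I would use the definition $\BH_{1,3}(m,v,v)=(\I\ot X_{vv})\BH_{1,2}(m,v,v)(\I\ot X_{vv})$ together with $X_{vv}^2=I\ot I$ to get $\BH_{1,3}^\ell=(\I\ot X_{vv})\BH_{1,2}^\ell(\I\ot X_{vv})$, and then apply the standard Brauer identities $\cap X_{vv}=\cap$ and $X_{vv}\cup=\cup$ (which hold in $\CB(\delta)\subset\MPB(\delta)$, since a thin crossing underneath a cap or above a cup is isotopic to no crossing) to reduce $\wt Z_\ell$ to $(\I\ot\cap)\BH_{1,2}(m,v,v)^\ell(\I\ot\cup)=Z_\ell$. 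The main obstacle will be carefully justifying the two sliding identities from Theorem \ref{thm:tensor-cat}(4); once they are in place, the rest is a routine unwinding of definitions.
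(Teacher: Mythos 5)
Your proposal is correct and takes essentially the same route as the paper's own proof: use the symmetry $X_{mv}\BH(m,v)=\BH(v,m)X_{mv}$ to turn the mirrored connectors into conjugates of $\BH(m,v)$ by crossings, slide the thin arc across the pole via the sliding relations of Theorem \ref{thm:tensor-cat}(4), and absorb the leftover thin--thin crossings into the cap and cup. Your explicit detour through $\BH_{1 3}(m,v,v)$ and its alternative formula is just a careful bookkeeping of what the paper compresses into ``by sliding the thin arc to the right,'' and the sliding identities you flag as the main obstacle are direct instances of those relations, so there is no gap.
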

\begin{proof}
We have $\BH(m, v)= X_{m, v} \BH(v, m) X_{v, m}$ by the symmetry of $\BH(a, b)$. 
Hence 
\[
\wt{Z}_\ell =(\cap\ot\I)(I\ot X_{m, v}) (I\ot \BH^\ell)(X_{v, m}\ot I)(\cup\ot\I).
\]
The right hand side can be expressed as $\Pi (\I\ot X) (\BH \ot I)(\I\ot X) \amalg$ by sliding the thin arc to the right.  This is clearly equal to $\Pi  (\BH \ot I) \amalg=Z_\ell$.
\end{proof}

Thus we confine our attention to $Z_\ell$. 

\begin{lemma}   \label{lem:central} The elements $Z_1, Z_2$ and $Z_3$ have the following properties:
\begin{enumerate}

\item If the characteristic of $\K$ is not $2$, then $Z_1=0$, and $ 2 Z_3 = (2-\delta)Z_2$;

\item The element $Z_2\ot I$ commutes with $\BH(m, v)$, and $I\ot Z_2$ commutes with $\BH(v, m)$. 

\item The element $Z_2$ is central in $\UPB(\delta)$ in the sense that
\[
\forall\A\in\Hom_{\UPB(\delta)}((v^r, m, v^s),  (v^{r'}, m, v^{s'})),  
\]
\[
\baln
(I(v^{r'})\ot Z_2\ot I(v^{s'})) \A= \A (I(v^{r})\ot Z_2\ot I(v^{s'})).
\ealn
\]

\end{enumerate}
\end{lemma}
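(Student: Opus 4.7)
The three parts of the lemma are proved by increasingly delicate diagrammatic manipulations in $\MPB(\delta)$, all building on the four-term relations and the skew symmetry of $\BH(m,v)$.

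For the identity $Z_1 = 0$ in part (1), observe that $Z_1 = (\I\ot\cap)(\BH(m,v)\ot I)(\I\ot\cup)$ is the partial trace of $\BH(m,v)$ over its $v$-leg. In a symmetric monoidal category with self-dual $v$, partial trace is invariant under partial transposition, so $Z_1$ equals the partial trace of $\BH^{rt}(m,v)$. The skew symmetry relation $\BH^{rt}(m,v) = -\BH(m,v)$, built into the defining ideal of $\HB(\delta_\CV)$, then gives $Z_1 = -Z_1$, which forces $Z_1 = 0$ when $\mathrm{char}(\K) \ne 2$. The second identity $2Z_3 = (2-\delta)Z_2$ is subtler. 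The strategy combines the Brauer quadratic relation $H^2 = I\ot I - (2-\delta)E$ (operative on the internal loop via $\BH_{23} = I^m \ot H$) with the iterated four-term commutations $[\BH_{12}^n, \BH_{13} + \BH_{23}] = 0$ and the absorption scalars $\Pi\BH_{23} = (1-\delta)\Pi$, $\BH_{23}\amalg = (1-\delta)\amalg$, $\Pi\amalg = \delta\I$. The key auxiliary observation is the splitting $\Pi\BH_{12}^p E_{23}\BH_{12}^q \amalg = Z_p Z_q$, which follows from $E_{23} = (I^m \ot \cup)(I^m \ot \cap)$ and the definition of $Z_k$ and in particular collapses to zero whenever $p$ or $q$ equals $1$ by the result $Z_1 = 0$. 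Systematic insertion of $X_{23} = H_{23} + E_{23}$ into expressions $\Pi\BH_{12}^p \BH_{13}^q \amalg$ (using the twist identity $X_{23}\BH_{12}X_{23} = \BH_{13}$) and repeated application of the four-term relation then produce the stated linear relation after cancellation. This reduction is expected to be the main technical obstacle.

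For part (2), embed the computation in the four-position context $(m,v,v,v)$, with positions $2,3$ forming the internal loop of $Z_2$ and position $4$ being the external $v$; let $\cap_{23}, \cup_{23}$ denote the cap and cup on the loop positions. A direct unfolding yields
\[
(Z_2 \ot I)\,\BH \;-\; \BH\,(Z_2 \ot I) \;=\; \cap_{23}\,[\BH_{12}^2,\BH_{14}]\,\cup_{23},
\]
where the cup and cap of $Z_2$ commute with $\BH_{14}$ because they have disjoint string support. The iterated four-term relation $[\BH_{12}^2, \BH_{14} + \BH_{24}] = 0$ converts the right-hand side into $\cap_{23}[\BH_{24}, \BH_{12}^2]\cup_{23}$. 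Since positions $2$ and $4$ are both $v$-coloured, $\BH_{24}$ is identified with $H_{24} = X_{24} - E_{24}$ in the quotient $\wh\HB(\delta)$. A diagrammatic zigzag straightening then shows that for every $k \ge 0$
\[
\cap_{23}\,X_{24}\,\BH_{12}^k\,\cup_{23} \;=\; \cap_{23}\,E_{24}\,\BH_{12}^k\,\cup_{23} \;=\; \BH(m,v)^k \in \End((m,v)),
\]
since in each case the external $v$-strand is routed through the internal loop and collects precisely the $k$ connectors $\BH_{12}$; the symmetric reduction holds with $X_{24}, E_{24}$ on the right of $\BH_{12}^k$. Subtracting gives $\cap_{23}\BH_{24}\BH_{12}^2\cup_{23} = 0 = \cap_{23}\BH_{12}^2\BH_{24}\cup_{23}$, and the commutator vanishes.

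Part (3) is a consequence of part (2). The morphism spaces of $\UPB(\delta)$ are
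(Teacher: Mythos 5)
Your treatment of $Z_1=0$ coincides with the paper's (skew symmetry of $\BH$ forces $Z_1=-Z_1$), and your reduction of part (2) to the closure of $[\BH_{12}^2,\BH_{14}]$ plus the iterated four-term relation is essentially the paper's $\BB_\ell$ argument with the auxiliary loop placed on the other side of the external strand. However, there are genuine gaps. First, the identity $2Z_3=(2-\delta)Z_2$ is never actually proved: the auxiliary identities you list (absorption scalars, the splitting $\Pi\BH_{12}^pE_{23}\BH_{12}^q\amalg=Z_pZ_q$, the twist identity) are correct, but you explicitly defer the computation as ``the main technical obstacle''. In the paper this is a short two-step argument: pre-compose the four-term relation on $(m,v^2)$ with $\Pi$ and simplify by skew symmetry (Figure \ref{fig:deg0-relation}), then post-compose with $(\BH\ot I)\amalg$ and apply skew symmetry again. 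Without this (or an equivalent explicit computation) part (1) is incomplete. Part (3) also breaks off mid-sentence; the paper's proof still requires writing morphisms of $\UPB(\delta)$ as compositions of pieces $A\ot\BD\ot B$ and the sliding identities such as $X_{m,v}(Z_2\ot I)=(I\ot Z_2)X_{m,v}$, none of which appear; nor do you address the second statement of (2), that $I\ot Z_2$ commutes with $\BH(v,m)$.

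Second, and more seriously, the key evaluation in your part (2) is false: $\cap_{23}E_{24}\BH_{12}^k\cup_{23}$ is not $\BH^k$. Routing the external strand through the internal loop via $E_{24}$ forces it to traverse the $k$ connectors in the reversed direction, so this closure is the partial transpose $(\BH^k)^T$ of Figure \ref{fig:bbH}, not $\BH^k$: by skew symmetry $\BH^T=-\BH$, and by \eqref{eq:quadratic} one has $(\BH^2)^T=\BH^2+(\delta-2)\BH$. (Only the $X_{24}$-closures equal $\BH^k$.) Consequently the two terms you claim vanish do not vanish individually; each of $\cap_{23}\BH_{24}\BH_{12}^2\cup_{23}$ and $\cap_{23}\BH_{12}^2\BH_{24}\cup_{23}$ equals $\BH^2-(\BH^2)^T=(2-\delta)\BH$, which is nonzero for generic $\delta$. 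Your conclusion survives only because these two equal terms cancel in the commutator, but that is not the argument you gave; to repair it you need the transposition formulas of Lemma \ref{lem:HT2} and Corollary \ref{cor:HT}, or the paper's device of working with the whole closure $\BB_2$ of $(\BH_{13}+\BH_{23})^2$, which commutes with $\BH$ outright by the four-term relation, so that one only has to check that $\BB_2$ differs from $Z_2\ot I$ by terms commuting with $\BH$.
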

\begin{proof} 
It is an immediate consequence of the skew symmetry of $\BH $ that 
$Z_1=-Z_1=0$ if $2\neq 0$.  
To prove the second relation in (1), we pre-multiply (vertically) the four-term relation in Figure \ref{fig:4-term} by $\Pi$,  and then apply skew symmetry of $\BH$ several times to obtain the relation depicted in Figure \ref{fig:deg0-relation}.  
\begin{figure}[h]
\begin{picture}(60, 55)(0,0)
{
\linethickness{1mm}
\put(10, 0){\line(0, 1){55}}
}

\put(10, 40){\uwave{\hspace{5mm}}}
\put(10, 25){\uwave{\hspace{5mm}}}

\put(25, 45){\line(0, -1){25}}
\put(40, 45){\line(0, -1){25}}

\qbezier(25, 45)(33, 60)(40, 45)


\put(25, 30){\line(0, -1){15}}
\put(40, 30){\line(0, -1){15}}
\qbezier(25, 15)(25, 15)(40, 0)
\qbezier(25, 0)(25, 0)(40, 15)
\put(50, 23){$-$}
\end{picture}
\begin{picture}(100, 50)(0,0)
{
\linethickness{1mm}
\put(10, 0){\line(0, 1){55}}
}

\put(10, 40){\uwave{\hspace{5mm}}}
\put(10, 25){\uwave{\hspace{5mm}}}

\qbezier(25, 45)(32, 60)(40, 45)

\put(25, 45){\line(0, -1){45}}
\put(40, 45){\line(0, -1){45}}
\put(50, 23){$= \ (\delta-2)$}
\end{picture}
\begin{picture}(50, 50)(0,0)
{
\linethickness{1mm}
\put(10, 0){\line(0, 1){55}}
}

\put(10, 25){\uwave{\hspace{5mm}}}

\qbezier(25, 40)(33, 55)(40, 40)
\put(25, 40){\line(0, -1){40}}
\put(40, 40){\line(0, -1){40}}
\put(45, 5){.}
\end{picture}
\caption{A relation in $\Hom_{\PB(\delta)}(1, 1)$}
\label{fig:deg0-relation}
\end{figure}  
We then post-multiply (vertically) each diagram in  Figure \ref{fig:deg0-relation}  by $(\BH\ot I) \amalg$ and again apply the skew symmetry of $\BH$. This leads directly to 
$
2 Z_3 = (2-\delta) Z_2. 
$

To prove part (2), we note that the
four-term relation \eqref{eq:4t-right} leads to
\[
(\I\ot I\ot \cap)\left([\BH_{1 2}, (\BH_{1 3}+ \BH_{2 3})^\ell]\ot I\right)(\I\ot I\ot \cup)=0
\] 
for all $\ell$. Define
\beq
\BB_\ell= (\I\ot I\ot \cap)\left((\BH_{13} + \BH_{23})^\ell \ot I\right)(\I\ot I\ot \cup).
\eeq
Then the above relation becomes  
\beq\label{eq:Xell}
\BH \BB_\ell - \BB_\ell \BH = 0, \quad \forall \ell. 
\eeq
For $\ell=2$, we have $\BB_2 =Z_2\ot I+ 2(\delta-1)\I\ot I$, and hence 
$\BH (Z_2\ot I)= (Z_2\ot I)\BH$ by \eqref{eq:Xell}, where the relation between $\BB_2$ and $Z_2\ot I$ follows from the formulae 
\[
\baln
&(\I\ot I\ot \cap)(\BH_{02}^2\ot I)(\I\ot I\ot \cup) = Z_2\ot I, \\
&(\I\ot I\ot \cap)((\BH_{02}\BH_{12}+\BH_{12}\BH_{02})\ot I)(\I\ot I\ot \cup) = 0, \\
&(\I\ot I\ot \cap)(\BH_{12}^2\ot I)(\I\ot I\ot \cup) =2(\delta-1)\I\ot I. 
\ealn
\]

Similarly, using the four-term relation
\[
[\BH_{1 2}(v^2, m)+\BH_{1 3}(v^2, m),  \BH_{2 3}(v^2, m)]=0,  
\]
we see that 
\beq\label{eq:Xell-1}
\BH(v, m) \wt\BB_\ell -  \wt\BB_\ell \BH(v, m)=0, \quad \forall \ell, 
\eeq
where 
$
\wt\BB_\ell =  
(\cap\ot I \ot \I)\left(I\ot\left(\BH_{1 2}(v^2, m)+\BH_{1 3}(v^2, m)\right)^\ell \right)(\cup\ot I \ot \I)
$.
For $\ell=2$, we have $\wt\BB_2 =\wt{Z}_2\ot \I+ 2(\delta-1)I\ot \I$. By Lemma \ref{eq:ZZ}, 
$\wt{Z}_2={Z}_2$, hence $\wt\BB_2 ={Z}_2\ot \I+ 2(\delta-1)I\ot \I$. Substituting  this into \eqref{eq:Xell-1}, we obtain   
$\BH(v, m) (I\ot {Z}_2) -  (I\ot {Z}_2) \BH(v, m)=0$.

Now to prove part (3), note that any morphism $\A$ of $\UPB(\delta)$ is a composition of elements of the form $A\ot \BD\ot B$, where $A$ and $B$ are ordinary Brauer diagrams, and $\BD$ is $\I$, $\BH(m, v)$, $\BH(v, m)$, $X_{m, v}$ or $X_{v, m}$.  Part (3) now follows from part (2) and the facts that  
$X_{m, v}(Z_2\ot I)= (I\ot Z_2)X_{m, v}$ and $(Z_2\ot I)X_{v, m}=X_{v, m} (I\ot Z_2)$, which  
are easily obtained by sliding thin arcs over infinitesimal braids.  
\end{proof}

\begin{lemma} \label{lem:HT2} 
Let $\BG_\ell=Z_\ell\ot I  -  \BH^\ell$, and $\Phi=(1-\delta)\I \ot I-\BH$ and let
 $(\BH^{\ell})^T$ be the diagram in Figure \ref{fig:bbH}. 

Then the following relations hold in  $\End_{\MPB(\delta)}((m, v))$ for all $\ell\ge 0$, 
\beq
(\BH^{\ell+1})^T  - (\BH^\ell)^T \Phi - \BG_\ell =0, \label{eq:HT-H-1}\\
(\BH^{\ell+1})^T - \Phi (\BH^{\ell})^T - \BG_{\ell} =0, \label{eq:HT-H-2}
\eeq
Furthermore, 
$\BH^k$ and $(\BH^{\ell})^T$ commute for all $k, \ell$.
\end{lemma}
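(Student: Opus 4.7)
The strategy is to establish the two recursion relations \eqref{eq:HT-H-1} and \eqref{eq:HT-H-2} by diagrammatic computations in $\End_{\MPB(\delta)}((m, v))$ and then to deduce the commutativity statement as an immediate consequence of these. The base case $\ell = 0$ serves as a useful consistency check: since $(\BH^0)^T = \I \ot I$ and $Z_0 = \delta\, \I$ (a free thin loop), both recursions collapse to $\BH^T = -\BH$, which is exactly the right skew symmetry displayed in Figure~\ref{fig:H-skew}.

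For \eqref{eq:HT-H-1}, I would isolate the bottommost connector of $(\BH^{\ell+1})^T$ and ``straighten'' it by pushing its thin endpoint along the bottom bezier of the looped $v$-strand out to the external bottom endpoint. Performed naively, this move converts $(\BH^{\ell+1})^T$ into $-(\BH^\ell)^T \BH$ as a leading term, where the sign comes from the right skew symmetry applied to the freed connector. The discrepancy is accounted for by the four-term relation from Lemma~\ref{lem:4-t-id},
\[
[\BH_{12}(m, v^2) + \BH_{13}(m, v^2),\; \BH_{23}(m, v^2)] = 0,
\]
in which $\BH_{23}(m, v^2) = \I \ot H = \I \ot X - \I \ot E$. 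The $X$-component of $H$ accounts for the straightening itself, while the $E$-component produces two corrections: a closed thin bubble $\cap\cup = \delta$, which contributes a $(1-\delta)(\BH^\ell)^T$ term, and a partial-trace-over-$v$ piece $Z_\ell \ot I - \BH^\ell$ that appears once the bottom connector is pulled off (since $Z_\ell = \Pi(\BH^\ell \ot I)\amalg$ is exactly the $v$-trace of $\BH^\ell$). Assembling the terms yields $(\BH^{\ell+1})^T = (\BH^\ell)^T \Phi + \BG_\ell$.

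Relation \eqref{eq:HT-H-2} is then obtained by the mirror argument: straighten the topmost connector of $(\BH^{\ell+1})^T$ outward through the top bezier, using the same four-term relation at the top of the loop; the computation is the vertical reflection of the previous one and gives $(\BH^{\ell+1})^T = \Phi (\BH^\ell)^T + \BG_\ell$. Finally, subtracting \eqref{eq:HT-H-1} from \eqref{eq:HT-H-2} produces $\Phi (\BH^\ell)^T = (\BH^\ell)^T \Phi$, which, since $\Phi = (1-\delta)\I \ot I - \BH$, is equivalent to $[\BH, (\BH^\ell)^T] = 0$; iterating yields $[\BH^k, (\BH^\ell)^T] = 0$ for all $k$. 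The main obstacle I foresee is executing the straightening move cleanly: applying the four-term relation spawns many diagrams, and care is required to recognise which diagrammatic piece contributes to $(\BH^\ell)^T$, to the partial trace $Z_\ell \ot I$, and to $\BH^\ell$, as well as to track the signs imposed by right skew symmetry.
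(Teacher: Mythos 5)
Your overall architecture is the right one and coincides with the paper's: dispose of $\ell=0$ by skew symmetry (with the convention $Z_0=\delta\,\I$), establish the two recursions, and get the commutativity claim by subtracting \eqref{eq:HT-H-1} from \eqref{eq:HT-H-2}; the shape of your correction terms ($-(\BH^\ell)^T\BH$, a $(1-\delta)(\BH^\ell)^T$ term, and $Z_\ell\ot I-\BH^\ell$) is also correct. The genuine gap is in the engine you invoke. The relation you quote, Lemma \ref{lem:4-t-id}, namely $[\BH_{12}(m,v^2)+\BH_{13}(m,v^2),\,\BH_{23}(m,v^2)]=0$, is proved in the paper to be an \emph{identity} forced by skew symmetry alone (it amounts to $(\BH_{12}+\BH_{13})\circ(\I\ot E)=(\I\ot E)\circ(\BH_{12}+\BH_{13})=0$), and in particular it carries no information about the commutator $[\BH_{12}(m,v^2),\BH_{13}(m,v^2)]$ of the two pole connectors. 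But that commutator is exactly what your straightening runs into: pushing the thin endpoint of the chosen connector through the bend is a single application of skew symmetry and is \emph{exact} (one sign, no correction); the resulting diagram fails to equal $-(\BH^\ell)^T\BH$ only because in $(\BH^{\ell+1})^T$ the order of attachments along the pole is reversed relative to the order along the looped strand, so the freed connector sits at the wrong end of the pole and must be commuted past the other $\ell$ pole attachments. That re-sorting is governed by the \emph{defining} four-term relation \eqref{eq:4t-right}, $[\BH_{12},\BH_{13}+\BH_{23}]=0$, not by Lemma \ref{lem:4-t-id}; and \eqref{eq:HT-H-1} genuinely depends on \eqref{eq:4t-right} (already for $\ell=1$ it yields $(\BH^2)^T=\BH^2+(\delta-2)\BH+Z_1\ot I$, which does not follow from skew symmetry). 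A proof resting only on Lemma \ref{lem:4-t-id} therefore cannot close.

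Even with the correct relation, one application only moves the freed connector past a single one of the remaining $\ell$ connectors, so your plan generates a sum over positions that must then be resummed; this is the ``many diagrams'' problem you flag, and it does not take care of itself. The efficient repair, which is the paper's route, is to use the iterated relation $[\BH_{12}(m,v^2)^\ell,\,\BH_{13}(m,v^2)+\BH_{23}(m,v^2)]=0$ (Figure \ref{fig:4-term-iterated}) wholesale: cap it above with $\Pi$, simplify with skew symmetry and $\cap\circ H=(1-\delta)\,\cap$, then compose below with $\X_0$ and pull the remaining bottom endpoint to the top; the five surviving terms are precisely $(\BH^{\ell+1})^T$, $(\BH^\ell)^T\BH$, $\BH^\ell$, $-Z_\ell\ot I$ and $-(1-\delta)(\BH^\ell)^T$, which is \eqref{eq:HT-H-1}, and \eqref{eq:HT-H-2} is the mirror image. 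Two smaller bookkeeping points in your sketch: the coefficient $(1-\delta)$ comes from $\cap\circ H=(1-\delta)\,\cap$, so the crossing contributes the $1$ and the closed bubble the $-\delta$; and the $-\BH^\ell$ inside $\BG_\ell$ arises from the crossing part of $H$ (via $H\circ X=I\ot I-E$ after the final crossing is inserted), not from the $E$ part, whose closure is what produces $Z_\ell\ot I$.
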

\begin{proof}
If $\ell=0$, both equations are equivalent to the skew symmetry of $\BH$ in view of the relation $\BG_0=(\delta-1)\I \ot I$. 
For any $\ell\ge 1$, we have the iterated four-term relation Figure \ref{fig:4-term-iterated}.

\begin{figure}[h]
\begin{picture}(50, 70)(0,0)
{
\linethickness{1mm}
\put(10, 0){\line(0, 1){70}}
}
\put(10, 65){\uwave{\hspace{5mm}}}
\put(15, 52){\tiny$\vdots\, \ell$}
\put(10, 50){\uwave{\hspace{5mm}}}
\put(10, 25){\uwave{\hspace{5mm}}}

\put(25, 70){\line(0, -1){30}}
\put(40, 70){\line(0, -1){30}}

\qbezier(25, 40)(25, 40)(40, 30)
\qbezier(25, 30)(25, 30)(40, 40)

\put(25, 30){\line(0, -1){15}}
\put(40, 30){\line(0, -1){15}}
\qbezier(25, 15)(25, 15)(40, 0)
\qbezier(25, 0)(25, 0)(40, 15)
\put(45, 30){$-$}
\end{picture}
\begin{picture}(50, 70)(0,0)
{
\linethickness{1mm}
\put(10, 0){\line(0, 1){70}}
}

\put(10, 50){\uwave{\hspace{5mm}}}
\put(10, 25){\uwave{\hspace{5mm}}}
\put(15, 12){\tiny$\vdots\, \ell$}
\put(10, 10){\uwave{\hspace{5mm}}}

\qbezier(25, 55)(25, 55)(40, 70)
\qbezier(25, 70)(25, 70)(40, 55)

\put(25, 55){\line(0, -1){15}}
\put(40, 55){\line(0, -1){15}}

\put(25, 30){\line(0, -1){30}}
\put(40, 30){\line(0, -1){30}}
\qbezier(25, 30)(25, 30)(40, 40)
\qbezier(25, 40)(25, 40)(40, 30)
\put(45, 30){$+$}
\end{picture}
\begin{picture}(50, 70)(0,0)
{
\linethickness{1mm}
\put(10, 0){\line(0, 1){70}}
}

\put(10, 65){\uwave{\hspace{5mm}}}
\put(15, 50){\tiny$\vdots\, \ell$}
\put(10, 45){\uwave{\hspace{5mm}}}

\put(25, 70){\line(0, -1){40}}
\put(40, 70){\line(0, -1){40}}


\put(25, 30){\line(0, -1){15}}
\put(40, 30){\line(0, -1){15}}
\qbezier(25, 15)(25, 15)(40, 0)
\qbezier(25, 0)(25, 0)(40, 15)
\put(45, 30){$-$}
\end{picture}
\begin{picture}(50, 70)(0,0)
{
\linethickness{1mm}
\put(10, 0){\line(0, 1){70}}
}

\put(10, 25){\uwave{\hspace{5mm}}}
\put(15, 12){\tiny$\vdots\, \ell$}
\put(10, 10){\uwave{\hspace{5mm}}}

\qbezier(25, 55)(25, 55)(40, 70)
\qbezier(25, 70)(25, 70)(40, 55)

\put(25, 55){\line(0, -1){15}}
\put(40, 55){\line(0, -1){15}}

\put(25, 40){\line(0, -1){40}}
\put(40, 40){\line(0, -1){40}}
\put(45, 30){$+$}
\end{picture}
\begin{picture}(50, 70)(0,0)
{
\linethickness{1mm}
\put(10, 0){\line(0, 1){70}}
}

\put(10, 25){\uwave{\hspace{5mm}}}
\put(15, 12){\tiny$\vdots\, \ell$}
\put(10, 10){\uwave{\hspace{5mm}}}

\qbezier(25, 60)(33, 45)(40, 60)
\put(25, 60){\line(0, 1){10}}
\put(40, 60){\line(0, 1){10}}


\qbezier(25, 40)(33, 55)(40, 40)
\put(25, 40){\line(0, -1){40}}
\put(40, 40){\line(0, -1){40}}
\put(45, 30){$-$}
\end{picture}
\begin{picture}(50, 70)(0,0)
{
\linethickness{1mm}
\put(10, 0){\line(0, 1){70}}
}

\put(10, 65){\uwave{\hspace{5mm}}}
\put(15, 52){\tiny$\vdots\, \ell$}
\put(10, 50){\uwave{\hspace{5mm}}}

\qbezier(25, 40)(33, 25)(40, 40)
\put(25, 70){\line(0, -1){30}}
\put(40, 70){\line(0, -1){30}}


\qbezier(25, 20)(33, 35)(40, 20)
\put(25, 20){\line(0, -1){20}}
\put(40, 20){\line(0, -1){20}}
\put(45, 30){$=0$.}
\end{picture}
\caption{Iterated four-term relation}
\label{fig:4-term-iterated}
\end{figure}

Now pre-multiply (vertically) this relation 
 by $\Pi$ and use the skew symmetry of $\BH$, to obtain Figure \ref{fig:iterated-cap}.  
Similarly, by post-multiplying (vertically) the relation Figure \ref{fig:4-term-iterated} by $\amalg$ and using the skew symmetry of $\BH$, we obtain the relation depicted in Figure \ref{fig:iterated-cap-2}. 
\begin{figure}[h]
\begin{picture}(50, 70)(0,0)
{
\linethickness{1mm}
\put(10, 0){\line(0, 1){70}}
}

\qbezier(25, 65)(33, 80)(40, 65)

\put(10, 60){\uwave{\hspace{5mm}}}
\put(15, 47){\tiny$\vdots\, \ell$}
\put(10, 45){\uwave{\hspace{5mm}}}
\put(10, 25){\uwave{\hspace{5mm}}}

\put(25, 65){\line(0, -1){25}}
\put(40, 65){\line(0, -1){25}}

\qbezier(25, 40)(25, 40)(40, 30)
\qbezier(25, 30)(25, 30)(40, 40)

\put(25, 30){\line(0, -1){15}}
\put(40, 30){\line(0, -1){15}}
\qbezier(25, 15)(25, 15)(40, 0)
\qbezier(25, 0)(25, 0)(40, 15)
\put(45, 30){$+$}
\end{picture}
\begin{picture}(50, 70)(0,0)
{
\linethickness{1mm}
\put(10, 0){\line(0, 1){70}}
}

\qbezier(25, 60)(33, 75)(40, 60)

\put(10, 50){\uwave{\hspace{5mm}}}
\put(10, 25){\uwave{\hspace{5mm}}}
\put(15, 12){\tiny$\vdots\, \ell$}
\put(10, 10){\uwave{\hspace{5mm}}}


\put(25, 60){\line(0, -1){60}}
\put(40, 60){\line(0, -1){60}}

\put(25, 30){\line(0, -1){30}}
\put(40, 30){\line(0, -1){30}}
\put(45, 30){$+$}
\end{picture}
\begin{picture}(50, 70)(0,0)
{
\linethickness{1mm}
\put(10, 0){\line(0, 1){70}}
}

\qbezier(25, 60)(33, 75)(40, 60)

\put(10, 55){\uwave{\hspace{5mm}}}
\put(15, 38){\tiny$\vdots\, \ell$}
\put(10, 30){\uwave{\hspace{5mm}}}

\put(25, 60){\line(0, -1){30}}
\put(40, 60){\line(0, -1){30}}


\put(25, 30){\line(0, -1){15}}
\put(40, 30){\line(0, -1){15}}
\qbezier(25, 15)(25, 15)(40, 0)
\qbezier(25, 0)(25, 0)(40, 15)
\put(45, 30){$-$}
\end{picture}
\begin{picture}(90, 70)(-40,0)
\put(-30, 30){$(1-\delta)$}
{
\linethickness{1mm}
\put(10, 0){\line(0, 1){70}}
}

\qbezier(25, 60)(33, 75)(40, 60)

\put(10, 55){\uwave{\hspace{5mm}}}
\put(15, 38){\tiny$\vdots\, \ell$}
\put(10, 30){\uwave{\hspace{5mm}}}


\put(25, 60){\line(0, -1){60}}
\put(40, 60){\line(0, -1){60}}

\put(45, 30){$-$}
\end{picture}
\begin{picture}(50, 70)(0,0)
{
\linethickness{1mm}
\put(10, 0){\line(0, 1){70}}
}

\qbezier(25, 60)(33, 75)(40, 60)

\put(10, 60){\uwave{\hspace{5mm}}}
\put(15, 47){\tiny$\vdots\, \ell$}
\put(10, 45){\uwave{\hspace{5mm}}}

\qbezier(25, 40)(33, 25)(40, 40)
\put(25, 60){\line(0, -1){20}}
\put(40, 60){\line(0, -1){20}}


\qbezier(25, 20)(33, 35)(40, 20)
\put(25, 20){\line(0, -1){20}}
\put(40, 20){\line(0, -1){20}}
\put(45, 30){$=0$.}
\end{picture}
\caption{A relation in $\Hom_{\PB(\delta)}(2, 0)$}
\label{fig:iterated-cap}
\end{figure}  
\begin{figure}[h]
\begin{picture}(50, 70)(0,0)
\put(-5, 30){$-$}
{
\linethickness{1mm}
\put(10, 0){\line(0, 1){70}}
}
\put(10, 65){\uwave{\hspace{5mm}}}
\put(15, 52){\tiny$\vdots\, \ell$}
\put(10, 50){\uwave{\hspace{5mm}}}
\put(10, 25){\uwave{\hspace{5mm}}}

\put(25, 70){\line(0, -1){40}}
\put(40, 70){\line(0, -1){40}}


\put(25, 30){\line(0, -1){20}}
\put(40, 30){\line(0, -1){20}}

\qbezier(25, 10)(32,-5)(40, 10)

\put(45, 30){$-$}
\end{picture}
\begin{picture}(50, 70)(0,0)
{
\linethickness{1mm}
\put(10, 0){\line(0, 1){70}}
}

\put(10, 50){\uwave{\hspace{5mm}}}
\put(10, 28){\uwave{\hspace{5mm}}}
\put(15, 15){\tiny$\vdots\, \ell$}
\put(10, 13){\uwave{\hspace{5mm}}}

\qbezier(25, 55)(25, 55)(40, 70)
\qbezier(25, 70)(25, 70)(40, 55)

\put(25, 55){\line(0, -1){15}}
\put(40, 55){\line(0, -1){15}}

\put(25, 30){\line(0, -1){22}}
\put(40, 30){\line(0, -1){22}}
\qbezier(25, 30)(25, 30)(40, 40)
\qbezier(25, 40)(25, 40)(40, 30)

\qbezier(25, 8)(32,-7)(40, 8)

\put(45, 30){$+$}
\end{picture}
\begin{picture}(50, 70)(0,0)
{
\linethickness{1mm}
\put(10, 0){\line(0, 1){70}}
}

\put(10, 65){\uwave{\hspace{5mm}}}
\put(15, 50){\tiny$\vdots\, \ell$}
\put(10, 45){\uwave{\hspace{5mm}}}

\put(25, 70){\line(0, -1){40}}
\put(40, 70){\line(0, -1){40}}


\put(25, 30){\line(0, -1){20}}
\put(40, 30){\line(0, -1){20}}

\qbezier(25, 10)(32,-5)(40, 10)

\put(45, 30){$-$}
\end{picture}
\begin{picture}(50, 70)(0,0)
{
\linethickness{1mm}
\put(10, 0){\line(0, 1){70}}
}

\put(10, 35){\uwave{\hspace{5mm}}}
\put(15, 22){\tiny$\vdots\, \ell$}
\put(10, 20){\uwave{\hspace{5mm}}}

\qbezier(25, 55)(25, 55)(40, 70)
\qbezier(25, 70)(25, 70)(40, 55)

\put(25, 55){\line(0, -1){15}}
\put(40, 55){\line(0, -1){15}}

\put(25, 40){\line(0, -1){30}}
\put(40, 40){\line(0, -1){30}}

\qbezier(25, 10)(32,-5)(40, 10)

\put(45, 30){$+$}
\end{picture}
\begin{picture}(50, 70)(0,0)
{
\linethickness{1mm}
\put(10, 0){\line(0, 1){70}}
}

\put(10, 35){\uwave{\hspace{5mm}}}
\put(15, 22){\tiny$\vdots\, \ell$}
\put(10, 20){\uwave{\hspace{5mm}}}

\qbezier(25, 60)(33, 45)(40, 60)
\put(25, 60){\line(0, 1){10}}
\put(40, 60){\line(0, 1){10}}


\qbezier(25, 40)(33, 55)(40, 40)
\put(25, 40){\line(0, -1){30}}
\put(40, 40){\line(0, -1){30}}

\qbezier(25, 10)(32,-5)(40, 10)

\put(45, 30){$-$}
\end{picture}
\begin{picture}(50, 70)(0,0)
{
\linethickness{1mm}
\put(10, 0){\line(0, 1){70}}
}

\put(10, 65){\uwave{\hspace{5mm}}}
\put(15, 52){\tiny$\vdots\, \ell$}
\put(10, 50){\uwave{\hspace{5mm}}}

\qbezier(25, 40)(33, 25)(40, 40)
\put(25, 70){\line(0, -1){30}}
\put(40, 70){\line(0, -1){30}}


\qbezier(25, 20)(33, 35)(40, 20)
\put(25, 20){\line(0, -1){10}}
\put(40, 20){\line(0, -1){10}}

\qbezier(25, 10)(32,-5)(40, 10)

\put(45, 30){$=0$.}
\end{picture}
\caption{A relation in $\Hom_{\PB(\delta)}(0, 2)$}
\label{fig:iterated-cap-2}
\end{figure}  

To prove \eqref{eq:HT-H-1}, we post-multiply the relation Figure \ref{fig:iterated-cap} by $\X_0$, and then pull the bottom right end point to the top. We obtain 
\[
(\BH^{\ell+1})^T +  (\BH^\ell)^T \BH +  \BH^\ell - (1-\delta) (\BH^\ell)^T - Z_\ell\ot I =0, 
\]
which leads to equation \eqref{eq:HT-H-1}. 

Equation \eqref{eq:HT-H-2} can be proven similarly. 
We pre-multiply the relation by $\X_0$, and then pull the top right end point to the bottom. We obtain the relation 
\[
-(\BH^{\ell+1})^T - \BH (\BH^{\ell})^T + (\BH^{\ell})^T - \BH^{\ell}  + Z_\ell\ot I - \delta (\BH^{\ell})^T=0,
\]
which leads to equation \eqref{eq:HT-H-2}. 

Taking the difference between equations \eqref{eq:HT-H-1} and \eqref{eq:HT-H-2}, we see that $\BH$ commutes with all $(\BH^{\ell})^T$. This implies the final statement of the lemma. 
\end{proof}

\begin{remark} If the characteristic of $\K$ is not $2$, then 
\be\label{eq:quadratic}
(\BH^2)^T = \BH^2 +(\delta-2)\BH; 
\ee
this equation may be represented pictorially as
\[
\begin{picture}(60, 45)(0,0)
{
\linethickness{1mm}
\put(10, 0){\line(0, 1){45}}
}

\put(10, 30){\uwave{\hspace{5mm}}}
\put(10, 15){\uwave{\hspace{5mm}}}

\put(25, 35){\line(0, -1){30}}

\qbezier(25, 35)(35, 45)(40, 0)
\qbezier(25, 5)(35, -5)(40, 40)


\put(50, 18){$-$}
\end{picture}
\begin{picture}(85, 45)(0,0)
{
\linethickness{1mm}
\put(10, 0){\line(0, 1){45}}
}

\put(10, 30){\uwave{\hspace{5mm}}}
\put(10, 15){\uwave{\hspace{5mm}}}


\put(25, 45){\line(0, -1){45}}
\put(35, 18){$= \ (\delta-2)$}
\end{picture}
\begin{picture}(50, 45)(0,0)
{
\linethickness{1mm}
\put(10, 0){\line(0, 1){45}}
}

\put(10, 25){\uwave{\hspace{5mm}}}

\put(25, 45){\line(0, -1){45}}
\put(30, 5){.}
\end{picture}
\]
This follows easily from Figure \ref{fig:deg0-relation}.
\end{remark}

\begin{corollary} \label{cor:HT} 
Retain the notation of Lemma \ref{lem:HT2} .  
The following relations hold in $\End_{\MPB(\delta)}((m, v))$ for all $\ell\ge 0$: 
\beq
(\BH^{\ell+1})^T =\sum_{i=1}^\ell \BG_i \Phi ^{\ell -i}  - \BH \Phi^\ell, \quad
(\BH^{\ell+1})^T =\sum_{i=1}^\ell  \Phi ^{\ell -i} \BG_i - \Phi^\ell  \BH, 
\eeq
which in particular imply 
\beq\label{eq:H-Z-comm}
\sum_{i=1}^{\ell-1}  [Z_i\ot I, \Phi^{\ell-i}]=0, \quad \forall \ell. 
\eeq
\end{corollary}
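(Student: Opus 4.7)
The plan is to iterate the two recursions of Lemma \ref{lem:HT2}, namely
\[
(\BH^{\ell+1})^T = (\BH^\ell)^T \Phi + \BG_\ell \quad\text{and}\quad (\BH^{\ell+1})^T = \Phi (\BH^\ell)^T + \BG_\ell,
\]
by induction on $\ell$, starting from the base case $(\BH^0)^T = \I\ot I$. (This base case is consistent with Lemma \ref{lem:HT2} at $\ell=0$, where the recursion reads $(\BH^1)^T = \Phi + \BG_0 = (1-\delta)\I\ot I - \BH + (\delta-1)\I\ot I = -\BH$, in agreement with the skew symmetry of $\BH$.)

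Iterating the first recursion yields, by a straightforward induction,
\[
(\BH^{\ell+1})^T = \Phi^{\ell+1} + \sum_{i=0}^{\ell} \BG_i\, \Phi^{\ell-i}.
\]
Since $\BG_0 = (\delta-1)\I\ot I$, the $i=0$ summand combines with the leading $\Phi^{\ell+1}$ term as
\[
\Phi^{\ell+1} + \BG_0 \Phi^\ell \;=\; \bigl(\Phi + (\delta-1)\I\ot I\bigr)\Phi^\ell \;=\; -\BH\, \Phi^\ell,
\]
giving the first stated identity. Iterating the second recursion and using that $\BH$ and $\Phi = (1-\delta)\I\ot I - \BH$ commute, the analogous manipulation produces the second identity.

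For the commutator relation \eqref{eq:H-Z-comm}, equate the two formulas just established. Since $\BH\Phi^\ell = \Phi^\ell \BH$, the terms outside the sum cancel, yielding $\sum_{i=1}^\ell [\BG_i, \Phi^{\ell-i}] = 0$. Now $\BG_i = Z_i\ot I - \BH^i$, and $\BH^i$ commutes with $\Phi^{\ell-i}$, so $[\BG_i, \Phi^{\ell-i}] = [Z_i\ot I, \Phi^{\ell-i}]$. The $i=\ell$ term is zero since $\Phi^0 = \I\ot I$, leaving $\sum_{i=1}^{\ell-1}[Z_i\ot I, \Phi^{\ell-i}] = 0$.

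There is no real obstacle here: once the two recursions of Lemma \ref{lem:HT2} are in hand and one notes that $\Phi$ and $\BH$ commute, everything follows by bookkeeping. The only mild subtlety is fixing the convention $(\BH^0)^T = \I\ot I$ so that the inductive base agrees with the skew symmetry of $\BH$.
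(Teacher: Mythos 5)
Your proof is correct and follows essentially the same route as the paper: iterate the two recursions of Lemma \ref{lem:HT2} and subtract the resulting closed formulas, using that $\Phi$ is a linear combination of $\BH$ and the identity so that all the relevant commutators reduce to $[Z_i\ot I,\Phi^{\ell-i}]$. The only (immaterial) difference is that the paper stops the iteration at $\BH^T$ and invokes skew symmetry $\BH^T=-\BH$, whereas you iterate one step further to $(\BH^0)^T=\I\ot I$ and recover the $-\BH\Phi^\ell$ term by absorbing $\BG_0\Phi^\ell$ into $\Phi^{\ell+1}$.
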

\begin{proof}
By iterating equation \eqref{eq:HT-H-1},  we obtain 
\[
\baln
(\BH^{\ell+1})^T 
&= \sum_{i=1}^\ell \BG_i \Phi ^{\ell -i} +\BH^T \Phi^\ell=\sum_{i=1}^\ell \BG_i \Phi ^{\ell -i}  - \BH \Phi^\ell, 
\ealn
\]
where the last equality is obtained by using the skew symmetry of $\BH$. This proves the first relation. 
The second relation is a consequence of equation \eqref{eq:HT-H-2}. 
By taking the difference between the two relations, we obtain \eqref{eq:H-Z-comm}.
This completes the proof.
\end{proof}

\begin{theorem} \label{thm:Z-odd} 

Assume that $2\ne 0$ in $\K$  (thus $Z_1=0$). 
Then 
\begin{enumerate}
\item The elements $Z_\ell$ are central in $\MPB(\delta)$ for $\ell\geq 1$, in the sense that for any objects $\bs, \bs', \bt, \bt'$,
and all $\A\in\Hom_{\MPB(\delta)}((\bt, m, \bt'),  (\bs, m, \bs'))$,  
\[
\baln
(I(\bt)\ot Z_\ell\ot I(\bt')) \A= \A (I(\bs)\ot Z_\ell\ot I(\bs'));
\ealn
\]
\item The elements $Z_{2j+1}$ with $j\ge 1$ belong to the subalgebra generated by $\I$ and $Z_{2\ell}$ with $\ell\ge 1$.
\end{enumerate}
\end{theorem}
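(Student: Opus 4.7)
My plan for part (1) is to verify that $Z_\ell$ slides past each of the generating morphisms of $\MPB(\delta)$. For the Brauer-type generators ($I$, $X$, $\cup$, $\cap$, $\I$, and the colour-permutation coupons $X_{ab}$) this is immediate from the sliding rules of Figure \ref{fig:coupon}, since $Z_\ell$ is supported on a single thick strand to which these generators contribute no chords. The substantive cases are commutation with the three infinitesimal-braid generators $\BH(m,v)$, $\BH(v,m)$, and $\BH(m,m)$. For $[Z_\ell \ot I, \BH(m,v)]$ I will invoke equation \eqref{eq:H-Z-comm} of Corollary \ref{cor:HT}, namely $\sum_{i=1}^{\ell-1}[Z_i \ot I, \Phi^{\ell-i}] = 0$, and argue by strong induction on $\ell$: the base case $Z_1 = 0$ is trivial, and under the inductive hypothesis $[Z_k \ot I, \Phi] = 0$ (hence $[Z_k \ot I, \Phi^j] = 0$) for $k < \ell - 1$, the sum collapses to $[Z_{\ell-1} \ot I, \Phi] = 0$, equivalent to $[Z_{\ell-1} \ot I, \BH(m,v)] = 0$. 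The case of $\BH(v,m)$ follows symmetrically.

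The hardest case is $[Z_\ell \ot \I, \BH(m,m)] = 0$. I plan to apply the four-term relation $[\BH(m,m) \ot I,\ \BH_{1 3}(m,m,v) + \BH_{2 3}(m,m,v)] = 0$ in $\End((m,m,v))$, raise it to the $\ell$-th power, and close the $v$-strand with cap--cup to obtain $[\BH(m,m), Y_\ell] = 0$ with $Y_\ell := (\I \ot \I \ot \cap)\bigl((\BH_{1 3}+\BH_{2 3})^\ell \ot I\bigr)(\I \ot \I \ot \cup)$. Expanding $(\BH_{1 3}+\BH_{2 3})^\ell$ and using sliding of chord endpoints along the resulting closed $v$-loop, one sees that $Y_\ell$ decomposes as $Z_\ell \ot \I + \I \ot Z_\ell$ plus cross-terms lying in the subalgebra generated by $\BH(m,m)$ and by $Z_k \ot \I,\ \I \ot Z_k$ for $k < \ell$; by the inductive hypothesis those cross-terms commute with $\BH(m,m)$, giving $[\BH(m,m),\, Z_\ell \ot \I + \I \ot Z_\ell] = 0$. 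The individual commutation of $Z_\ell \ot \I$ with $\BH(m,m)$ is then extracted using the chord-number grading on $\BT((m,m))$ from Section~\ref{sect:BH-properties} together with the symmetry $X_{m,m}\BH(m,m)X_{m,m} = \BH(m,m)$. This separation step is expected to be the main technical obstacle.

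For part (2), I will apply the closure $\overline{f} := (\I \ot \cap)(f \ot I)(\I \ot \cup) \in \End(m)$, for $f \in \End((m,v))$, to both sides of the identity
\[
(\BH^{\ell+1})^T \;=\; \sum_{i=1}^{\ell} \BG_i\, \Phi^{\ell-i} \;-\; \BH\, \Phi^{\ell}
\]
from Corollary \ref{cor:HT}. By construction $\overline{\BH^k} = Z_k$ (with $Z_0 = \delta\I$), and $\overline{(Z_i \ot I)\, P(\BH)} = Z_i \cdot \overline{P(\BH)}$ for any polynomial $P$, since $Z_i \ot I$ acts trivially on the $v$-strand. In addition $\overline{(\BH^k)^T} = Z_k$, which I would verify by a direct diagrammatic simplification using skew symmetry of $\BH$ (easily checked by hand for small $k$ and then propagated via the iteration of \eqref{eq:HT-H-1}). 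Comparing coefficients of $Z_{\ell+1}$ on the two sides of the closed identity, the LHS contributes $1$ while on the RHS the only contribution is from the $\BH \cdot (-\BH)^{\ell}$ term of $-\BH \Phi^{\ell}$, giving $-(-1)^{\ell}$. Rearranging,
\[
\bigl(1 + (-1)^{\ell}\bigr)\, Z_{\ell+1} \;=\; P_\ell\bigl(Z_0, Z_1, \ldots, Z_\ell\bigr)
\]
for an explicit polynomial $P_\ell$ with scalar coefficients.

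When $\ell = 2j$ is even the coefficient $1 + 1 = 2$ is non-zero (since $2 \neq 0$ in $\K$ by hypothesis), so $Z_{2j+1}$ is expressible as a polynomial in $Z_0, Z_1, \ldots, Z_{2j}$. The claim of part (2) then follows by induction on $j$: the base case $j = 1$ recovers the relation $2 Z_3 = (2-\delta) Z_2$ of Lemma \ref{lem:central}(1), placing $Z_3 \in \langle \I, Z_2 \rangle$; for $j \geq 2$, the inductive hypothesis allows me to replace every occurrence of $Z_{2k+1}$ for $1 \leq k < j$ in $P_{2j}$ by a polynomial in $\I$ and the even $Z_{2\ell}$, so that $Z_{2j+1}$ lies in $\langle \I, Z_2, Z_4, \ldots, Z_{2j} \rangle$. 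When $\ell$ is odd the coefficient $1 + (-1)^{\ell} = 0$ and one obtains only a syzygy among lower-indexed $Z_k$; this is consistent with the theorem making no analogous claim for even-indexed $Z_{2j}$.
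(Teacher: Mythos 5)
Your proposal follows the paper's own route in both parts. For part (1), your strong induction on $\ell$ using \eqref{eq:H-Z-comm} (base case $Z_1=0$, collapse of the sum to a single commutator $[Z_{\ell}\ot I,\Phi]=0$) is exactly the paper's argument, and for part (2) your closure of the identity of Corollary~\ref{cor:HT}, the observation that only the top term of $-\BH\Phi^{\ell}$ contributes $Z_{\ell+1}$, the resulting relation $(1+(-1)^{\ell})Z_{\ell+1}=P_\ell(Z_0,\dots,Z_\ell)$, and the induction on $j$ replacing the odd $Z_{2k+1}$ by polynomials in the even ones, is precisely what the paper does (it first rewrites the identity as a polynomial in $\BH$ with coefficients $f_i(Z_2,\dots,Z_\ell)$ and then closes, but the bookkeeping is the same); your identity $\overline{(\BH^{k})^T}=Z_k$ is also used, without comment, in the paper.

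The one place you go beyond the paper is the commutation of $Z_\ell$ with $\BH(m,m)$, which you honestly flag as unresolved. Note that the paper's proof does not treat this case at all: it reduces part (1) to $[\BH(m,v),Z_\ell\ot I]=0$ and stops there, the earlier decomposition argument (Lemma~\ref{lem:central}(3)) being carried out only for $\UPB(\delta)$, where a diagram has a single thick strand and the generators meeting the pole are $\I$, $\BH(m,v)$, $\BH(v,m)$ and crossings. So your completed portion already covers everything the paper actually proves; the "separation step" you anticipate (extracting $[\BH(m,m),Z_\ell\ot\I]=0$ from $[\BH(m,m),Z_\ell\ot\I+\I\ot Z_\ell+\text{mixed terms}]=0$, where the mixed terms are closures of words containing both $\BH_{13}$ and $\BH_{23}$) is genuinely not routine, but it is also not supplied by the paper, and it only becomes relevant because the statement allows $\bt,\bt'$ to contain further thick strands joined to the distinguished pole by connectors. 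In short: where your argument is complete it coincides with the paper's, and the gap you flag is a gap in the paper's own exposition rather than a missing ingredient that the paper provides.
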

\begin{proof}
To prove part (1), we need only show that $[\BH, Z_\ell\ot I]=0$ for all $\ell$. We will do this by induction on $\ell$  using \eqref{eq:H-Z-comm}. 
Let us first replace $\ell$ by $\ell+1$ and re-write \eqref{eq:H-Z-comm} as
\[
\sum_{i=1}^{\ell}  [\Phi^{\ell+1-i},  Z_i\ot I]=0, \quad \forall \ell.
\]
Recall that $Z_1=0$, and we have already shown that $Z_2$ (and hence $Z_3$) commutes with $\BH$. Thus they also commute with $\Phi= (1-\delta)\I \ot I-\BH$. 
Assume that $Z_i$ commutes with $\Phi$ for all $i<\ell$. Then the above relation gives
$[\Phi,  Z_\ell\ot I]=0$, and hence $[\BH,  Z_\ell\ot I]=0$. 

Now we prove part (2). 
In view of the expressions for $\Phi$ and $\BG_i$ in terms of $\BH$ and $Z_j$ from Lemma \ref{lem:HT2}, we can re-write  the first relation in Corollary \ref{cor:HT} as 
\beq\label{eq:diff-HT}
(\BH^{\ell+1})^T  = (1)^{\ell+1} \BH^{\ell +1}+\sum_{i=0}^\ell f_i(Z_2, Z_3, \dots, Z_\ell) \BH^i, 
\eeq
where the $f_i(Z_2, Z_3, \dots, Z_\ell)$ are linear combinations of $1, Z_2, Z_3, \dots, Z_\ell$.
Taking the tensor product of both sides of \eqref{eq:diff-HT} with $I$, we obtain a relation in $\Hom_{\PB(\delta)}(3, 3)$. Pre-multiplying the resulting relation by $\Pi$ and then post-multiplying by $\amalg$, and using $\Pi Z_{\ell+1}\left((\BH^{\ell+1})^T \ot I\right) \amalg = Z_{\ell+1}$,  we obtain 
\[
\left(1-(1)^{\ell+1}\right) Z_{\ell+1}= \sum_{i=0}^\ell f_i(Z_2, Z_3, \dots, Z_\ell) Z_i.
\]

Let $\ell=2j$, then this relation allows us to express $Z_{2j+1}$ in terms of $Z_k$ for $k\le 2j$. An induction on $j$, starting from $j=0$ with $Z_1=0$,  completes the proof of 
the theorem. 
%
\end{proof}

Note that by slightly extending the proof of Theorem \ref{thm:Z-odd}(2),  we also obtain the relation $
\sum_{i=0}^{2\ell+1} f_i(Z_2, Z_3, \dots, Z_{2\ell +1}) Z_i=0$ for all $\ell$. 

\;

\;

\begin{lemma}   \label{lem:tech} 
The elements 
$
Z(k_1, k_2, \dots, k_p)=\Pi \BH^{k_1} \X_0\BH^{k_2} \X_0\dots  \X_0 \BH^{k_p}\amalg$,  
for $p\ge 1$ and $k_i\ge 1$, 
can be expressed, using composition, in terms of $\I$ and $Z_{\ell}$ for $\ell\ge 2$. 
\end{lemma}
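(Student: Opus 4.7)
The plan is to proceed by induction on $p$, the number of $\BH^{k_i}$ blocks. The base case $p=1$ is immediate, since $Z(k_1)=\Pi\BH^{k_1}\amalg=Z_{k_1}$ by definition. For the inductive step, I would exploit the identity $\X_0=\BH_{23}+E_{23}$, where $E_{23}=\amalg\circ\Pi$. This decomposition follows from $\BH(v,v)=H=X-E$, which rewrites $\X_0=\I\otimes X=\I\otimes(H+E)=\BH_{23}+E_{23}$ in $\End_{\MPB(\delta)}((m,v,v))$.

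Applying this to the first crossing in $Z(k_1,\ldots,k_p)$ splits the expression as
\[
Z(k_1,\ldots,k_p)=\Pi\BH^{k_1}\BH_{23}\BH^{k_2}\X_0\cdots\BH^{k_p}\amalg+Z_{k_1}\cdot Z(k_2,\ldots,k_p),
\]
where the $E_{23}=\amalg\Pi$ factor ``cuts'' the diagram producing the product on the right, using the centrality of $Z_{k_1}$ from Theorem \ref{thm:Z-odd} to move it past the second factor. The second term is handled by the inductive hypothesis. Iterating the substitution $\X_0=\BH_{23}+E_{23}$ at every crossing and expanding, one obtains $Z(k_1,\ldots,k_p)$ as a sum over subsets $S\subseteq\{1,\ldots,p-1\}$ of products of ``pure $\BH_{23}$ blocks'' of the form $Y(a_1,\ldots,a_q):=\Pi\BH_{12}^{a_1}\BH_{23}\BH_{12}^{a_2}\BH_{23}\cdots\BH_{23}\BH_{12}^{a_q}\amalg$, the blocks being separated by the $E_{23}$ cuts. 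It thus suffices to show each such $Y$ lies in the subalgebra generated by $\I$ and the $Z_\ell$'s.

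For a block $Y(a_1,\ldots,a_q)$, I would commute the leftmost $\BH_{23}$ through $\BH_{12}^{a_1}$ using the 4-term relation $[\BH_{12},\BH_{13}+\BH_{23}]=0$ from Lemma \ref{lem:Murphy}(1), which telescopes to $[\BH_{12}^{a_1},\BH_{23}]=\BH_{13}\BH_{12}^{a_1}-\BH_{12}^{a_1}\BH_{13}$. Combined with the boundary identities $\Pi\BH_{23}=(1-\delta)\Pi$ and $\BH_{23}\amalg=(1-\delta)\amalg$ (obtained from $H=X-E$ together with $\Pi X=\Pi$, $\Pi E=\delta\Pi$ and the mirror relations), this produces a multiple of a shorter $Y$-block with $a_1,a_2$ merged into $a_1+a_2$, plus boundary terms of the form $\Pi\BH_{13}(\cdots)\amalg$ and $\Pi(\cdots)\BH_{13}\amalg$. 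The latter can be absorbed using $\Pi\BH_{13}=\Pi\BH_{12}\X_0$ and $\BH_{13}\amalg=\X_0\BH_{12}\amalg$, which pushes the extra crossing back into the diagram and, after reapplying the $\X_0=\BH_{23}+E_{23}$ split, yields shorter $Y$-blocks together with products already covered by induction.

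The main obstacle is bookkeeping: the $\BH_{13}$ factors produced by commutation sit in the middle of the expression and, if naively rewritten as $\X_0\BH_{12}\X_0$, appear to increase the number of crossings. The resolution is to choose an inductive measure (for instance, lexicographic on $(p,\sum_i k_i)$) that strictly decreases under the combined use of (i) the telescoped commutator identity, (ii) the boundary absorption rules $\Pi\BH_{13}=\Pi\BH_{12}\X_0$ and its mirror, and (iii) centrality of $Z_\ell$ to detach completed $\Pi\BH^k\amalg=Z_k$ sub-diagrams. Since the identities in Corollary \ref{cor:HT} already show how boundary-closed expressions in $\BH$ collapse to polynomials in the $Z_\ell$'s, the induction terminates with the stated conclusion.
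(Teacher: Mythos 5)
Your preparatory identities are all correct: $\X_0=\BH_{23}+E_{23}$ with $E_{23}=\amalg\Pi$, the boundary rules $\Pi\BH_{23}=(1-\delta)\Pi$, $\BH_{23}\amalg=(1-\delta)\amalg$, $\Pi\BH_{13}=\Pi\BH_{12}\X_0$, and the telescoped commutator $[\BH_{12}^{a},\BH_{23}]=\BH_{13}\BH_{12}^{a}-\BH_{12}^{a}\BH_{13}$ all hold, and the $E_{23}$-cut does factor off $Z_{k_1}$. But the proof has a genuine gap exactly where you flag it: the termination of the rewriting of the $Y$-blocks is never established, and this is the entire content of the lemma. The commutator produces not boundary terms but a \emph{middle} term $\Pi\cdots\BH_{12}^{a_1}\BH_{13}\BH_{12}^{a_2}\cdots\amalg$; rewriting $\BH_{13}=\X_0\BH_{12}\X_0$ converts it back into a $Z(\dots)$-expression with one more pole connector and one more block. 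Already for $p=2$ your own moves give, up to signs, $Z(k_1,k_2)=Z_{k_1}Z_{k_2}+(1-\delta)Z_{k_1+k_2}\mp Z_{k_1+k_2+1}-Z(k_1,1,k_2)$, so the new term $Z(k_1,1,k_2)$ has both a larger $p$ and a larger $\sum_i k_i$ than the element you started from: the lexicographic measure you propose \emph{increases} under your procedure (in either order of the components), and no decreasing measure is exhibited. Corollary \ref{cor:HT} does not supply one, since it concerns $(\BH^\ell)^T$ in $\End_{\MPB(\delta)}((m,v))$ and does not control these mixed words.

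The missing idea is the one the paper uses: filter $\Hom_{\MPB(\delta)}((m,v^r),(m,v^s))$ by the number of connectors and induct on the total degree $k=\sum_i k_i$, working modulo $F\wh{\mathbf H}(0,0)_{k-1}$. The point is that when the four-term relation is used to commute $\BH_{13}$ (one pole connector) past $\BH_{12}$, the correction terms involve $\BH_{23}$, which carries no pole connector, hence lie in strictly lower filtration and can simply be discarded; modulo $F\wh{\mathbf H}(0,0)_{k-1}$ the reductions $Z(k_1,1,k_3,\dots)\equiv -Z(k_1+1+k_3,\dots)$ and $Z(k_1,k_2,\dots)\equiv -Z(k_1+1,k_2-1,\dots)$ strictly simplify the leading term at fixed $k$, and the discarded pieces, being spanned by elements $Z(k_1',\dots,k_q')$ of total degree $<k$, are covered by the induction hypothesis. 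With this ``modulo lower filtration'' device your exact bookkeeping (and the need for a terminating rewriting measure) disappears; without it, the argument as written does not close.
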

\begin{proof} 
Denote $\wh{\bf H}(r, s)=\Hom_{\MPB(\delta)}((m, v^r),  (m, v^s))$. There is a filtration 
\beq
F\wh{\bf H}(r, s)_0\subset F\wh{\bf H}(r, s)_1\subset F\wh{\bf H}(r, s)_2\subset\dots,
\eeq
where $F\wh{\bf H}(r, s)_k$ is spanned by multi-polar diagrams with no more than $k$ connectors. Each $\wh{\bf H}(r, r)$ is a filtered algebra with this filtration.

Now $Z(k_1, k_2, \dots, k_p)$  belongs to  $F\wh{\bf H}(0, 0)_k$ with $k=\sum_i k_i$. We prove the lemma by induction on $k$. If $k=1$, the statement is clear. 
Note that $F\wh{\bf H}(0, 0)_k$ is spanned by diagrams of the type we are considering, viz. $\{Z(k_1,k_2, \dots ,k_p)\mid \sum_ik_i\leq k\}$.

If $k_1=1$, it follows the skew symmetry of $\BH$ that $Z(k_1, k_2, \dots, k_p)=-Z(k_1+k_2, k_3, \dots, k_p)$, and there is a similar relation if $k_p=1$. Thus we may assume that $k_1, k_p\ge 2$. 

We now describe two reductions, which apply respectively to the cases $k_2=1$ or $k_2>1$.
\begin{enumerate}
\item If $k_2=1$, by repeated application of the four-term relation modulo $F\wh{\bf H}(0, 0)_{k-1}$, we obtain 
\[
Z(k_1, k_2, \dots, k_p)=-Z(k_1+k_2+ k_3, k_4, \dots, k_p) \mod F\wh{\bf H}(0, 0)_{k-1}.
\] 
\item If $k_2\ge 2$, we write $Z(k_1, k_2, \dots, k_p) = \Pi \BH^{k_1} \X_0\BH\X_0 \X_0\BH^{k_2-1} \X_0\dots  \X_0 \BH^{k_p}\amalg$. Then applying the reduction (1), we obtain 
\[
\begin{aligned}
Z(k_1, k_2, \dots, k_p) &= - \Pi \BH^{k_1+1} \X_0\BH^{k_2-1} \X_0\dots  \X_0 \BH^{k_p}\amalg \mod F\wh{\bf H}(0, 0)_{k-1}\\
&=- Z(k_1+1, k_2-1, \dots, k_p) \mod F\wh{\bf H}(0, 0)_{k-1}. 
\end{aligned}
\]
If $k_2-1\ge 2$, repeat this process, eventually reaching 
\[
\begin{aligned}
Z(k_1, k_2, \dots, k_p) 
&=(-1)^{k_2-1} Z(k_1+k_2-1, 1, \dots, k_p) \mod F\wh{\bf H}(0, 0)_{k-1}, 
\end{aligned}
\]
and we are in the situation of (1). 
\end{enumerate}
The two reductions above enable us to reduce $Z(k_1, k_2, \dots, k_p)$ to $Z(k-1, 1)$ or $-Z(k-1, k) $ modulo $F\wh{\bf H}(0, 0)_{k-1}$. These in turn are equal to $\pm Z_k\mod F\wh{\bf H}(0, 0)_{k-1}$
(for some $k$) by the skew symmetry of $\BH$. Using induction on $k$,
we can express $Z(k_1, k_2, \dots, k_p)$ as a polynomial in $Z_\ell$ for $\ell\le k$ with the coefficient of $Z_k$ being $\pm 1$. 

This completes the proof of the lemma. 
\end{proof}

\begin{theorem}\label{thm:Hom01}
\begin{enumerate}
\item
The endomorphism algebra $\End_{\MPB(\delta)}(m)$ is commutative, and is generated by the elements $\Z_{2\ell}$ for $\ell\ge 1$. 
\item All elements $Z\in \End_{\MPB(\delta)}(m)$ are central in $\UPB(\delta)$ in the sense that for any pair of objects $(v^r, m, v^s),  (v^{r'}, m, v^{s'})\in\UPB(\delta)$,
and morphism $\A\in\Hom_{\UPB(\delta)}((v^r, m, v^s),  (v^{r'}, m, v^{s'}))$,  
\[
\baln
(I(v^{r'})\ot Z\ot I(v^{s'})) \A= \A (I(v^{r})\ot Z\ot I(v^{s'})).
\ealn
\]
\item The endomorphism algebra $\End_{\MPB(\delta)}(\bs)$ for $\bs=(m, v)$ (resp. $\bs =(v, m)$) is generated by $\BH(m, v)$ (resp. $\BH(v, m)$) and the elements $Z_{2\ell}I(\bs)$ for all $\ell$, 
and thus  is commutative. 
\item The elements $Z_{2\ell}$ for $\ell\ge 1$ are algebraically independent. 
\end{enumerate}
\end{theorem}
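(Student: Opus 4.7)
The plan is to handle parts (1)--(3) by a structural analysis of endomorphism diagrams, using the filtration $F\wh{\bf H}(r,s)_\bullet$ from the proof of Lemma \ref{lem:tech}, and then to address part (4) by specialization through the functor of Theorem \ref{thm:main}.

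For part (1), I would first observe that every morphism in $\End_{\MPB(\delta)}(m)$ is a linear combination of multi-polar Brauer diagrams consisting of a single vertical thick strand together with finitely many connectors whose thin ends are paired by thin arcs (modulo closed thin loops, each of which contributes a factor of $\delta$). Using the sliding relations of Theorem \ref{thm:tensor-cat}, the symmetries in Figures \ref{fig:conj} and \ref{fig:BH-S}, and the four-term relation of Figure \ref{fig:4-term}, such a diagram can be brought, modulo $F\wh{\bf H}(0,0)_{k-1}$, into the normal form $Z(k_1,\dots,k_p)$ studied in Lemma \ref{lem:tech}. That lemma then expresses each $Z(k_1,\dots,k_p)$ as a polynomial in $\I$ and the $Z_\ell$, and Theorem \ref{thm:Z-odd}(2) further reduces every $Z_{2j+1}$ to a polynomial in the $Z_{2\ell}$ and $\I$. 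Commutativity is immediate from Theorem \ref{thm:Z-odd}(1), since the generators are mutually central. Part (2) is then a direct consequence: every $Z\in\End_{\MPB(\delta)}(m)$ is a polynomial in central elements, and centrality in the stated sense is preserved by sums and products.

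For part (3), I would analyze $\End_{\MPB(\delta)}((m,v))$ analogously. A generic diagram has one thick and one thin external strand, together with extra connectors whose thin endpoints are paired. Using the skew symmetry of $\BH(m,v)$ (Figure \ref{fig:H-skew}), the four-term identity in Lemma \ref{lem:4-t-id}, and the identities in Lemma \ref{lem:HT2} and Corollary \ref{cor:HT} (which ensure that the ``transposed'' diagrams $(\BH^k)^T$ are not independent new generators), one reduces each diagram to the form $\BH(m,v)^k\cdot (Z\ot I)$ with $Z\in\End_{\MPB(\delta)}(m)$. Commutativity follows by combining part (1), Lemma \ref{lem:central}(2) (which gives $[\BH(m,v),Z_2\ot I]=0$), and Theorem \ref{thm:Z-odd}(1) (which extends this to $[\BH(m,v),Z_{2\ell}\ot I]=0$ for all $\ell$). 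The case $\bs=(v,m)$ is entirely symmetric, via the symmetry of $\BH(a,b)$ used in the proof of Lemma \ref{eq:ZZ}.

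The main obstacle will be part (4), the algebraic independence of the $Z_{2\ell}$. My plan is to invoke Theorem \ref{thm:main} with a carefully chosen target: take $\fg=\fso_m$ with the natural module $V=\C^m$ (so that $\sdim V=m$, specializing $\delta$ to $m$), and $M=\U(\fg)$ the regular left module. Under the resulting functor $\CF_\U$, which is developed in Section \ref{sect:centre-construct}, each $Z_{2\ell}$ maps to a central element of $\U(\fso_m)$ of adjoint-filtration degree exactly $2\ell$; for $m$ sufficiently large these images are algebraically independent generators of a polynomial subring of $Z(\U(\fso_m))$ by the Harish-Chandra isomorphism. Since a polynomial relation among the $Z_{2\ell}$ in $\MPB(\delta)$ would force the same relation to hold after applying $\CF_\U$, no nontrivial relation exists when $\delta=m$ for $m$ large. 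For the remaining (finitely many exceptional or generic) values of $\delta$, I would supplement this with the $\fsp_{2n}$-specialization, where $\delta=-2n$ and the even Casimirs of $\fsp_{2n}$ are again algebraically independent, thereby covering all negative even $\delta$. The purely diagrammatic alternative, namely passing to the associated graded algebra of $F\wh{\bf H}(0,0)_\bullet$ and identifying the leading term of $Z_{2\ell}$ as a diagram not expressible via products of lower $Z_{2k}$, would work in principle but requires a delicate combinatorial bookkeeping of diagrams of a given connector number; I expect the representation-theoretic route to be cleaner.
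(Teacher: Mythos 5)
Your proposal for parts (1)--(3) is correct and is essentially the paper's own argument: reduce endomorphism diagrams to the normal forms $Z(k_1,\dots,k_p)$ of Lemma \ref{lem:tech}, use Theorem \ref{thm:Z-odd} for centrality and for eliminating the odd $Z$'s, and use Lemma \ref{lem:HT2}/Corollary \ref{cor:HT} to express the transposed powers $(\BH^\ell)^T$ in terms of $\BH^k$ and $Z\ot I$, the only (cosmetic) difference being that the paper deduces (1) from (3) by closing up with $\Pi$ and $\amalg$, whereas you argue (1) directly. Your main route for part (4) is likewise exactly the paper's proof (apply $\CF_\U$ for $\fg=\fso_m$, where $Z_{2j}\mapsto I(2j)$, the Gelfand invariants, which are algebraically independent for $j\le[m/2]$, and let $m$ range over all integers $\ge 3$); only your aside that the "remaining" values of $\delta$ are finitely many and are all covered by the $\fsp_{2n}$ specialisation is inaccurate (non-integral and negative odd $\delta$ are not reached by either specialisation), but the paper's own proof does no more than the $\fso_m$ argument, so this does not separate your approach from theirs.
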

\begin{proof}
We adopt notation from the proof of Lemma \ref{lem:tech}. In particular, $\wh{\bf H}(0, 0)=\End_{\MPB(\delta)}(m)$ and $\wh{\bf H}(1, 1)=\End_{\MPB(\delta)}((m, v))$.

Denote by $Z(0, 0)$ the subalgebra of $\wh{\bf H}(0, 0)$ generated by the elements $\Z_{2\ell}$ for all $\ell$, which are central by Theorem \ref{thm:Z-odd}. 
Then all $Z_\ell\in Z(0, 0)$ by Theorem \ref{thm:Z-odd}(2). By the above lemma, 
$Z(k_1, k_2, \dots, k_p)\in Z(0, 0)$, thus is central for all $(k_1,\dots,k_p)$.

Using this fact, it follows
that $\wh{\bf H}(1, 1)$ is generated by the elements $(\BH^\ell)^T$, $\BH^k$,  
and $Z(k_1, k_2, \dots, k_p)\ot I$ for all $k_1, \dots, k_p$ and  $k, \ell, p$. 
Corollary \ref{cor:HT} states that the elements $(\BH^\ell)^T$ can all be expressed in terms of the elements 
$\BH^k$  and $Z(k_1, k_2, \dots, k_p)\ot I$. This implies part (3). 

As the elements 
$\Pi (\BD\ot I)\amalg$ for $\BD\in\wh{\bf H}(1, 1)$ span $\wh{\bf H}(0, 0)$, we have  
$\wh{\bf H}(0, 0)=Z(0, 0)$ by part (3). Now part (2) is clear. 

We will give a proof of part (4) at the end of Section \ref{sect:centre-construct} by using 
a special case of the functor constructed in Theorem \ref{thm:a-funct}.
\end{proof}
\begin{remark}
One can also deduce from Corollary \cite[XX.3.2. (a)]{K} the commutativity of $\End_{\MPB(\delta)}(m)$. We mention that there is an analysis of the endomorphism algebra of the object $0$ of the category $\AB(\delta)$ in \cite{RSo}, 
which corresponds to the endomorphism algebra in $PB(\delta)$ of 
the object $m$,  by the analysis in Section \ref{sect:PB-AB}. Similar analysis in the case of the $\rm{Spin}$ and $\rm{Pin}$ groups may be found in \cite{McS}.
\end{remark}

\subsection{Endomorphism algebras of the polar Brauer category}

When dealing with the polar Brauer category $\PB(\delta)$, we shall adopt the following convention. 

\begin{remark}\label{rmk:no-m}
Denote the object $(m, v^r)$ of $\PB(\delta)$ by $r$ for any $r\in\N$. 
Thus we will write $\Hom_{\PB(\delta)}((m, v^r), (m, v^s))$ and $\End_{\PB(\delta)}((m, v^r)))$ respectively as $\Hom_{\PB(\delta)}(r, s)$ and $\End_{\PB(\delta)}(r)$ for any $r, s\in\N$. 
\end{remark}

\subsubsection{Endomorphism algebras of the polar Brauer category}

We consider the endomorphism algebra $\End_{\PB(\delta)}(r)$. It contains the subalgebra generated by the elements $Z_\ell\ot I_r$,  where $Z_\ell$ are defined in Lemma  \ref{lem:central}.  

Recall that the usual Brauer category $\CB(\delta)$ is a full subcategory of $\MPB(\delta)$ 
with objects $v^r$ for $r\in\N$. We have a canonical embedding $\iota$ of
the $\K$-spaces of morphisms of $\CB(\delta)$ in that of $\PB(\delta)$,  
which takes a Brauer diagram $A$ to its image $\iota(A)=\A_0=\I\ot A$. 
Let $s_i, e_i$ for $i=1, 2, \dots, r-1$ be the standard generators of the Brauer algebra $B_r(\delta)=\End_{\MPB(\delta)}((v^r))$ given in Figure \ref{fig:siei}. 
\begin{figure}[h]
\setlength{\unitlength}{0.3mm}
\begin{picture}(150, 70)(-20,-5)
\put(-20, 28){$s_i\  =$}
\put(20, 0){\line(0, 1){60}}
\put(25, 30){...}
\put(40, 0){\line(0, 1){60}}

\qbezier(60, 0)(60, 0)(80, 60)
\qbezier(80, 0)(80, 0)(60, 60)

\put(100, 0){\line(0, 1){60}}
\put(120, 0){\line(0, 1){60}}
\put(105, 30){...}
\put(56, -10){\small$i$}
\put(72, -10){\small{$i$+1}}
\put(130, 0){, }
\end{picture} 
\hspace{.5cm}
\begin{picture}(150, 70)(-20,-5)
\put(-20, 28){$e_i\  =$}
\put(20, 0){\line(0, 1){60}}
\put(25, 30){...}
\put(40, 0){\line(0, 1){60}}

\qbezier(60, 60)(70, 10)(80, 60)
\qbezier(60, 0)(70, 50)(80, 0)

\put(100, 0){\line(0, 1){60}}
\put(120, 0){\line(0, 1){60}}
\put(105, 30){...}
\put(56, -10){\small$i$}
\put(72, -10){\small{$i$+1}}
\put(130, 0){. }
\end{picture}
\caption{Generators of $B_r(\delta)$}
\label{fig:siei}
\end{figure}
Define 
$
H_i = s_i - e_i,$ fior $i=1,2, \dots, r-1,  
$
which are represented graphically by Figure \ref{fig:Hi},  
\begin{figure}[h]
\begin{picture}(150, 60)(-20,0)
\put(-20, 28){$H_i\  =$}
\put(20, 0){\line(0, 1){60}}
\put(25, 30){...}
\put(125,30){,}
\put(40, 0){\line(0, 1){60}}

\put(60, 0){\line(0, 1){60}}
\put(60, 30){\line(1, 0){20}}
\put(80, 0){\line(0, 1){60}}


\put(100, 0){\line(0, 1){60}}
\put(120, 0){\line(0, 1){60}}
\put(105, 30){...}
\put(56, -10){\small$i$}
\put(72, -10){\small{$i$+1}}
\end{picture} 
\caption{Picture for $H_i=s_i-e_i$}
\label{fig:Hi}
\end{figure}
%
%
where the right hand side is the linear combination of two Brauer diagrams in Figure \ref{fig:siei}.

We have the elements $S_i=\I\ot s_i$, 
$E_i=\I\ot e_i$ and $\I\ot H_i$ in $\End_{\PB(\delta)}(r)$. 
Let $\BH_{i j}(r) = \BH_{i+1, j+1}(m, v^r)$ for $0\le i<j\le r$, where $\BH_{i+1, j+1}(m, v^r)$ are defined by Figure \ref{fig:Hijr}.   
Denote by $PB_r(\delta)$ the $\K$-subalgebra of $\End_{\PB(\delta)}(r)$ 
generated by 
\[
\{ \BH_{0 j}(r)\mid   j=1, 2, \dots, r\}\bigcup \{S_k, \  E_k  \mid k=1, 2, \dots, r-1  \}, 
\] 
where we note that the elements $Z_\ell\ot I_r$ are not in $PB_r(\delta)$. 
We have the following generalisation of Theorem 
\ref{thm:TtoC}.

\begin{theorem}
Recall the definition of the infinitesimal braid algebra $T_r$ given in \S\ref{ss:tr}. The  map 
$
t_{i j} \mapsto \BH _{i-1, j-1}(r)$, for all $1\le i<j\le r+1, 
$
extends  to a unique algebra homomorphism 
$
\Phi_r: T_{r+1}\lra PB_r(\delta).
$
This homomorphism  is surjective if $\delta- 2$ is a unit in $\K$. 
\end{theorem}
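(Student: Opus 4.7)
The plan is to construct $\Phi_r$ by applying Lemma \ref{lem:Murphy}(1) to a carefully chosen object, verify that the image lands in $PB_r(\delta)$, and then establish surjectivity via the cubic polynomial relation for $H$.

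First, I would set $\bs = (m, v^r)$, an object of $\MPB(\delta)$ of length $r+1$. By Lemma \ref{lem:Murphy}(1), the elements $\BH_{ij}(\bs) \in \End_{\MPB(\delta)}(\bs)$, for $1\le i<j\le r+1$, satisfy exactly the three families of commutation relations \eqref{eq:deftr} defining $T_{r+1}$. Since $\BH_{i,j}(r) = \BH_{i+1,j+1}(\bs)$, the shifted assignment $t_{ij} \mapsto \BH_{i-1,j-1}(r)$ (for $1\le i<j\le r+1$) therefore extends uniquely to an algebra homomorphism $T_{r+1} \to \End_{\PB(\delta)}(r)$ by the universal property of $T_{r+1}$.

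Next, I would verify that the image of this homomorphism is contained in $PB_r(\delta)$. The images $\BH_{0,k}(r)$ for $k=1,\dots,r$ are generators of $PB_r(\delta)$ by definition. For $1\le k<\ell\le r$, recall that by Definition \ref{def:mpolar} the quotient imposes $\BH(v,v)=H=X-E$, so
\[
\BH_{k,k+1}(r) \;=\; \I\ot\bigl(I^{\ot (k-1)}\ot H \ot I^{\ot(r-k-1)}\bigr) \;=\; S_k - E_k \;\in\; PB_r(\delta).
\]
For $\ell>k+1$, iterating the sliding formula of Lemma \ref{lem:Murphy}(2) (equivalently, conjugating by the crossings that move the $\ell$-th thin strand next to the $(k+1)$-th) expresses $\BH_{k,\ell}(r)$ as $S_{\ell-1}\cdots S_{k+1}\,\BH_{k,k+1}(r)\,S_{k+1}\cdots S_{\ell-1}$, which lies in $PB_r(\delta)$. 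Thus $\Phi_r: T_{r+1}\to PB_r(\delta)$ is well defined.

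For surjectivity under the hypothesis that $\delta - 2 \in \K^\times$, I would invoke Lemma \ref{lem:H-skew-sym}(3), namely the cubic relation $(H-I\ot I)(H+I\ot I)(H-(1-\delta)I\ot I)=0$, which yields $E = (\delta-2)^{-1}(H^2 - I\ot I)$ and $X = H+E$ as polynomials in $H$. Applied locally at each adjacent pair of thin strands, this expresses $E_k$ and $S_k$ as polynomials in $\BH_{k,k+1}(r) = \Phi_r(t_{k+1,k+2})$, placing them in the image. Combined with $\BH_{0,j}(r) = \Phi_r(t_{1,j+1})$, this exhausts the generators of $PB_r(\delta)$, so $\Phi_r$ is surjective.

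The main obstacle I anticipate is a clean bookkeeping of step two: one needs to be sure that every $\BH_{k,\ell}(r)$ with $1\le k<\ell\le r$, which is a priori an element of the enhanced endomorphism algebra, actually lies in the subalgebra $PB_r(\delta)$ generated only by $\{\BH_{0,j}(r)\}$ together with $\{S_k,E_k\}$. This is what forces the translation through the quotient relation $\BH(v,v)=H$ and the conjugation identities; once it is done, the surjectivity step is formal and the existence follows directly from Lemma \ref{lem:Murphy}.
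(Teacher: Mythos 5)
Your proof is correct and follows essentially the same route as the paper: existence comes from the four-term relations of Lemma \ref{lem:Murphy}(1), and surjectivity from the fact that $X$ and $E$ are polynomials in $H$ when $\delta-2$ is invertible (Lemma \ref{lem:H-skew-sym}(3), which is exactly the content of Lemma \ref{lem:T-H-B} that the paper cites). Your extra bookkeeping that each $\BH_{k,\ell}(r)$ with $k\ge 1$ lands in $PB_r(\delta)$ is a worthwhile detail the paper leaves implicit, but it is not a different argument.
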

\begin{proof}  
The first statement follows from Lemma \ref{lem:Murphy}(1), and the second statement is easily deduced from Lemma \ref{lem:T-H-B}. 
\end{proof}

To better understand the structure of $PB_r(\delta)$, we define the elements 
\beq\label{eq:vartheta}
\vartheta_j(r)= \sum_{0\le a< j} \BH_{a j}(r), \quad j=1, 2, \dots, r, 
\eeq
and note that they have the following properties.
\begin{theorem} \label{lem:JM}\label{thm:JM}
The following relations hold in $PB_r(\delta)$ for all valid indices $i, j, k$:
\beq
&& E_1(\vartheta_1(r))^\ell E_1= Z_\ell E_1,  \label{eq:JM-1}\\
 &&\vartheta_i(r) \vartheta_j(r)=  \vartheta_j(r) \vartheta_i(r), \  \text{ for all  }\    i, j,  \label{eq:JM-2}\\
 &&S_k  \vartheta_j(r) -  \vartheta_j(r) S_k  =0, \ \text{ if $j\not\in\{k, k+1\}$}, \label{eq:JM-3}\\
 &&E_k  \vartheta_j(r) -  \vartheta_j(r) E_k  =0, \ \text{ if $j\not\in\{k, k+1\}$},  \label{eq:JM-4}\\
 && S_k  \vartheta_{k}(r)- \vartheta_{k+1}  S_k  = E_k - \I_r,  \label{eq:JM-5}\\
 && \vartheta_k(r) S_k  -S_k  \vartheta_{k+1}(r) = E_k - \I_r,  \label{eq:JM-6}\\
 &&E_k \left(\vartheta_k(r) + \vartheta_{k+1}(r) \right) = (1-\delta) E_k , \label{eq:JM-7}\\
&&\left(\vartheta_k(r) + \vartheta_{k+1}(r) \right) E_k = (1-\delta)E_k, \label{eq:JM-8}
  \eeq
  where $\I_r=I(m, v^r)$. 
\end{theorem}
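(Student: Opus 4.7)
The central observation is that with $\bs = (m, v^r)$, the element $\vartheta_j(r)$ of \eqref{eq:vartheta} coincides with the Jucys--Murphy type element $\Theta_{j+1}(\bs)$ of Lemma \ref{lem:Murphy}, since $\BH_{aj}(r) = \BH_{a+1,\,j+1}(\bs)$ under the obvious shift of indices (the pole $m$ is the string labelled~$1$ in $\bs$). This identification immediately gives \eqref{eq:JM-2} from the first relation of Lemma \ref{lem:Murphy}(2). Relation \eqref{eq:JM-3} likewise follows, after reindexing, from the commutation of $X_j$ with $\Theta_i$ in Lemma \ref{lem:Murphy}(2). For \eqref{eq:JM-5}, the plan is to specialise the third assertion of Lemma \ref{lem:Murphy}(2) to $i = k+1$, giving $S_k \vartheta_k(r) S_k = \vartheta_{k+1}(r) - \BH_{k+1,k+2}(\bs)$; identifying $\BH_{k+1,k+2}(\bs) = \I\otimes H_k = S_k - E_k$ in $\wh\HB(\delta)$, and right-multiplying by $S_k$ using $S_k^2 = \I_r$ and $E_k S_k = E_k$, rearranges the identity to $S_k \vartheta_k(r) - \vartheta_{k+1}(r) S_k = E_k - \I_r$. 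Relation \eqref{eq:JM-6} follows symmetrically by left-multiplying.

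For \eqref{eq:JM-4}, I would split $\vartheta_j(r) = \sum_{a<j} \BH_{aj}(r)$ and argue summand by summand. If $j < k$, all strings involved in $\vartheta_j(r)$ are disjoint from $\{k,k+1\}$, so $E_k$ commutes trivially with each term. If $j \ge k+2$, the summands with $a \notin \{k,k+1\}$ again commute trivially, while the paired summand $\BH_{kj}(r) + \BH_{k+1,j}(r)$ is annihilated on both sides by $E_k$ (by the same skew-symmetry computation used for \eqref{eq:JM-7} below, applied to the endpoint at $k$ or $k+1$ rather than $j$), so its commutator with $E_k$ is zero.

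The heart of the argument is \eqref{eq:JM-7} and \eqref{eq:JM-8}. For $a < k$, the left skew symmetry of $H$ (Figure \ref{fig:Brauer-H-skew}) applied to the cap inside $E_k$ meeting the endpoint of $\BH_{ak}(r)$ at string $k$ yields $E_k \BH_{ak}(r) = -E_k \BH_{a,k+1}(r)$, so these paired contributions to $E_k(\vartheta_k(r) + \vartheta_{k+1}(r))$ cancel in pairs. The only surviving term is $E_k \BH_{k,k+1}(r) = E_k(S_k - E_k) = E_k - \delta E_k = (1-\delta)E_k$, using the standard Brauer relations $E_k S_k = E_k$ and $E_k^2 = \delta E_k$. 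This proves \eqref{eq:JM-7}; relation \eqref{eq:JM-8} follows by the symmetric computation using the right skew symmetry of $H$.

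Finally, \eqref{eq:JM-1} is a direct diagrammatic check. Since $\vartheta_1(r) = \BH_{01}(r) = \BH \otimes I^{r-1}$ and $E_1 = \I \otimes e_1 \otimes I^{r-2}$, the product factors as
$E_1 (\vartheta_1(r))^\ell E_1 = \bigl[(\I \otimes e_1)(\BH^\ell \otimes I)(\I \otimes e_1)\bigr] \otimes I^{r-2}$. In the bracketed factor, the cup of the bottom $e_1$ and the cap of the top $e_1$ close the first $v$-string into a closed loop around the pole carrying $\ell$ connectors, which is precisely the diagram $Z_\ell$ of Figure \ref{fig:Z}, while the outer cap (at the very bottom) and cup (at the very top) of the two $e_1$'s reassemble into a single $e_1$. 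Hence the bracketed factor equals $Z_\ell \otimes e_1$, and the full product equals $Z_\ell E_1$. The main obstacle throughout is the bookkeeping: keeping the indexing between $\Theta_j(\bs)$ and $\vartheta_j(r)$ correct, and carefully tracking signs in the skew-symmetry arguments; no essentially new diagrammatic identities beyond those already established in the paper are required.
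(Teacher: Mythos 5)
Your proposal is correct and takes essentially the same route as the paper: \eqref{eq:JM-2}, \eqref{eq:JM-3}, \eqref{eq:JM-5} and \eqref{eq:JM-6} come from Lemma \ref{lem:Murphy} and the conjugation identity $S_k\,\vartheta_k(r)\,S_k=\vartheta_{k+1}(r)-\BH_{k,k+1}(r)$, relations \eqref{eq:JM-7}--\eqref{eq:JM-8} from the skew-symmetry cancellation $E_k(\BH_{ak}(r)+\BH_{a,k+1}(r))=0$ together with $E_kS_k=E_k$ and $E_k^2=\delta E_k$ (the paper packages the identical cancellation as cap/cup relations proved by induction on the string index), and \eqref{eq:JM-1} by the same direct diagrammatic check. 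One cosmetic point: in your cancellation for \eqref{eq:JM-7} the $a=0$ summand uses the skew symmetry of $\BH(m,v)$ (Figure \ref{fig:H-skew}) rather than that of $H$ (Figure \ref{fig:Brauer-H-skew}), but the argument is otherwise unchanged.
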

\begin{proof}
The relations \eqref{eq:JM-1}, \eqref{eq:JM-3} and \eqref{eq:JM-4} are quite clear; the relations \eqref{eq:JM-5} and \eqref{eq:JM-6} follow from the obvious fact that 
\[
S_k  \vartheta_{k} S_k  = \vartheta_{k+1} - \BH_{k, k+1}(r); 
\]
and \eqref{eq:JM-2}follows from Lemma \ref{lem:Murphy}(2).

The relations \eqref{eq:JM-7} and \eqref{eq:JM-8}  can be derived from 
the skew symmetry of $\BH$ and of $H$, as well as the relations
\[
\cap\circ H = (1-\delta) \cap, \quad H\circ \cup = (1-\delta) \cup.
\]
To prove them, we define the  elements $\Pi_i(r)= \I\ot I^{\ot (i-1)}\ot\cap\ot I^{\ot (r-i-1)}$ and 
$\amalg_i(r)= \I\ot I^{\ot (i-1)}\ot\cup\ot I^{\ot (r-i-1)}$ for $i=1, 2, \dots, r-1$.
Note that $\Pi_i(r) \in \Hom_{\PB(\delta)}(r, r-2)$ and 
$\amalg_i(r) \in \Hom_{\PB(\delta)}(r-2, r)$. 
Then \eqref{eq:JM-7} and \eqref{eq:JM-8} are implied by the following relations for any $i<r$:
\beq
&\Pi_i(r)\left(\vartheta_i(r) + \vartheta_{i+1}(r) \right) = (1-\delta) \Pi_i(r), \label{eq:cap-theta}\\
&\left(\vartheta_i(r) + \vartheta_{i+1}(r) \right) \amalg_i(r) = (1-\delta)\amalg_i(r). \label{eq:cup-theta}
\eeq
We  verify the $i=1$ case of \eqref{eq:cap-theta} by the following computation:
\[
\baln
\Pi_1(r) \left(\vartheta_1(r) +\vartheta_2(r)\right) =\Pi_1(r) \BH_{01}(r)+ \Pi_1(r) \BH_{02}(r) + \I \ot \cap H\ot I^{\ot{r-2}},
\ealn
\]
where the first two terms on the right side cancel because of skew symmetry of $\BH$ and $H$, 
and the third term is equal to $(1-\delta) \Pi_1(r)$.  An easy induction on $i$ proves the general case.  
The relation \eqref{eq:cup-theta}  can be proved in the same way. 

This completes the proof of the theorem.
\end{proof}

Recall the Nazarov--Wenzl algebra of degree $r$ as presented in \cite[Definition 2.1]{ES}  with generators and relations $(VW.1)$ -- $(VW.8)$, where $(VW.3)$ and $(VW.4)$ involve an infinite family of  arbitrary parameters $w_k$ for all non-negative integers $k$.  It emerges in our context
as a quotient algebra of $PB_r(\delta)$ by identifying certain linear combinations of $Z_\ell$ with the scalars $w_k$.

\;
\;

\;

\;
%
\subsubsection{Relationship to the affine Brauer category of \cite{RSo}}
\label{sect:PB-AB}
%
%
Our category $\PB(\delta)$ is not a monoidal category, but rather a module category over the usual Brauer category $\CB(\delta)$ which is a monoidal category.  That is, we have the bi-functor 
$\ot: \PB(\delta) \times \CB(\delta)\lra  \PB(\delta)$. 
Some categories of dotted Brauer diagrams were introduced in \cite{RSo, Betal}, which are monoidal categories in contrast, but are very closely related to ours nonetheless. 

The work \cite{Betal} deals with a super category, which is essentially the same as the affine Brauer category $\AB(\delta)$ of \cite{RSo},  but with various signs arising through a $\Z_2$-grading. 
For the sake of precision, let us compare our category with the latter.  The  category $\AB(\delta)$ has objects $r\in\N$. 
As a monoidal category,  its morphisms are generated by the standard generators \cite{LZ12} of $\CB(\delta)$ together with the dotted diagram
$\begin{picture}(10, 10)(-5,0)
\put(0, 0){\line(0, 1){10}}
\put(-2, 3){\tiny$\bullet$}
\end{picture}.$
However, when $\AB(\delta)$ is only considered as a (right) module category of the Brauer category $\CB(\delta)$,  one also needs the generators 
$
\begin{picture}(80, 20)(-30,5)
\put(-28, 5) {$Y_{\ell+1} =$} 
\put(10, 0){\line(0, 1){20}}
\put(17, 12){$\dots$}
\put(20, -2){$\ell$}
\put(35, 0){\line(0, 1){20}}
\put(45, 0){\line(0, 1){20}}
\put(43, 8){\tiny$\bullet$}
\end{picture}
$
for $\ell =0, 1, \dots.$

\medskip

The following result is a straightforward consequence of Theorem \ref{thm:JM}
\begin{theorem}\label{thm:RSo} Retain the notation of Theorem \ref{thm:JM}. 
There exists an isomorphism of categories $\mathfrak{H}: \PB(\delta)\lra\AB(\delta)$, which preserves $\CB(\delta)$-module category structures.
 The functor $\mathfrak{H}$ sends any object $(m, v^r)\in \PB(\delta)$ to $r\in\AB(\delta)$, and   
\beq\label{eq:iso-cat}
&\vartheta_\ell(\ell)+\frac{1-\delta}2 \I_\ell\mapsto Y_\ell, \quad \forall \ell =1, 2, \dots, \\
&\I \ot B\mapsto B, \quad \text{for any morphism $B$ of $\CB(\delta)$}. 
\eeq
\end{theorem}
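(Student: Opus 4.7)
The strategy is to define $\mathfrak{H}$ on generators, verify it respects the defining relations of $\AB(\delta)$ (viewed as a right $\CB(\delta)$-module category), and then construct an inverse functor $\mathfrak{H}^{-1}$ in the opposite direction; bijectivity on morphism spaces then follows formally.

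First I would set $\mathfrak{H}(m,v^r)=r$ on objects and declare $\mathfrak{H}$ on morphisms by the rules in \eqref{eq:iso-cat}, extended by $\mathfrak{H}(f\ot B)=\mathfrak{H}(f)\ot B$ whenever $f$ is a morphism of $\PB(\delta)$ and $B$ a morphism of $\CB(\delta)$ (compatibility with the module category structure). This is well defined on generators once we know that $\PB(\delta)$, as a right $\CB(\delta)$-module category, is generated by the endomorphisms $\vartheta_\ell(\ell)$ together with the $\CB(\delta)$-action; this in turn follows from Lemma~\ref{lem:mod-cat} and the description of multi-polar diagrams: every polar Brauer diagram as in Figure~\ref{fig:AffD} decomposes as a composition of blocks $D_i$ (in $\CB(\delta)$) interleaved with connectors of the form $\BH_{0j}$, and the connectors can be written as $\CB(\delta)$-conjugates of $\BH_{01}=\vartheta_1(1)$.

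Next I would check that the assignment respects the defining relations of $\AB(\delta)$. Recalling the presentation in \cite{RSo}, the affine Brauer category is generated, as a module category over $\CB(\delta)$, by a single dot $Y_1$ subject to: (a) the ``dot commutes past non-adjacent strands'' relations, (b) the Jucys--Murphy-type braid/dot commutations with $s_i,e_i$, and (c) the skein relations $e_1(Y_1)^\ell e_1=\omega_\ell e_1$ together with the absorbing relations $e_k(Y_k+Y_{k+1})=0=(Y_k+Y_{k+1})e_k$. After the shift $Y_\ell=\vartheta_\ell(\ell)+\tfrac{1-\delta}2\I_\ell$, these relations become, respectively, \eqref{eq:JM-3}--\eqref{eq:JM-4}, \eqref{eq:JM-5}--\eqref{eq:JM-6}, and \eqref{eq:JM-7}--\eqref{eq:JM-8} of Theorem~\ref{thm:JM}, where the central parameters $\omega_\ell$ of $\AB(\delta)$ are matched with explicit polynomials in the $Z_k$ coming from \eqref{eq:JM-1} (this matching amounts to expanding $E_1(\vartheta_1(r)+\tfrac{1-\delta}2)^\ell E_1$ binomially and using $E_1Z_kE_1=Z_kE_1$, which is Lemma~\ref{lem:central}(3)). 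Thus all defining relations of $\AB(\delta)$ are sent to identities in $\PB(\delta)$, and $\mathfrak{H}$ is a well defined $\CB(\delta)$-module functor.

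For the inverse, I would use the universal property of $\AB(\delta)$ established in \cite{RSo}: any $\CB(\delta)$-module functor out of $\AB(\delta)$ is determined by the image of $Y_1$, subject to the relations above. Define $\mathfrak{H}^{-1}$ by $r\mapsto (m,v^r)$, $Y_\ell\mapsto \vartheta_\ell(\ell)+\tfrac{1-\delta}2\I_\ell$, and $B\mapsto \I\ot B$ for $B\in\CB(\delta)$. The relations to be verified are exactly \eqref{eq:JM-1}--\eqref{eq:JM-8}, which hold by Theorem~\ref{thm:JM}. Since $\mathfrak{H}\circ\mathfrak{H}^{-1}$ and $\mathfrak{H}^{-1}\circ\mathfrak{H}$ fix both objects and the generating morphisms, they are the respective identity functors.

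The main obstacle I anticipate is the bookkeeping in step two: verifying the skein relation $E_1(\vartheta_1)^\ell E_1 = Z_\ell E_1$ matches the affine-Brauer parameter $\omega_\ell$ correctly after the $\tfrac{1-\delta}{2}$ shift, and checking that no additional relations are forced in $\PB(\delta)$ beyond those in $\AB(\delta)$ (so that $\mathfrak{H}^{-1}$ really does land in a category no smaller than $\PB(\delta)$). The second point is handled by the spanning argument given in the first paragraph: every morphism of $\PB(\delta)$ is a composite of $\CB(\delta)$-morphisms and the $\vartheta_\ell(\ell)$, and these are precisely the images of Brauer morphisms and of $Y_\ell$ under $\mathfrak{H}^{-1}$, so no morphism of $\PB(\delta)$ can be killed or collapsed by $\mathfrak{H}$.
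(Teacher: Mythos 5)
Your overall route is the same as the paper's (match generators, use Theorem \ref{thm:JM} to see that the presentation of $\AB(\delta)$ from \cite{RSo} is realised in $\PB(\delta)$, and argue the two functors are mutually inverse), but as written there are two genuine problems. First, you never establish that $\mathfrak{H}$ itself is a functor. Your first paragraph only shows that $\PB(\delta)$ is \emph{generated}, as a $\CB(\delta)$-module category, by the Brauer morphisms and the $\vartheta_\ell(\ell)$; generation determines $\mathfrak{H}$ on generators but does not show the assignment extends, and both of your relation checks (paragraphs two and three) go in the same direction: they verify that the defining relations of $\AB(\delta)$ hold in $\PB(\delta)$, which is exactly what is needed for $\mathfrak{H}^{-1}$, and only for $\mathfrak{H}^{-1}$. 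What is missing is the converse check that the defining relations of $\PB(\delta)$ (the coloured Brauer relations, the symmetry and skew symmetry of $\BH(m,v)$, and the four-term relations inherited from the construction of $\wh\HB(\delta)$) are sent to identities in $\AB(\delta)$ --- equivalently, that $\mathfrak{H}^{-1}$ is faithful. Without this, the final sentence ``$\mathfrak{H}\circ\mathfrak{H}^{-1}$ and $\mathfrak{H}^{-1}\circ\mathfrak{H}$ fix generators, hence are identities'' is vacuous, and your closing assertion that ``no morphism of $\PB(\delta)$ can be killed or collapsed'' does not follow from fullness. The paper's (terse) argument explicitly invokes both directions: e.g.\ the four-term relation corresponds to the commutation of dots on different strands, the skew symmetry of $\BH$ to relation (1.18) of \cite{RSo}, and \eqref{eq:JM-7} to (1.17).

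Second, the specific form you recall for the dot--cap relations is inconsistent with the shift in the statement. If $\AB(\delta)$ satisfied the Nazarov--Wenzl normalisation $e_k(Y_k+Y_{k+1})=0=(Y_k+Y_{k+1})e_k$, then transporting through $Y_\ell\mapsto \vartheta_\ell+\tfrac{1-\delta}{2}\I$ would give $E_k\bigl(\vartheta_k+\vartheta_{k+1}\bigr)=-(1-\delta)E_k$, which contradicts \eqref{eq:JM-7} (the correct sign would force the shift $-\tfrac{1-\delta}{2}$). So you cannot ``verify'' those relations as stated; you must work with the actual relations (1.17)--(1.18) of \cite{RSo}, whose normalisation differs from the affine Wenzl one, and this is precisely how the paper matches \eqref{eq:JM-7} and the skew symmetry of $\BH$ to them. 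This is more than bookkeeping: as written the check in your second paragraph would fail, although it is repairable once the correct presentation of $\AB(\delta)$ is used.
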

The functor $\mathfrak{H}$ and its inverse map the defining relations of $\PB(\delta)$ and those of $\AB(\delta)$ to one another.   For example,  relation \cite[(1.17)]{RSo} is a consequence of equation \eqref{eq:JM-7} via $\mathfrak{H}$, and relation \cite[(1.18)]{RSo} follows from the skew symmetry of $\BH$, or more explicitly, from the relation
$\prod_{23} \Theta_2(m, v^3) \amalg_{12}=\prod_{12} \Theta_2(m, v^3) \amalg_{23}=-\Theta_1(m, v)$,
where 
 $
\begin{picture}(60, 10)(-15,0)
\put(-15, 2){$\amalg_{12}=$}
{
\linethickness{.7mm}
\put(20, 0){\line(0, 1){10}}
}
\qbezier(25, 10)(30, -5)(35, 10)
\put(40, 0){\line(0, 1){10}}
\end{picture}, 
$
$
\begin{picture}(60, 10)(-15,0)
\put(-15, 2){$\prod_{12}=$}
{
\linethickness{.7mm}
\put(20, 0){\line(0, 1){10}}
}
\qbezier(25, 0)(30, 15)(35, 0)
\put(40, 0){\line(0, 1){10}}
\end{picture}, 
$
$
\begin{picture}(60, 10)(-15,0)
\put(-15, 2){$\amalg_{23}=$}
{
\linethickness{.7mm}
\put(20, 0){\line(0, 1){10}}
}
\put(25, 0){\line(0, 1){10}}
\qbezier(30, 10)(35, -5)(40, 10)
\end{picture}, 
$
and 
$
\begin{picture}(60, 10)(-15,0)
\put(-15, 2){$\prod_{23}=$}
{
\linethickness{.7mm}
\put(20, 0){\line(0, 1){10}}
}
\put(25, 0){\line(0, 1){10}}
\qbezier(30, 0)(35, 15)(40, 0)
\end{picture}. 
$

\smallskip

Note that the morphisms 
$\begin{picture}(20, 10)(-5,0)
\put(0, 0){\line(0, 1){10}}
\put(-2, 3){\tiny$\bullet$}
\put(10, 0){\line(0, 1){10}}
\end{picture}$ and 
$\begin{picture}(20, 10)(-5,0)
\put(0, 0){\line(0, 1){10}}
\put(10, 0){\line(0, 1){10}}
\put(8, 3){\tiny$\bullet$}
\end{picture}$ 
commute in $\AB(\delta)$,  as their compositions in different orders were both taken to be equal to the tensor product of $\begin{picture}(10, 10)(-5,0)
\put(0, 0){\line(0, 1){10}}
\put(-2, 3){\tiny$\bullet$}
\end{picture}$ with itself, that is,  
$\begin{picture}(20, 10)(-5,0)
\put(0, 0){\line(0, 1){10}}
\put(-2, 3){\tiny$\bullet$}
\put(10, 0){\line(0, 1){10}}
\put(8, 3){\tiny$\bullet$}
\end{picture}$.
This commutation relation in $\AB(\delta)$ corresponds to the four-term relation in $\PB(\delta)$ via $\mathfrak{H}$. 

However,  when $\AB(\delta)$ was applied in \cite{RSo} to  study the category $\fg$-Mod of $\fg$-modules for the orthogonal or symplectic Lie algebra $\fg$, there was an extra layer of complication in that it was necessary to go through the endofunctor category $\mathcal{E}nd(\text{$\fg$-Mod})$ instead of working with $\fg$-Mod directly. This will be explained in Remark \ref{rmk:compare}.
Similar remarks apply to the analysis in \cite{McS}.

\subsection{The multi-polar Temperley-Lieb category}\label{sect:TL}
We now introduce a quotient category of $\MPB(\delta)$, which may be regarded as a multi-polar analogue of the Temperley-Lieb category (cf. \cite{GL03, ILZ}).

The required category will be a quotient $\MPB(\delta)/\CJ$ for some tensor ideal $\CJ$ generated by a morphism $\Theta\in \End_{\MPB(\delta)}(v^2)$.
Any such $\Theta$ is of the form $\Theta=a X_{v v} + b I(v^2) + c E$ with $E=\cup^v\circ \cap_v$,  for some $a, b, c\in\K$ with $a\ne 0$.
%
%
To obtain an interesting quotient category, we stipulate that the images of the quasi idempotents $I(v^2)- X_{v v}$ and $E$ 
(which are orthogonal to each other) are non-zero in the quotient.
Now 
\[
\baln
&(I(v^2)- X_{v v})\Theta=(b-a) (I(v^2)- X_{v v})=0, \\
&E\Theta=(a+b+\delta c) E=0 
\ealn
\] 
in the quotient category, from which we conclude that
$
a-b=0, \text{ and } a+b+\delta c=0. 
$
These relations imply that $b=a$ and 
$
2 a + \delta c=0.  
$
Hence 
$\delta$ is a unit, and $c= - \frac{2a}{\delta}$.
%
%
Choose $a=1$, so that the morphism is given by $\Theta= X_{v v} + I(v^2) - \frac{2}{\delta} E$, 
which is depicted graphically in Figure \ref{fig:TL}. Note that $\Theta$ is also a quasi idempotent.

\begin{figure}[h]
\begin{picture}(150, 35)(0,0)
\put(-35, 15){$\Theta\ = \ $}

\qbezier(0, 0)(0, 0)(15, 35)
\qbezier(0, 35)(0, 35)(15, 0)

\put(35, 15){$+ $}

\put(65, 0){\line(0, 1){35}}
\put(75, 0){\line(0, 1){35}}
\put(90, 15){$-\ \frac{2}{\delta}$}

\qbezier(120, 35)(130, 10)(140, 35)
\qbezier(120, 0)(130, 25)(140, 0)

\end{picture} 
\caption{Temperley-Lieb condition}
\label{fig:TL}
\end{figure}

\begin{definition} \label{def:TL}
Assume that $0\ne \delta\in\K$. Let $\langle \Theta\rangle$ be the tensor ideal generated by the element $\Theta$.  The quotient category $\MTL(\delta)=\MPB(\delta)/\langle \Theta\rangle$
will be referred to as the {\em multi-polar Temperley-Lieb category}.
\end{definition}

As noted above, the relation $\Theta=0$ implies that the vector space  of morphisms of $\MTL(\delta)$ is spanned by diagrams analogous to multi-polar 
diagrams without any crossings of strings all labelled by $v$. Such diagrams will be called {\em multi-polar Temperley-Lieb diagrams}.  

Henceforth we assume that $0\ne \delta\in\K$.

Note that the composition of diagrams now obeys new rules. In particular, we have the following result. 

\begin{lemma} \label{lem:TL-H} The four-term relation in $\End((m, v^2))$ reduces to the following relation in $\End_{\MTL(\delta)}((m, v))$
\beq
\BH^2 +\frac{\delta-2}{2} \BH - \frac{1}{\delta} Z_2\ot I =0.
\eeq
This can be graphically represented by 
\beq\label{eq:TL-H}
\begin{array}{c}
\begin{picture}(100, 40)(0, 0)
{
\linethickness{1mm}
\put(15, 0){\line(0, 1){40}}
}
\put(15, 28){\uwave{\hspace{7mm}}}
\put(15, 12){\uwave{\hspace{7mm}}}

\put(35, 0){\line(0, 1){40}}

\put(45, 16){ $+ \ \frac{\delta-2}2$ }
\end{picture} 
\begin{picture}(100, 40)(25, 0)
{
\linethickness{1mm}
\put(15, 0){\line(0, 1){40}}
}
\put(15, 20){\uwave{\hspace{7mm}}}

\put(35, 0){\line(0, 1){40}}

\put(45, 16){ $ - \ \frac{Z_2}\delta$ }
\end{picture} 
\begin{picture}(60, 40)(55, 0)
{
\linethickness{1mm}
\put(15, 0){\line(0, 1){40}}
}

\put(35, 0){\line(0, 1){40}}

\put(45, 16){ $=\ 0$ }
\put(75, 5){.}
\end{picture} 
\end{array}
\eeq
\end{lemma}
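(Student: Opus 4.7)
The plan is to combine the identity $(\BH^2)^T = \BH^2 + (\delta-2)\BH$ from \eqref{eq:quadratic}, which already holds in $\MPB(\delta)$, with a second, direct computation of $(\BH^2)^T$ carried out inside $\MTL(\delta)$. First I would realise the diagram $(\BH^2)^T$ of Figure \ref{fig:bbH} (with $\ell=2$) as the explicit composition
\[
(\BH^2)^T \;=\; (\I \ot \cap \ot I) \circ (\BH^2 \ot I \ot I) \circ (\I \ot X \ot I) \circ (\I \ot I \ot \cup),
\]
in which an auxiliary pair of thin strands is created at the bottom by $\cup$, the crossing $X = X_{vv}$ on the middle two strands encodes the twist of the $v$-strand visible in the pictorial definition of $(\BH^\ell)^T$, and the pair is eliminated at the top by $\cap$. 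A direct strand-trace confirms that this composition reproduces $(\BH^2)^T$.

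Next I would apply the Temperley--Lieb relation $X = -I(v^2) + \tfrac{2}{\delta} E$ to the middle crossing, splitting $(\BH^2)^T$ into two contributions. In the $-I(v^2)$ piece, the bottom cup and top cap on the right-hand thin strands form a zigzag which collapses via the identity $(\cap \ot I)\circ (I \ot \cup) = I$; the two chords attached by $\BH^2$ transport onto the resulting straight $v$-strand, so the contribution is $-\BH^2$. In the $\tfrac{2}{\delta} E$ piece, the factor $E = \cup \circ \cap$ severs the middle two strands into two independent components: the input and output $v$-strands are joined by a second zigzag which again collapses to the identity, while the two strands created by the inner $\cup$ of $E$ and closed by the top cap form a closed $v$-loop carrying the two chords originally attached by $\BH^2$, which by \eqref{eq:Z-gener} is exactly $Z_2$. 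Hence in $\MTL(\delta)$,
\[
(\BH^2)^T \;=\; -\BH^2 + \tfrac{2}{\delta}\, Z_2 \ot I.
\]

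Equating the two expressions for $(\BH^2)^T$ gives $2\BH^2 + (\delta-2)\BH = \tfrac{2}{\delta}\, Z_2 \ot I$, and dividing by $2$ recovers \eqref{eq:TL-H}. The main obstacle will be the careful diagrammatic bookkeeping of the two zigzag simplifications: one must verify that each cup-cap pair straightens to the identity on the $v$-strand without picking up a spurious factor of $\delta$ (the closed-loop value), and that the two chords originally attached by $\BH^2$ transport correctly---onto the zigzag-straightened strand in the $-I(v^2)$ piece and onto the new closed $v$-loop in the $\tfrac{2}{\delta} E$ piece---so as to yield $-\BH^2$ and $Z_2\ot I$ respectively rather than a mixed configuration. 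Once these tracings are settled, the remainder is a one-line algebraic rearrangement.
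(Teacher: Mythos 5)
Your argument is correct, and it takes a genuinely (if mildly) different route from the paper's. The paper works directly with the four-term relation of Figure \ref{fig:4-term}: it resolves every thin--thin crossing there via $\Theta=0$ (that is, $X=-I(v^2)+\tfrac{2}{\delta}E$) to get the intermediate identity \eqref{eq:TL-H-1} in $\End_{\MTL(\delta)}((m,v^2))$, and then post-multiplies by $\cup$ and bends the free endpoint down to land in $\End_{\MTL(\delta)}((m,v))$. You instead import the $\MPB(\delta)$-identity $(\BH^2)^T=\BH^2+(\delta-2)\BH$ of \eqref{eq:quadratic} (itself a consequence of the four-term relation via Figure \ref{fig:deg0-relation}, so the lemma's ``reduces to'' is still honoured) and evaluate $(\BH^2)^T$ a second time inside $\MTL(\delta)$ by resolving the single crossing in its defining diagram: your algebraic expression for $(\BH^2)^T$ does reproduce the diagram of Figure \ref{fig:bbH}, the $-I(v^2)$ term collapses by the straightening relations to $-\BH^2$ with the two connectors carried along, and the $\tfrac{2}{\delta}E$ term separates into a straightened strand times the closed two-connector loop, which is exactly $Z_2$; since no free loop is created, no spurious factor of $\delta$ arises, so $(\BH^2)^T=-\BH^2+\tfrac{2}{\delta}Z_2\ot I$ and equating the two evaluations gives \eqref{eq:TL-H}. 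What your route buys is brevity, reusing \eqref{eq:quadratic} and needing only one crossing resolution; what the paper's route buys is the intermediate relation \eqref{eq:TL-H-1} together with the closing remark of its proof that \eqref{eq:TL-H} conversely makes \eqref{eq:TL-H-1}, hence the four-term relation, an identity in $\MTL(\delta)$. Both routes implicitly require $2$ and $\delta$ invertible, which is in force in this section.
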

\begin{proof}
Eliminating the crossings of thin arcs in all the diagrams in the four-term relation given in Figure \ref{fig:4-term} by using the relation $\Theta=0$, we obtain the relation 
\beq \label{eq:TL-H-1}
\begin{array}{c}
\begin{picture}(75, 40)(30, 0)
{
\linethickness{1mm}
\put(35, 0){\line(0, 1){40}}
}
\put(35, 35){\uwave{\hspace{5mm}}}
\put(35, 25){\uwave{\hspace{5mm}}}

\put(50, 40){\line(0, -1){25}}
\put(65, 40){\line(0, -1){25}}

\qbezier(50, 15)(57, 5)(65, 15)

\qbezier(50, 0)(57, 10)(65, 0)

\put(80, 15){$-$}
\end{picture}
\begin{picture}(85, 40)(30, 0)
{
\linethickness{1mm}
\put(35, 0){\line(0, 1){40}}
}
\put(35, 18){\uwave{\hspace{5mm}}}
\put(35, 8){\uwave{\hspace{5mm}}}

\put(50, 0){\line(0, 1){25}}
\put(65, 0){\line(0, 1){25}}

\qbezier(50, 25)(57, 35)(65, 25)

\qbezier(50, 40)(57, 30)(65, 40)



\put(80, 15){$-$}
\end{picture}
\begin{picture}(85, 40)(30, 0)
\put(12, 15){$\frac{2-\delta}{2}$}
{
\linethickness{1mm}
\put(35, 0){\line(0, 1){40}}
}
\put(35, 30){\uwave{\hspace{5mm}}}

\put(50, 40){\line(0, -1){25}}
\put(65, 40){\line(0, -1){25}}

\qbezier(50, 15)(57, 5)(65, 15)

\qbezier(50, 0)(57, 10)(65, 0)



\put(80, 15){$+$}
\end{picture}
\begin{picture}(80, 40)(30, 0)
\put(12, 15){$\frac{2-\delta}{2}$}
{
\linethickness{1mm}
\put(35, 0){\line(0, 1){40}}
}
\put(35, 15){\uwave{\hspace{5mm}}}

\put(50, 0){\line(0, 1){25}}
\put(65, 0){\line(0, 1){25}}

\qbezier(50, 25)(57, 35)(65, 25)

\qbezier(50, 40)(57, 30)(65, 40)

\put(80, 15){$=\ 0$}
\put(105, 10){.}
\end{picture}
\end{array}
\eeq

Post-multiplying \eqref{eq:TL-H-1} by $\cup$, we obtain a relation involving three $(0, 2)$-diagrams. 
Bringing the right end point of the thin arc in each of the diagram down to the bottom, we arrive at \eqref{eq:TL-H}.  Finally, given \eqref{eq:TL-H}, 
the relation \eqref{eq:TL-H-1} holds identically in $\MTL(\delta)$.
\end{proof}

\begin{remark}
Note that the above shows that if $\delta-2=0$, then $\BH^2=\frac{1}{\delta}Z_2\ot I$, and many of our results become degenerate. We therefore
assume henceforth that $\delta\neq 0, 2$.
\end{remark}

The following results are immediate from Lemma \ref{lem:TL-H}  and Lemma \ref{lem:central}. 
\begin{corollary} \label{lem:TL-4-t} The multi-polar Temperley-Lieb category has the following properties.
\begin{enumerate}[(i)]
\item $(Z_2\ot I) \BH = \BH (Z_2\ot I)$, and hence  ${\mathbb D}(Z_2\ot I_r)= (Z_2\ot I_s){\mathbb D}$ for any ${\mathbb D}\in\Hom_{\MTL(\delta)}((m, v^r), (m, v^s))$.

\item $\End_{\MTL(\delta)}(m)=\K[Z_2]$, the polynomial algebra in $Z_2$;  
 
\item $\End_{\MTL(\delta)}((m, v))=(\K[Z_2]\ot I) \oplus (\K[Z_2]\ot I)\BH$, which is  a commutative $\K$-algebra.
\end{enumerate}
\end{corollary}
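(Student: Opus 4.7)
The plan is to obtain all three parts as direct consequences of Theorem \ref{thm:Hom01} combined with the quadratic reduction of $\BH^2$ provided by Lemma \ref{lem:TL-H}, with centrality in the quotient inherited from Lemma \ref{lem:central}(2).

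For (i), the commutation $(Z_2\ot I)\BH=\BH(Z_2\ot I)$ is already asserted inside $\MPB(\delta)$ by Lemma \ref{lem:central}(2), and this identity simply descends to the quotient $\MTL(\delta)$. The extended centrality with respect to an arbitrary $\mathbb{D}\in\Hom_{\MTL(\delta)}((m,v^r),(m,v^s))$ I would deduce from Theorem \ref{thm:Hom01}(2): every such $\mathbb{D}$ lifts to a morphism of $\UPB(\delta)$, in which category $Z_2\ot I$ has already been shown to be central.

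For (ii), my approach is to combine Theorem \ref{thm:Hom01}(1) -- which gives that $\End_{\MPB(\delta)}(m)$ is generated by $\{Z_{2\ell}\mid \ell\geq 1\}$ -- with an inductive reduction based on Lemma \ref{lem:TL-H}. The quadratic relation
\[
\BH^2=-\frac{\delta-2}{2}\BH+\frac{1}{\delta}(Z_2\ot I)
\]
in $\End_{\MTL(\delta)}((m,v))$ allows one to prove, by induction on $\ell$, that $\BH^\ell=a_\ell(Z_2)\cdot(\I\ot I)+b_\ell(Z_2)\cdot\BH$ for certain polynomials $a_\ell,b_\ell\in\K[Z_2]$. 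Inserting this into the definition $Z_\ell=\Pi(\BH^\ell\ot I)\amalg$ and using $\Pi(\I\ot I\ot I)\amalg=\delta$ together with $\Pi(\BH\ot I)\amalg=Z_1=0$ yields $Z_\ell=\delta\,a_\ell(Z_2)\in\K[Z_2]$. Hence $\End_{\MTL(\delta)}(m)$ is generated by $Z_2$ as a $\K$-algebra.

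For (iii), a parallel argument applies: Theorem \ref{thm:Hom01}(3) presents $\End_{\MPB(\delta)}((m,v))$ as an algebra generated by $\BH$ together with the central elements $Z_{2\ell}\ot I$. In the quotient, the $Z_{2\ell}\ot I$ become polynomials in $Z_2\ot I$ by (ii), while Lemma \ref{lem:TL-H} reduces any power $\BH^k$ with $k\geq 2$ to a $\K[Z_2]\ot I$-combination of $\I\ot I$ and $\BH$. Consequently $\End_{\MTL(\delta)}((m,v))$ is spanned over $\K[Z_2]\ot I$ by $\{\I\ot I,\BH\}$, and commutativity follows from (i). The main obstacle will be the algebraic independence of $Z_2$ in (ii) (so that $\K[Z_2]$ is genuinely the polynomial algebra rather than a quotient of it) and, equivalently, the directness of the sum in (iii). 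Neither can be established purely diagrammatically within $\MTL(\delta)$; both will require a separating functor, which is supplied by the representation-theoretic construction of Theorem \ref{thm:a-funct} specialised, for instance, to $\fsp_2(\C)$ with $\sdim=-2$, where $Z_2$ maps to a non-constant central element of $\U(\fsp_2(\C))$ taking infinitely many distinct values on a natural family of highest weight modules.
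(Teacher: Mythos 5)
Your reduction arguments coincide with the paper's own (very brief) proof of this corollary: part (i) is exactly the descent to $\MTL(\delta)$ of the centrality statements of Lemma \ref{lem:central}(2)--(3) (equivalently Theorem \ref{thm:Hom01}(2)), and parts (ii)--(iii) are obtained, as in the paper, by combining the quadratic relation \eqref{eq:TL-H} with the generation statements of Theorem \ref{thm:Hom01}(1),(3); your induction $\BH^\ell=a_\ell(Z_2)\,\I\ot I+b_\ell(Z_2)\,\BH$ together with the evaluation $Z_\ell=\delta\,a_\ell(Z_2)$ (using $\Pi\amalg=\delta\,\I$ and $Z_1=0$) is just the fleshed-out version of what the paper leaves implicit, and your numerics are consistent with $2Z_3=(2-\delta)Z_2$ from Lemma \ref{lem:central}(1).

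Where you go beyond the paper is in addressing the freeness claims (that $\K[Z_2]$ is genuinely a polynomial algebra in (ii), and that the sum in (iii) is direct). The paper's one-line proof is silent on this, and the independence is implicitly taken for granted there and again in Theorem \ref{thm:ATL-dim}, so flagging it is a point in your favour. However, note the limitation of your proposed fix: the functor of Theorem \ref{thm:a-funct} exists only when $\delta=\sdim(V)$ for an orthosymplectic $V$, and it factors through the Temperley--Lieb quotient only in the $\fsp_2$ case, i.e.\ $\delta=-2$ (this is precisely the computation opening Section \ref{sect:sp2}). Thus your separating-functor argument establishes transcendence of $Z_2$, hence (ii) and the directness in (iii), only at $\delta=-2$; it does not cover arbitrary $\delta\neq 0,2$, and the $\fso_m$ functors used in the proof of Theorem \ref{thm:Hom01}(4) cannot be substituted, since they do not annihilate $\Theta$ and so do not factor through $\MTL(\delta)$. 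For general $\delta$ one would need a different device (for instance a basis argument for the spanning set of multi-polar Temperley--Lieb diagrams, or an integral form in the parameter $\delta$ plus specialisation). Since the paper's own proof makes no attempt at this point, this is a limitation shared with, rather than created relative to, the source.
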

\begin{proof}
Part (1) is a consequence of Lemma \ref{lem:central}(2). The rest of the lemma follows from \eqref{eq:TL-H} and part (1). 
\end{proof}

Let $\ATL(\delta)$ be the full subcategory of $\MTL(\delta)$ with objects $(m, v^r)$ for all $r\in\N$; we refer to $\ATL(\delta)$ as the {\em polar Temperley-Lieb category}. 
%

When considering $\ATL(\delta)$, we shall adopt the convention of Remark \ref{rmk:no-m} by writing the object $(m, v^r)$ as $r$. 
We shall also write $f(Z_2) {\mathbb D}$  for both ${\mathbb D}(f(Z_2)\ot I_r)$ and $(f(Z_2)\ot I_s){\mathbb D}$ for any $f(Z_2)\in \K[Z_2]$ and ${\mathbb D}\in \Hom_{\ATL(\delta)}(r, s)$. This should not cause any confusion because of parts (i) and (ii) of Corollary \ref{lem:TL-4-t}.

Now the space of homomorphisms $\Hom(\ATL(\delta))$ of $\ATL(\delta)$ may be thought of as a $\K[Z_2]$-module,  which is clearly free.
Note in particular that for any $N$, a polar Temperley-Lieb $(0, 2N)$-diagram is the product of some power of $Z_2$ and a diagram of the form shown in Figure \ref{fig:ATL2N},   
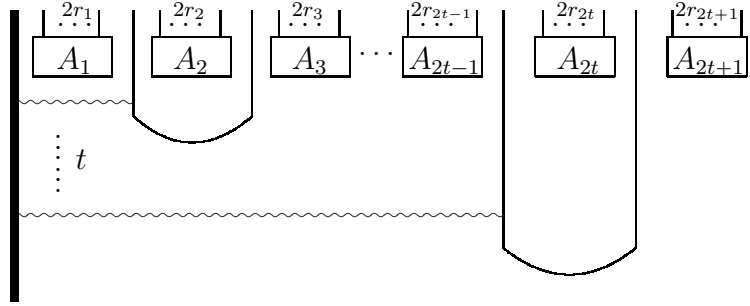
\begin{figure}[h]
\begin{picture}(280, 120)(10,-60)
{
\linethickness{1mm}
\put(10, -60){\line(0, 1){110}}
}

\put(10, 18){\uwave{\hspace{15mm}}}
\put(55, 10){\line(0, 1){40}}
\put(100, 10){\line(0, 1){40}}

\qbezier(55, 10)(77, -10)(100, 10)

\put(17, 25){\line(1, 0){30}}
\put(17, 40){\line(1, 0){30}}
\put(17, 25){\line(0, 1){15}}
\put(47, 25){\line(0, 1){15}}
\put(21, 40){\line(0, 1){10}}
\put(42, 40){\line(0, 1){10}}
\put(26, 28){$A_1$}
\put(26, 44){$\dots$}
\put(28, 48){\tiny$2r_1$}

\put(62, 25){\line(1, 0){30}}
\put(62, 40){\line(1, 0){30}}
\put(62, 25){\line(0, 1){15}}
\put(92, 25){\line(0, 1){15}}
\put(65, 40){\line(0, 1){10}}
\put(88, 40){\line(0, 1){10}}
\put(70, 28){$A_2$}
\put(68, 44){$\dots$}
\put(70, 48){\tiny$2r_2$}

\put(107, 25){\line(1, 0){30}}
\put(107, 40){\line(1, 0){30}}
\put(107, 25){\line(0, 1){15}}
\put(137, 25){\line(0, 1){15}}
\put(110, 40){\line(0, 1){10}}
\put(133, 40){\line(0, 1){10}}
\put(115, 28){$A_3$}
\put(113, 44){$\dots$}
\put(115, 48){\tiny$2r_3$}

\put(140, 33){$\dots$}
\put(157, 25){\line(1, 0){30}}
\put(157, 40){\line(1, 0){30}}
\put(157, 25){\line(0, 1){15}}
\put(187, 25){\line(0, 1){15}}
\put(159, 40){\line(0, 1){10}}
\put(185, 40){\line(0, 1){10}}
\put(159, 28){$A_{2t-1}$}
\put(163, 44){$\dots$}
\put(160, 48){\tiny$2r_{\scriptscriptstyle{2t-1}}$}

\put(207, 25){\line(1, 0){30}}
\put(207, 40){\line(1, 0){30}}
\put(207, 25){\line(0, 1){15}}
\put(237, 25){\line(0, 1){15}}
\put(210, 40){\line(0, 1){10}}
\put(233, 40){\line(0, 1){10}}
\put(215, 28){$A_{2t}$}
\put(213, 44){$\dots$}
\put(215, 48){\tiny$2r_{2t}$}

\put(257, 25){\line(1, 0){30}}
\put(257, 40){\line(1, 0){30}}
\put(257, 25){\line(0, 1){15}}
\put(287, 25){\line(0, 1){15}}
\put(259, 40){\line(0, 1){10}}
\put(285, 40){\line(0, 1){10}}
\put(259, 28){$A_{2t+1}$}
\put(263, 44){$\dots$}
\put(260, 48){\tiny$2r_{2t+1}$}


\put(20, -10){$\begin{array}{c c}
\vdots \vspace{-2.5mm}\\
\vdots
\end{array} t $}

\put(10, -25){\uwave{\hspace{65mm}}}
\put(195, -40){\line(0, 1){90}}
\put(245, -40){\line(0, 1){90}}

\qbezier(195, -40)(220, -60)(245, -40)
\end{picture}
\caption{Standard polar Temperley-Lieb diagram}
\label{fig:ATL2N}
\end{figure}
where there are $t$ connectors with $0\le t\le N$, and each $A_j$ is a usual Brauer $(0,2r_j)$-diagram with no intersections,
such that $\sum_j r_j=N- t$.  We call such a polar Temperley-Lieb diagram {\em standard}.

We have the following result.

\begin{theorem} \label{thm:ATL-dim}
For any non-negative integers $N$ and $r$ such that $r\le 2N$, the rank of $\Hom_{\ATL(\delta)}(r, 2N-r)$ over $\K[Z_2]$ is equal to   
$\begin{pmatrix}2N \\ N \end{pmatrix}$.
\end{theorem}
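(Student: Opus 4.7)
The plan is to prove Theorem \ref{thm:ATL-dim} in three stages: spanning by standard polar Temperley--Lieb diagrams, counting them to obtain $\binom{2N}{N}$, and establishing $\K[Z_2]$-linear independence.

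For \textbf{spanning}, I would extend the observation made in the excerpt for $(0, 2N)$-diagrams (Figure \ref{fig:ATL2N}) to arbitrary $(r, 2N-r)$-diagrams, using the reductions available in $\MTL(\delta)$: (a) the defining relation $\Theta = 0$ of $\MTL(\delta)$ removes all crossings of thin arcs, leaving only non-crossing thin-arc patterns; (b) equation \eqref{eq:TL-H} of Lemma \ref{lem:TL-H}, expressing $\BH^2 = -\tfrac{\delta-2}{2}\BH + \tfrac{1}{\delta}Z_2 \otimes I$, ensures every thin strand carries at most one connector, with extra powers of $Z_2$ absorbed into the coefficient; (c) Lemma \ref{lem:tech} together with Theorem \ref{thm:Z-odd}(2) expresses any internal closed loop carrying connectors as a polynomial in $Z_2$. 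Iterating these reductions on an arbitrary polar Temperley--Lieb $(r, 2N-r)$-diagram produces a $\K[Z_2]$-linear combination of standard polar Temperley--Lieb diagrams.

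For \textbf{counting}, by bending thin strands via $\cup$ and $\cap$ one establishes a bijection between standard $(r, 2N-r)$-diagrams and standard $(0, 2N)$-diagrams, so it suffices to count the latter. Reading the $2N$ non-pole endpoints sequentially along the boundary, each standard diagram is uniquely specified by choosing a subset of endpoints to be pole-connected together with a non-crossing thin-arc matching of the rest. Planarity forces the connector endpoints to partition the remaining ones into gaps of even sizes $2h_0, \ldots, 2h_t$ (where $t$ is the number of connectors), each admitting $C_{h_i}$ non-crossing matchings. Summing over $t$ and gap sizes gives
\[
\sum_{t \ge 0} [x^{2N-t}]\, C(x^2)^{t+1} = [x^{2N}]\,\frac{C(x^2)}{1 - x\,C(x^2)},
\]
with $C(y) = (1-\sqrt{1-4y})/(2y)$ the Catalan generating function. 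Taking the even part and using the identity $x^2 C(x^2)^2 = C(x^2) - 1$, this simplifies to $\frac{C(x^2)}{2 - C(x^2)} = (1-4x^2)^{-1/2} = \sum_{N \ge 0} \binom{2N}{N} x^{2N}$, yielding the desired count of $\binom{2N}{N}$.

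The main obstacle is \textbf{linear independence} over $\K[Z_2]$. I would follow the scheme of \cite{ILZ} for the affine Temperley--Lieb category, constructing a faithful representation. Concretely, for $\fg = \fsp_2(\C)$ with $V = \C^{0|2}$ (so $\delta = \sdim V = -2$), the functor $\CF_{M_\lambda}:\PB(-2) \to \CT_{M_\lambda}(V)$ of Theorem \ref{thm:a-funct}, applied to a Verma module $M_\lambda$ with generic highest weight $\lambda$, descends through the quotient to a functor $\ATL(-2) \to \CT_{M_\lambda}(V)$. The image of $Z_2$ becomes a non-trivial polynomial in $\lambda$ arising from the Casimir eigenvalue on $M_\lambda$, and a weight-basis calculation analogous to that of \cite{ILZ} shows the images of the $\binom{2N}{N}$ standard diagrams act as $\C$-linearly independent $\U(\fsp_2)$-homomorphisms on $M_\lambda \otimes V^{\otimes (2N-r)}$; a specialisation-in-$\lambda$ argument then upgrades this to $\K[Z_2]$-linear independence in $\ATL(-2)$. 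For general $\delta$, one runs the parallel argument using $\osp(V;\omega)$ of appropriate super-dimension, or works universally over $\K[\delta]$ before specialising. The technical core is the weight-basis calculation mirroring \cite{ILZ}, requiring careful bookkeeping of each standard diagram's action to extract a non-degenerate leading term with respect to the natural $Z_2$-filtration.
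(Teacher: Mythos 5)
Your spanning step and your count of the standard diagrams are both fine, and the counting is a genuinely different route from the paper's: you enumerate the standard spanning set directly by a Catalan generating-function argument, whereas the paper never counts diagrams at all -- it filters $W(2N)=\wh{K}\otimes_{\K[Z_2]}\Hom_{\ATL(\delta)}(0,2N)$ by the number of connectors, identifies each subquotient with the simple ${\rm TL}_{2N}(\delta)$-module (cell module) of dimension $\binom{2N}{N-i}-\binom{2N}{N-i-1}$, and telescopes to $\binom{2N}{N}$. Your computation is a pleasant shortcut for the upper bound, but keep in mind that counting a spanning set only bounds the rank above; the substance of the theorem is the lower bound, i.e.\ $\K[Z_2]$-linear independence.

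That is where your proposal has a genuine gap. Your independence argument is external, via the functor $\CF_M$ of Theorem \ref{thm:a-funct}, and it only exists on $\ATL(\delta)$ when $\delta=-2$. For $\fg=\osp(V;\omega)$ with $\sdim V=\delta$, the image of $\Theta$ is $\tau+\id-\frac{2}{\delta}e$, which acts as $2$ on the traceless supersymmetric part of $V\otimes V$; this part vanishes only for $V=\C^{0|2}$, which is exactly why the first lemma of Section \ref{sect:sp2} is special to $\fsp_2$. So ``running the parallel argument using $\osp(V;\omega)$ of appropriate super-dimension'' is not available: the functor does not factor through the Temperley--Lieb quotient for any other $\delta$. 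Your fallback ``work universally over $\K[\delta]$ before specialising'' is also not enough as stated: specialisation preserves spanning but not linear independence, so proving independence generically and then specialising runs in the wrong direction, and independence at the single value $\delta=-2$ does not by itself give other special values -- you would need an additional argument (for instance, base-change of the finitely generated $\K[Z_2][\delta^{\pm1}]$-module spanned by the standard diagrams, upper semicontinuity of fibre dimension, combined with the spanning bound), none of which appears in the proposal. The paper sidesteps all of this by arguing internally and uniformly in $\delta$, using only the ${\rm TL}_{2N}(\delta)$-action on $W(2N)$, \eqref{eq:TL-H} and the skew symmetry of $\BH$. Finally, even at $\delta=-2$ your plan amounts to redoing the ILZ-style weight computation that the paper carries out later in Section \ref{sect:sp2}; note that the results there take Theorem \ref{thm:ATL-dim} as an input, so if you go this way you must establish the full independence of all $\binom{2N}{N}$ images by hand rather than quoting that section.
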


\begin{proof}  
We adapt some key ideas of the proof of \cite[Lemma 3.24]{ILZ} to the present context. 
Let $\wh{K}$ be the fraction field of $\K[Z_2]$. Let $W(r, s)=\wh{K}\ot_{\K[Z_2]}\Hom_{\ATL(\delta)}(r, s)$ for all $r, s$, and denote $W(2N)=W(0, 2N)$. 
The standard polar Temperley-Lieb $(0, 2N)$-diagrams form a basis of $W(0, 2N)$. 
For each $i\le N$, the standard $(0, 2N)$-diagrams with at most $i\le N$ connectors span a
$\wh{K}$-subspace $F_iW(2N)$ of $W(2N)$, and thus we have a
filtration of $W(2N)$ given as follows.
\beq\label{eq:TL-filt}
W(2N)=F_N W(2N)\supset F_{N-1}W(2N)  \supset \dots \supset  F_0 W(2N)\supset 0.
\eeq

Note that for any $n$, the subspace $E_n^0$ of $W(n, n)$  spanned by polar Temperley -Lieb $(n, n)$-diagrams without connectors forms a $\wh{K}$-algebra isomorphic to the Temperley-Lieb algebra ${\rm TL}_n(\delta)$ of degree $n$ with parameter $\delta$; in particular, $E^0_{2N}={\rm TL}_{2N}(\delta)$. 

Now ${\rm TL}_{2N}(\delta)$ naturally acts on $W(2N)$ by composition of morphisms in $\ATL(\delta)$, and each $F_iW(2N)$ forms a ${\rm TL}_{2N}(\delta)$-submodule. Thus \eqref{eq:TL-filt} is a filtration of ${\rm TL}_{2N}(\delta)$-modules. It follows from \eqref{eq:TL-H} and skew symmetry of $\BH$ that $W_{2i}(2N):=\frac{F_i W(2N)}{F_{i-1} W(2N)}$ is a simple ${\rm TL}_{2N}(q)$-module for each $i$. Recall that the dimension of this simple ${\rm TL}_{2N}(\delta)$-module is given by $\dim_{\wh{K}}(W_{2i}(2N))= \begin{pmatrix}2N \\ N-i \end{pmatrix}-\begin{pmatrix}2N \\ N-i-1 \end{pmatrix}$. Therefore,  
$\dim_{\wh{K}}(W(2N))=\sum_i \dim_{\wh{K}}(W_{2i}(2N)) = \begin{pmatrix}2N \\ N \end{pmatrix}$.

However $\dim_{\wh{K}}(W(r, 2N-r))=\dim_{\wh{K}}(W(2N))$ for all $r$ with $0\leq r\leq 2N$, since pulling of strings shows that $W(a, b)$ and $W(0, a+b)$ 
are isomorphic as vector spaces over $\wh{K}$ for all $a, b$. This completes the proof of the Theorem.
\end{proof}

Since $Z_2$ is central in $\ATL(\delta)$, we may take a quotient of the category by setting $Z_2$ to some fixed parameter. 
\begin{definition}  \label{def:TLB}
Fix $\lambda\in \K$. Denote by $\TLBC(\delta, \lambda)$ the quotient category of $\ATL(\delta)$ obtained by setting 
\[
\frac{Z_2}{\delta}=-\lambda\left(\frac{\delta-2}{2}-\lambda\right), 
\]
and call it a {\em Temperley-Lieb category of type $B$}. 
\end{definition}
We continue to denote the image of $\BH$ in $\TLBC(\delta, \lambda)$ by the same symbol $\BH$. We have the following result. 
\begin{lemma}
The following relation holds in $\TLBC(\delta, \lambda)$.
\[
(\BH + \lambda)\left(\BH + \frac{\delta-2}{2}-\lambda\right) =0.
\]
\end{lemma}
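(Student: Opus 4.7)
The proof is essentially a direct calculation from Lemma \ref{lem:TL-H}, combined with the defining substitution of $\TLBC(\delta,\lambda)$. The plan is as follows.

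First, I would recall that Lemma \ref{lem:TL-H} gives the relation
\[
\BH^2 + \frac{\delta-2}{2}\BH - \frac{1}{\delta}\, Z_2\otimes I = 0
\]
in $\End_{\MTL(\delta)}((m,v))$, and hence a fortiori in $\End_{\ATL(\delta)}(1)$ after the natural identification. By Corollary \ref{lem:TL-4-t}, the element $Z_2$ is central, so it acts as a scalar after passing to the quotient $\TLBC(\delta,\lambda)$, where by Definition \ref{def:TLB} that scalar is prescribed to be $\frac{Z_2}{\delta}=-\lambda\bigl(\frac{\delta-2}{2}-\lambda\bigr)$. Substituting into the quadratic relation above yields
\[
\BH^2 + \frac{\delta-2}{2}\BH + \lambda\left(\frac{\delta-2}{2}-\lambda\right) = 0
\]
in $\End_{\TLBC(\delta,\lambda)}(1)$.

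The second step is the elementary algebraic observation that this polynomial equation factors as desired. Indeed, expanding the product,
\[
(\BH+\lambda)\!\left(\BH+\tfrac{\delta-2}{2}-\lambda\right) = \BH^2 + \tfrac{\delta-2}{2}\BH + \lambda\!\left(\tfrac{\delta-2}{2}-\lambda\right),
\]
since the cross terms $\lambda\BH$ and $-\lambda\BH$ cancel. Comparing with the previous display gives the claimed vanishing.

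There is essentially no obstacle here: the content of the lemma is the identification of the quadratic polynomial annihilating $\BH$ (coming from the four-term relation in multi-polar Temperley-Lieb, Lemma \ref{lem:TL-H}) with the factored form chosen in the definition of $\TLBC(\delta,\lambda)$. The only small point to check is that the parameter $\frac{Z_2}{\delta}$ genuinely acts as the prescribed scalar on $\BH$ — but this is exactly the centrality of $Z_2$ asserted in Corollary \ref{lem:TL-4-t}(i), which allows one to treat $Z_2\otimes I$ and the scalar interchangeably on morphisms in $\End_{\TLBC(\delta,\lambda)}(1)$.
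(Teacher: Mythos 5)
Your argument is correct and is essentially the paper's own proof: the lemma is deduced from the quadratic relation $\BH^2+\frac{\delta-2}{2}\BH-\frac{Z_2}{\delta}=0$ of Lemma \ref{lem:TL-H}, with the substitution $\frac{Z_2}{\delta}=-\lambda\bigl(\frac{\delta-2}{2}-\lambda\bigr)$ from Definition \ref{def:TLB} and a trivial factorisation check. You merely spell out the expansion and the centrality of $Z_2$ that the paper leaves implicit; nothing is missing.
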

\begin{proof}
This follows from Lemma \ref{lem:TL-H}, which asserts that $\BH$ satisfies the quadratic relation $\BH^2+(\frac{\delta-2}{2})\BH-\frac{Z_2}{\delta}=0$ over $\K$. 
\end{proof}

There is a close relationship between the category $\TLBC(-2, \lambda)$ and a category of infinite dimensional $\fsp_2$-representations, which will be studied in detail in Section \ref{sect:sp2}.

\section{Applications to $G_2$}\label{sect:EB-G}

We consider an example of $\HB_{\CG_1}(\delta_\CV)$ with non-trivial $\CG_1$. 
The categories discussed in this section are suitable for studying representations 
of the Lie algebra $G_2$. 

Let us first recall some basic facts about $G_2$. 

\bigskip
\noindent
\subsection{Some basic facts concerning $G_2$}  \label{sect:so-G2}

 Let $V=\C^7$ be equipped with a non-degenerate symmetric bilinear form $\omega$, and we take $\fso_7(\C)=\fso(V; \omega)$. Now $\omega$ gives rise to  non-zero $\fso_7(\C)$-invariant maps $\hat{C}: V\ot V\lra \C$ and $\check{C}: \C\lra V\wedge V$, where $\hat{C}(v\ot  v')=\omega(v, v')$ for all $v, v'\in V$, and $\check{C} \hat{C}$ is a quasi idempotent such that $(\check{C} \hat{C})^2=7 \check{C} \hat{C}$.  The Lie algebra $\fso(V; \omega)$ can be equivalently described as the subalgebra of $\gl_7(\C)$ annihilating $\check{C}(1)$.

There is a map $\check{C}_3: \C\lra V \wedge V\wedge V$ such that $G_2$ is the subalgeba of $\fso_7(\C)$ annihilating $\check{C}_3(1)$. Explicitly, if we
 assume that $\omega(e_i, e_j)=\delta_{i j}$ for the elements of
the standard basis $\{e_i\mid 1\le i\le 7\}$ of $V$, then $\check{C}(1)=\sum_{i=1}^7 e_i \ot e_i$.
The element $\check{C}_3(1)$ is then given by
\[
\baln
\check{C}_3(1)&=e_1 \wedge e_2 \wedge e_3 - e_1 \wedge (e_4\wedge e_5 + e_6\wedge e_7)\\
&- e_2 \wedge (e_4  \wedge  e_6  - e_5\wedge e_7) - e_3\wedge (e_4 \wedge  e_7 + e_5 \wedge e_6). 
\ealn
\]
%
%
%
It is straightforward that 
\beq\label{eq:norm}
 (\id_V\ot \omega\ot\id_V)(\id_V^{\ot2} \omega\ot  \id_V^{\ot 2})(\check{C}_3(1) \ot  \check{C}_3(1))=3 \check{C}(1).
\eeq

The Lie algebra $G_2$ is $14$-dimensional and is of rank $2$. 
Denote by $\alpha_1$ and $\alpha_2$ the two simple roots of $G_2$ with $\alpha_1$ short and $\alpha_2$ long. Denote by $\lambda_1$ and $\lambda_2$ the fundamental weights, i.e., a basis of the weight space satisfying 
$
\frac{2(\alpha_j, \lambda_i)}{(\alpha_j, \alpha_j)}=\delta_{i j}
$ 
for $i, j=1, 2$. In fact both fundamentals weights are roots, with 
$\lambda_1 = 2\alpha_1+\alpha_2$ and $\lambda_2= 3\alpha_1 +2\alpha_2$.  
Note that $\lambda_2$ is the maximal root, thus the simple module $L_{\lambda_2}$ is isomorphic to $G_2$ under the adjoint action. The simple module $L_{\lambda_1}$ is the natural module $V=\C^7$. It is self-dual, and is strongly multiplicity free in the sense of \cite{LZ06}.  

Consider $V\ot V$ by first decomposing it into the direct sum of the symmetric and skew symmetric submodules, and then decomposing the two submodules into direct sums of simple modules. We have  
\[
V\ot V = S^2(V) \oplus \wedge^2(V), \quad S^2(V)=L_{2\lambda_2}\oplus L_0, \quad 
\wedge^2(V) = L_{\lambda_2} \oplus L_{\lambda_1}.
\]
The eigenvalues of the Casimir operator in the relevant simple modules are respectively given by 
\[
\chi_0(C) =0, \quad \chi_{\lambda_1}(C) = 12, \quad \chi_{\lambda_2} (C) = 24, \quad \chi_{2\lambda_1}(C) = 28, 
\]
and hence the eigenvalues of the tempered Casimir operator on $V\ot V$ are given by 
\[
\chi_0(t) =-12, \quad \chi_{\lambda_1}(t) = -6, \quad \chi_{\lambda_2} (t) = 0, \quad \chi_{2\lambda_1}(t) = 2.  
\]

Denote by $P_\zeta$ the idempotent mapping $V\ot V$ onto the simple submodule $L_\zeta$ for $\zeta = 0, \lambda_1, \lambda_2$ and $2\lambda_1$. Let $\tau=\tau_{V, V}$,  $e=\check{\omega}_V{\omega}_V$, and $\SH=(\mu\ot\mu)(t)$. Then 
\beq
&1=P_{2\lambda_1} + P_0 + P_{\lambda_1 } + P_{\lambda_2}, \\
&\tau=P_{2\lambda_1} + P_0 - P_{\lambda_1 } - P_{\lambda_2}, \\
&e= 7 P_0, \\
&
\SH=  2 P_{2\lambda_1} -12 P_0 - 6 P_{\lambda_1 }. 
\eeq
Hence $\SH$ satisfies the quartic polynomial equation  
\[
\SH(\SH-2) (\SH+6)(\SH+12)=0. 
\]
Furthermore, we can express $\SH$ as 
\beq
&\SH =1 +\tau - 2 e - 6 P_{\lambda_1 }. \label{eq:spectral}
\eeq

\begin{lemma}
The following relation holds in $\End_{G_2}(V\ot V)$,  
\[
6 ( ^\vee \ot \id) P_{\lambda_1 } + 6 P_{\lambda_1}= -2\tau + e  + 1, 
\]
where the map $^\vee$ is defined by \eqref{eq:vee}.
\end{lemma}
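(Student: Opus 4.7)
The strategy is to apply the operation $(\phantom{x}^\vee \otimes \id)$ to the spectral decomposition \eqref{eq:spectral} of $\SH$, and to use the left skew-symmetry of $\BH(v,v)$ to fold the result back to a relation involving $P_{\lambda_1}$ and elementary morphisms.

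First I would record how $(\phantom{x}^\vee \otimes \id)$ acts on the three building blocks appearing in \eqref{eq:spectral}. Viewing $\End(V \otimes V) \cong \End(V) \otimes \End(V)$, the operation simply transposes the first tensor factor via $\omega_V$. Either from a short index computation in an $\omega_V$-orthonormal basis of $V$, or diagrammatically via Lemma \ref{lem:duality} together with the sliding relations of Section \ref{sect:CB}, one checks
\[
(\phantom{x}^\vee \otimes \id)(1) = 1, \qquad (\phantom{x}^\vee \otimes \id)(\tau) = e, \qquad (\phantom{x}^\vee \otimes \id)(e) = \tau.
\]
The last equality is forced by the first two together with the involutivity of $(\phantom{x}^\vee \otimes \id)$.

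Second I would invoke the left skew-symmetry relation of Definition \ref{def:CB-infB}, which reads $\BH^{lt}(v,v) + \BH(v,v) = 0$ in $\HB_{\CG_1}(\delta_\CV)$. Under the representation functor $\SF$ of Theorem \ref{thm:main}, this translates into the identity $(\phantom{x}^\vee \otimes \id)(\SH) = -\SH$ in $\End_{G_2}(V\otimes V)$: indeed, since $G_2 \subset \fso_7 = \fso(V;\omega)$, every operator $\mu_V(X)$ for $X \in G_2$ is $\omega_V$-antisymmetric, so $\mu_V^\vee = -\mu_V$, and this is exactly what the argument carried out at the end of the proof of Theorem \ref{thm:main} shows.

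Combining the two ingredients, I would apply $(\phantom{x}^\vee \otimes \id)$ to both sides of \eqref{eq:spectral} term by term to obtain an expression for $(\phantom{x}^\vee \otimes \id)(\SH)$ in terms of $1, e, \tau$ and $(\phantom{x}^\vee \otimes \id)(P_{\lambda_1})$, and then substitute $(\phantom{x}^\vee \otimes \id)(\SH) = -\SH$ using the spectral decomposition once more. The resulting linear equation in $\End_{G_2}(V\otimes V)$ can be solved for the combination $6(\phantom{x}^\vee \otimes \id)(P_{\lambda_1}) + 6 P_{\lambda_1}$ directly; the $\SH$-terms cancel and only the elementary morphisms $1,\tau,e$ remain on the right hand side. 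The main obstacle will be tracking signs carefully when translating the skew-symmetry relation on the categorical side into the representation-theoretic identity $(\phantom{x}^\vee \otimes \id)(\SH) = -\SH$, and ensuring that the conventions chosen for $\check\omega_V$, $\omega_V$ and $\tau_{V,V}$ are used consistently; once this is pinned down, the computation is purely routine linear algebra.
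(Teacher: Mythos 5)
Your two preliminary ingredients are fine: $({}^\vee\ot\id)$ fixes $1$, interchanges $\tau$ and $e$, and sends $\SH$ to $-\SH$ (this is the left skew symmetry, exactly as in the proof of Theorem \ref{thm:main}). The problem is the last step, which you describe as routine but do not carry out: substituting into \eqref{eq:spectral} gives $-\SH = 1 + e - 2\tau - 6({}^\vee\ot\id)P_{\lambda_1}$, and adding $\SH = 1+\tau-2e-6P_{\lambda_1}$ yields $6({}^\vee\ot\id)P_{\lambda_1} + 6P_{\lambda_1} = 2\cdot 1 - \tau - e$, \emph{not} the stated right-hand side $-2\tau + e + 1$. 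The two expressions differ by $1+\tau-2e$, which is nonzero (it acts by $2$ on $L_{2\lambda_1}$), so your outlined computation proves a different identity from the one in the lemma; the claim that "only the elementary morphisms remain and one obtains the lemma" is where the proposal breaks down.

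For comparison, the paper's own proof first derives precisely the relation your route produces, but for the right transpose: $6(\id\ot{}^\vee)P_{\lambda_1} + 6P_{\lambda_1} - 2 + e + \tau = 0$. It then reaches the printed form \eqref{eq:P1tran} only through an additional manipulation: left multiplication by $\tau$, using $\tau P_{\lambda_1} = -P_{\lambda_1}$, $\tau e = e$, $\tau^2 = 1$, together with an implicit identification of $\tau(\id\ot{}^\vee)P_{\lambda_1}$ with $-({}^\vee\ot\id)P_{\lambda_1}$. Your proposal has no counterpart of this conversion step, and you should also be aware that your (correct) preliminary facts are actually in tension with the printed statement: the partial transpose preserves the trace on $\End_\C(V\ot V)$, so the left-hand side has trace $6\cdot 7 + 6\cdot 7 = 84$, while $1 + e - 2\tau$ has trace $49 + 7 - 14 = 42$ and $2 - \tau - e$ has trace $84$. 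So either you must supply and justify the missing $\tau$-conversion used in the paper (which requires an argument, since in general $(\id\ot{}^\vee)P_{\lambda_1} = \tau\bigl(({}^\vee\ot\id)P_{\lambda_1}\bigr)\tau$ rather than the sign relation needed), or you should state and prove the trace-consistent identity $6({}^\vee\ot\id)P_{\lambda_1} + 6P_{\lambda_1} = 2 - \tau - e$ and reconcile it with the form quoted in the lemma. As written, the proposal asserts, but does not deliver, the printed equality.
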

\begin{proof}
We have shown that $
(\id\ot   ^\vee) \SH	=- \SH.
$
Using 
$
\SH=1 +\tau - 2 e - 6 P_{\lambda_1 },
$
we obtain 
$
-(1 +\tau - 2 e - 6 P_{\lambda_1 }) = (\id\ot   ^\vee) \SH	
= 1 + e - 2 \tau - 6 (\id\ot   ^\vee)P_{\lambda_1 }.
$
Thus
\[
6 (\id\ot ^\vee)P_{\lambda_1 } +6 P_{\lambda_1} -2 + e  + \tau=0. 
\]
Multiplying this by $\tau$ from the left, and using $\tau P_{\lambda_1}=- P_{\lambda_1}$, we obtain 
\beq\label{eq:P1tran}
 6( ^\vee \ot\id) P_{\lambda_1 }+ 6 P_{\lambda_1}= -2\tau + e  + 1.
\eeq
This completes the proof. 
\end{proof}

\subsection{Coloured Brauer categories with infinitesimal braids for $G_2$}\label{sect:B-G2}
We now consider the special case of $\HB_{\CG_1}(\delta_\CV)$ with 
 $\CM\supset \CV=\{v\}$. 

\subsubsection{$G_2$ categories}

Maintain the convention of Remark \ref{rmk:no-v}, i.e., drop the label $v$ from all arcs of colour $v$, and 
write $I^v$, $X_{v, v}$, $\cup^v$ and $\cap_v$ simply as  
$I$, $X$, $\cup$ and $\cap$.  We also write $\delta=\delta_\CV$. 

Assume that $|\CG_1|$ consists of the single element $\check\omega_3\in \Hom(\emptyset, v^3)$, which is totally skew symmetric in the sense that 
\beq\label{eq:skew-omega}
(X\ot I) \check\omega_3= (I\ot X) \check\omega_3 =- \check\omega_3.
\eeq
Call this category $\HB_{\check\omega_3}(\delta)$. 

The morphism $\check\omega_3$  will be depicted graphically by 
\[
\begin{picture}(80, 30)(-25, 0)
\put(-25, 10){$\check\omega_3=$}

\put(20, 0){\line(0, 1){30}}

\qbezier(10, 30)(10, 5)(20, 0)
\qbezier(30, 30)(30, 5)(20, 0)

\put(30, 0){.}
\end{picture}
\]
Let $\Upsilon=(I\ot I \ot  \cap) (\check\omega_3\ot I)$, $\hat\Upsilon=(I\ot \cap)(\Upsilon\ot I)$ and $\omega_3=\cap(\hat\Upsilon\ot I)$. Then it follows the total skew symmetry of $\check\omega_3$ given by \eqref{eq:skew-omega} that 
\beq\label{eq:skew-omega-2}
\check\omega_3= (I\ot \Upsilon)\cup, \quad  \hat\Upsilon=(\cap\ot I)(I\ot\Upsilon), \quad
\omega_3=\cap(I\ot\hat\Upsilon).
\eeq

Let  
$\CK=\Upsilon\hat\Upsilon, 
\text{and}\  
\begin{picture}(50, 10)(-3, 0)
\put(0, 0){$E=$}
\qbezier(30, 0)(35, 8)(40, 0)
\qbezier(30, 10)(35, 2)(40, 10)
\put(45, 0){.}
\end{picture}
$
We want the morphism $\check\omega_3$ to mimic the map $\sqrt{2}\check{C}_3$ in the $G_2$ context, and thus $\CK$ mimics some scalar multiple of  $P_{\lambda_1}$.  The category to be defined should have analogues of relations \eqref{eq:norm} and \eqref{eq:spectral}. Therefore, we introduce the following definition. 

\begin{definition}
Let $\CJ$ be the tensor ideal in $\HB_{\check\omega_3}(\delta)$ generated by the morphisms
\beq
\hat\Upsilon\Upsilon - 6 I, \quad  \BH(v, v) - X +2 E - I\ot I+ \Upsilon\hat\Upsilon. 
\eeq
Denote by $\wh\HB_{\check\omega_3}(\delta)$ the quotient category $\HB_{\check\omega_3}(\delta)/\CJ$, and refer to is as the {\em coloured Brauer category with infinitesimal braids} for $G_2$-representations.
\end{definition}

We have the following result. 
\begin{lemma}\label{lem:constraint}
If $I\ne 0$ in $\wh\HB_{\check\omega_3}(\delta)$, then $\delta=7$. 
\end{lemma}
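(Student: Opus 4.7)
The plan is to compute a single closed scalar in $\End_{\wh\HB_{\check\omega_3}(\delta)}(\emptyset)=\K$ in two different ways, producing a quadratic equation in $\delta$ whose only non-trivial root is $7$.

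First I would unpack the skew symmetry $\BH(v,v)^{rt}+\BH(v,v)=0$ using the defining formula $\BH(v,v)=X-2E+I\otimes I-\CK$ (with $\CK=\Upsilon\hat\Upsilon$). A short diagrammatic check gives $X^{rt}=E$, $E^{rt}=X$ and $(I\otimes I)^{rt}=I\otimes I$, so the skew symmetry collapses to
\[
\CK^{rt}+\CK\;=\;2(I\otimes I)-X-E.
\]
Pre-composing with $\cap_v$ and post-composing with $\cup^v$ then produces a scalar identity. The key simplification is that the total skew symmetry $(X\otimes I)\check\omega_3=-\check\omega_3$ forces $\cap\Upsilon=0$ and dually $\hat\Upsilon\cup=0$, so that $\cap\CK\cup=(\cap\Upsilon)(\hat\Upsilon\cup)=0$; using $\cap\cup=\delta$, $\cap X\cup=\delta$ (since $X\cup=\cup$) and $\cap E\cup=\delta^{2}$, I obtain
\[
\cap\CK^{rt}\cup\;=\;\delta(1-\delta).
\]

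The crucial step is the independent evaluation of $\cap\CK^{rt}\cup$. Unwinding the nested cup–cap structure built into $\CK^{rt}=(I\otimes\cap\otimes I)(\CK\otimes X)(I\otimes\cup\otimes I)$, this closed diagram is precisely the categorical trace $\tr(X\CK)$. The skew symmetry of $\check\omega_3$ gives $X\Upsilon=-\Upsilon$ (and $\hat\Upsilon X=-\hat\Upsilon$), hence $X\CK=-\CK$ and therefore $\cap\CK^{rt}\cup=-\tr(\CK)$. Cyclicity of the trace across the pair $\Upsilon\colon v\to v\otimes v$, $\hat\Upsilon\colon v\otimes v\to v$, together with the normalisation $\hat\Upsilon\Upsilon=6I_v$, yields
\[
\tr(\CK)\;=\;\tr(\Upsilon\hat\Upsilon)\;=\;\tr(\hat\Upsilon\Upsilon)\;=\;6\tr(I_v)\;=\;6\delta.
\]
Equating the two expressions gives $\delta(1-\delta)=-6\delta$, i.e. $\delta(\delta-7)=0$. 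The hypothesis $I\ne 0$ together with the fact that $\hat\Upsilon\Upsilon=6I_v$ is an isomorphism exhibits $v$ as a direct summand of $v\otimes v$, so a standard additive dimension count ($\delta^{2}=\delta+\dim v''$) rules out $\delta=0$; hence $\delta=7$.

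The main obstacle I expect is the clean identification $\cap\CK^{rt}\cup=\tr(X\CK)$, which requires careful bookkeeping of how the two nested cups and caps reassemble around the $X$-insertion into a single trace-type closed loop; the identity is immediate in any coordinate realisation but the purely diagrammatic version needs care with signs and orientations. An equivalent and perhaps more intrinsic route, which sidesteps this identification, is to pre- and post-compose the identity $\CK^{rt}+\CK=2(I\otimes I)-X-E$ with $\hat\Upsilon$ and $\Upsilon$ rather than with $\cap$ and $\cup$: the same inputs ($\hat\Upsilon X\Upsilon=-\hat\Upsilon\Upsilon=-6I$, $\hat\Upsilon E\Upsilon=0$, $\hat\Upsilon\CK\Upsilon=36I$) give $\hat\Upsilon\CK^{rt}\Upsilon=-18I_v$, and taking traces then recovers the same quadratic $\delta(\delta-7)=0$.
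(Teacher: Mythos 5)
Your algebra up to the quadratic is sound: the identity $\CK^{rt}+\CK=2(I\ot I)-X-E$ is exactly the intermediate relation obtained in the proof of Lemma \ref{lem:reduction}, and both of your evaluations of the closed scalar (including the identification of $\cap\,\CK^{rt}\cup$ with the trace of $X\CK$, and $\tr(\CK)=6\delta$) are correct, giving $\delta(\delta-7)=0$. The genuine gap is the last step, where you discard the root $\delta=0$. In this abstractly presented $\K$-linear diagram category the loop value $\delta$ is a formal parameter, not the dimension of any vector space, so there is no ``additive dimension count'' to invoke: categorical dimensions need not be positive, integral, or even nonzero (the intended specialisations elsewhere in the paper already include negative values such as $\sdim=-2n$). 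What additivity of the categorical trace over the idempotent decomposition $I\ot I=\tfrac{1}{6}\CK+(I\ot I-\tfrac{1}{6}\CK)$ actually gives is $\delta^{2}=\delta+\tr\bigl(I\ot I-\tfrac{1}{6}\CK\bigr)$, and nothing prevents the second trace from vanishing; in particular $\delta=0$ produces no contradiction this way. Your alternative route through $\hat\Upsilon(\cdot)\Upsilon$ ends, by your own description, at the same quadratic, so it has the same defect. As written, the proposal proves only $\delta\in\{0,7\}$, which is strictly weaker than the lemma.

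The spurious root enters precisely because you closed \emph{both} strands: the full closure multiplies the useful relation by the (possibly zero) loop value. Close only one strand instead. Applying $(I\ot\cap)(\,\cdot\,\ot I)(I\ot\cup)$ to your identity, the right-hand side gives $(2\delta-2)I$ (the one-strand closures of $X$ and of $E$ are $I$, that of $I\ot I$ is $\delta I$), while the one-strand closures of $\CK$ and of $\CK^{rt}$ both straighten, using the rotation property \eqref{eq:skew-omega-2} of the trivalent vertex and the twist relations, to the bubble $\hat\Upsilon\Upsilon=6I$; this yields $(7-\delta)I=0$, so $I\neq 0$ forces $\delta=7$ with no case analysis. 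This is in substance the paper's own argument: there one applies the one-strand closure to the relation of Figure \ref{fig:Upsilon-2} (the $\BH(v,v)$ term dies by its skew symmetry), obtaining that the partial closure of $\CK$ equals $(\delta-1)I$, and compares this with Figure \ref{fig:Upsilon-1} to get $(1-\delta)I+6I=0$.
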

We will prove this later after we have developed the necessary diagrammatics. 

As usual, we shall still use the same symbols and corresponding pictures for diagrams in $\HB_{\check\omega_3}(\delta)$ to represent their images in $\wh\HB_{\check\omega_3}(\delta)$.  In view of \eqref{eq:skew-omega-2}, we can and will simply represent the morphisms $\Upsilon$, $\hat\Upsilon$,  and $\omega_3$ by the following diagrams respectively. 
\[
\begin{picture}(65, 30)(-25, -7)
\put(-25, 10){$\Upsilon\  \  =$}

\put(20, 15){\line(0, -1){15}}

\qbezier(10, 30)(10, 30)(20, 15)
\qbezier(30, 30)(30, 30)(20, 15)

\put(30, 0){,}
\end{picture}
\quad
\begin{picture}(80, 40)(-25,-10)
\put(-25, 10){$\hat\Upsilon=$}

\put(20, 30){\line(0, -1){20}}

\qbezier(10, -5)(10, -5)(20, 10)
\qbezier(30, -5)(30, -5)(20, 10)

\put(35, 0){,}
\end{picture}
\begin{picture}(60, 40)(-25,-10)
\put(-25, 10){$\omega_3=$}

\put(20, -5){\line(0, 1){30}}

\qbezier(10, -5)(10, 20)(20, 25)
\qbezier(30, -5)(30, 20)(20, 25)

\put(35, 0){.}
\end{picture}
\]

\noindent
{\bf Additional relations in $\wh\HB_{\check\omega_3}(\delta)$}. The category $\wh\HB_{\check\omega_3}(\delta)$ inherites all the relations from $\HB_{\check\omega_3}(\delta)$. It also has the additional defining relations arising from the generators of $\CJ$. They can be pictorially represented by 
Figure \ref{fig:Upsilon-1} and Figure \ref{fig:Upsilon-2}.

\begin{figure}[h]
\begin{picture}(80, 40)(0, 0)

\put(10, 0){\line(0, 1){10}}

\qbezier(0, 20)(10, 10)(10, 10)
\qbezier(20, 20)(10, 10)(10, 10)

\qbezier(0, 20)(10, 30)(10, 30)
\qbezier(20, 20)(10, 30)(10, 30)

\put(10, 30){\line(0, 1){10}}

\put(28, 18){$- \  \, 6$}
\put(60, 0){\line(0, 1){40}}

\put(70, 18){$= \ 0$}
\end{picture}
\caption{Relation (1) in $\wh\HB_{\check\omega_3}(\delta)$}
\label{fig:Upsilon-1}
\end{figure}

\begin{figure}[h]
\begin{picture}(280, 40)(0, 0)

\put(0, 0){\line(0, 1){40}}

\put(0, 20){\uwave{\hspace{7mm}}}
\put(20, 0){\line(0, 1){40}}

\put(30, 16){$- $}

\qbezier(50, 40)(70, 0)(70, 0)
\qbezier(70, 40)(50, 0)(50, 0)

\put(82, 16){$+ $}
\put(105, 16){$2 $}
\qbezier(120, 40)(130, 5)(140, 40)
\qbezier(120, 0)(130, 35)(140, 0)

\put(150, 16){$- $}

\put(175, 0){\line(0, 1){40}}
\put(195, 0){\line(0, 1){40}} 

\put(208, 16){$+$}

\qbezier(225, 40)(235, 25)(235, 25)
\qbezier(245, 40)(235, 25)(235, 25)

\qbezier(225, 0)(235, 15)(235, 15)
\qbezier(245, 0)(235, 15)(235, 15)

\put(235, 15){\line(0, 1){10}} 

\put(255, 16){$= \ 0$}

\end{picture}
\caption{Relation (2) in $\wh\HB_{\check\omega_3}(\delta)$}
\label{fig:Upsilon-2}
\end{figure}


Lemma \ref{lem:reduction} below provides a powerful tool for manipulating diagrams 
in $\wh\HB_{\check\omega_3}(\delta)$.  
\begin{lemma} \label{lem:reduction}
The following relation holds in $\wh\HB_{\check\omega_3}(\delta)$.
\[
\begin{picture}(270, 40)(-75, 0)

\qbezier(-75, 40) (-75, 40) (-65, 20)
\qbezier(-35, 40) (-35, 40) (-45, 20)
\put(-65, 20){\line(1, 0){20}}

\qbezier(-75, 0) (-75, 0) (-65, 20)
\qbezier(-35, 0) (-35, 0) (-45, 20)



\put(-25, 16){$+$}

\qbezier(-10, 40)(0, 25)(0, 25)
\qbezier(10, 40)(0, 25)(0, 25)

\qbezier(-10, 0)(0, 15)(0, 15)
\qbezier(10, 0)(0, 15)(0, 15)

\put(0, 15){\line(0, 1){10}} 

\put(18, 16){$+ \  \ \  2$}

\qbezier(50, 40)(70, 0)(70, 0)
\qbezier(70, 40)(50, 0)(50, 0)

\put(85, 16){$- $}
\qbezier(110, 40)(120, 5)(130, 40)
\qbezier(110, 0)(120, 35)(130, 0)

\put(140, 16){$- $}

\put(160, 0){\line(0, 1){40}}
\put(180, 0){\line(0, 1){40}} 

\put(190, 16){$= \ 0 $ .}

\end{picture}
\]
\end{lemma}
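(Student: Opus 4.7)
The plan is to recognise the leftmost diagram as the ``$G_2$ H-web'' endomorphism of $v\ot v$ and then to reduce the claim to a single equivalent identity that follows from the defining relations of $\wh\HB_{\check\omega_3}(\delta)$.

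First I would identify the first diagram.  Each of its two internal vertices (at height $y=20$) has degree three, so each is a trivalent vertex built from $\Upsilon$ or $\hat\Upsilon$; by planar isotopy and the duality relations \eqref{eq:skew-omega-2}, the diagram represents the endomorphism
\[
M \;:=\; (I\ot\cap\ot I)\circ(\Upsilon\ot\Upsilon) \;=\; (\hat\Upsilon\ot I)\circ(I\ot\Upsilon) \;\in\; \End_{\wh\HB_{\check\omega_3}(\delta)}((v,v)).
\]
Using the defining relation of Figure \ref{fig:Upsilon-2}, $\CK=\Upsilon\hat\Upsilon = -\BH(v,v)+X-2E+I\ot I$, to substitute for $\CK$ in the claimed sum and collecting terms, the displayed equation is seen to be equivalent to
\begin{equation}\label{eq:MH}
M \;=\; \BH(v,v) - 3(X-E) \quad\text{in }\wh\HB_{\check\omega_3}(\delta),
\end{equation}
so it suffices to establish \eqref{eq:MH}.

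Since $\End((v,v))$ in the quotient is spanned by $I\ot I$, $X$, $E$ and $\CK$, an endomorphism is determined by its compositions with $E$ and $\CK$ and its interaction with $X$.  The key computations are: $EM=ME=6E$, which follows from $(\hat\Upsilon\ot I)\check\omega_3=6\,\cup$, itself a direct consequence of the bigon relation $\hat\Upsilon\Upsilon=6I$ of Figure \ref{fig:Upsilon-1} combined with $\check\omega_3=(I\ot\Upsilon)\cup$ from \eqref{eq:skew-omega-2}; $XM=MX$, which follows from the skew symmetries $X\Upsilon=-\Upsilon$ and $\hat\Upsilon X=-\hat\Upsilon$ induced by the total skew symmetry of $\check\omega_3$, together with the symmetric placement of the two trivalent vertices in $M$; and $\CK M=M\CK=-3\CK$, obtained by first showing $M\Upsilon=-3\Upsilon$ and then post-composing with $\hat\Upsilon$.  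Verifying that $\BH(v,v)-3(X-E) = -2X+E+I\ot I-\CK$ satisfies the same three conditions using the defining relation of $\BH(v,v)$, Lemma \ref{lem:H-skew-sym}(3) and standard Brauer identities then yields \eqref{eq:MH}.

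The main obstacle is the computation $M\Upsilon=-3\Upsilon$, which involves composing three trivalent vertices in a row.  The cleanest route is to expand $M\Upsilon=(\hat\Upsilon\ot I)(I\ot\Upsilon)\Upsilon$, use planar isotopy to bring an interior $(\hat\Upsilon,\Upsilon)$-pair into bigon configuration, apply the bigon relation $\hat\Upsilon\Upsilon=6I$, and track the sign contributions from the total skew symmetry of $\check\omega_3$, which together produce the coefficient $-3$; an alternative route uses the four-term relation for $\BH$ to infer $\BH(v,v)\circ\Upsilon$ and combines it with a direct computation of $(X-E)\circ\Upsilon$ to deduce \eqref{eq:MH} in a single step.
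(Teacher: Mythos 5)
Your opening reduction is sound: writing $M$ for the first (H-shaped) diagram and $\CH=\BH(v,v)$, substituting $\CK=-\CH+X-2E+I\ot I$ from Figure \ref{fig:Upsilon-2} does show the stated display is equivalent to $M=\CH-3(X-E)$. The proof you then sketch for that identity, however, has genuine gaps. (a) The claim that $\End_{\wh\HB_{\check\omega_3}(\delta)}((v,v))$ is spanned by $I\ot I,X,E,\CK$ is not available here; it is a consequence of the reduction machinery that this very lemma is meant to supply, so invoking it is essentially circular. (b) Even granting the span, your determination principle is false: for $D=aI\ot I+bX+cE+d\CK$ the conditions $ED=DE=0$, $\CK D=D\CK=0$, $XD=DX$ amount only to $a+b+\delta c=0$ and $a-b+6d=0$ (using $E\CK=\CK E=0$, $X\CK=\CK X=-\CK$, $\CK^2=6\CK$; commutation with $X$ is automatic on the span), so the nonzero element $\delta X-E+\tfrac{\delta}{6}\CK$ passes every one of your tests. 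Hence equality of the relevant compositions cannot force $M=\CH-3(X-E)$. (c) The pivotal coefficient in $M\Upsilon=-3\Upsilon$ cannot be extracted from planar isotopy plus the bigon relation $\hat\Upsilon\Upsilon=6I$: a triangle is not isotopic to a bigon, and in the paper this triangle relation is itself deduced from the present lemma (by composing its left-hand side with $\Upsilon$), so this route is circular. A further warning sign: your test value $EM=ME=6E$ is incompatible with the candidate for general $\delta$, since $E(\CH-3(X-E))=(\delta-1)E$; insisting on agreement forces $\delta=7$, whereas the lemma is stated for arbitrary $\delta$ and $\delta=7$ is only derived afterwards in Lemma \ref{lem:constraint}.

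The missing idea is the rotation (partial transpose) of the defining relation, which your outline never performs. The paper sets $\B^T=(I\ot\cap\ot I)(\B\ot X)(I\ot\cup\ot I)$, applies this to the relation of Figure \ref{fig:Upsilon-2} and uses the skew symmetry $\CH^T=-\CH$ to get $-\CH-E+2X-I\ot I+\CK^T=0$; adding the original relation eliminates $\CH$ and yields $X+E-2\,I\ot I+\CK+\CK^T=0$; premultiplying by $X$ (with $XE=E$, $X\CK=-\CK$) gives $\CK-X\CK^T+2X-E-I\ot I=0$; and the lemma then follows from the purely pictorial identification, via \eqref{eq:skew-omega} and \eqref{eq:skew-omega-2}, of $X\CK^T$ with $-M$. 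If you want to salvage your plan, you would have to replace the ``determined by compositions'' step by exactly this kind of rotated use of Figure \ref{fig:Upsilon-2}; eigenvalue-style checks against $E$, $\CK$ and $X$ are structurally insufficient.
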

\begin{proof}
For any $\B\in \Hom(v^2, v^2)$, let $\B^T=  (I\ot \cap\ot I)(\B\ot X)(I\ot \cup\ot I)$. 
Denote by $\BL$ the left hand side of the relation of Figure \ref{fig:Upsilon-2}, and let $\CK=\Upsilon\hat\Upsilon$. Write $\CH=\BH(v,v)$. Then 
\[
\BL^T = \CH^T -  E + 2 X - I\ot I +\CK^T= -\CH -  E + 2 X - I\ot I +\CK^T=0, 
\]
where the second equality uses the skew symmetry of $\CH$. Using Figure \ref{fig:Upsilon-2}, we obtain 
\[
X+ E - 2 I\ot I + \CK+\CK^T=0. 
\]
Multiplying both sides of this relation by $X$ on the left, and then using the fact that $X\CK=- \CK$, we obtain 
\beq\label{eq:simplify}
\CK - X \CK^T + 2 X  - E  - I\ot I =0. 
\eeq
Pictorially, $X \CK^T$ is given by 
\[
\setlength{\unitlength}{0.3mm}
\begin{picture}(60, 60)(0, 0)
\qbezier(0, 60)(30, 50)(30, 50)
\qbezier(0, 50)(30, 60)(30, 60)

\put(0, 50){\line(0, -1){10}} 
\put(30, 50){\line(0, -1){10}} %

\qbezier(0, 40)(10, 30)(10, 30)
\qbezier(20, 40)(10, 30)(10, 30) %

\put(10, 30){\line(0, -1){10}} 

\qbezier(0, 5)(10, 20)(10, 20)
\qbezier(20, 10)(10, 20)(10, 20)%

\qbezier(20, 10)(32, 8)(30, 40)%
\qbezier(20, 40)(35, 42)(35, 5) %

\put(40, 30){$= \ - $}
\end{picture}
\quad
\begin{picture}(80, 60)(-5, -10)

\put(-5, 27){\line(0, 1){18}}

\put(15, 27){\line(0, 1){18}} 

\qbezier(15, 27)(25, 13)(25, 13)
\qbezier(15, 27)(5, 13)(5, 13)
\qbezier(-5, 27)(5, 13)(5, 13)
\put(5, -5){\line(0, 1){18}} 
\put(25, -5){\line(0, 1){18}} 
\put(35, 20){$=\ - $}

\qbezier(75, -5)(75, -5)(85, 22)
\qbezier(115, -5)(115, -5)(105, 22)
\put(85, 22){\line(1, 0){20}} 

\qbezier(75, 50)(75, 50)(85, 22)
\qbezier(115, 50)(115, 50)(105, 22)

\put(120, 0){,} 
\end{picture}
\]
where the first equality is obtained by applying \eqref{eq:skew-omega} and the skew symmetry of $\Upsilon$. The second equality is simply obtained by straightening
the bent arcs on the left side.   Using this relation in the pictorial description of equation \eqref{eq:simplify}, we obtain the lemma. 
\end{proof}

Lemma \ref{lem:reduction} enables one to eliminate triangles, squares, pentagons, etc. built from $\Upsilon$ and its relatives.  For example, one easily deduces from $(I\ot\cap)(\BL\ot I)(I\ot\cup)=0$ and $\BL\Upsilon=0$ the following relations:
\[
\setlength{\unitlength}{0.3mm}
\begin{picture}(150, 40)(-10, 0)
\qbezier(-10, 40)(0, 25)(0, 25)
\qbezier(10, 40)(0, 25)(0, 25)

\qbezier(-10, 0)(0, 15)(0, 15)
\qbezier(10, 0)(0, 15)(0, 15)

\put(0, 15){\line(0, 1){10}} 

\qbezier(10, 0)(20, 0)(20, 20)
\qbezier(10, 40)(20, 40)(20, 20)

\put(30, 15){$+ \  (1-\delta)$} 
\put(95, 0){\line(0, 1){40}} 
\put(105, 15){$=  \ 0$} 
\put(135, 10){; }
\end{picture}
\]

\[
\setlength{\unitlength}{0.3mm}
\begin{picture}(150, 55)(-5, -25)

\put(5, 10){\line(0, 1){20}} 
\put(25, 10){\line(0, 1){20}}
\put(5, 10){\line(1, 0){20}} 

\qbezier(5, 10)(5, 10)(15, -5)
\qbezier(25, 10)(25, 10)(15, -5)

\put(15, -5){\line(0, -1){18}}
 
\put(40, 0){$+\ \  3 $}

\qbezier(70, 30)(80, 5)(80, 5)
\qbezier(90, 30)(80, 5)(80, 5)
\put(80, 5){\line(0, -1){25}}


\put(100, 0){$=\ \ 0$}
\put(140, -10){.}
\end{picture}
\]

We are now in a position to prove that the parameter $\delta$ must be equal to $7$.  
\begin{proof}[Proof of Lemma \ref{lem:constraint}]  

Comparing the first relation above with Figure \ref{fig:Upsilon-1} taking into account that 
\[
\setlength{\unitlength}{0.30mm}
\begin{picture}(50, 30)(-10, 0)
\qbezier(-10, 40)(0, 25)(0, 25)
\qbezier(10, 40)(0, 25)(0, 25)

\qbezier(-10, 0)(0, 15)(0, 15)
\qbezier(10, 0)(0, 15)(0, 15)

\put(0, 15){\line(0, 1){10}} 

\qbezier(10, 0)(20, 0)(20, 20)
\qbezier(10, 40)(20, 40)(20, 20)
\put(35, 15){$=$}
\end{picture}
\quad
\setlength{\unitlength}{0.35mm}
\begin{picture}(50, 30)(-5, 2)

\put(10, 0){\line(0, 1){10}}

\qbezier(0, 20)(10, 10)(10, 10)
\qbezier(20, 20)(10, 10)(10, 10)

\qbezier(0, 20)(10, 30)(10, 30)
\qbezier(20, 20)(10, 30)(10, 30)

\put(10, 30){\line(0, 1){10}}


\put(28, 5){,}
\end{picture}
\]
we obtain $(1-\delta) I+ 6 I=0$. Since we assume that $I\ne 0$, it follows that $\delta=7$. 
\end{proof}

\subsubsection{Recovery of the diagram category $\mathscr{T}_\Gamma$ of  \cite{BE}}
Let us consider $\wh\HB_{\check\omega_3}(7)$ in the simplest case with $\CM=\CV=\{v\}$, where we denote the category by $\CB_{\check\omega_3}(7)$. When $\CM\supsetneq\CV=\{v\}$, the category $\CB_{\check\omega_3}(7)$ appears as a full subcategory of $\wh\HB_{\check\omega_3}(7)$. 
We will show that $\CB_{\check\omega_3}(7)$ recovers the monoidal diagram category $\mathscr{T}_\Gamma$ in \cite[\S 4.1]{BE} (also see \cite{Ros96, Ros04}), 
which are closely related to the $G_2$ spiders of Kuperberg \cite{Kg}. The category $\mathscr{T}_\Gamma$ was called a ``3-tangle category'' in \cite{BE}. 

\begin{remark}\label{rmk:Benkart}
The objects of the diagram category $\mathscr{T}_\Gamma$ \cite[\S 4.1]{BE} are the integers $r\in\N$, and the morphisms are generated by the standard Brauer generators (i.e., $I$, $X$, $\cup$, $\cap$) and an additional generator which formally looks the same as the diagram of $\Upsilon$. Besides the usual relations satisfied by the Brauer generators (the loop removal relation is called $\gamma_0$ in op. cit.), 
there are two further
 relations $\gamma_1$ and $\gamma_2$ in op. cit., which  involve the extra 
 generator (with $\Gamma=\{\gamma_0, \gamma_1, \gamma_2\}$).  The relation $\gamma_1$ is nothing but the skew symmetry property of the extra generator. The relation  $\gamma_2$ is described  below. 
\end{remark}

\begin{theorem}\label{thm:G2-iso}
 As monoidal category,  $\CB_{\check\omega_3}(7)$ is isomorphic to
the diagram category $\mathscr{T}_\Gamma$ in \cite[\S 4]{BE}. 
\end{theorem}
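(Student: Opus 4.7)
The plan is to exhibit a pair of mutually inverse monoidal functors between $\CB_{\check\omega_3}(7)$ and $\mathscr{T}_\Gamma$. First I would observe that the defining relation $\BH(v,v) = X - 2E + I\ot I - \Upsilon\hat\Upsilon$ imposed in $\CB_{\check\omega_3}(7)$ allows the generator $\BH(v,v)$ to be eliminated in favour of the Brauer generators $I, X, \cup, \cap$ together with $\Upsilon$ (from which $\hat\Upsilon$ is obtained via \eqref{eq:skew-omega-2}). Thus, as a monoidal category, $\CB_{\check\omega_3}(7)$ is generated by exactly the same pieces of data as $\mathscr{T}_\Gamma$: a Brauer category with loop parameter $7$ augmented by a totally skew-symmetric trivalent vertex.

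Next I would define the functor $F\colon \mathscr{T}_\Gamma \lra \CB_{\check\omega_3}(7)$ on objects by $r\mapsto v^r$ and on generators by sending Brauer generators to their counterparts and the Benkart--Elduque trivalent vertex to $\Upsilon$. To see $F$ is well-defined I must check the three relations $\Gamma=\{\gamma_0,\gamma_1,\gamma_2\}$ of \cite{BE}. The loop relation $\gamma_0$ holds because Lemma \ref{lem:constraint} forces $\delta=7$; the skew symmetry $\gamma_1$ is precisely \eqref{eq:skew-omega}; and the ``square'' or digon-square reduction $\gamma_2$ is exactly the content of Lemma \ref{lem:reduction}. Since $F$ clearly respects the monoidal structure (it preserves juxtaposition of diagrams), this gives a monoidal functor.

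Conversely I would construct $G\colon \CB_{\check\omega_3}(7) \lra \mathscr{T}_\Gamma$ sending objects by $v^r\mapsto r$, the Brauer generators and $\check\omega_3$ to their obvious counterparts, and $\BH(v,v)$ to the formal expression $X - 2E + I\ot I - \Upsilon\hat\Upsilon$ in $\mathscr{T}_\Gamma$. For $G$ to be well-defined I must verify that the defining relations of $\HB_{\check\omega_3}(7)$ and the extra quotient relations of $\CB_{\check\omega_3}(7)$ all hold in $\mathscr{T}_\Gamma$ under this assignment: namely, (i) the Brauer relations (immediate from $\gamma_0$ in $\mathscr{T}_\Gamma$); (ii) the two relations in $\CJ$ defining $\wh\HB_{\check\omega_3}(7)$ -- the digon relation $\hat\Upsilon\Upsilon=6I$ (deducible from $\gamma_2$ using skew symmetry) and the defining equation for $\BH(v,v)$ (which is a tautology under $G$); and (iii) the symmetry, left/right skew symmetries and four-term relations making $\BH(v,v)$ an ``infinitesimal braid'' as in Definition \ref{def:CB-infB}.

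The main obstacle will be step (iii): verifying that $X - 2E + I\ot I - \Upsilon\hat\Upsilon$ actually satisfies the infinitesimal braid axioms in $\mathscr{T}_\Gamma$. The symmetry $X\BH X=\BH$ is a short calculation using $X^2=I\ot I$, $XE=EX=E$ and the fact that $X\Upsilon=-\Upsilon$; the left and right skew symmetries reduce to $\gamma_1$ together with the straightening relations. The genuine work is the four-term relation on $v^{\otimes 3}$: substituting the expression for $\BH$ expands each commutator into a large sum of planar trivalent tangles, and one must show the result vanishes. This cancellation is driven entirely by repeated application of the square-reduction $\gamma_2$ and the digon relation $\hat\Upsilon\Upsilon=6I$, so it amounts to a (lengthy but algorithmic) diagrammatic simplification; once this is done, $F$ and $G$ are manifestly mutual inverses on generators and hence define the required isomorphism of monoidal categories.
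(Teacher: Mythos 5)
Your two-functor strategy (match generators, check relations in both directions) is in substance the same as the paper's, but the identification of generators you propose is off at precisely the point the paper is careful about: normalisation. You send the Benkart--Elduque trivalent vertex to $\Upsilon$ on the nose, and claim that $\gamma_2$ ``is exactly the content of Lemma \ref{lem:reduction}''. In fact the two presentations differ by a sign on every term quadratic in the trivalent vertex: the paper identifies $\Upsilon$ with $\sqrt{-1}$ times the morphism of $\mathscr{T}_\Gamma$ drawn by the same picture, so that $\CK=\Upsilon\hat\Upsilon$ corresponds to the \emph{negative} of the same-looking diagram of \cite{BE}, and only after this rescaling does Lemma \ref{lem:reduction} become $\gamma_2$. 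Concretely, $\mathscr{T}_\Gamma$ is modelled on the genuine cross product on $\C^7$, for which the digon built from the vertex is $-6\,\id$ (since $\sum_i (v\times e_i)\times e_i=-6v$), whereas in $\CB_{\check\omega_3}(7)$ one has $\hat\Upsilon\Upsilon=6I$; so under your naive assignment the functor $G$ does not kill the generator $\hat\Upsilon\Upsilon-6I$ of $\CJ$, and $F$ does not preserve $\gamma_2$. The gap is easily repaired by inserting the factor $\sqrt{-1}$ (your checks of $\gamma_0$ via Lemma \ref{lem:constraint} and of $\gamma_1$ via \eqref{eq:skew-omega} are unaffected, since skew symmetry is scalar-invariant), but as written neither functor is well defined.

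The second place your proposal stops short is the well-definedness of $G$ on the infinitesimal-braid relations. You rightly note that one must verify the four-term relations (and the skew symmetries) for the element $X-2E+I\ot I-\CK$, suitably rescaled, inside $\mathscr{T}_\Gamma$, but you only assert that a ``lengthy but algorithmic'' diagrammatic cancellation will do it; that is the hardest step and it is not carried out. A cleaner route, and in effect the one the paper's outline takes by invoking Remark \ref{rmk:Benkart} and \cite{BE}, is to use the fact proved in \cite{BE} that $\mathscr{T}_\Gamma$ is isomorphic to the concrete category of $G_2$-modules $(\C^7)^{\ot r}$: under that isomorphism your candidate element is carried to $(\mu\ot\mu)(t)$ by \eqref{eq:spectral}, which satisfies the four-term relations because the elements $C_{ij}$ do (see \eqref{eq:C-4-term}), and faithfulness transports these relations back to $\mathscr{T}_\Gamma$. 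With the $\sqrt{-1}$ correction and this verification supplied, your argument coincides with the paper's.
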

\begin{proof}[Outline of the proof]    
Note that we can use 
$\Upsilon$ as a generator for $\CB_{\check\omega_3}(7)$ 
instead of $\check\omega_3$. 
If we identify our $\Upsilon$ with $\sqrt{-1}$ times the morphism of $\mathscr{T}_\Gamma$ represented by the same picture in \cite[\S 4.1]{BE} (and hence $\CK$ is replaced by the negative of the morphism represented by the same diagram in op. cit.), 
Lemma \ref{lem:reduction} becomes the defining relation $\gamma_2$ of the category in op. cit..  Note that Lemma \ref{lem:reduction} is equivalent to the defining relation of $\CB_{\check\omega_3}(7)$ arising from the second generator of the tensor ideal $\CJ$. 

For $\delta=7$, the relation $\hat\Upsilon\Upsilon - 6 I=0$ arising from the first generator of 
$\CJ$ is a consequence of Lemma \ref{lem:reduction} and the skew symmetry of $\Upsilon$, as we have shown in the proof of Lemma \ref{lem:constraint}. This fact was also proven in \cite{BE, Ros04}. 

These observations together with Remark \ref{rmk:Benkart} led to an isomorphism between the monoidal categories $\CB_{\check\omega_3}(7)$ and $\mathscr{T}_\Gamma$ of \cite[\S 4]{BE}. 
\end{proof}

The following result is now straightforward. 
\begin{theorem} 
Retain the notation of Section \ref{sect:so-G2}; in particular, 
denote by $V$ the $7$-dimensional simple $G_2$-module. 
Let  $\CT(V)$ be the full subcategory of the category of $G_2$-modules with objects $V^{\ot r}$ for all $r\in \N$.
There exists an isomorphism of monoidal categories  
$
\CF: \CB_{\check\omega_3}(7)\lra \CT(V),
$
which maps an object $r$ to $V^{\ot r}$, and maps the generators of the space of morphisms as indicated below
\[
\begin{aligned}
&\CF\left(\setlength{\unitlength}{0.1mm}
\begin{picture}(10, 20)(0,0)
\put(5, 0){\line(0, 1){30}}
\end{picture}\right) = \id_V, \quad
&\CF\left(
\setlength{\unitlength}{0.1mm}
\begin{picture}(28, 23)(0,5)
\qbezier(0, 0)(0, 0)(25, 35)
\qbezier(0, 35)(0, 35)(25, 0)
\end{picture}\right) = \tau, \\
&\CF\left(\setlength{\unitlength}{0.1mm}
\begin{picture}(28, 15)(0,0)
\qbezier(0, 0)(13, 35)(25, 0)
\end{picture}\right)= \hat{C}, \quad
&\CF\left(\setlength{\unitlength}{0.1mm}
\begin{picture}(28, 15)(0,15)
\qbezier(0, 35)(13, 0)(25, 35)
\end{picture}\right)=\check{C},   \\
%
&\CF\left(
\setlength{\unitlength}{0.1mm}
\begin{picture}(35, 30)(5, 2)

\put(20, 0){\line(0, 1){30}}

\qbezier(10, 30)(10, 5)(20, 0)
\qbezier(30, 30)(30, 5)(20, 0)

\end{picture}
\right)=\check{C}_3.
\end{aligned}
\]
\end{theorem}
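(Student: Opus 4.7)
My plan is to build $\CF$ in two stages: first construct a monoidal functor out of $\HB_{\check\omega_3}(7)$, then show it descends to an isomorphism on the quotient $\CB_{\check\omega_3}(7)$.

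For the first stage I would apply Theorem \ref{thm:main} with $\fg=G_2$, $\CM=\CV=\{V\}$ and $\CG_1=\{\check\omega_3\}$, declaring $\SF(\check\omega_3)=\check{C}_3$ (the requirement \eqref{eq:funct4} in Theorem \ref{thm:main} is free to fix since $\CG_1$ has only this one element, and $\sdim(V)=7$ so the parameter matches). The skew symmetry \eqref{eq:skew-omega} is preserved because $\check{C}_3$ lands in $V\wedge V\wedge V\subset V^{\ot 3}$. Theorem \ref{thm:main} then yields a monoidal functor $\SF:\HB_{\check\omega_3}(7)\lra\CT(V)$ which, by construction, sends the generators $I, X, \cap, \cup, \check\omega_3$ to $\id_V, \tau, \hat C, \check C, \check C_3$ respectively, and sends $\BH(v,v)$ to the tempered Casimir $\SH=(\mu\ot\mu)(t)$.

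For the second stage I must check that the two generators of the tensor ideal $\CJ$ lie in the kernel of $\SF$. For $\hat\Upsilon\Upsilon-6I$, unwinding the definitions shows $\SF(\hat\Upsilon\Upsilon)=\hat C_3\check C_3$, and the identity $\hat C_3\check C_3=6\,\id_V$ follows from the normalisation \eqref{eq:norm} (indeed, $\hat C_3\check C_3$ is a $G_2$-endomorphism of the simple module $V$, hence a scalar by Schur's lemma, and the scalar is pinned down by tracing or by a direct basis computation from the explicit formula for $\check{C}_3(1)$ in \S\ref{sect:so-G2}). For the second generator $\BH(v,v)-X+2E-I\ot I+\Upsilon\hat\Upsilon$, its image under $\SF$ is $\SH-\tau+2e-\id_{V\ot V}+\check C_3\hat C_3$, which is zero by the spectral identity \eqref{eq:spectral}, provided $\check C_3\hat C_3=6P_{\lambda_1}$. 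The latter is true because $\check C_3\hat C_3$ is a $G_2$-endomorphism of $V\ot V$ factoring through $V\cong L_{\lambda_1}$, hence is a scalar multiple of $P_{\lambda_1}$, and the scalar $6$ is forced by part (1) via $(\hat C_3\check C_3)P_{\lambda_1}=(\check C_3\hat C_3)|_{L_{\lambda_1}}$. With these two checks, $\SF$ descends to a monoidal functor $\CF:\CB_{\check\omega_3}(7)\lra\CT(V)$ sending generators as prescribed.

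To conclude that $\CF$ is an isomorphism, I would combine Theorem \ref{thm:G2-iso} with the result of \cite{BE} (going back to Kuperberg's $G_2$ spiders, cf. also \cite{Ros96, Ros04}): the diagram category $\mathscr T_\Gamma$ is isomorphic as a monoidal category to $\CT(V)$. Theorem \ref{thm:G2-iso} identifies $\CB_{\check\omega_3}(7)\cong \mathscr T_\Gamma$, so composing these two known isomorphisms yields a monoidal isomorphism $\CB_{\check\omega_3}(7)\cong\CT(V)$. By construction this composite agrees with $\CF$ on generators, so $\CF$ itself is the isomorphism sought.

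The main obstacle is the verification of the two relations in $\CJ$, and in particular the scalar $6$ appearing in both computations. Whereas the spectral identity \eqref{eq:spectral} is immediate once the scalars for $\hat C_3\check C_3$ and $\check C_3\hat C_3$ are fixed, pinning down these scalars from the concrete formula for $\check C_3(1)$ in \S\ref{sect:so-G2} requires an explicit basis calculation (or, equivalently, invariant-theoretic input identifying the image of $\check C_3$ with the $L_{\lambda_1}$-component of $V\wedge V$ up to the correct normalisation). Everything else is formal and follows from the apparatus already assembled in the paper.
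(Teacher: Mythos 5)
Your overall route is the same as the paper's: the paper likewise treats the functoriality of $\CF$ as immediate (it is exactly the verification you spell out via Theorem \ref{thm:main} and the two generators of the ideal $\CJ$), and then deduces that $\CF$ is an isomorphism by combining Theorem \ref{thm:G2-iso} with the isomorphism $\mathscr{T}_\Gamma\cong\CT(V)$ of \cite[Theorem 4.14]{BE}. So structurally your plan for the isomorphism step is the paper's argument, with the well-definedness check made explicit.

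The one point to repair is the scalar in that check. If the trivalent vertex is sent to $\check{C}_3$ on the nose, as in the statement, then \eqref{eq:norm} gives $\CF(\hat\Upsilon\Upsilon)=3\,\id_V$ (up to a sign from permuting the contracted slots of the totally antisymmetric tensor), not $6\,\id_V$: the composite $\hat\Upsilon\Upsilon$ is precisely the double contraction of $\check{C}_3(1)\ot\check{C}_3(1)$ appearing in \eqref{eq:norm}, paired against the input vector. Consequently $\CF(\Upsilon\hat\Upsilon)=3P_{\lambda_1}$ (it is a nonzero $G_2$-endomorphism of $V\ot V$ factoring through the unique copy of $L_{\lambda_1}$, with square equal to $3$ times itself), and the second generator of $\CJ$ is then sent to $-6P_{\lambda_1}+3P_{\lambda_1}\neq 0$ by \eqref{eq:spectral}. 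The cure is the normalisation the paper itself signals just before the definition of $\CJ$: the vertex must go to $\sqrt{2}\,\check{C}_3$ (compare also the factor $\sqrt{-1}$ used in the proof of Theorem \ref{thm:G2-iso}); with that choice both relations are killed and your argument goes through verbatim. This factor cannot be absorbed afterwards, since rescaling the vertex by $c$ is an automorphism of $\CB_{\check\omega_3}(7)$ only when $c^{2}=1$; so your assertion that the value $6$ "follows from \eqref{eq:norm}" is the one step that needs correcting, a normalisation wrinkle that is in fact inherited from the theorem's own statement.
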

\begin{proof}
It is clear that 
$
\CF: \CB_{\check\omega_3}(7)\lra \CT(V)
$ defined above is a monoidal functor.  Replacing $\CB_{\check\omega_3}(7)$ by the category $\mathscr{T}_\Gamma$ of \cite[\S 4]{BE}, we also obtain a monoidal functor, which was shown to be an isomorphism (see [Theorem 4.14.] in \cite{BE}).  It  thus follows from Theorem \ref{thm:G2-iso} that $\CF$ is an isomorphism of monoidal categories. 
\end{proof}

\begin{remark}
The full nature of the functor $\CF$ was proved in \cite{LZ06} by a specialisation argument. Subsequently, in \cite{Mor11}, this result was 
re-proved using Kuperberg's spiders. Our ``multipolar'' approach subsumes both approaches to this problem, and yields more.
\end{remark}

\subsubsection{A multi-polar enhanced Brauer category for $G_2$}

The next simplest case of the category $\wh\HB_{\check\omega_3}(7)$ is where
$\CM=\{m, v\}\supset\CV=\{v\}$; it will be denoted
by $\MPB_{\check\omega_3}(7)$. 
In this case, we represent diagrams in $\MPB_{\check\omega_3}(7)$ by ``multi-polar enhanced Brauer diagrams'' in a manner similar to Section \ref{sect:mpolar}. We will call $\MPB_{\check\omega_3}(7)$ the {\em multi-polar enhanced Brauer category for $G_2$}. It contains a full subcategory $\PB_{\check\omega_3}(7)$ with objects $(m, v^r)$ for all $r\in \N$, which will be referred to as the {\em polar enhanced Brauer category for $G_2$}. 

We intend to develop the structure of $\MPB_{\check\omega_3}(7)$ and $\PB_{\check\omega_3}(7)$ in future work, 
where we will also explore further applications of these categories to $G_2$-representations. 

\;

{\color{red}  CHECKED TO HERE}

\;

\section{Applications to representation theory}\label{sect:centre}
Henceforth we take $\K$ to be the field $\C$ of complex numbers.
\subsection{The orthosymplectic Lie superalgebra $\osp(V; \omega)$}\label{sect:osp}

Fix a $\Z_2$-graded vector space $V=V_{\bar 0}\oplus  V_{\bar 1}$ with 
$(\dim(V_{\bar 0})| \dim(V_{\bar 1}))= (m|\ell)$. 
We take as given a homogeneous basis $\{e^a\mid a=1, 2, \dots, m+\ell\}$ of $V$, 
where $e^a$ is even if $a\le m$, and is odd if $a>m$. 
For $a=1, 2, \dots, m+\ell$, we write $[a]$ for the parity of $a$, so that $[a]=0$ if $a\le m$, and $[a]=1$ if $a>m$.
Denote by $E_a^b$  the matrix units relative to this basis; then $E_a^b\in\End_\C(V)$ satisfies 
\[
E^c_b (e^a)= \delta^a_b e^c, \quad \forall a, b, c =1, 2, \dots, m+\ell, 
\]
where $\delta^a_b$ is the usual Kronecker delta.
The canonical permutation $\tau:=\tau_{V, V}: V\ot V\lra V\ot V$ may then be expressed as 
\beq\label{eq:tau}
\tau = \sum_{a, b=1}^{m+\ell} (-1)^{[b]}E_a^b\ot E^b_a.
\eeq

Assume that $V$ is equipped with a non-degenerate supersymmetric bilinear form 
$\omega: V\times V \lra \C$. This requires that $\dim(V_{\bar 1})=\ell=2 n$ be even. 
 The form $\omega$ may be  represented by an invertible matrix $g^{-1}=(g^{a b})$, where $g^{a b}=\omega(e^a, e^b)$ for all $a, b$. 
 We write $g=(g_{a b})$.  Note that $g$ has the following symmetry properties: $g_{a b}\ne 0$ only if $[a]=[b]$, and
  $g_{a b} = (-1)^{[a]} g_{b a} = (-1)^{[b]} g_{b a}$. The elements $e_b= \sum_c g_{b c} e_c$ satisfy 
\[
\omega(e_b, e^a) = \sum_c g_{b c} \omega(e^c, e^a)  = \sum_c g_{b c} g^{c a}  = \delta^a_b
\]
Thus $\{e_a\mid a=1, 2, \dots, m+2n\}$ is a basis of $V$, dual to the standard basis.  We have $e^a = \sum_b g^{a b} e_b$. 

Let $E_{a b}=\sum_c g_{a c} E^c _b$, then $E_{a b}(e^c)=e_a\delta_b^c$. 
Introduce the following elements of $\End_\C(V)$:
\beq\label{eq:J}
J_{a b} = E_{a b} - (-1)^{[a][b]} E_{b a}, \quad a, b=1, 2, \dots, m+2n, 
\eeq
where clearly $J_{a b}=-(-1)^{[a][b]}J_{b a}$. We also let
\[
\begin{aligned}
J^a_b :=\sum_{a'} g^{a a'} J_{a' b}	&=E^a_b -  (-1)^{[a][b]}  \sum_{b', a'} g^{a a'} g_{b b'} E^{b'}_{a'}.
\end{aligned}
\]
The elements $J_{a b}$ satisfy the following commutation relations \cite{JG, Z08}
\beq
\begin{aligned}
{} [J_{a b}, J_{c d}] &= g_{c b} J_{a d} + (-1)^{[a]([b]+[c])}g_{d a}  J_{b c} \\
&- (-1)^{[c][d]} g_{d b} J_{a c} - (-1)^{[a][b]} g_{c a} J_{b d},  
\end{aligned}
\eeq
where $[X, Y]=XY - (-1)^{[X][Y]} Y X$ is the $\Z_2$-graded commutator of $X, Y$.  
Clearly $J_{i j}$, $J_{i, m+k}$ and  $J_{m+k, m+\ell}$ with $1\le i< j\le m$, $1\le k\le \ell \le 2n$ are linearly independent, and hence they span 
a Lie superalgebra $\fg$ of dimension $\frac{1}{2}m(m-1) + \frac{1}{2}2n(2n+1) + 2m n$. 

The  Lie superalgebra $\fg$ spanned by the elements $J_{a b}$ is the orthosymplectic Lie superalgebra $\osp(V; \omega)$. 

It is instructive to directly verify that the bilinear form $\omega$ is indeed invariant under the transformations $J_{ab}$. 
Note that 
\beq\label{eq:act}
\begin{aligned}
J_{a b}(e^c)&=e_a \delta^c_b - (-1)^{[a][b]} e_b \delta_a^c.
\end{aligned}
\eeq
Using this relation we obtain 
\[
\begin{aligned}
\omega(J_{a b}(e^c), e^d) &= \delta_a^d \delta^c_b - (-1)^{[a][b]} \delta_b^d\delta_a^c, \\
\omega(e^c, J_{a b}(e^d)) 
&= -(-1)^{([a]+[b])[c]} \left(\delta^d_a \delta^c_b- (-1)^{[a][b]} \delta^d_b \delta^c_a\right),
\end{aligned}
\]
implying that
\[
 \omega(J_{a b}(e^c), e^d) + (-1)^{([a]+[b])[c]} \omega(e^c, J_{a b}(e^d))=0, \quad \forall a, b, c, d.  
\]
This proves the invariance of $\omega$ with respect to the
Lie superalgebra $\fg$ spanned by the elements $J_{a b}$. It is not hard to show that the linear span of the $J_{ab}$ is
precisely the set of all linear transformations of $V$ which preserve $\omega$. We therefore write $\fg=\osp(V,\omega)$.

%
%
%
%
Denote by $X_{a b}$ (resp. $X^a_b$) the image of $J_{a b}$ (resp. $J^a_b$) in $\U(\fg)$ under the canonical embedding $\fg\hookrightarrow \U(\fg)$.  We obtain 
the quadratic Casimir of $\U(\fg)$ by ``tensor contraction''.    
\beq\label{eq:C-formula}
C=\frac{1}{2}\sum_{a, b=1} ^{m+2n}(-1)^{[b]} X^a_b X^b_a.  
\eeq
Now the tempered Casimir element $t=\frac{1}{2}\left(\Delta(C)- C\ot 1 - 1 \ot C\right)$ (see \eqref{eq:deftr})  is equal to 
\beq\label{eq:t-formula}
t=\frac{1}{2}\sum_{a, b=1} ^{m+2n} X^a_b \ot (-1)^{[b]} X^b_a. 
\eeq

Denote the natural representation of $\fg$ on $V$ by $\mu$. Then $\mu(X^a_b)=J^a_b$ for all $a, b$. 
\begin{lemma}  The following formula holds in $\U(\fg)\ot\End(V)$.
\beq
(\id_{\U(\fg)}\ot\mu)(t)&=&\sum_{a, b} X^a_b\ot (-1)^{[b]} E^b_a. \label{eq:t-HH}
\eeq
\end{lemma}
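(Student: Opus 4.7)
The plan is to start from the explicit formula \eqref{eq:t-formula} for $t$ and the expression for $J^b_a$ in the displayed equation just after \eqref{eq:J}, and expand. Since $\mu(X^b_a)=J^b_a$, we have
\[
(\id_{\U(\fg)}\ot\mu)(t)=\frac{1}{2}\sum_{a,b} X^a_b\ot(-1)^{[b]} J^b_a,
\]
and the substitution
\[
J^b_a = E^b_a - (-1)^{[a][b]}\sum_{a',b'} g^{ba'} g_{ab'} E^{b'}_{a'}
\]
splits the right-hand side into (i) the ``diagonal" term $\tfrac{1}{2}\sum_{a,b} X^a_b\ot(-1)^{[b]} E^b_a$, which is exactly half of the desired answer, and (ii) a ``correction term" coming from the anti-symmetrisation in $J^b_a$. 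The goal is to show that the correction term equals the diagonal term, so that together they give the formula.

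To simplify the correction term I would substitute $X^a_b = \sum_c g^{ac} X_{cb}$ and then carry out the summation over $a$. Using the graded symmetry $g^{ac}=(-1)^{[a]}g^{ca}$ together with the duality $\sum_a g^{ca}g_{ab'}=\delta^c_{b'}$, the factor $\sum_a g^{ac}g_{ab'}$ collapses to $(-1)^{[c]}\delta_{b'}^c$, which forces $c=b'$ (and $[a]=[b']$ in the surviving terms). Next I would apply the graded skew-symmetry $X_{b'b}=-(-1)^{[b'][b]}X_{bb'}$ coming from the analogous property of $J_{ab}$ in \eqref{eq:J}. After tracking all the signs this produces a factor $(-1)^{[b]+[b']}$ in front of $\sum_b g^{ba'} X_{bb'}$. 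Using $g^{ba'}=(-1)^{[b]}g^{a'b}$ once more, the remaining sum over $b$ becomes $(-1)^{[a']}X^{a'}_{b'}$, and the net signs collapse to just $(-1)^{[b']}$. Re-labelling $a'\to a$, $b'\to b$ one obtains $+\tfrac{1}{2}\sum_{a,b}(-1)^{[b]} X^a_b\ot E^b_a$, which added to the diagonal term gives \eqref{eq:t-HH}.

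The main obstacle will be purely bookkeeping: several graded signs of the form $(-1)^{[a][b]}$, $(-1)^{[a]}$ and $(-1)^{[b]}$ are generated at each step, and one must use the fact that $g_{ab}$ and $g^{ab}$ vanish unless $[a]=[b]$ to equate various signs (e.g.\ $(-1)^{[a]}=(-1)^{[c]}$ inside $\sum_a g^{ac}\cdots$) before they can be shown to cancel. I would organise the computation so that each sum over a repeated index is performed as soon as the metric factors allow, to minimise the chance of a sign error. An equivalent but more structural route, which I would use as a cross-check, is to choose a homogeneous basis of $\fg$ indexed by pairs $(a,b)$ with $a\le b$ (with appropriate scaling for odd-odd pairs), compute the dual basis relative to the invariant form underlying \eqref{eq:C-formula}, and verify \eqref{eq:t-HH} term by term; this leads to the same index manipulations as above but packages them differently.
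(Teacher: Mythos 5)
Your proposal is correct and follows essentially the same route as the paper's proof: expand $(\id\ot\mu)(t)$ via \eqref{eq:t-formula}, substitute the expression for $J^b_a$ in terms of matrix units, and show that the anti-symmetrisation (``correction'') term equals the diagonal term by contracting the metric factors and using the graded skew-symmetry of $X_{ab}$ together with the symmetries of $g$ (your sign bookkeeping checks out). The only cosmetic difference is that the paper manipulates $2(\id\ot\mu)(t)$ and shows each of the two resulting terms equals the full right-hand side, whereas you retain the factor $\tfrac12$ and show the correction equals half of it.
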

\begin{proof}
Write $\U=\U(\fg)$.
Using  $X^b_a=\sum_c g^{b c} X_{c b}$, we can express $2(\id_\U\ot \mu)(t)$ as 
\[
\begin{aligned}
&2(\id_\U\ot \mu)(t) = \sum_{a, b} X^b_a\ot (-1)^{[a]}J^a_b\\
&= \sum_{a, b} X^b_a\ot (-1)^{[a]}E^a_b 
-  \sum_{a, b, c} g^{b c} X_{c a}\ot(-1)^{[a][b]}  \sum_{b', a'} g^{a a'} g_{b b'} E^{b'}_{a'}. 
\end{aligned}
\]
Note that the first term is equal to the right hand side of \eqref{eq:t-HH}.  We now compute
the second term. 
Using symmetry properties of $g$, we can re-write the second term as 
\[
- \sum_{a, b, c} X_{c a}\ot (-1)^{[a][c] +[a]+[c]}  g^{a b}  E^{c}_{b}
=
- \sum_{a, b, c} g^{ba}  (-1)^{[a][c]} X_{c a}\ot (-1)^{[c]}   E^{c}_{b}.
\]
Using $X_{c a}=-(-1)^{[a][c]}X_{a c}$, we can re-write the right side of the above identity as 
\[
\begin{aligned}
\sum_{a, b, c} g^{b a} X_{a c}\ot (-1)^{[c]}   E^{c}_{b}
= \sum_{b, c}X^b_{c}\ot (-1)^{[c]}   E^{c}_{b}, 
\end{aligned}
\]
which is equal to the right hand side of \eqref{eq:t-HH}. This completes the proof of the Lemma. 
\end{proof}

The bilinear form $\omega$ gives rise to a $\fg$-module homomorphism 
\beq
\wh{C}: V\ot V\lra \C, \quad \wh{C}(v\ot v')=\omega(v, v'), \ \forall v, v'\in V. 
\eeq
Further, since $\omega$ is non-degenerate, we have a linear isomorphism $j: V\ot V \lra \End_\C(V)$ of $\Z_2$-graded vector spaces, which is defined, for any $v\ot v'\in V\ot V$, by 
\[
j(v\ot v')(u)=\omega(v', u) v, \quad \forall u\in V. 
\]
The map $j$ is a $\fg$-module homomorphism if $\End_\C(V)$ is endowed with the natural $\fg$-module structure in which an element $A\in \fg$ acts on an arbitrary element $f\in \End_\C(V)$ by 
$[A, f]=A f - (-1)^{[X][f]} f A$. 
Note that $c_0:=j^{-1}(\id_V)=\sum_{a=1}^{m+2n} e^a\ot e_a$,  is a $\fg$-invariant in $V\ot V$. We therefore have the following $\fg$-morphisms.
\be
\begin{aligned}
&\check{C}: \C\lra V\ot V, \quad c\mapsto c j^{-1}(\id_V);\\
&\hat{C}: V\ot V\lr \C, \quad v\ot v'\mapsto \omega(v,v')\text{ for }v,v'\in V.\\
\end{aligned}
\ee

The following result is well known (cf. \cite{LZ17}). 
\begin{lemma} \label{lem:X-cup-cap}
Let $\fg=\osp(V;\omega)$. Denote the identity map on $V$ by $\id$.
\begin{enumerate}
\item The element $c_0$ belongs to
$(V\otimes V)^\fg$ and satisfies $\tau(c_0)= c_0$. It is independent of the basis.

\item The maps $\tau$, $\check{C}$ and $\hat{C}$ are all $\fg$-equivariant and satisfy
\begin{eqnarray}
&\tau \check{C} =  \check{C}, \quad
\hat{C} \tau =  \hat{C}, \label{eq:Ctau}\\
&\hat{C}\check{C}=\sdim(V),  \quad (\hat{C}\ot\id)(\id\ot\check{C})=\id=(\id\ot\hat{C})(\check{C}\ot\id), \label{eq:C-C}\\
&(\hat{C}\ot\id)\circ (\id\ot \tau)= (\id\ot\hat{C})\circ (\tau\ot\id), \label{eq:C-tau-1}\\
&(\tau\ot \id)\circ(\id\ot \check{C})=(\id\ot \tau)\circ(\check{C}\ot \id). \label{eq:C-tau-2}
\end{eqnarray}
\end{enumerate}
\end{lemma}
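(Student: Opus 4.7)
The plan is to prove part (1) first and then use it to derive all the identities in part (2). For part (1), basis independence of $c_0$ is immediate from the characterization $c_0 = j^{-1}(\id_V)$: since $j$ is defined using only the bilinear form $\omega$ with no reference to a basis, the same is true of $c_0$. For the $\fg$-invariance of $c_0$, I see two natural approaches. The direct approach is to compute $X\cdot c_0$ for $X=J_{\alpha\beta}$ using the action formula \eqref{eq:act} and the expansion $e_a=\sum_c g_{ac}e^c$; the first summand $\sum_a J_{\alpha\beta}(e^a)\otimes e_a$ collapses to $e_\alpha\otimes e_\beta-(-1)^{[\alpha][\beta]}e_\beta\otimes e_\alpha$, and careful bookkeeping using the symmetry properties $g_{ab}=(-1)^{[a]}g_{ba}$ (with $g_{ab}=0$ unless $[a]=[b]$) shows that the second summand $\sum_a(-1)^{([\alpha]+[\beta])[a]}e^a\otimes J_{\alpha\beta}(e_a)$ is its negative. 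The cleaner conceptual route is to observe that $j$ is an $\fg$-module isomorphism and $\id_V$ is $\fg$-invariant under the adjoint action $X\cdot f = [X,f]$, so $c_0=j^{-1}(\id_V)$ must lie in $(V\otimes V)^\fg$. The identity $\tau(c_0)=c_0$ then reduces, after expanding $e_a=\sum_b g_{ab}e^b$ in one tensor factor, to $\sum_{a,b}(-1)^{[a]}g_{ab}\,e^b\otimes e^a=\sum_b e^b\otimes e_b$, which holds because $(-1)^{[a]}g_{ab}=g_{ba}$ when $[a]=[b]$.

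Given part (1), the $\fg$-equivariance in part (2) is essentially formal: $\tau$ is the canonical symmetry of the tensor category of super vector spaces and therefore commutes with the diagonal $\fg$-action, $\hat{C}$ is equivariant because $\osp(V;\omega)$ is by definition the stabiliser of $\omega$, and $\check{C}$ is equivariant because its image $c_0$ is a $\fg$-invariant vector. The relations in \eqref{eq:Ctau} are immediate: $\tau\check{C}=\check{C}$ is just $\tau(c_0)=c_0$, and $\hat{C}\tau=\hat{C}$ is the supersymmetry of $\omega$. For the normalisation in \eqref{eq:C-C}, observe that $\omega(e_a,e^b)=\delta^b_a$ and the supersymmetry of $\omega$ give $\omega(e^a,e_a)=(-1)^{[a]}\omega(e_a,e^a)=(-1)^{[a]}$, so $\hat{C}\check{C}(1)=\sum_a(-1)^{[a]}=\sdim(V)$.

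For the two snake identities in \eqref{eq:C-C}, I would evaluate on a test vector $v\in V$. The key point is the reconstruction formulas $v=\sum_a\omega(e_a,v)e^a=\sum_a\omega(v,e^a)e_a$, which follow from duality $\omega(e_a,e^b)=\delta^b_a$ by pairing both sides against the appropriate basis element. Then $(\hat{C}\otimes\id)(v\otimes c_0)=\sum_a\omega(v,e^a)e_a=v$, and the other snake is analogous. The mixed relations \eqref{eq:C-tau-1} and \eqref{eq:C-tau-2} are naturality statements with respect to the symmetry $\tau$; evaluating on a basic tensor $u\otimes v\otimes w$ yields $(-1)^{[v][w]}\omega(u,w)\,v$ on the left of \eqref{eq:C-tau-1} and $(-1)^{[u][v]}\omega(u,w)\,v$ on the right. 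Since $\omega(u,w)\neq 0$ forces $[u]=[w]$, these signs agree, and \eqref{eq:C-tau-2} is handled identically.

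The main obstacle is purely the careful tracking of $\Z_2$-signs in verifying the invariance of $c_0$ and the identity $\tau(c_0)=c_0$; every other step is either a one-line reduction to part (1) or a straightforward linear-algebra computation. The conceptual proof of $\fg$-invariance via the isomorphism $j$ bypasses most of this sign juggling, so I would present it that way and leave the direct verification as an alternative.
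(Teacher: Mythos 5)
Your proof is correct. Note that the paper offers no argument of its own for this lemma --- it is stated as ``well known (cf.\ \cite{LZ17})'' --- and what you give is precisely the standard verification: basis independence and $\fg$-invariance of $c_0$ via the $\fg$-module isomorphism $j$ (the conceptual route the paper itself gestures at when it notes $c_0=j^{-1}(\id_V)$ is invariant), the sign computations for $\tau(c_0)=c_0$ and $\hat{C}\check{C}=\sdim(V)$ using $g_{ab}=(-1)^{[a]}g_{ba}$ and the supersymmetry of $\omega$, the snake identities via the reconstruction formulas $v=\sum_a\omega(v,e^a)e_a=\sum_a\omega(e_a,v)e^a$, and the mixed relations \eqref{eq:C-tau-1}--\eqref{eq:C-tau-2} from naturality of $\tau$ together with the evenness of $\omega$ (which forces $[u]=[w]$ whenever $\omega(u,w)\neq 0$). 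All of these steps check out, so there is no gap and nothing substantive to compare against.
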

Define the endomorphism $e$ of $V\ot V$ as follows. 
\beq\label{eq:e}
e=\check{C}\circ \wh{C} : V\ot V \lra V\ot V. 
\eeq
Then $\tau e = e\tau = e$ by \eqref{eq:Ctau}, and $e^2 = \sdim(V) e$ by the first relation of \eqref{eq:C-C}.

The following observation will play a crucial role later. 
\begin{theorem}\label{thm:H-formula} The action of the tempered Casimir operator  $t$  on $V\ot V$ is given by
\beq
\SH:=(\mu\ot\mu)(t)= \tau - e. 
\eeq
It satisfies the cubic relation 
\beq
(\SH- 1) (\SH + 1) (\SH - (1-\sdim(V)))=0. 
\eeq
\end{theorem}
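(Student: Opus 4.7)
The plan is to first establish the explicit formula $\SH=\tau-e$ by a direct calculation in the standard basis $\{e^a\}$, and then deduce the cubic polynomial identity algebraically from properties of $\tau$ and $e$ already available in Lemma \ref{lem:X-cup-cap}.

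For the first part, I would apply $\mu\ot\id$ to both sides of the identity \eqref{eq:t-HH}, obtaining
\[
\SH=(\mu\ot\mu)(t)=\sum_{a,b}J^a_b\ot(-1)^{[b]}E^b_a.
\]
Substituting the expansion $J^a_b=E^a_b-(-1)^{[a][b]}\sum_{a',b'}g^{aa'}g_{bb'}E^{b'}_{a'}$, the first piece immediately yields $\sum_{a,b}(-1)^{[b]}E^a_b\ot E^b_a$, which is $\tau$ by \eqref{eq:tau}. For the second piece, I would check directly that its action on an arbitrary basis tensor $e^x\ot e^y$ is $-g^{xy}\sum_{p,q}g_{pq}\,e^p\ot e^q$; this equals $-e(e^x\ot e^y)$ since $e(e^x\ot e^y)=\hat{C}(e^x\ot e^y)\,c_0=g^{xy}c_0$ and $c_0=\sum_a e^a\ot e_a=\sum_{p,q}g_{pq}\,e^p\ot e^q$. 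The verification reduces to a sign check exploiting the graded symmetries $g^{ab}=(-1)^{[a][b]}g^{ba}$ and $g_{ab}=(-1)^{[a][b]}g_{ba}$, together with the support condition $g^{ab}\neq 0\Rightarrow[a]=[b]$.

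Once $\SH=\tau-e$ is in hand, the cubic relation follows from the identities $\tau^2=\id$, $\tau e=e\tau=e$, and $e^2=\sdim(V)\,e$, all of which are immediate consequences of Lemma \ref{lem:X-cup-cap} and the definition $e=\check C\hat C$. Indeed,
\[
(\SH-1)(\SH+1)=(\tau-e)^2-\id=-\tau e-e\tau+e^2=(\sdim(V)-2)\,e,
\]
and therefore
\[
(\SH-1)(\SH+1)\bigl(\SH-(1-\sdim(V))\bigr)=(\sdim(V)-2)\,e\,\bigl(\tau-e+\sdim(V)-1\bigr).
\]
The bracketed factor simplifies to $e\tau-e^2+(\sdim(V)-1)e=e-\sdim(V)e+(\sdim(V)-1)e=0$, so the cubic identity holds.

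The main obstacle will be the sign book-keeping in the first part. Three independent sources of signs, namely the $(-1)^{[a][b]}$ inside $J^a_b$, the $(-1)^{[b]}$ inherited from \eqref{eq:t-HH}, and the $(-1)^{[a][b]}$-type factors built into the graded symmetries of $g^{ab}$ and $g_{ab}$, must conspire so that the second piece of the expansion assembles precisely into $-e$ with no surviving signs. Once that calculation is carried out, the cubic identity in the second part is essentially formal.
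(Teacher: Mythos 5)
Your proposal is correct and follows essentially the same route as the paper: the identity $\SH=\tau-e$ is obtained exactly as in the paper's proof by applying $\mu$ to the first tensor factor of \eqref{eq:t-HH}, expanding $J^a_b$ so that one piece reproduces $\tau$ via \eqref{eq:tau} and the other acts on $e^c\ot e^d$ as $g^{cd}\check{C}(1)$, i.e.\ as $e$. The cubic identity is then deduced, as in the paper, from $\tau^2=\id$, $\tau e=e\tau=e$ and $e^2=\sdim(V)\,e$; your factored computation $(\SH-1)(\SH+1)=(\sdim(V)-2)e$ followed by $e(\tau-e+\sdim(V)-1)=0$ is only a cosmetic variant of the paper's computation of $\SH^2$ and $\SH^3$.
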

\begin{proof}
We have $(\mu\ot\mu)(t)= \frac{1}{2}\sum_{a, b} J^a_b \ot (-1)^{[b]}J^b_a$. 
Expressing the $J$'s in terms of $E$'s using \eqref{eq:J}, we obtain from \eqref{eq:t-HH}
\[
(\mu\ot\mu)(t)= \sum_{a, b} J^a_b \ot (-1)^{[b]}E^b_a  =T_1 - T_2, 
\]
with $T_1= \sum (-1)^{[b]} E^a_b\ot  E^b_a$,  and 
$
T_2= \sum (-1)^{[b]} (-1)^{[a]([a]+[b])}   g_{b b''} E^{b''}_{a''} g^{a'' a} \ot E^b_a.
$
We have seen $T_1=\tau$. For any $e^c\ot e^d$, an explicit calculation leads to  
\[
\begin{aligned}
T_2(e^c\ot e^d)
					 &= g^{c d} \check{C}(1)= \check{C}\circ \wh{C}(e^c\ot e^d), 
\end{aligned}
\]
that is,  $T_2=e$.  This proves that $\mathscr{H}=\tau - e$. 

Write $\delta_V=\sdim(V)$. Using $\mathscr{H}=\tau - e$, we easily obtain
\[
\baln
\SH^2= 1 - (2-\delta_V) e, \quad 
\SH^3= \tau -e - (1-\delta_V) (2-\delta_V)  e.
\ealn
\]
These relations together lead to $\SH^3= \SH - (1-\delta_V)(1-\SH^2)$, 
which implies the second claim of the theorem. 
\end{proof}

\begin{remark}
Note the similarity between the above result and the purely diagrammatic Lemma \ref{lem:H-skew-sym}.
This will be exploited below.
\end{remark}

Recall that if $\sdim(V)\ne 0$, then $V\ot V$ is semi-simple with three simple distinct submodules. 
In this case the following statements hold. We omit the proofs, which are straightforward. 
\begin{lemma} \label{lem:H-tau-e} Write $\delta_V=\sdim(V)$. 
\begin{enumerate}
\item If $\delta_V\ne 2$, then 
$e= \frac{(\SH- 1) (\SH + 1)}{2-\delta_V}$ and 
$\tau = \SH + \frac{(\SH- 1) (\SH + 1)}{2-\delta_V}. 
$

\item If $\delta_V=2$, then $(\SH - 1)(\SH+1) =0$, but it is not possible to express $\tau$ and $e$ in terms of $\SH$ in this case.

\item If $\delta_V=0$, then 
$\frac{(\SH + 1) (\SH - (1-\delta_V))}{2\delta_V}, $  
$\frac{(\SH- 1)  (\SH - (1-\delta_V))}{2(2-\delta_V)}$ and
$-\frac{(\SH- 1) (\SH + 1)}{\delta_V (2-\delta_V)}$
form a complete set of mutually orthogonal idempotents, which project $V\ot V$ to its three simple submodules respectively.
\end{enumerate}
\end{lemma}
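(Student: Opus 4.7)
The plan is to read off all three parts of the lemma from the two identities already established in Theorem \ref{thm:H-formula}: the quadratic relation $\SH^2 = 1 - (2-\delta_V)\,e$, the cubic relation $(\SH - 1)(\SH + 1)(\SH - (1-\delta_V))=0$, and the defining equation $\tau = \SH + e$. Throughout I would work in $\End_\fg(V\ot V)$, where $\tau$, $e$, $\SH$ and the identity all live.

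For part (1), assuming $2-\delta_V$ is invertible, the quadratic relation solves immediately for $e$ as a scalar multiple of $(\SH-1)(\SH+1)$, and then $\tau=\SH+e$ gives the stated formula for $\tau$. This step is purely algebraic and uses no representation-theoretic input beyond Theorem \ref{thm:H-formula}; only care with signs is required in matching conventions.

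For part (2), when $\delta_V=2$ the quadratic relation collapses to $\SH^2=1$, which is precisely $(\SH-1)(\SH+1)=0$. The substantive assertion is that neither $\tau$ nor $e$ is a polynomial in $\SH$ in this case, and this will be the main obstacle. My approach is a dimension count: any polynomial in $\SH$ lies in the two-dimensional subspace $\C\cdot 1+\C\cdot\SH$, so it suffices to show that $\{1,\tau,e\}$ spans a three-dimensional subspace of $\End_\fg(V\ot V)$. To establish linear independence I would exploit the structural difference between these three operators: $e=\check C\hat C$ has rank one with image $\C\cdot\check C(1)$, whereas $1$ and $\tau$ are invertible, and $\tau$ has both eigenvalues $\pm 1$. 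Restricting a hypothetical relation $\alpha\cdot 1+\beta\tau+\gamma e=0$ to $\ker\hat C$, where $e$ vanishes, forces $\alpha=\beta=0$ by the $\pm 1$-eigenspace decomposition of $\tau$ on this subspace; the remaining relation $\gamma e=0$ then forces $\gamma=0$.

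For part (3), assume $\delta_V\notin\{0,2\}$ so that the three roots $1,-1,1-\delta_V$ of the cubic of Theorem \ref{thm:H-formula} are distinct. Standard Lagrange interpolation at these three roots produces three polynomials of degree at most two satisfying $P_\lambda(\mu)=\delta_{\lambda\mu}$ at the roots and summing to $1$; these are exactly the three rational expressions listed in the statement. Substituting $\SH$ for the variable and reducing modulo the cubic relation yields three mutually orthogonal idempotents summing to the identity. To identify each with the projector onto a simple summand of $V\ot V$, I would compute the eigenvalue of $\SH$ on each summand: on the antisymmetric part $\wedge^2 V$ one has $\hat C=0$, whence $e=0$ and $\SH=\tau=-1$; on the symmetric-traceless part similarly $\SH=\tau=+1$; and on the trivial summand spanned by $c_0$, the identities $\tau c_0=c_0$ and $e c_0=\check C\hat C(c_0)=\delta_V c_0$ of Lemma \ref{lem:X-cup-cap} give $\SH c_0=(1-\delta_V)c_0$. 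This matches up each idempotent with the projector onto the corresponding simple submodule, completing the proof.
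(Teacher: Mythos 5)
The paper gives no argument for this lemma (it is dismissed as ``straightforward''), and your proof supplies exactly the intended verification: part (1) from the quadratic relation $\SH^2=1-(2-\delta_V)e$ established in the proof of Theorem \ref{thm:H-formula} together with $\tau=\SH+e$, part (3) by Lagrange interpolation at the three distinct eigenvalues $1,-1,1-\delta_V$ (your computation of these eigenvalues on the antisymmetric, symmetric-traceless and trivial summands is the right way to match idempotents with projectors), and part (2) by the rank/eigenspace argument showing $1,\tau,e$ are linearly independent, which is the needed substitute for the trivial linear independence of diagrams used in the parallel Lemma \ref{lem:H-skew-sym}(3).

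One point you should make explicit rather than defer to ``care with signs'': solving $\SH^2=1-(2-\delta_V)e$ gives $e=\frac{(\SH-1)(\SH+1)}{\delta_V-2}$, i.e.\ the opposite sign to the formula printed in part (1); this is a typo in the statement, as one can cross-check from part (3), where $e$ equals $\delta_V$ times the third idempotent, namely $-\frac{(\SH-1)(\SH+1)}{2-\delta_V}$. Likewise the printed hypothesis ``$\delta_V=0$'' in part (3) must be read as $\delta_V\neq 0$ (and $\neq 2$), which is what you correctly assumed; stating these corrections explicitly would make your write-up complete.
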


\begin{remark}
As $\fg$-module $V\ot V$ is semi-simple when $\delta_V=2$, and the idempotents mapping to the simple submodules (there are three) can be easily constructed from $\tau$ and $e$. However, $V\ot V$ is not semi-simple when $\delta_V=0$.
\end{remark}

Finally we compute the eigenvalue of  the Casimir element $C$ in $V$.
\begin{lemma}
The eigenvalue of $C$ in $V$ is equal to
\beq\label{eq:eigen-C}
\chi_V(C) = \sdim(V)-1.
\eeq
\end{lemma}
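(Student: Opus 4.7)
The plan is to extract the eigenvalue from the action of the tempered Casimir on the canonical trivial submodule of $V\otimes V$, avoiding any direct basis computation inside $\U(\fg)$.

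First I would recall the defining identity $2t=\Delta(C)-C\ot 1-1\ot C$, and apply $\mu\ot\mu$ to obtain
\[
(\mu\ot\mu)\Delta(C)=(\mu\ot\mu)(C\ot 1+1\ot C)+2\,\SH
=2\chi_V(C)\,\id_{V\ot V}+2\SH,
\]
using that $C$ acts as the scalar $\chi_V(C)$ on the simple module $V$, and invoking Theorem \ref{thm:H-formula} for $\SH=\tau-e$. Here $(\mu\ot\mu)\Delta(C)$ is precisely the action of $C$ on $V\ot V$ regarded as a $\fg$-module via $\Delta$.

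Next I would use the canonical trivial submodule $\check{C}(\C)\subset V\ot V$: since $\check{C}$ is a $\fg$-module homomorphism from $\C$ and $C$ annihilates $\C$, the Casimir acts as zero on $\check{C}(1)$. Evaluating the displayed identity on $\check{C}(1)$ therefore yields
\[
0=2\chi_V(C)\,\check{C}(1)+2(\tau-e)\check{C}(1).
\]
By Lemma \ref{lem:X-cup-cap}, $\tau\check{C}(1)=\check{C}(1)$, and $e\check{C}(1)=\check{C}\hat{C}\check{C}(1)=\sdim(V)\check{C}(1)$ by \eqref{eq:C-C}. Substituting and cancelling the nonzero vector $\check{C}(1)$ gives $\chi_V(C)+1-\sdim(V)=0$, i.e.\ $\chi_V(C)=\sdim(V)-1$.

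There is essentially no obstacle here: the only thing to verify is that $\check{C}(1)\neq 0$, which is immediate since $(\hat{C}\ot\id)\circ(\id\ot\check{C})=\id_V$ forces $\check{C}$ to be injective whenever $V\neq 0$. This proof also has the pleasant feature that it does not use the explicit formula \eqref{eq:C-formula}, only the abstract identity $\Delta(C)=C\ot 1+1\ot C+2t$ together with the already-computed action $\SH=\tau-e$.
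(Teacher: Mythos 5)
Your argument is correct, but it is not the proof the paper gives: the paper verifies the lemma by a direct basis computation, writing $\chi_V(C)\id_V=\sum_{a,b}J^a_b(-1)^{[b]}E^b_a$ via \eqref{eq:C-formula}, splitting it as $Q_1-Q_2$ and checking $Q_1=\sdim(V)\id_V$, $Q_2=\id_V$ with explicit sign bookkeeping. You instead evaluate the identity $(\mu\ot\mu)\Delta(C)=2\chi_V(C)\,\id_{V\ot V}+2\SH$ on the canonical invariant $c_0=\check{C}(1)$, using Theorem \ref{thm:H-formula} ($\SH=\tau-e$), Lemma \ref{lem:X-cup-cap} and \eqref{eq:C-C} to get $\SH c_0=(1-\sdim(V))c_0$, and your injectivity remark correctly handles the case $\sdim(V)=0$ where $\hat{C}(c_0)=0$ would not suffice to see $c_0\neq 0$; there is no circularity, since Theorem \ref{thm:H-formula} is proved independently of this lemma. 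The trade-off is that your route is cleaner and free of sign computations, but it leans on the prior computation $\SH=\tau-e$ and on the assumption that $C$ acts on $V$ by a scalar (Schur's lemma for the simple natural module), whereas the paper's computation simultaneously establishes that scalar action without any simplicity hypothesis — a point worth a sentence if $V$ is allowed to degenerate (e.g.\ the abelian case $\fso_2$, where $V$ is not simple yet the identity still holds by direct check).
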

\begin{proof}
We have  
$
\chi_V(C)\id_V=\frac{1}{2}\sum_{a, b} J^a_b (-1)^{[b]}J^b_a=\sum_{a, b} J^a_b (-1)^{[b]}E^b_a.
$
Expressing $J$'s in terms of $E$'s, we obtain
$\chi_V(C)\id_V
=Q_1 - Q_2$, with
\[
\begin{aligned}
Q_1= \sum (-1)^{[b]} E^a_b E^b_a, \quad
Q_2= \sum (-1)^{[b]} (-1)^{[a]([a]+[b])}   g_{b b''} E^{b''}_{a''} g^{a'' a} E^b_a. 
\end{aligned}
\]
Using properties of the matrix $g$ and carefully keeping track of the sign factors, 
one shows that 
$
Q_1 = \sdim(V) \id_V$  and $Q_2 = \id_V$. This proves 
\eqref{eq:eigen-C}. 
\end{proof}

\subsection{Representations of the multi-polar Brauer category}\label{sect:PB-rep}
Retain the notation of the last section. In particular, $\fg=\osp(V; \omega)$. 

We will also refer to the orthosymplectic supergroup $G=\OSp(V;\omega)$, most efficiently regarded as a group scheme, as well as 
its subgroup 
$G_0={\rm SO}(V_{\bar 0}; \omega|_{\bar 0})\times {\rm Sp}(V_{\bar 1}; \omega|_{\bar 1})$, where $\omega|_{\bar 0}$ and $\omega|_{\bar 1}$ are the restrictions of the bilinear form $\omega$ to the even and odd subspaces of $V$ respectively. It is conceptually appealing to consider the supergroup as a group scheme, however, for the purpose of this paper, we will follow \cite{DM}, 
identifying $G$ with the Harish-Chandra pair $(G_0, \U(\fg)))$, where $G_0$ acts on  $\U(\fg)$ by conjugation.

Denote by $\CT(V)$ the full subcategory of $\U(\fg)$-modules with objects $V^{\ot r}$ for all $r\in\N$, where $V^{\ot 0}=\C$.  Note that $\CT(V)$ is a tensor category.  
The following theorem is well known \cite{LZ17, DLZ}. 
\begin{theorem} \label{thm:funct}
There exists a tensor functor 
$
\CF: \CB(\sdim)\lra \CT(V),
$
which maps an object $r$ to $V^{\ot r}$, and maps the generators of the space of morphisms as shown below.
\[
\begin{aligned}
&\CF\left(\setlength{\unitlength}{0.1mm}
\begin{picture}(10, 20)(0,0)
\put(5, 0){\line(0, 1){30}}
\end{picture}\right) = \id_V, \quad
&\CF\left(
\setlength{\unitlength}{0.1mm}
\begin{picture}(28, 23)(0,5)
\qbezier(0, 0)(0, 0)(25, 35)
\qbezier(0, 35)(0, 35)(25, 0)
\end{picture}\right) = \tau, \\
&\CF\left(\setlength{\unitlength}{0.1mm}
\begin{picture}(28, 15)(0,0)
\qbezier(0, 0)(13, 35)(25, 0)
\end{picture}\right)= \hat{C}, \quad
&\CF\left(\setlength{\unitlength}{0.1mm}
\begin{picture}(28, 15)(0,15)
\qbezier(0, 35)(13, 0)(25, 35)
\end{picture}\right)=\check{C}.
\end{aligned}
\]
\end{theorem}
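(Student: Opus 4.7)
The plan is to invoke the presentation of the Brauer category $\CB(\sdim)$ as a strict monoidal category given by Theorem \ref{thm:tensor-cat} (specialised to the case $\CM=\CV=\{v\}$), and then verify that the proposed assignments of generating morphisms satisfy all the defining relations. Since a strict monoidal functor out of a category presented by generators and relations is uniquely determined (up to the evident coherence) by a choice of images for the generating objects and generating morphisms, together with the verification that those images satisfy the relations, this essentially reduces to a list of checks using Lemma \ref{lem:X-cup-cap} and the loop-removal identity.

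First I would define $\CF$ on objects by $\CF(r)=V^{\otimes r}$, with $\CF(0)=\C$; the tensor structure of $\CT(V)$ then gives $\CF(r\otimes s)=\CF(r)\otimes\CF(s)$, so the object assignment is strictly monoidal. For morphisms, $\CF$ is defined on the generators $I, X, \cap, \cup$ as in the statement, and then extended to arbitrary composites and tensor products by the bifunctoriality of $\otimes$. Since each generator is sent to a $\fg$-module homomorphism (by Lemma \ref{lem:X-cup-cap}(2) and the fact that $\tau$ is $\fg$-equivariant because the coproduct of $\U(\fg)$ is cocommutative up to the supertwist), the image of any composite lies in $\CT(V)$, so $\CF$ is well defined as a functor into $\CT(V)$ provided that the relations are preserved.

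Next I would check each defining relation of $\CB(\sdim)$ listed in Theorem \ref{thm:tensor-cat} against its image:
\begin{itemize}
\item Involutivity of the crossing, $X\circ X=I\otimes I$, goes to $\tau\circ\tau=\id_{V\otimes V}$, which is immediate from the definition of the canonical graded symmetry.
\item The braid relation on three strands is \eqref{eq:tau3}, already established for the symmetry $\tau$ in Section \ref{sect:HB-rep}.
\item The straightening (snake) relations correspond to \eqref{eq:C-C}, namely $(\hat C\otimes\id)(\id\otimes\check C)=\id=(\id\otimes\hat C)(\check C\otimes\id)$.
\item The sliding relations correspond exactly to \eqref{eq:C-tau-1} and \eqref{eq:C-tau-2}.
\item The twist relations reduce to $\tau\check C=\check C$ and $\hat C\tau=\hat C$ from \eqref{eq:Ctau}.
\item Finally, the loop-removal (loop evaluation) $\cap\circ\cup=\sdim(V)\cdot\id_{\C}$ becomes $\hat C\circ\check C=\sdim(V)$, which is the first identity of \eqref{eq:C-C}. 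This is the point at which the parameter of $\CB$ must be taken to equal $\sdim(V)$, explaining the notation $\CB(\sdim)$.
\end{itemize}
Each of these identities is supplied by Lemma \ref{lem:X-cup-cap}, so the list of relations is verified.

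The construction is routine once Lemma \ref{lem:X-cup-cap} is in hand; there is no real obstacle. The only point that warrants care is that the symmetry $\tau$ used in the target must be the super-symmetry (with the Koszul sign), so that the defining commutativity of $\fg$-action with $\tau$ is the graded one; this is precisely how $\tau_{M,M'}$ was set up in \eqref{eq:tau-funct}. Finally, the fact that $\CF$ is a monoidal functor rather than just a functor follows from the strict monoidal structure of both source and target, together with the observation that the images of the generators were defined in a way compatible with $\otimes$: on morphisms, juxtaposition of Brauer diagrams is sent to the tensor product of the corresponding $\C$-linear maps, which then automatically intertwines the $\fg$-action via the iterated coproduct $\Delta^{(r-1)}$. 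This completes the construction and uniqueness of $\CF$.
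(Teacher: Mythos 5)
Your proposal is correct, and since the paper itself offers no proof of Theorem \ref{thm:funct} (it is quoted as well known from \cite{LZ17, DLZ}), your argument — invoking the presentation of $\CB(\sdim)$ from Theorem \ref{thm:tensor-cat} and checking each relation against Lemma \ref{lem:X-cup-cap}, with the loop value $\hat{C}\check{C}=\sdim(V)$ pinning down the parameter — is exactly the standard verification. It also coincides with how the paper treats the ``type (a)'' Brauer relations in the proof of the more general Theorem \ref{thm:main}, so there is nothing to add beyond noting that your explicit check of the loop relation is needed (it is built into the composition rule of the diagram category rather than listed in Theorem \ref{thm:tensor-cat}) and you handled it correctly.
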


\begin{remark}\label{rem:funct-F}
\begin{enumerate}
\item 
In \cite{LZ17} we constructed a full tensor functor from $\CB(\sdim)$ to the full subcategory of representations of $G={\rm OSp}(V; \omega)$ with objects $V^{\ot r}$ for all $r\in\N$. Note that 
\[\Hom_G(V^{\ot r},  V^{\ot s})= \left\{ \varphi\in \Hom_{\U(\fg)}(V^{\ot r},  V^{\ot s})\mid \eta.\varphi = \varphi.\eta, \forall \eta\in G_0\right\}.\]

\item The functor in Theorem \ref{thm:funct} is not full in general. There is a $1$-dimensional $\U(\fg)$-submodule in $V^{\ot r_c}$ for $r_c= m(2n+1)$ spanned by the super Pfaffian $\wt\Omega$ \cite{LZ17-Pf}, which satisfies $\eta.\wt\Omega= \sdet(\eta) \wt\Omega$ for all $\eta\in G_0$, and hence is not in the image of any morphism in $\CF\left(\Hom_{\CB(\sdim)}(0, r_c)\right)=\Hom_G(\C, V^{\ot r_c})$.   This problem already appears in the invariant theory of $\mathfrak{so}_n$, and has been addressed in \cite{LZ17-Ecate} by introducing an ``enhanced Brauer category''.
\end{enumerate}
\end{remark}

\;

\;


Now fix a finite set $\CM\supset\CV$ of $\U(\fg)$-modules where $\CV=\{V\}$.

For any $\U(\fg)$-module $M$, denote by $\mu_M: \U(\fg)\lra \End_\C(M)$ the corresponding representation. 
Let $\CT(\CM, V)$ be the full subcategory of $\U(\fg)$-modules with objects 
$M_1^{\ot s_1} \ot V^{\ot r_1} \ot \dots \ot M_k^{\ot s_k}\ot V^{\ot r_k}$ 
for $k, r_i, s_i\in\N$ and $M_i\in\CM\setminus \{V\}$. 
Then $\CT(\CM, V)$ has a monoidal structure defined by the usual tensor product 
of $\U(\fg)$-modules and homomorphisms. If $V\in\CM$ is fixed, we write $\CT(\CM)$ for $\CT(\CM,V)$.

The monoidal functor constructed in Theorem \ref{thm:main} in the present case reduces to
\beq\label{eq:SFM}
\SF: \HB(\sdim)\lra \CT(\CM),  
\eeq
with the following properties
\[
\baln
&\SF(I^M)=\id_{M}, \quad \SF(X_{M M'}) = \tau_{M M'},  \\
&\SF(\BH(M, M'))=(\mu_{M}\ot\mu_{M'})(t),  \quad \forall M, M'\in\CM,  \\
&  \SF(\cup^V)=\CF\left(\setlength{\unitlength}{0.1mm}
\begin{picture}(28, 15)(0,0)
\qbezier(0, 0)(13, 35)(25, 0)
\end{picture}\right), \quad
\quad \SF(\cap_V)=\CF\left(\setlength{\unitlength}{0.1mm}
\begin{picture}(28, 15)(0,15)
\qbezier(0, 35)(13, 0)(25, 35)
\end{picture}\right), 
\ealn
\]
where $\CF: \CB(\sdim)\lra \CT(V)$ is defined in Theorem \ref{thm:funct}. 
Note in particular that for the elements $\vartheta_j(r)$ defined by \eqref{eq:vartheta}, we have 
\[
\CF(\vartheta_j(r))=\left(\mu_{M\ot V^{\ot j}}\ot \mu\right)(t) = \left(\mu_M\ot \mu^{\ot (j+1)}\right)(\Delta^{(j-1)}\ot\id)(t). 
\]

We have the following result.
\begin{theorem}\label{thm:a-funct} Retain the notation above. 
The monoidal functor \eqref{eq:SFM} factors through $\wh\HB(\sdim)$, that is, 
there is a unique monoidal functor $\wh\SF: \wh\HB(\sdim)$ $\lra \CT(\CM)$, which renders the following diagram of monoidal categories and monoidal functors commutative, 
\beq\label{eq:tensor-funct}
\begin{tikzcd}
\HB(\sdim)  \arrow[d] 
\arrow[r, "\SF"] & \CT(\CM)\\
\wh\HB(\sdim), \arrow[ur, "\wh\SF"'] & 
\end{tikzcd}
\eeq
where the downward arrow is the quotient functor in Definition \ref{def:mpolar}. 
\end{theorem}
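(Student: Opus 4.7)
The plan is to invoke the universal property of the quotient monoidal category $\wh\HB(\sdim) = \HB(\sdim)/\CJ$, where $\CJ$ is the tensor ideal generated by the single morphism $\BH(v,v) - H$ (with $H = X - E = X_{vv} - \cup\circ\cap$ as in Figure \ref{fig:t-image}). By this universal property, a monoidal functor out of $\HB(\sdim)$ factors uniquely through the quotient if and only if it annihilates the generator of $\CJ$. Thus the entire content of the theorem reduces to verifying the single identity
\[
\SF(\BH(v,v)) = \SF(H)
\]
in $\Hom_{\CT(\CM)}(V\otimes V, V\otimes V)$, from which uniqueness of $\wh\SF$ is automatic.

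First I would unpack both sides using the defining prescriptions of $\SF$ recorded in \eqref{eq:funct1}--\eqref{eq:funct3}. On the left, $\SF(\BH(v,v)) = (\mu_V \otimes \mu_V)(t) =: \mathscr H$, the action of the tempered Casimir element on $V\otimes V$. On the right, since $\SF$ is monoidal and the generators of $H$ are $X_{vv}$, $\cup^v$, $\cap_v$, we get
\[
\SF(H) = \SF(X_{vv}) - \SF(\cup^v)\circ\SF(\cap_v) = \tau_{V,V} - \check{C}\circ\hat{C} = \tau - e,
\]
with $e$ as in \eqref{eq:e}.

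The crux is then Theorem \ref{thm:H-formula}, which asserts precisely the identity $\mathscr H = \tau - e$ in $\End_\C(V\otimes V)$. This identity is already established in the excerpt via an explicit matrix-units computation using the formula \eqref{eq:t-HH} for $(\id\otimes \mu)(t)$ and the bases $\{e^a\}$, $\{e_a\}$ of $V$ dual with respect to $\omega$. With that in hand, the equality $\SF(\BH(v,v) - H) = 0$ follows immediately.

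The only remaining step is to observe that this single generator equation forces $\SF$ to vanish on the whole tensor ideal $\CJ$: any element of $\CJ$ is a $\C$-linear combination of morphisms of the form $A \circ (I(\bs)\otimes (\BH(v,v) - H) \otimes I(\bs')) \circ B$, and monoidality and functoriality of $\SF$ together with $\SF(\BH(v,v)-H)=0$ send each such morphism to zero. Consequently $\SF$ descends to a well-defined monoidal functor $\wh\SF: \wh\HB(\sdim) \to \CT(\CM)$ making the diagram \eqref{eq:tensor-funct} commute, and this functor is unique because the quotient functor $\HB(\sdim) \to \wh\HB(\sdim)$ is full and essentially surjective. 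No step poses a genuine obstacle here: the substantive work is all contained in Theorem \ref{thm:H-formula}, which has already been proved.
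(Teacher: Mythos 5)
Your proposal is correct and follows essentially the same route as the paper: reduce the factorisation to the single identity $\SF(\BH(v,v))=\SF(H)$ via the universal property of the quotient by the tensor ideal generated by $\BH(v,v)-H$, then invoke Theorem \ref{thm:H-formula}, i.e.\ $(\mu\ot\mu)(t)=\tau-e$, to verify it. Your extra remarks on why the generator equation kills the whole ideal and why uniqueness follows are just the details the paper leaves implicit.
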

\begin{proof}
Recall that $\wh\HB(\sdim)=\HB(\sdim)/\CJ$ with the tensor ideal $\CJ$ generated by $\BH(V, V)- H$,  where $H= X(V, V) -\cup^V\circ\cap_V$. Thus it suffices to show  that $\SF(\BH(V, V)) = \CF(H)$ in order to prove the theorem. Clearly $\CF(H)= \tau - e$. By Theorem \ref{thm:H-formula}, we have
\beq
\SF(\BH(V, V)) = (\mu\ot \mu)(t) =   \tau - e. 
\eeq
This proves the theorem. 
\end{proof}

If $\CM=\{M, V\}$ where $M\ne V$ is an arbitrary $\U(\fg)$-module, the category $\wh\HB(\sdim)$ is the multi-polar Brauer category $\MPB(\sdim)$. 
Recall that the polar Brauer category $\PB(\sdim)$ is the full subcategory of $\MPB(\sdim)$ 
with  objects $(M, \underbrace{V, \dots, V}_r)$ for $r\in\N$. 
The monoidal functor $\wh\SF: \MPB(\sdim)\lra \CT(M, V)$ of Theorem \ref{thm:a-funct}
maps $\PB(\sdim)$ to the full subcategory category $\CT_M(V)$ of $\CT(\CM)$ whose objects are the
modules $M\ot V^{\ot r}$ for all $r\in\N$. 
We denote the restriction  to $\PB(\sdim)$ of the functor $\wh\SF$ by 
\beq\label{eq:FM}
\CF_M: \PB(\sdim)\lra \CT_M(V). 
\eeq

Note that the usual tensor product bi-functor $\ot: \CT_M(V)\times \CT(V)\lra \CT_M(V)$ equips $\CT_M(V)$ with the structure of a right module category over the monoidal category $\CT(V)$. 
Recall from Lemma \ref{lem:mod-cat} that the monoidal structure of $\MPB(\delta)$ leads to
a bi-functor 
$
\ot: \PB(\delta)\times \CB(\delta)\lra \PB(\delta),
$
which makes $\PB(\delta)$ a right module category over the monoidal category $\CB(\delta)$.  

\begin{lemma}\label{lem:mod-T}
The following diagram commutes.
\[
\begin{tikzcd}
\PB(\sdim)\times \CB(\sdim)  \arrow[d, "\ot"' pos=0.43]
					\arrow[r, "\CF_M\times\CF"] & \CT_M(V)\times \CT(V) \arrow[d, "\ot"]\\
\PB(\sdim) \arrow[r, "\CF_M"'] & \CT_M(V). 
\end{tikzcd}
\]
\end{lemma}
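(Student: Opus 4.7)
The approach is to recognise that Lemma \ref{lem:mod-T} is essentially a bookkeeping consequence of the monoidality of $\wh\SF$ established in Theorem \ref{thm:a-funct}. In the present case $\CM=\{M,V\}$ we have $\wh\HB(\sdim)=\MPB(\sdim)$, and both $\PB(\sdim)$ and $\CB(\sdim)$ sit as full (but not monoidal) subcategories inside the monoidal category $\MPB(\sdim)$. By Lemma \ref{lem:mod-cat}, the bi-functor $\ot:\PB(\sdim)\times\CB(\sdim)\to\PB(\sdim)$ appearing on the left vertical arrow of the diagram is nothing but the restriction of the monoidal tensor product of $\MPB(\sdim)$. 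Similarly, $\CT_M(V)$ is a $\CT(V)$-module subcategory of $\CT(M,V)$ with tensor product inherited from the $\U(\fg)$-module tensor product.

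First I would verify commutativity on objects: along either path, $((M,v^r),v^s)\mapsto M\ot V^{\ot(r+s)}$, which is immediate. Next I would observe that the functor $\CF:\CB(\sdim)\to\CT(V)$ of Theorem \ref{thm:funct} coincides with the restriction of $\wh\SF$ to the full subcategory $\CB(\sdim)\subset\MPB(\sdim)$. This is a direct comparison of generators: the assignments $I\mapsto\id_V$, $X\mapsto\tau$, $\cup^V\mapsto\check C$, $\cap_V\mapsto\hat C$ in \eqref{eq:SFM} reproduce exactly those in Theorem \ref{thm:funct}. Likewise, $\CF_M$ is by definition \eqref{eq:FM} the restriction of $\wh\SF$ to $\PB(\sdim)$.

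With these identifications, the content of the lemma is that for morphisms $\A\in\Hom_{\PB(\sdim)}((M,v^r),(M,v^{r'}))$ and $B\in\Hom_{\CB(\sdim)}(v^s,v^{s'})$, one has
\[
\wh\SF(\A\ot B)=\wh\SF(\A)\ot\wh\SF(B),
\]
which is precisely the monoidality of $\wh\SF$ proved in Theorem \ref{thm:a-funct}. Since it suffices to check the diagram on a generating set of morphisms and since tensor products of morphisms in $\PB(\sdim)\times\CB(\sdim)$ are by construction juxtapositions of multi-polar Brauer diagrams in $\MPB(\sdim)$, no separate calculation is required beyond invoking Theorem \ref{thm:a-funct}.

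The only point that needs any care — and it is not really an obstacle — is the compatibility of the two identifications: namely, that $\CF=\wh\SF|_{\CB(\sdim)}$ as functors into $\CT(V)\subset\CT(M,V)$ (so that the right-hand vertical tensor product is well-defined and makes the square meaningful). This is immediate once one unwinds the definition of $\wh\SF$ on the generators of $\CB(\sdim)$, as noted above. Thus the proof reduces to recording these identifications and citing the monoidality of $\wh\SF$.
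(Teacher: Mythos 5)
Your argument is correct and is exactly the one the paper intends: the lemma is stated without proof precisely because $\CF_M$ and $\CF$ are by construction restrictions of the monoidal functor $\wh\SF$ of Theorem \ref{thm:a-funct}, and both vertical arrows are restrictions of the respective monoidal tensor products (Lemma \ref{lem:mod-cat} on the diagram side, the usual tensor product of $\U(\fg)$-modules on the representation side). Your write-up simply makes these identifications explicit and invokes monoidality of $\wh\SF$, which matches the paper's implicit reasoning.
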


\begin{remark}\label{rmk:compare}
If $\fg$ is the orthogonal or symplectic Lie algebra,
we can compare our functor \eqref{eq:FM} with the monoidal functor  
$\Psi: \AB(\sdim)\lra \mathcal{E}nd(\text{$\fg$-Mod})$ constructed in \cite{RSo}, 
where $\mathcal{E}nd(\text{$\fg$-Mod})$ is the endofunctor category of $\fg$-Mod. 
For any morphism $\BB$ in $\AB(\sdim)$, the component $\Psi(\BB)_{\wt{M}}$ 
of the natural transformation $\Psi(\BB)$ 
on an arbitrary $\fg$-module $\wt{M}$ is a morphism in the category $\CT_{\wt{M}}(V)$.  
In particular, $\Psi_{\wt{M}}\left(\begin{picture}(10, 10)(-5,2)
\put(0, 0){\line(0, 1){10}}
\put(-2, 3){\tiny$\bullet$}
\end{picture}\right)=(\mu_{\wt{M}}\ot \mu)(t)$, 
and $\Psi_{\wt{M}}\left(\begin{picture}(40, 10)(-35,2)
\put(-30, 0){\line(0, 1){10}}
\put(-25, 5){\tiny$\dots$}
\put(-23, -1){\tiny$r$}
\put(-10, 0){\line(0, 1){10}}
\put(0, 0){\line(0, 1){10}}
\put(-2, 3){\tiny$\bullet$}
\end{picture}\right)=(\mu_{\wt{M}\ot V^{\ot r}}\ot \mu)(t)$.
Thus in order to extract from \cite{RSo} the functor $\CF_M: \PB(\delta)\lr\CT_M(V)\subset$  $\fg$-Mod, which is the ultimate object of interest, 
one needs to consider $\Psi_{\wt{M}}$ with $\wt{M}=M\ot V^{\ot r}$ for all $r$.
\end{remark}

\section{Applications to universal enveloping superalgebras}\label{sect:Ug}

We apply the diagrammatics of $\PB(\delta)$ to study the universal enveloping superalgebra $\U(\osp(V; \omega))$ of the orthosymplectic Lie superalgebra. We will develop explicit descriptions of 
the centre of $\U(\osp(V; \omega))$, and give a categorical derivation of characteristic identities \cite[\S 4.10]{B}, \cite{BG, G} for the orthogonal and symplectic Lie algebras. 

A characteristic identity for a Lie algebra  $\fg$ is in the spirit of the Cayley-Hamilton theorem but for a specific type of square matrix with entries in $\U(\fg)$. It says that the matrix satisfies a monic polynomial equation over the centre of $\U(\fg)$. Generalisations of characteristic identities to quantum groups \cite{GZB}  are also known. 

This section has an intimate connection with, and provides a categorical approach to, ``quantum family algebras'' \cite{Ki1, Ki2} (see Remark \ref{rmk:family} below).

We retain the notation of the last section. Since we shall deal with $\PB(\delta)$ exclusively in this section, we also adopt the following {\bf convention}.  
Let $PB_r(\delta)=\End_{\PB(\delta)}(r)$ and $ E_{M, r}(V)= \End_{\fg}(M\ot V^{\ot r})$. Then the restriction of the functor
$\CF_M$ to $PB_r(\sdim)$ leads to the following representation of the endomorphism algebra 
\beq\label{eq:F-prime-r}
\CF_M|_r:  PB_r(\sdim)\lra E_{M, r}(V).
\eeq

\subsection{Construction of the centre of $\U(\osp(V;\omega))$}\label{sect:centre-construct}

We use the notation of Section \ref{sect:osp},  and identify $G=\OSp(V; \omega)$ with the Harish-Chandra pair $(G_0, \U(\fg))$.  We note that any $\eta\in G_0$ is of the form $\eta = \begin{pmatrix}\alpha & 0\\ 0 & \beta\end{pmatrix}$ with $\alpha\in {\rm O}(V_{\bar 0}; \omega_{\bar 0})$ and $\beta \in {\rm Sp}(V_{\bar 1}; \omega_{\bar1})$. Hence the Berezinian $\sdet(\eta)$ is equal  to $\det(\alpha)\det(\beta)^{-1}=\det(\alpha)$, which is $\pm 1$. 

Both $G$ and $\fg$ act on $\fg$ through their respective adjoint actions, and these actions extend to  actions on
the universal enveloping superalgebra $\U(\fg)$. For any $X\in\fg$ and $g\in G_0$, we denote their actions on any $u\in\U(\fg)$ by  $ X.u$ and $g.u$ respectively. 
The algebraic structure of $\U(\fg)$ is preserved by these actions, in the sense that, for any $u, v\in\U(\fg)$, 
\beq
X.(u v) = (X.u) v + (-1)^{[X][u]} u (X.v), \quad g.(u v) =  (g.u) (g.v). 
\eeq

Denote by $Z(\U(\fg))$ the centre of $\U(\fg)$, 
which coincides with the subalgebra $\U(\fg)^\fg = \{z\in\U(\fg)\mid X.z=0, \forall X\in\fg\}$ of $\fg$-invariants. 
We also have the subalgebra of $G$-invariants defined by 
$\U(\fg)^G = \{z\in\U(\fg)^\fg\mid \eta.z=z, \forall \eta\in G_0\}$.  
The relationship between $G$-invariants and $\fg$-invariants was investigated in \cite{LZ17-Pf}. 
Consider the following subspace  of $\U(\fg)^\fg$. 
\[
\U(\fg)^{G, \sdet} = \{z\in\U(\fg)^\fg\mid \eta.z=\sdet(\eta) z, \ \forall \eta\in G_0\}.
\]
Then for all  $\alpha, \beta\in \U(\fg)^{G, \sdet}$, we have 
$\alpha\beta\in \U(\fg)^G$. Furthermore, 
\[
\U(\fg)^\fg=\U(\fg)^G\bigoplus \U(\fg)^{G, \sdet}, \quad \text{if $\{\eta\in G_0\mid \sdet(\eta)=-1\}\ne\emptyset$}.
\]

\begin{lemma}\label{lem:odd}
If $\dim(V_{\bar 0})$ is odd or $0$, then $Z(\U(\fg))=\U(\fg)^G$. 
\begin{proof} 
If $\dim(V_{\bar 0})=0$, we have $G=G_0={\rm Sp}_{2n}$, and hence $\sdet(\eta)=1$ for all $\eta\in G_0$. In this case,  $Z(\U(\fg))=\U(\fg)^\fg=\U(\fg)^G$.

If $\eta_0\in \{\eta\in G_0\mid \sdet(\eta)=-1\}\ne\emptyset$,  then 
$
\U(\fg)^{G, \sdet} = \{z\in \U(\fg)^\fg \mid \eta_0.z=- z\}, 
$
independently of the choice of such an $\eta_0$. Assume that $\dim(V_{\bar 0})$ is odd.  
Let $\eta_0\in G_0$ be the element $\eta_0=-id_V$. 
Then clearly $\sdet(g_0)=-1$, and $\eta_0. X=X$ for all $X\in\fg$.  Hence $
\U(\fg)^{G, \sdet} =0$, and thus $Z(\U(\fg))= \U(\fg)^{G}$. 
\end{proof}
\end{lemma}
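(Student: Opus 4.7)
The plan is to leverage the decomposition $\U(\fg)^\fg = \U(\fg)^G \oplus \U(\fg)^{G,\sdet}$ noted just above the lemma (valid when $G_0$ contains an element of super-determinant $-1$), together with the already-cited fact that $Z(\U(\fg))=\U(\fg)^\fg$. Thus the lemma reduces to showing that $\U(\fg)^{G,\sdet}=0$ in the two cases under consideration.

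First I would handle the easy case $\dim(V_{\bar 0})=0$: here $V=V_{\bar 1}$, so $\fg=\fsp_{2n}$ and $G=G_0=\Sp_{2n}$. Every $\beta\in\Sp(V_{\bar 1};\omega|_{\bar 1})$ has $\det(\beta)=1$, and the formula $\sdet(\eta)=\det(\alpha)\det(\beta)^{-1}$ specialises to $\sdet(\eta)=1$ for all $\eta\in G_0$. Hence $\U(\fg)^{G,\sdet}$ is tautologically zero (the condition $\eta.z=\sdet(\eta)z$ collapses to $\eta.z=z$), giving $\U(\fg)^G=\U(\fg)^\fg=Z(\U(\fg))$.

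The second case, $\dim(V_{\bar 0})$ odd, is the one to which the decomposition applies nontrivially. I would produce an element $\eta_0\in G_0$ with $\sdet(\eta_0)=-1$ that acts trivially on $\U(\fg)$; the natural candidate is $\eta_0=-\id_V$, which lies in $\Or(V_{\bar 0})\times\Sp(V_{\bar 1})$ (both blocks preserve the relevant forms) and satisfies $\det(-\id_{V_{\bar 0}})=(-1)^{\dim V_{\bar 0}}=-1$, hence $\sdet(\eta_0)=-1$. Since $\fg\subset\End_\C(V)$ and $G_0$ acts by conjugation, for any $X\in\fg$ we have $\eta_0.X=(-\id_V)X(-\id_V)^{-1}=X$, so $\eta_0$ acts as the identity on all of $\U(\fg)$.

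The conclusion then follows immediately: if $z\in\U(\fg)^{G,\sdet}$, then on the one hand $\eta_0.z=z$ (by the triviality of the action) and on the other hand $\eta_0.z=\sdet(\eta_0)z=-z$, forcing $2z=0$ and therefore $z=0$. Combined with the decomposition, this yields $\U(\fg)^\fg=\U(\fg)^G$, and the identification $Z(\U(\fg))=\U(\fg)^\fg$ completes the proof. The essential (and only really interesting) ingredient is the existence of a super-determinant $-1$ element in the centre of $G_0$, which explains why the argument works precisely when $\dim(V_{\bar 0})$ is odd — in the even-dimensional case $-\id_V$ has $\sdet=+1$ and no such central obstruction exists.
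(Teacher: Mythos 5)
Your proposal is correct and follows essentially the same route as the paper: in the purely odd case one observes $\sdet\equiv 1$ on $G_0=\Sp_{2n}$ so that $\U(\fg)^\fg=\U(\fg)^G$, and in the odd-dimensional case one uses $\eta_0=-\id_V$, which has $\sdet(\eta_0)=-1$ yet acts trivially on $\U(\fg)$ by conjugation, forcing $\U(\fg)^{G,\sdet}=0$ and hence $Z(\U(\fg))=\U(\fg)^G$ via the decomposition $\U(\fg)^\fg=\U(\fg)^G\oplus\U(\fg)^{G,\sdet}$. One small slip in wording: when $\dim(V_{\bar 0})=0$ the space $\U(\fg)^{G,\sdet}$ is not ``tautologically zero'' but coincides with $\U(\fg)^G$ (the decomposition does not apply there); this does not affect your conclusion, which matches the paper's.
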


Since $G$ acts naturally on $V$, and hence on  $V^{\ot r}$ for any $r$, it follows that $G$ acts 
on $\Hom_\C(V^{\ot r}, V^{\ot s})$  ($r, s\in \N$ fixed).  If $g\in G_0$ and $X\in \fg$, their action on $\varphi\in \Hom_\C(V^{\ot r}, V^{\ot s})$
 ($r, s\in \N$ fixed) is given by 
\[
\baln
& (X.\varphi)({\bf v})= X.(\varphi({\bf v}))- (-1)^{[X][\varphi]} \varphi(X.{\bf v}), \\
& (g.\varphi)({\bf v})= g. (\varphi( g^{-1}.{\bf v})), \quad \forall {\bf v}\in V^{\ot r}. 
\ealn
\]
This $G$-action extends to $\U(\fg)\ot \Hom_\C(V^{\ot r}, V^{\ot s})$ as follows. For any $X\in\fg$, $g\in G_0$, 
$u\in \U(\fg)$, and $\varphi\in \Hom_\C(V^{\ot r}, V^{\ot s})$, 
\beq\label{eq:act-fam}
X.(u\ot \varphi) = X.u \ot \varphi + (-1)^{[X][u]} u\ot X.\varphi, \quad
g.(u\ot \varphi) = g. u \ot g.\varphi. 
\eeq
Now it is evident that $\U(\fg)\ot \Hom_\C(V^{\ot r}, V^{\ot s})\ot \Hom_\C(V^{\ot r'}, V^{\ot r'})$ also naturally acquires a $G$-module structure.  

For $u, u'\in\U(\fg)$, $\varphi\in \Hom_\C(V^{\ot r}, V^{\ot r'})$ and $\psi \in \Hom_\C(V^{\ot r'}, V^{\ot s})$, 
 we have the composition $(u'\ot\psi)(u\ot \varphi)=(-1)^{[u][\psi]}u' u \ot \psi\varphi$. One checks easily that
the $G$-action defined by \eqref{eq:act-fam} respects this composition.


Let us write $\U=\U(\fg)$, and consider it as a left $\U$-module under multiplication. The functor $\CF_M$ for $M=\U$ then becomes 
\beq
&&\CF_\U: \PB(\sdim)\lra \CT_\U(V) ,
\eeq  
where $\CF_\U(\I )=1\in\U$ and $\CF_\U(\BH)=(\id_\U\ot \mu)(t)$.

Clearly $\CF_\U(\BH)$ is $G$-invariant with respect to the action \eqref{eq:act-fam}, 
as is $\CF(D)$ for any morphism $D$ of $\CB(\sdim)$.   
As $\I , \BH$ and all $D$ generate the space of morphisms of $\PB(\sdim)$, 
\beq\label{eq:Hom-G-inv}
\CF_\U(\Hom_{\PB(\sdim)}(r, s)) \subseteq \Hom_\C(\U\ot V^{\ot r}, \U\ot V^{\ot s})^G. 
\eeq  

We are now in a position to prove the following crucial result.
\begin{theorem}\label{eq:key-centre}
The identity element together with $\{\CF_\U(Z_{2\ell})\mid \ell\ge 1\}$  generate $\U(\fg)^G$. Further, 
if $\dim(V_{\bar 0})$ is odd or $0$, these elements generate $Z(\U(\fg))$. 
\end{theorem}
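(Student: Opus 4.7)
The plan is to establish the result in three steps: first identify $\CF_\U(Z_{2\ell})$ with an explicit element $z_{2\ell}$ of $Z(\U(\fg))\cap\U(\fg)^G$; then compute the PBW symbol of $z_{2\ell}$; then conclude generation by combining the classical super invariant theory of $\osp(V;\omega)$ with a standard filtration argument, invoking Lemma \ref{lem:odd} for the centrality claim.

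For the first step, since $Z_{2\ell}\in\End_{\PB(\sdim)}(m)$, the image $\CF_\U(Z_{2\ell})$ is a $\fg$-endomorphism of $\U$ regarded as a left module, hence of the form $R_{z_{2\ell}}$ (right multiplication) for some even $z_{2\ell}\in\U(\fg)$. Unfolding the diagrammatic definition $Z_{2\ell}=\Pi\circ(\BH^{2\ell}\ot I)\circ\amalg$ together with \eqref{eq:t-HH}, $\CF_\U(\amalg)(u)=u\ot\check{C}(1)$, and $\CF_\U(\Pi)(u\ot v\ot v')=u\,\omega(v,v')$, one computes directly that $\CF_\U(Z_{2\ell})(u)=z_{2\ell}u$ as well, forcing $z_{2\ell}\in Z(\U(\fg))$. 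The $G$-invariance encoded in \eqref{eq:Hom-G-inv}, combined with the identification of the $G$-action on $\End_\fg(\U)\cong\U(\fg)^{\mathrm{op}}$ with the adjoint action on $\U(\fg)$, yields $z_{2\ell}\in\U(\fg)^G$.

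For the second step, the matrix-unit identity $E^{b_1}_{a_1}\cdots E^{b_n}_{a_n}=\delta^{b_2}_{a_1}\cdots\delta^{b_n}_{a_{n-1}}E^{b_1}_{a_n}$ reduces the computation above to
\[
z_{2\ell}=\sum_{c_1,\ldots,c_{2\ell}}(-1)^{[c_1]+\cdots+[c_{2\ell}]}X^{c_2}_{c_1}X^{c_3}_{c_2}\cdots X^{c_1}_{c_{2\ell}},
\]
which is a cyclic supertrace-type polynomial in the matrix entries $X^a_b$. Passing to the associated graded $\mathrm{gr}\,\U(\fg)=S(\osp(V;\omega))$, the top symbol of $z_{2\ell}$ coincides, up to a nonzero scalar, with the classical supertrace invariant $\Str(X^{2\ell})$ regarded as a polynomial function on $\fg^*\cong\fg$.

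For the third step, the first fundamental theorem of invariant theory for the natural $\OSp(V;\omega)$-action on $\osp(V;\omega)$ (treated in \cite{LZ17, DLZ}) asserts that $S(\osp(V;\omega))^{\OSp(V;\omega)}$ is generated by $\{\Str(X^{2\ell})\mid \ell\ge 1\}$, the odd-power supertraces vanishing since elements of $\osp$ are $\omega$-skew. Given $z\in\U(\fg)^G$ of PBW-degree $d$, express its symbol $\bar z\in S^d(\fg)^G$ as a polynomial in the symbols $\bar z_{2\ell}$ ($2\ell\le d$), lift that polynomial to a combination of the $z_{2\ell}$, and subtract from $z$ to reduce the filtration degree; induction on $d$ finishes. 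The second assertion of the theorem is then immediate from Lemma \ref{lem:odd}, which gives $Z(\U(\fg))=\U(\fg)^G$ precisely when $\dim V_{\bar 0}$ is odd or zero.

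\textbf{Main obstacle.} The technical heart lies in the invariant theory input of Step~3. In the purely even orthogonal and symplectic cases, generation by even-power traces is classical, but for the genuine super case $\osp(m|2n)$ with both $m>0$ and $n>0$ one must work with the full group $\OSp$, not $\SOSp$, and verify that the super Pfaffian, being only a $\sdet$-semi-invariant, contributes no additional $\OSp$-generators beyond the supertraces. Equally delicate is confirming that the scalar relating the symbol $\bar z_{2\ell}$ to $\Str(X^{2\ell})$ is nonzero for every $\ell\geq 1$, which must be checked by an explicit coefficient calculation in the cyclic sum above.
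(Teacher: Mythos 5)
Your proposal is correct and follows essentially the same route as the paper: identify $\CF_\U(Z_\ell)$ with the explicit cyclic "trace" element in $\U(\fg)$ (the paper's formula \eqref{eq:FZ}), pass to the associated graded via a symmetrisation map ($\Upsilon$ in the paper, the PBW symbol in your write-up), invoke the first fundamental theorem for the full supergroup $\OSp(V;\omega)$ to generate the graded invariants by supertrace-type elements, induct on filtration degree, and finish with Lemma \ref{lem:odd}. The only cosmetic difference is that the paper keeps the generators $\wh T[\ell]$ for all $\ell\ge 2$ and reduces to the even-indexed $Z_{2\ell}$ via the diagrammatic Theorem \ref{thm:Z-odd}(2), whereas you discard the odd ones at the symbol level by noting that odd-power supertraces vanish on $\omega$-skew elements; both of your flagged "obstacles" (absence of Pfaffian-type $\OSp$-generators and the nonvanishing of the leading coefficient) are already settled by the cited FFT for the full supergroup and by the explicit leading-term computation, respectively.
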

\begin{proof} 
We begin by laying some groundwork for the proof. 
For any $G$-module $M$,  let $T(M)=\oplus_{r\ge 0} M^{\ot r}$ be the tensor algebra on $M$. 
For each $r$, there is the usual semi-simple $\Sym_r$ action on $M^{\ot r}$, which commutes with the $G$-action. 
Let $\Sigma_{\pm 1}=\sum_{\sigma\in\Sym_r}(\mp 1)^{\ell(\sigma)}\sigma$, where $\ell(\sigma)$ is the length of $\sigma$. 
Then $\Sigma_{-1}$ (resp. $\Sigma_{+1}$) is the Young symmetriser associated with the standard Young tableau of shape 
$(r)$ (resp. $(1^r)$). Write $S_{gr}^r (M)=\Sigma_{-1}M^{\ot r}$ and $\wedge_{gr}^r M= \Sigma_{+1} M^{\ot r}$;
these are $G$-module direct summands of $M^{\ot r}$.  Write $S_{gr}(M)=\oplus_r S_{gr}^r M$ and $\wedge_{gr} (M)= \oplus_r \wedge_{gr}^r M$.
These are $G$-module direct summands of the tensor algebra $T(M)=\oplus_r M^{\ot r}$.   

Now consider the natural $G$-module $V$, and the following $G$-module  isomorphism
\[
\begin{aligned}
j:\   &  \wedge^2_{gr}V\stackrel{\sim}\lra \fg, & e_a\wedge e_b \mapsto X_{a b} \quad \forall a, b,
\end{aligned}
\]
where $e_a\wedge e_b =e_a\ot e_b- (-1)^{[a][b]}e_b\ot e_a$, and the $X_{a b}$ are the images in $\U(\fg)$ of the basis elements $J_{ab}$ of $\fg$. 
We extend the map $j$ to a $\U(\fg)$-module isomorphism
$
\Upsilon:  S_{gr}(\wedge^2_{gr} V) \stackrel{\sim}\lra \U(\fg)
$
in the following way. 

For any $r\ge 2$, the symmetric group
$\Sym_r$ acts on $\U(\fg)^{\ot r}$ by permuting the factors with appropriate sign. 
Explicitly, the simple reflections $s_i$ of $\Sym_r$ act by 
$\id_{\U}^{\ot(i-1)}\ot\tau_{\U, \U}\ot \id_{\U}^{\ot(r-i-1)}$, 
where $\tau_{\U, \U}$ is defined by \eqref{eq:tau}. 
As above, let $\Sigma_{-1}\in \C\Sym_r$ be the Young symmetriser associated with the shape $(r)$.
Further, let $\mathfrak{M}_r: \U(\fg)^{\ot r}\lra \U(\fg)$ be the multiplication defined by 
$u_1\ot u_2\ot \dots \ot u_r\mapsto u_1u_2\dots u_r$, for any $u_1, \dots, u_r\in\U(\fg)$. 
Now define $\Upsilon$ by requiring that it map any element $Y_1 Y_2\dots Y_r\in S_{gr}^r(\wedge^2_{gr} V)$ 
to $\frac{1}{r!}{\mathfrak M}_r\Sigma_{-1}\left(j(Y_1)\ot j(Y_2)\ot \dots\ot  j(Y_r)\right)$ in $\U(\fg)$ for any $r$. 

We now prove the theorem.  Clearly, $(S_{gr}(\wedge^2_{gr} V))^G=\sum_{k\ge 0}(S^k_{gr}(\wedge^2_{gr} V))^G$ is $\Z_+$-graded with 
$
(S_{gr}^0(\wedge^2_{gr} V))^G =\C$ and $(S_{gr}^1(\wedge^2_{gr} V))^G=0.
$
It follows from the first fundamental theorem of invariant theory for $G$ \cite{DLZ, LZ17} that the elements 
\[
\wh{T}[\ell]= \sum_{a_1,\dots, a_{\ell-1},  b} (-1)^{\sum_{i=1}^{\ell-1} [a_i]} j^{-1}(X^{a_1}_b)  j^{-1}( X^{a_2}_{a_1})  j^{-1}(X^{a_3}_{a_2})\dots j^{-1}(X^b_{a_{\ell-1}}), \quad \ell\ge 2,
\]
together with the identity generate $(S_{gr}(\wedge^2_{gr} V))^G$. 
A simple calculation yields that 
\beq\label{eq:FZ}
\CF_\U(Z_\ell)= \sum_{a_1,\dots, a_{\ell-1},  b=1}^{m+2n} (-1)^{\sum_{i=1}^{\ell-1} [a_i]} X^{a_1}_b  X^{a_2}_{a_1} X^{a_3}_{a_2}\dots X^b_{a_{\ell-1}}. 
\eeq
Thus for any $\ell$, we have 
\beq\label{eq:Upsilon-1}
&\Upsilon^{-1}(\CF_\U(Z_\ell))=\wh{T}[\ell]\mod (S_{gr}^{\ell-1}(\wedge^2_{gr} V))^G. 
\eeq
In particular, $\Upsilon^{-1}(\CF_\U(Z_2))=\wh{T}[2]$,  and by Lemma \ref{lem:central}, 
$\Upsilon^{-1}(\CF_\U(Z_3))=\wh{T}[3]$.
It is also evident that 
\beq\label{eq:Upsilon-2}
\Upsilon^{-1}(\CF_\U(Z_k Z_\ell))=\wh{T}[k] \wh{T}[\ell]\mod (S_{gr}^{k+\ell-1}(\wedge^2_{gr} V))^G. 
\eeq
Using the relations \eqref{eq:Upsilon-1} and \eqref{eq:Upsilon-2},  one now shows by induction on degree in $(S_{gr}(\wedge^2_{gr} V))^G$ that  the identity and
$\Upsilon^{-1}(\CF_\U(Z_\ell))$ for all $\ell\ge 2$ generate  $(S_{gr}(\wedge^2_{gr} V))^G$.
Since $\Upsilon^{-1}(\U(\fg)^G)=(S_{gr}(\wedge^2_{gr} V))^G$, the identity and the elements   $\CF_\U(Z_\ell)$ for $\ell\ge 2$ together generate $\U(\fg)^G$. 
As the elements $Z_{2\ell}$ for all $\ell\ge 1$ generate $\Hom_{\PB(\sdim)}(0, 0)$ by Theorem \ref{thm:Hom01}(2), 
we obtain the first statement of the theorem. 

The second statement of the theorem now follows from Lemma \ref{lem:odd}.  This completes the proof of the Theorem.  
\end{proof}

In summary, we have 

\begin{corollary}\label{cor:part-trace}  
For any $r, s$, the functor $\CF_\U$ sends $\Hom_{\PB(\sdim)}(r, s)$ to the subspace $\left(\U\ot_\C \Hom_\C\left(V^{\ot r}, V^{\ot s}\right)\right)^G$ of  $G$-invariants. In particular,  
\beq\label{eq:G-inv-U}
\CF_\U(\End_{\PB(\sdim)}(0))=\U(\fg)^G. 
\eeq
Further,  if $\dim(V_{\bar 0})$ is odd or $0$, then 
$Z(\U(\fg))=\CF_\U(\End_{\PB(\sdim)}(0))$.
\end{corollary}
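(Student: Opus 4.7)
The plan is to assemble the corollary from pieces already established in the preceding development, with the generation result Theorem \ref{eq:key-centre} doing the main work. The three assertions (general $G$-invariance of the image of $\CF_\U$; the identification $\CF_\U(\End_{\PB(\sdim)}(0))=\U(\fg)^G$; and equality with $Z(\U(\fg))$ in the specified dimension range) are essentially corollaries of \eqref{eq:Hom-G-inv}, Theorem \ref{thm:Hom01}(2), Theorem \ref{eq:key-centre}, and Lemma \ref{lem:odd}, respectively.

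For the first claim, I would simply invoke \eqref{eq:Hom-G-inv}: it was observed there that $\CF_\U(\BH)=(\id_\U\ot\mu)(t)$ is $G$-invariant under the action \eqref{eq:act-fam} (since $t$ is $\fg$- and $G_0$-invariant and the natural map $\mu$ on $V$ is $G$-equivariant), and that $\CF(D)$ for any morphism $D$ of $\CB(\sdim)$ lies in the $G$-invariants (as it is built from $\tau,\check{C},\hat{C}$, all of which are $G$-equivariant). Since these together with $\I$ generate the morphisms of $\PB(\sdim)$, and since the composition/tensor product operations on $\U\ot_\C\Hom_\C(V^{\ot r},V^{\ot s})$ respect the $G$-action, $\CF_\U$ lands in the $G$-invariant subspace, as claimed.

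For the second claim, the inclusion $\CF_\U(\End_{\PB(\sdim)}(0))\subseteq\U(\fg)^G$ is a special case ($r=s=0$, so $\Hom_\C(V^{\ot 0},V^{\ot 0})=\C$) of the first assertion. For the reverse inclusion, I would combine Theorem \ref{thm:Hom01}(2), which asserts that $\End_{\PB(\sdim)}(0)=\End_{\MPB(\delta)}(m)$ is generated (as a $\C$-algebra) by $\I$ together with the elements $Z_{2\ell}$, $\ell\ge 1$, with Theorem \ref{eq:key-centre}, which asserts that $1$ together with $\{\CF_\U(Z_{2\ell})\mid\ell\ge 1\}$ generate $\U(\fg)^G$. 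Applying $\CF_\U$ to the generating set of $\End_{\PB(\sdim)}(0)$ then produces a generating set of $\U(\fg)^G$ inside $\CF_\U(\End_{\PB(\sdim)}(0))$, forcing equality.

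For the third claim, I would simply quote Lemma \ref{lem:odd}, which identifies $Z(\U(\fg))$ with $\U(\fg)^G$ precisely when $\dim(V_{\bar 0})$ is odd or zero; combined with the equality from the second claim this gives $Z(\U(\fg))=\CF_\U(\End_{\PB(\sdim)}(0))$. I do not anticipate a serious obstacle: all the non-trivial work has already been done in Theorem \ref{eq:key-centre}, whose proof used the first fundamental theorem of invariant theory together with the explicit formula \eqref{eq:FZ} for $\CF_\U(Z_\ell)$ and the symmetrisation isomorphism $\Upsilon$. The only minor care needed is to note that in Theorem \ref{thm:Hom01}(2) the elements $Z_{2\ell}$ were shown to generate the endomorphism algebra over $\C$, so that every element of $\End_{\PB(\sdim)}(0)$ is in fact a polynomial in the $Z_{2\ell}$, and $\CF_\U$, being an algebra homomorphism on this commutative subalgebra, transports polynomial relations to polynomial relations in $\U(\fg)^G$.
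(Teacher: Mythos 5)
Your proposal is correct and follows essentially the same route as the paper: the first assertion is read off from \eqref{eq:Hom-G-inv}, the equality \eqref{eq:G-inv-U} from Theorem \ref{eq:key-centre} together with the fact that the $Z_{2\ell}$ generate $\End_{\PB(\sdim)}(0)$, and the final statement from Lemma \ref{lem:odd}. Your added remark about $\CF_\U$ being an algebra homomorphism on the commutative endomorphism algebra is a harmless elaboration of what the paper leaves implicit.
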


\begin{proof}
The first statement follows from \eqref{eq:Hom-G-inv}, and \eqref{eq:G-inv-U} follows from Theorem \ref{eq:key-centre}. 
If $\dim(V_{\bar 0})$ is odd or $0$, then $\U(\fg)^\fg=\U(\fg)^G$ by Lemma \ref{lem:odd}. 
This completes the proof of the Corollary.
\end{proof}

\begin{remark}\label{rmk:family}
The quantum family algebra associated with a simple $\U$-module $L$ 
is defined by $\CR_L:=\left(\U\ot \End_\C(L)\right)^{G}$ \cite{Ki1, Ki2, Ko}. 
Relevant to the study in this section is the quantum family algebra $\CR_V$.  
Clearly $\im(\CF_\U|_1)\subseteq \CR_V$ by Corollary \ref{cor:part-trace}, and
it will be very interesting to determine whether $\im(\CF_\U|_1)= \CR_V$. 
A central issue in the study of this quantum family algebra is to understand properties of 
$E=(\id_\U\ot \mu)(t)$,  
including a characteristic identity for $E$ in $\CR_V$ \cite{Rn}.
In this section, we provide a diagram categorical approach to these questions. 
\end{remark}

\subsection{Proof of Theorem \ref{thm:Hom01} (4)}
To close this section, we give the promised completion the proof of Theorem \ref{thm:Hom01}. 
Let us recall some well known facts about central elements of the universal enveloping algebra $\U(\fso_m)$ of the orthogonal Lie algebra $\fso_m$.
\begin{remark}\label{rmk:so-Casimir}
By taking $V=\C^m$ to be purely even and equipped with the bilinear form $\omega(e^i, e^j)=\delta_{i j}$ for all $i, j$, we have the orthogonal Lie algebra $\fso_m$, which is spanned by $J_{i j} = E^i_j-E^j_i$ for $i, j=1, 2, \dots, m$, where $E_j^i$ are the $m\times m$ matrix units.  
Denote by  $X_{i j}$ the image  of $J_{i j}$ in $\U(\fso_m)$. Then it follows immediately from the first fundamental theorem of invariant theory for the orthogonal group
 $O_m(\C)$ that as a subalgebra of $\U(\fso_m)$, $\U(\fso_m)^{O_m(\C)}$ is generated by 
\[
I(r)=\sum_{i_1, i_2, \dots, i_r} X_{i_1 i_2} X_{i_2 i_3} \dots X_{i_r, i_1},  \quad r\ge 2.
\]
It is well-known that the central elements $I(2j)$ for $j=1, 2, \dots, \begin{bmatrix}\frac{m}{2}\end{bmatrix}$ are algebraically independent  and generate $\U(\fso_m)^{O_m(\C)}$. 
These facts are easily proved following the line of reasoning suggested in \cite[\S11]{Bin}.

It is also straightforward to verify that $I(2k+1)$ may be explicitly written as a polynomial in the $I(2j)$. 
This also follows from the more general statement in Theorem \ref{thm:Z-odd}(2) (which is proved independently) by using  the functor $\CF_\U$ for $\fso_m$.

\end{remark}

\begin{proof}[Proof of Theorem \ref{thm:Hom01}(4)]
Retain the notation in Remark \ref{rmk:so-Casimir}. 
Let $\fg=\fso_m$ for any $m\ge 3$, so that $\U(\fg)=\U(\fso_m)$. Using \eqref{eq:FZ},  we obtain
\[
\CF_{\U}(Z_{2j})=I(2j), \quad \forall j\ge 1. 
\]
Now the $I(2j)$ for $j=1, 2, \dots, \left[\frac{m}{2}\right]$ are algebraically independent. Thus the elements 
$Z_{2j}$ for all $j=1, 2, \dots, \left[\frac{m}{2}\right]$ must be algebraically independent, and this is true for any $m\ge 3$. 
This proves Theorem  \ref{thm:Hom01}(4).
\end{proof}

\subsection{Characteristic identities}\label{sect:char-id}

In general, we have $\ker(\CF_\U)$ $\ne 0$, and non-zero elements of $\ker(\CF_\U|_r)$ lead to interesting identities in $\left(\U\ot\End_\C(V^{\ot r})\right)^G$. Bracken and Green \cite[\S 4.10]{B}, 
\cite{BG, G} discovered certain `characteristic identities' for Lie algebras in $\left(\U\ot\End_\C(V)\right)^G$, which may be interpreted this way.   
Such characteristic identities have been widely studied in the mathematical physics literature (see \cite{IWD} for a brief review and further references).

Throughout this section, $V$ is either $\C^{m|0}$ or $\C^{0|2n}$, and thus $\fg$ is $\mathfrak{so}_m$ or $\mathfrak{sp}_{2n}$.  Write $d=\dim(V)$.

\subsubsection{Characteristic identities for the orthogonal and symplectic Lie algebras}
We begin by stating precisely what is meant by the term ``characteristic identity'' \cite{B, BG, G} for $\fg$ (also see \cite{Go}). 
Note first that the algebra $\U(\fg)\ot\End_\C(V)$ may be thought of as $\rm{Mat}_{\dim V}(\U)$, the algebra of matrices of size $\dim V$
with entries in $\U$. It is therefore a module over $Z(\U(\fg))$. A {\bf characteristic identity} for the element $A\in\rm{Mat}_{\dim V}(\U)$ is a monic polynomial 
$Q(\alpha)\in Z(\U(\fg))[\alpha]$, where $\alpha$ is an indeterminate, such that $Q(A)=0$. A specific instance of this is as follows.

Let $E$ be the element of  $\left(\U(\fg)\ot\End_\C(V)\right)^G$  
given by $E=(\id_{\U(\fg)}\ot\mu)(t)$, where, as usual, $\mu$ denotes the $\fg$-action on $V$ and $t$ is the tempered Casimir element. 
Then $E$ satisfies a characteristic identity if there are explicitly known central elements $Q_i\in Z(\U(\fg))$ such that 
\beq\label{eq:char-2}
E^d + Q_1E^{d-1} + Q_2E^{d-2}+\dots+Q_d=0. 
\eeq
Here the $Q_i$ in \eqref{eq:char-2} are known explicitly  in the sense that their images under the Harish-Chandra isomorphism are explicitly given, 
as explained in the proof of Theorem \ref{thm:char-id}. 

\begin{lemma}\label{lem:even-coef}
The central elements $Q_i$ appearing in any characteristic identity \eqref{eq:char-2} all belong to $\U(\fg)^G$. 
\end{lemma}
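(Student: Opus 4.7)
My plan is to reduce to the orthogonal case and then exploit the fact that $E$ is already $G$-invariant to pull extra symmetry onto the coefficients $Q_i$. For $\fg = \fsp_{2n}$ the group $G = \Sp_{2n}$ is connected, so every $\fg$-invariant is automatically $G$-invariant, i.e.\ $Z(\U(\fg)) = \U(\fg)^\fg = \U(\fg)^G$, and there is nothing to prove. I therefore focus on $\fg = \fso_m$ with $G = \Or_m$, where $\U(\fg)^G = \U(\fg)^{\Or_m}$ is a proper subalgebra of $Z(\U(\fg)) = \U(\fg)^{\SO_m}$ precisely when $m$ is even.

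Fix any $\tau \in \Or_m$ with $\det(\tau) = -1$. This $\tau$ acts on $\U(\fg) \ot \End_\C(V)$ as an algebra automorphism: by the adjoint action on $\U(\fg)$, and by conjugation on $\End_\C(V)$ via the natural $\Or_m$-action on $V$. Since $E = (\id_\U \ot \mu)(t)$ is manifestly $G$-invariant, $\tau . E = E$. Applying $\tau$ to $p(E) = E^d + \sum_{i=1}^d Q_i E^{d-i} = 0$ and subtracting the original identity, I would obtain
\[
\sum_{i=1}^d (Q_i - \tau . Q_i)\, E^{d-i} \;=\; 0,
\]
a relation of degree at most $d-1$ in $E$ with coefficients in $Z(\U(\fg))$. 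The task is to conclude that each $Q_i - \tau . Q_i$ vanishes.

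The key technical step, which will be the main obstacle, is to show that $1, E, E^2, \dots, E^{d-1}$ are linearly independent over $Z(\U(\fg))$ in $\U(\fg) \ot \End_\C(V)$. To this end, I would test the relation on a generic Verma module $M_\lambda$. The tensor product $M_\lambda \ot V$ carries a weight filtration whose associated graded is $\bigoplus_{j=1}^d M_{\lambda + \mu_j}$, where $\mu_1, \dots, \mu_d$ are the weights of the natural module $V$ (which are pairwise distinct). For generic $\lambda$ this filtration splits as a direct sum of Verma modules, and on the summand $M_{\lambda + \mu_j}$ the tempered Casimir $E$ acts by the scalar
\[
\alpha_j(\lambda) \;=\; \tfrac12\bigl(c(\lambda+\mu_j) - c(\lambda) - c_V\bigr) \;=\; (\lambda,\mu_j) + \tfrac12\bigl((\mu_j,\mu_j) + 2(\rho,\mu_j) - c_V\bigr),
\]
an affine function of $\lambda$ depending non-trivially on the weight label $\mu_j$. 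For generic $\lambda$ the $d$ scalars $\alpha_j(\lambda)$ are therefore pairwise distinct, so the minimal polynomial of $E|_{M_\lambda \ot V}$ has degree exactly $d$.

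Consequently, any relation $\sum_{i=0}^{d-1} z_i E^i = 0$ with $z_i \in Z(\U(\fg))$ becomes, on $M_\lambda \ot V$, a polynomial identity $\sum_i \chi_\lambda(z_i)\, E^i|_{M_\lambda \ot V} = 0$ of degree at most $d-1$ in an operator with $d$ distinct eigenvalues, forcing $\chi_\lambda(z_i) = 0$ for every $i$. Since this holds on a Zariski dense set of $\lambda$ and the Harish-Chandra homomorphism $Z(\U(\fg)) \hookrightarrow S(\mathfrak{h})^W$ is injective, each $z_i$ vanishes. Applying this to $z_i = Q_i - \tau . Q_i$ gives $\tau . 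Q_i = Q_i$, and since $\Or_m = \SO_m \cdot \langle \tau \rangle$ this upgrades the centrality $Q_i \in \U(\fg)^{\SO_m}$ to $Q_i \in \U(\fg)^{\Or_m} = \U(\fg)^G$, as required.
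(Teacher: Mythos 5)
Your proof is correct, and it in fact establishes the lemma in its literal, stronger form, by a route that diverges from the paper's after the first step. Both arguments start identically: dispose of $\fsp_{2n}$ and of $\fso_m$ with $m$ odd (the paper via Lemma \ref{lem:odd}, you via connectedness of $\Sp_{2n}$ and the trivial action of $-\id_V$ for odd $m$), then choose $\eta_0\in\Or_m$ with $\det(\eta_0)=-1$ and use $\eta_0.E=E$ to act on \eqref{eq:char-2}. The paper then \emph{adds} the two identities: writing $Q_i=Q_{i,+}+Q_{i,-}$ for the $\pm 1$-eigencomponents of $\eta_0$, it concludes that $E^d+\sum_i Q_{i,+}E^{d-i}=0$, i.e.\ that \emph{some} characteristic identity has coefficients in $\U(\fg)^G$ --- which is all that Theorem \ref{thm:char-id} needs, but which only gives the statement as literally phrased (``the $Q_i$ appearing in any characteristic identity'') if one also knows the coefficients are unique. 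You instead \emph{subtract}, and then supply exactly that uniqueness: the linear independence of $1,E,\dots,E^{d-1}$ over $Z(\U(\fg))$, proved by splitting $M_\lambda\ot V$ into $d$ Verma summands for generic $\lambda$, observing that $E$ acts there by the pairwise distinct scalars $\alpha_j(\lambda)$ (the same eigenvalue computation the paper uses to construct the identity in Theorem \ref{thm:char-id}), so its minimal polynomial has degree $d$, and then using Zariski density in $\lambda$ together with injectivity of the Harish--Chandra homomorphism to force each coefficient $Q_i-\eta_0.Q_i$ to vanish. Your argument costs an extra spectral/Harish--Chandra step, but it buys the genuine conclusion $\eta_0.Q_i=Q_i$ for the original coefficients and, as a by-product, the uniqueness of the $Q_i$ in \eqref{eq:char-2}; the paper's averaging is shorter and suffices for its downstream application.
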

\begin{proof}
By Lemma \ref{lem:odd}, it is always true that $Q_i\in \U(\fg)^G$ for all $i$ if $\fg=\mathfrak{sp}_{2n}$ or $\mathfrak{so}_m$ for odd $m$. 
If $\fg=\mathfrak{so}_m$ with $m$ even, we let $\eta_0\in {\rm O}_m$ be such that $\det(\eta_0)=-1$. We can express $Q_i$ as $Q_i=Q_{i, +}+Q_{i, -}$ 
with $Q_{i, \pm}\in  Z(\U(\fg))$ such that $\eta_0.Q_{i, \pm}=\pm Q_{i, \pm}$, that is, $Q_{i, +}\in \U(\fg)^G$, and $Q_{i, -} \in \U(\fg)^{G, \sdet}$.
Since $\eta_0. E= E$ and $\eta_0.Q_i=Q_{i, +}-Q_{i, -}$, by combining \eqref{eq:char-2} with
\[
E^d + \eta_0.Q_1 E^{d-1} + \eta_0.Q_2 E^{d-2}+\dots+\eta_0.Q_d =0,  
\]
we obtain 
$
E^d + Q_{1, +}E^{d-1} + Q_{2, +}E^{d-2}+\dots+Q_{d, +}=0.
$
This proves the lemma. 
\end{proof}

\subsubsection{Categorification of characteristic identities}

The following theorem gives a categorical derivation of characteristic identities for  $\mathfrak{so}_m$ and $\mathfrak{sp}_{2n}$. It could be thought of as a diagrammatic, or abstract characteristic identity.

\begin{theorem} \label{thm:char-id} Maintain the above notation, and write $d=\dim(V)$. There exist elements $\wh{Q}_i\in \End_{\PB(\sdim)}(0)$, for $i=1, 2, \dots, d=\dim(V)$, such that the element 
\[
\wh\Q:=\BH^d + \wh{\Q}_1\BH^{d-1} + \wh{\Q}_2\BH^{d-2} +\dots + \wh{\Q}_{d}
\]
belongs to $\ker(\CF_\U|_1)$, where $\wh{\Q}_i = \wh{Q}_i\ot I$.
Furthermore, the characteristic identity \eqref{eq:char-2} may simply be stated in the present context as  
$
\CF_\U(\wh\Q)=0. 
$
\end{theorem}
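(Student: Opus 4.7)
The plan is to bootstrap from the classical Bracken--Green characteristic identity \eqref{eq:char-2} and lift its coefficients into the polar Brauer category via Corollary \ref{cor:part-trace}. The key observation is that $\CF_\U(\BH) = E$ by construction, so any monic polynomial relation satisfied by $E$ with coefficients in $\U(\fg)^G$ transfers automatically to a relation in $\ker(\CF_\U|_1)$, provided the coefficients can be chosen diagrammatically.

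First I would invoke the classical characteristic identity \eqref{eq:char-2} for $\fso_m$ or $\fsp_{2n}$, giving explicit $Q_i \in Z(\U(\fg))$ satisfying $E^d + Q_1 E^{d-1} + \dots + Q_d = 0$. By Lemma \ref{lem:even-coef}, every $Q_i$ in fact belongs to the $G$-invariant subalgebra $\U(\fg)^G$. Then Corollary \ref{cor:part-trace} asserts that the map $\CF_\U\colon \End_{\PB(\sdim)}(0) \to \U(\fg)^G$ is surjective, so for each $i$ we may choose a preimage $\wh{Q}_i \in \End_{\PB(\sdim)}(0)$ with $\CF_\U(\wh{Q}_i) = Q_i$. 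Setting $\wh{\Q}_i = \wh{Q}_i \ot I \in \End_{\PB(\sdim)}(1)$, we form
\[
\wh\Q = \BH^d + \wh{\Q}_1\BH^{d-1} + \wh{\Q}_2\BH^{d-2} + \dots + \wh{\Q}_d.
\]

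Applying the monoidal functor $\CF_\U$ (which by Lemma \ref{lem:mod-T} respects the $\CB(\sdim)$-module-category structure), we have $\CF_\U(\wh{Q}_i \ot I) = Q_i \ot \id_V$ and $\CF_\U(\BH) = E$. Hence
\[
\CF_\U(\wh\Q) = E^d + Q_1 E^{d-1} + \dots + Q_d = 0
\]
by the classical identity, which shows $\wh\Q \in \ker(\CF_\U|_1)$. Conversely, given such a $\wh\Q$, the vanishing of $\CF_\U(\wh\Q)$ recovers \eqref{eq:char-2} with $Q_i = \CF_\U(\wh{Q}_i)$, proving the equivalence claimed in the second statement.

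The main obstacle is purely external to our categorical framework: the argument relies on the classical existence of a degree-$d$ monic polynomial identity satisfied by $E$, together with the refinement (Lemma \ref{lem:even-coef}) that its coefficients can be taken in $\U(\fg)^G$ rather than merely in the $\fg$-invariant centre $Z(\U(\fg))$. This $G$-invariance is essential for the applicability of Corollary \ref{cor:part-trace}, whose image is precisely $\U(\fg)^G$. Once these classical inputs are granted, the categorical lift is immediate, and the functoriality of $\CF_\U$ completes the argument with no further calculation.
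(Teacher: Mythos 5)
Your proposal is correct, and the categorical lifting step is exactly the paper's: use Lemma \ref{lem:even-coef} to place the coefficients $Q_i$ in $\U(\fg)^G$, use the surjectivity $\CF_\U(\End_{\PB(\sdim)}(0))=\U(\fg)^G$ from Corollary \ref{cor:part-trace} to choose preimages $\wh{Q}_i$, and conclude by functoriality that $\CF_\U(\wh\Q)=E^d+Q_1E^{d-1}+\dots+Q_d=0$. Where you differ is in the treatment of the classical input: you take the Bracken--Green identity \eqref{eq:char-2} as an externally known fact, whereas the paper devotes most of its proof to re-deriving it, following Gould. Concretely, the paper computes the eigenvalues $e_i(\lambda)=\frac12\bigl((\lambda+\lambda_i+2\rho,\lambda+\lambda_i)-(\lambda+2\rho,\lambda)-\chi_V(C)\bigr)$ of $E$ on the summands of $M_\lambda\ot V$, forms $R_\lambda=\prod_i(E-e_i(\lambda))$, observes that the coefficients $P_i(\lambda)$ are invariant under the dot-action of the Weyl group (using the $W$-invariance of the weights of $V$), invokes the Harish-Chandra isomorphism to produce central elements $Q_i$ with $\mu_{M_\lambda}(Q_i)=P_i(\lambda)$, and then concludes $R=0$ because its entries vanish in all simple quotients $L_\lambda$. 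This internal derivation is what justifies the phrase ``explicitly known'' for the $Q_i$ (their Harish-Chandra images are given), makes the theorem self-contained, and is also the step that fails for general $\osp_{m|2n}$ at atypical weights, as the paper explains in \S\ref{sect:s-char-id}; your citation-based shortcut is legitimate for $\fso_m$ and $\fsp_{2n}$ but buys none of this extra information, while being shorter. Everything else in your argument, including the converse direction recovering \eqref{eq:char-2} from $\CF_\U(\wh\Q)=0$, matches the paper.
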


\begin{proof} This proof is partly based on \cite{Go}. 
Denote by $M_\lambda$ the Verma module with highest weight $\lambda$, and by $L_\lambda$ the corresponding 
simple quotient module. Then $M_\lambda\ot V= \sum_{\lambda_i}M_{\lambda+\lambda_i}$, 
where the $\lambda_i$ for $i=1, ,2, \dots, d$ are the weights of a basis of weight vectors of $V$. Now 
$E$ has an eigenvalue 
\[
e_i(\lambda)=\frac{1}{2}\left((\lambda+\lambda_i+2\rho, \lambda+\lambda_i)- (\lambda+2\rho, \lambda)-\chi_V(C)\right),
\]
in each submodule $M_{\lambda+\lambda_i}$, where $\chi_V(C)$ is the eigenvalue of $C$ in $V$.  Let 
\[
R_\lambda=\prod_{i=1}^{d}\left(E- e_i(\lambda)\right). 
\]
Then $(\mu_{M_\lambda}\ot\id)(R_\lambda)=0$ as an element of $\End_{\U}(M_\lambda\ot V)$. This implies 
\beq\label{eq:prod-form}
(\mu_{L_\lambda}\ot\id)(R_\lambda)=0, \quad \forall \lambda. 
\eeq

Now  expand $R_\lambda$ as a polynomial in $E$, obtaining  
\beq
R_\lambda=E^d + P_1(\lambda) E^{d-1} + P_2 (\lambda) E^{d-2}+\dots+ P_d (\lambda), 
\eeq
where the $P_i(\lambda)$ are polynomials in the functions $e_j(\lambda)$ of $\lambda$, 
which are essentially the elementary symmetric functions in the $e_i(\lambda)$, with suitable signs; e.g., $P_1(\lambda)=-\sum_i e_i(\lambda)$, 
$P_2(\lambda)=\sum_{i\ne j} e_i(\lambda)e_j(\lambda)$,  etc. 
Write $E(\lambda)=(\mu_{L_\lambda}\ot\id)(E)$. Then \eqref{eq:prod-form} is equivalent to 
\beq\label{eq:char-1}
E(\lambda)^d + P_1(\lambda) E(\lambda)^{d-1} + P_2 (\lambda) E(\lambda)^{d-2}+\dots+ P_d (\lambda) =0, \quad \forall \lambda.  
\eeq

Using the fact that the set of weights of $V$ is invariant 
with respect to the usual action of the Weyl group $W$ of $\fg$, one easily shows, as in \cite{Go} that the 
$P_i(\lambda)$ are all invariant under the dot-action of $W$. Hence by Harish-Chandra's isomorphism, there exist elements $Q_i\in Z(\U)$ such that 
$\mu_{M_\lambda}(Q_i)=P_i(\lambda)$ for all $i$.  We define 
\beq
R=E^d + Q_1 E^{d-1} + Q_2 E^{d-2}+\dots+ Q_d,  
\eeq
which is equal to the left hand side of \eqref{eq:char-2}.  Now equation \eqref{eq:char-1} becomes 
\beq
(\mu_{L_\lambda}\ot\id)(R)=0, \quad \forall \lambda.
\eeq

Note that $R$ is a square matrix with entries in $\U(\fg)$. The above equation states that all entries of $R$ vanish in the irreducible representations $\mu_{L_\lambda}$ for all $\lambda$. This implies that $R=0$, which is equation \eqref{eq:char-2}. 

By Lemma \ref{lem:even-coef} and Corollary \ref{cor:part-trace}, there exist $\wh{Q}_i\in \Hom_{\PB(\sdim)}(0, 0)$ for $1\le i\le d$ such that $Q_i= \CF_\U(\wh{Q}_i)$,  and hence $Q_i\ot \id_V= \CF_\U(\wh{\Q}_i)$. Since $E=\CF_\U(\BH)$, the left hand side of  \eqref{eq:char-2} is the image of the element $\wh Q$ given in Theorem \ref{thm:char-id} under $\CF_\U$. This proves the theorem. 
\end{proof}

\begin{example}\label{eg:sp2} When $V=\C^{0|2}$, we are dealing with the Lie algebra $\mathfrak{sp}_2$. 
Then 
\[
\wh\Q=\BH^2 - 2\BH + \frac{1}{2}Z_2\ot I 
\]
belongs to the kernel of $\CF_\U|_1$, and  
the characteristic identity in this case is given by $\CF_\U(\wh\Q)=0$.

To be more explicit, let $X, Y, T$ be the standard generators of $\mathfrak{sp}_2=\fsl_2$ with 
\[
[T, X]=2X,  \quad [T, Y]=-2Y, \quad [X, Y]=T.
\]  
Then  the quadratic Casimir operator \eqref{eq:C-formula} is given by 
\beq\label{eq:C-sp2}
C= - 2 \left(\frac{H^2}{2}+ X Y + Y X\right)
\eeq
in the current notation. As $V=\C^{0|2}$ has highest weight $1$, the eigenvalue of $C$ in $V$ is equal to $-2\left(\frac{1}{2}+1\right)= - 3= \sdim-1$.  
The standard generators $X,Y,T$ act on $V$ with respect to the standard basis as the matrices $\begin{pmatrix} 0 & 1\\0 & 0 \end{pmatrix}$,
$\begin{pmatrix} 0 & 0\\1 & 0 \end{pmatrix}$ and $\begin{pmatrix} 1 & 0\\0 & -1 \end{pmatrix}$ respectively.

The corresponding tempered Casimir element is  $t=- 2\left( \frac{T\ot T}{2}+ X\ot Y + Y\ot X\right)$. Hence
\[
\begin{aligned}
\CF_\U(\BH)&=(\id_\U\ot\mu)(t)
 =-2 \begin{pmatrix} \frac{T}{2} & Y\\  X & -  \frac{T}{2} \end{pmatrix},  \\
\CF_\U(Z_2)&=\text{str} (\CF_\U(\BH)^2)=2C.
\end{aligned}
\]
\end{example}

\subsubsection{Comments on characteristic identities for Lie superalgebras and quantum groups}\label{sect:s-char-id}
There are discussions of characteristic identities for Lie superalgebras in the literature (see, e.g., \cite{JG}), with most being ``local'' results like \eqref{eq:char-1}
for any finite dimensional simple module $L_\lambda$ (that is, replacing $M_\lambda$ by $L_\lambda$). 
It appears to be difficult to generalise the method of \cite{Go} to lift \eqref{eq:char-1} for all $\lambda$ to the identity \eqref{eq:char-2} 
in $\U\ot\End_\C(V)$ for $\osp_{m|2n}$ and $\gl_{m|n}$ for general $m, n$. 
The problem lies in the fact that the $P_i(\lambda)$ for atypical $\lambda$ are not invariant 
under translation by scalar multiples of atypical roots $\gamma$, that is, 
$P_i(\lambda)\ne P_i(\lambda+c\gamma)$ in general for nonzero $c\in\C$ and isotropic root $\gamma$ such that $(\lambda+\rho, \gamma)=0$. By the generalised Harish-Chandra isomorphism for Lie superalgebras,  there exist no central elements 
of $\U$ whose eigenvalues are $P_i(\lambda)$ in $M_\lambda$ for all $\lambda$.  This problem does not exist for  $\osp_{1|2n}$, as this Lie superalgebra has no atypical weights.  

However, any non-zero element $\wh\Q\in \ker(\CF_\U|_1)$ would lead to an identity
$\CF_\U(\wh\Q)=0$  in $\left(\U\ot\End_\C(V)\right)^G$, which might be considered a generalisation of the characteristic identities above. 
For the purpose of constructing such identities, we need only consider the generators of $\ker(\CF_\U|_1)$ as a $2$-sided ideal in $\Hom_{\PB(\sdim)}(1, 1)$.

Characteristic identities for quantum groups were investigated in \cite{GZB} (also see \cite{ZGB}). 
It is quite straightforward to use the restricted coloured tangle category (with one pole only) introduced
 in \cite[\S3.1, \S 3.2]{ILZ} to give a categorical treatment of the quantum characteristic identities analogous to what has been presented here.

\section{Applications to category $\mathcal O$ of $\fsp_2$}\label{sect:sp2}

The material presented in this section may be considered as the classical (i.e., $q\to 1$) limit of of the results in \cite{ILZ}. 
Throughout this section, we assume that $V=\C^{0|2}$ and $\omega=\begin{pmatrix}0 &1\\ -1 & 0\end{pmatrix}$. Thus $\osp(V; \omega)$ is the Lie algebra $\fsp_2(\C)$. Clearly $\sdim=-2$. 

We focus attention on the functor of Theorem \ref{thm:a-funct} in this special case. The following result shows that the functor
$\CF_M$ factors through the polar Temperley-Lieb category in this case.
\begin{lemma} 
For any  $\fsp_2$-module $M$,  the functor $
\CF_M: \PB(-2)\lra \CT_M(V)$ given in Theorem \ref{thm:a-funct} factors through $\ATL(-2)$, that is, 
there exists a unique functor $\CF^{TL}_M: \ATL(-2) \lra \CT_M(V)$ such that the following diagram commutes, 
\[
\begin{tikzcd}
\PB(-2) \arrow[d] \arrow[r, "\CF_M"] & \CT_M(V)\\
\ATL(-2) \arrow[ur, "\CF^{TL}_M"'pos=0.43]. 
\end{tikzcd}
\]
\end{lemma}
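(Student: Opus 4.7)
The plan is to show that $\CF_M$ annihilates the relation defining $\ATL(-2)$ and then invoke the universal property of the quotient.

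Recall that $\ATL(-2)$ is the full subcategory of $\MTL(-2) = \MPB(-2)/\langle \Theta \rangle$ on the objects $(m, v^r)$, where $\Theta = X_{vv} + I(v^2) + E$ (since $-2/\delta = 1$ for $\delta = -2$). Because $\PB(-2)$ is a full subcategory of $\MPB(-2)$, the functor $\CF_M$ is the restriction of $\wh\SF : \MPB(-2) \to \CT(\{M, V\})$ of Theorem~\ref{thm:a-funct}. To factor through the quotient it suffices to check that $\wh\SF(\Theta) = 0$ in $\End_\fg(V \otimes V)$; the universal property of the quotient category then produces a unique monoidal functor $\wh\SF^{TL}: \MTL(-2) \to \CT(\{M, V\})$ factoring $\wh\SF$, whose restriction to $\ATL(-2)$ lands in $\CT_M(V)$ and defines $\CF^{TL}_M$.

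The main work is the identity
\begin{equation*}
\wh\SF(\Theta) \;=\; \tau_{VV} + \id_{V \otimes V} + e \;=\; 0 \qquad \text{on } V \otimes V.
\end{equation*}
Starting from $\SH = \tau - e$ and the relation $\SH^2 = 1 - (2 - \sdim(V))\, e = 1 + 4 e$ obtained in the proof of Theorem~\ref{thm:H-formula}, one computes
\begin{equation*}
\tau + \id + e \;=\; \SH + 2 e + \id \;=\; -\tfrac{1}{2}(\SH - 3)(\SH + 1).
\end{equation*}
So it remains to show $(\SH - 3)(\SH + 1) = 0$ on $V \otimes V$, a strengthening of the cubic relation of Theorem~\ref{thm:H-formula}. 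For $V = \C^{0|2}$ this follows from the $\fsp_2$-module decomposition of $V \otimes V$ into the direct sum of the trivial and the adjoint simple modules, on which $\SH$ acts by $3 = 1 - \sdim(V)$ and by $-1$, respectively. Uniqueness of $\CF^{TL}_M$ then follows from surjectivity on morphisms of the quotient functor $\PB(-2) \to \ATL(-2)$, and commutativity of the diagram is immediate from the construction.

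The main obstacle is the step of sharpening the cubic of Theorem~\ref{thm:H-formula} to the quadratic $(\SH - 3)(\SH + 1) = 0$: the eigenvalue $\SH = 1$ must be excluded, which requires the representation-theoretic observation that a corresponding simple summand of $V^{\otimes 2}$ would have $\fsl_2$-Casimir eigenvalue $-4$, incompatible with any finite-dimensional simple $\fsl_2$-module. Alternatively, one could verify the vanishing of $\tau + \id + e$ directly on an explicit four-element basis of $V \otimes V$.
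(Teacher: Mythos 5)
Your argument is correct and takes essentially the same route as the paper: both reduce the statement to the single identity $\wh\SF(\Theta)=\tau+\mathrm{id}+e=0$ on $V\otimes V$, verified from the decomposition of $V\otimes V$ into its trivial and adjoint $\fsp_2$-summands (the paper evaluates $\tau$ and $e$ directly on these summands, while you package the same eigenvalue data through $\SH$ and its quadratic relation). The only slip is harmless: for $\sdim(V)=-2$ one has $\SH^2=1-(2-\sdim(V))e=1-4e$, not $1+4e$, and your factorisation $\tau+\mathrm{id}+e=-\tfrac12(\SH-3)(\SH+1)$ is already the one implied by the correct sign.
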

\begin{proof} 
Note that $V\ot V=\C^{0|2}\ot\C^{0|2}=L_s \oplus L_a$ as $\fsp_2$-module, where $L_s$ and $L_a$ are simple submodules of dimensions $1$ and $3$ respectively. Thus the permutation $\tau: V\ot V\lra V\ot V$ is given by 
$\tau = -P_a + P_s = 1 + 2 P_s$.  As $\sdim(V)=-2$, the map $e$ in \eqref{eq:e} is given by $e=-2 P_s$ in this case, and hence
$
\tau = 1- e.
$
This shows that $\CF(\Theta)=0$ because $\delta=\sdim(V)=-2$, where $\Theta$ is the element of $\AB(-2)(2,2)$ depicted in Fig. 14. By the definition of $\ATL$, this shows 
that the functor $\CF_M$ factors through $\ATL(-2)$.
\end{proof}

We adopt the same convention for $\PB(\delta)$ in Remark \ref{rmk:no-m}  to the present case by writing  objects $(m, v^r)$ of $\ATL(\delta)$ simply as $r$.

\begin{lemma} Maintain the notation above and fix $\lambda\in\C$. Assume that the $\fsp_2$-module $M$ is either the Verma module $M_\lambda$ with highest weight $\lambda$ or the simple module $L_\lambda$. Then the functor 
$\CF^{TL}_M: \ATL(-2) \lra \CT_M(V)$ factors through $\TLBC(-2, \lambda)$, that is there exists a unique functor 
$\CF^{TLB}_\lambda: \TLBC(-2, \lambda)\lra \CT_M(V)$
such that the following diagram commutes, 
\[
\begin{tikzcd}
\ATL(-2) \arrow[d] \arrow[r, "\CF^{TL}_M"] & \CT_M(V)\\
\TLBC(-2, \lambda) \arrow[ur, "\CF^{TLB}_\lambda"'pos=0.43], 
\end{tikzcd}
\]
where the vertical functor is the quotient functor (see Definition \ref{def:TL}).  
\end{lemma}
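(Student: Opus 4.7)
The plan is to verify that $\CF^{TL}_M$ sends the defining generator of $\TLBC(-2,\lambda)$ (as a quotient of $\ATL(-2)$) to zero, so that factoring follows from the universal property of the quotient; uniqueness of the factored functor is then automatic.

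By Definition~\ref{def:TLB}, specialised to $\delta=-2$, the quotient $\TLBC(-2,\lambda)$ is obtained from $\ATL(-2)$ by imposing
\[
\frac{Z_2}{-2} \;=\; -\lambda\!\left(\tfrac{-2-2}{2}-\lambda\right) \;=\; \lambda(\lambda+2),
\]
so the tensor ideal being collapsed is generated by $Z_2+2\lambda(\lambda+2)\,\I_m \in \End_{\ATL(-2)}(m)$. Since $Z_2$ is central in $\ATL(-2)$ by Corollary~\ref{lem:TL-4-t}, every morphism in this ideal is obtained by pre- or post-composition (together with the Brauer module category action) from this single central element, so it suffices to show
\[
\CF^{TL}_M(Z_2) \;=\; -2\lambda(\lambda+2)\,\id_M.
\]

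The first step is to identify $\CF^{TL}_M(Z_2)$ with a multiple of the Casimir action on $M$. From the definition $Z_2 = \Pi\,(\BH^2\otimes I)\,\amalg$ (Figure~\ref{fig:Z}) and the recipe for $\CF_M$, one obtains
\[
\CF_M(Z_2) \;=\; (\id_M\otimes\hat{C})\circ\bigl(((\mu_M\otimes\mu)(t))^2\otimes\id_V\bigr)\circ(\id_M\otimes\check{C}),
\]
which is the partial supertrace of $((\mu_M\otimes\mu)(t))^2$ over the $V$-factor. Expanding $t=\sum_\alpha X_\alpha\otimes X^\alpha$ and contracting with the bilinear form $(X,Y)\mapsto\Str(\mu(X)\mu(Y))$---an invariant symmetric form on the simple Lie algebra $\fsp_2$, hence proportional to the form used to define $X_\alpha,X^\alpha$---collapses this expression to $c\,\mu_M(C)$ for some constant $c$ independent of $M$. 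Functoriality pins $c$ down from a single universal case: Example~\ref{eg:sp2} records $\CF_\U(Z_2)=\Str(\CF_\U(\BH)^2)=2C$, giving $c=2$ and hence $\CF_M(Z_2)=2\mu_M(C)$.

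The second step is to compute $\mu_M(C)$. Using the formula \eqref{eq:C-sp2} for $C$ and acting on the highest weight vector $v_\lambda$ (annihilated by $X$, with $Tv_\lambda=\lambda v_\lambda$), one finds $Cv_\lambda=-\lambda(\lambda+2)\,v_\lambda$. Since $C$ is central in $\U(\fsp_2)$ and both $M_\lambda$ and $L_\lambda$ are cyclic, generated by $v_\lambda$, this forces $\mu_M(C)=-\lambda(\lambda+2)\,\id_M$ for both choices of $M$. Combining with the first step yields $\CF_M(Z_2)=-2\lambda(\lambda+2)\,\id_M$, as required. The main technical obstacle is identifying the proportionality constant $c$ in $\CF_M(Z_2)=c\,\mu_M(C)$; attempting this directly from the supertrace definition involves an unwieldy bookkeeping of the explicit form of $t$ together with bases for $V$ dualised via $\omega$. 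The clean route is to exploit functoriality---the constant must be $M$-independent---and read it off from the universal case of Example~\ref{eg:sp2}, where the calculation has already been done explicitly in matrix form.
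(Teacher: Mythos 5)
Your proposal is correct and follows essentially the same route as the paper: one checks that $\CF^{TL}_M(Z_2)$ equals $2\mu_M(C)$ (the paper simply reads this off from Example~\ref{eg:sp2}, where $\CF_\U(Z_2)=2C$) and that $C$ acts on $M_\lambda$, hence on $L_\lambda$, by $-\lambda(\lambda+2)$, so the defining relation of $\TLBC(-2,\lambda)$ is satisfied and the functor factors through the quotient, uniquely since $Z_2$ is central and the quotient functor is full. Your extra justification of the constant via the invariant-form/partial-supertrace argument is sound, though it can be shortened to the observation that $\CF_M(Z_2)=\mu_M(\CF_\U(Z_2))$.
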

\begin{proof}
We have $\CF^{TL}_M(Z_2)=2C: M\lra M$, where $C$ is the quadratic Casimir given in \eqref{eq:C-sp2}, which acts on $M=M_\lambda$ and {\it a fortiori} on $L_\lambda$ 
as multiplication by the scalar $\chi_\lambda(C)= - \lambda(\lambda +2)$. Hence for $\delta=-2$, 
\[
\CF^{TL}_M\left(\frac{Z_2}{\delta}\right)= \lambda(\lambda +2) = -\lambda\left(\frac{\delta-2}{2}-\lambda\right).
\]
This shows that $\CF^{TL}_M$ factors through $\TLBC(-2, \lambda)$.
\end{proof}

Let us write $\Z_{<-1}=\{-2, -3, -4, \dots\}$.  For any given $\lambda\in\C\backslash \Z_{<-1}$, the Verma module $M_\lambda$ is projective in category ${\CO}$ of $\fsp_2$. 

The next theorem is an analogue in our context of \cite[Thm. 4.9]{ILZ}.

\begin{theorem} Maintain the above notation and let $M_\lambda$ be the Verma module for $\fsp_2$ with highest weight $\lambda\in\C\backslash \Z_{<-1}$. Then 
the functor $\CF^{TLB}_\lambda: \TLBC(-2, \lambda)\lra \CT_{M_\lambda}(V)$
is an isomorphism of categories.  
\end{theorem}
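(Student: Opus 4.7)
The plan is to show that $\CF^{TLB}_\lambda$ is bijective on objects and induces a bijection on every morphism space. Bijectivity on objects is immediate, since $r \mapsto M_\lambda \otimes V^{\otimes r}$ gives a bijection of $\N$ with $\mathrm{Ob}(\CT_{M_\lambda}(V))$. For morphism spaces, the plan is first to match dimensions, and then to establish faithfulness directly; equality of finite dimensions will then upgrade faithfulness to bijectivity.

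On the Temperley--Lieb side, \thmref{thm:ATL-dim} shows that $\Hom_{\ATL(-2)}(r,s)$ is a free $\C[Z_2]$-module of rank $\binom{r+s}{(r+s)/2}$ when $r+s$ is even, and (by a parity argument on the number of thin boundary points) zero otherwise. Specialising $Z_2$ to a scalar in passing to $\TLBC(-2,\lambda)$, this gives $\dim_\C \Hom_{\TLBC(-2,\lambda)}(r,s) = \binom{r+s}{(r+s)/2}$. On the representation-theoretic side, for $\fg = \fsl_2$ and $V = \C^2$, the module $M_\lambda \otimes V^{\otimes r}$ carries a Verma flag in which $M_{\lambda+k}$ occurs with multiplicity $\binom{r}{(r+k)/2}$, where $k$ ranges over $\{-r,-r+2,\dots,r\}$. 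The hypothesis $\lambda \notin \Z_{<-1}$ makes $M_\lambda$ projective in $\CO$, and hence so is $M_\lambda \otimes V^{\otimes r}$; under this assumption it also follows that $\dim_\C \Hom_\fg(M_{\lambda+k}, M_{\lambda+k'}) = \delta_{k,k'}$ for the relevant indices $k, k'$. Applying BGG reciprocity for projective modules with Verma flags then yields
\begin{equation*}
\dim_\C \Hom_\fg(M_\lambda \otimes V^{\otimes r}, M_\lambda \otimes V^{\otimes s}) = \sum_{k} \binom{r}{(r+k)/2}\binom{s}{(s+k)/2},
\end{equation*}
which equals $\binom{r+s}{(r+s)/2}$ by the Chu--Vandermonde identity, matching the Temperley--Lieb count.

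With dimensions in hand, it remains to establish faithfulness of $\CF^{TLB}_\lambda$, which is the main obstacle. The plan is to adapt the filtration argument from the proof of \thmref{thm:ATL-dim}: the filtration of $W(r,s)$ by the number of polar connectors has simple ${\rm TL}_{r+s}(-2)$-module subquotients, and under $\CF^{TLB}_\lambda$ this filtration corresponds to the filtration of $\Hom_\fg(M_\lambda \otimes V^{\otimes r}, M_\lambda \otimes V^{\otimes s})$ induced by the Verma flag of $M_\lambda \otimes V^{\otimes s}$. The key technical input is that the Jucys--Murphy-type elements $\vartheta_j(r)$ of \thmref{thm:JM} act, through $\CF^{TLB}_\lambda$, on each Verma subquotient $M_{\lambda+k}$ by an eigenvalue linear in $\lambda+k$; the condition $\lambda \notin \Z_{<-1}$ is precisely what is needed for these eigenvalues to separate the summands of the Verma flag, after which standard cellular arguments yield non-vanishing of $\CF^{TLB}_\lambda$ on each associated graded piece. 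The hardest part of the plan is the non-generic case such as $\lambda = -1$, where $M_\lambda \otimes V^{\otimes r}$ is not a direct sum of Vermas but only admits a non-split Verma flag, forcing the separation argument to be run on associated gradeds; the characteristic identity $(\BH - I)^2 = 0$ of \exref{eg:sp2} shows the phenomenon is genuinely unipotent here. Once faithfulness is established, the dimension equality above upgrades it to bijectivity on each morphism space, completing the proof of the categorical isomorphism.
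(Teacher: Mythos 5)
There is a genuine gap at the heart of your argument: the faithfulness step is never actually carried out, and the mechanism you propose for it does not work as stated. You claim that the condition $\lambda\notin\Z_{<-1}$ ``is precisely what is needed'' for the eigenvalues of the Jucys--Murphy type elements $\vartheta_j(r)$ to separate the Verma constituents; this is wrong on both counts. The hypothesis $\lambda\notin\Z_{<-1}$ is equivalent to projectivity of $M_\lambda$ in $\CO$, which is what drives the dimension count; it has nothing to do with eigenvalue separation, and indeed separation fails at the allowed value $\lambda=-1$ (the eigenvalues $-\lambda$ and $\lambda+2$ of $\BH$ on the two Verma layers of $M_\lambda\ot V$ coincide exactly there, as your own remark about $(\BH-1)^2=0$ shows). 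You acknowledge this is ``the hardest part'' and defer it to ``standard cellular arguments'' on associated gradeds, but no such argument is supplied, so injectivity of $\CF^{TLB}_\lambda$ on morphisms --- the only non-formal part of the theorem --- remains unproved. The paper's proof handles this by a concrete computation that your plan lacks: using the filtration by the number of connectors, it evaluates the image of the standard $(0,2N)$-diagram with $t$ connectors (reduced, by capping off, to the $(2t,0)$-diagram $\wt\BD_t^*$) on the vector $m_+\ot v_{-1}^{\ot 2t}$ and gets $(-2Y)^t m_+\ne 0$, while every diagram from the lower filtration layer contains a cap not joined to the pole and therefore kills this vector because $\omega(v_{-1},v_{-1})=0$; simplicity of each layer as a ${\rm TL}_{2N}(-2)$-module then forces injectivity on the associated graded, uniformly in $\lambda$, including $\lambda=-1$.

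A secondary problem is your dimension count on the representation side. The intermediate claim $\dim\Hom_{\fsp_2}(M_{\lambda+k},M_{\lambda+k'})=\delta_{k,k'}$ ``for the relevant indices'' is false for some allowed $\lambda$: at $\lambda=-1$ with $k=-1$, $k'=1$ one has $\Hom(M_{-2},M_0)\ne 0$, and in any case computing $\Hom$ between two modules that only have Verma flags (rather than Verma decompositions) requires vanishing of extension groups that you do not address. The paper avoids all of this by the adjunction $\Hom(M_\lambda\ot V^{\ot r},M_\lambda\ot V^{\ot s})\cong\Hom(M_\lambda,M_\lambda\ot V^{\ot(r+s)})$ and then uses projectivity of $M_\lambda$, together with the fact that for $\lambda\notin\Z_{<-1}$ no Verma $M_{\lambda+\mu}$ with $\mu\ne 0$ admits a nonzero map from $M_\lambda$, to identify the dimension with $\dim(V^{\ot(r+s)})_0=\binom{2N}{N}$; I would recommend adopting that route, after which your Chu--Vandermonde manipulation becomes unnecessary. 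With the dimension count repaired and an actual non-vanishing argument on each filtration layer (such as the paper's lowest-weight-vector evaluation), your overall strategy --- match dimensions, prove faithfulness, conclude bijectivity --- does coincide with the paper's.
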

\begin{proof} 
It is evident that the restriction of $\CF^{TLB}_\lambda$  to objects is an isomorphism. Since $M_\lambda$ with $\lambda\in\C\backslash \Z_{<-1}$ is projective in category ${\CO}$, we have 
\[
\begin{aligned}
\dim \Hom_{\fsp_2}(M_\lambda \ot V^{\ot r}, M_\lambda \ot V^{\ot s})
&= \dim \Hom_{\fsp_2}(M_\lambda, M_\lambda \ot V^{\ot (r+s)}) \\
&= \dim (V^{\ot (r+s)})_0, \quad \forall r, s, 
\end{aligned}
\]
where $(V^{\ot (r+s)})_0$ is the $0$-weight subspace of $V^{\ot (r+s)}$, which has dimension $\begin{pmatrix}2N \\ N\end{pmatrix}$ if $r+s=2N$ is even, and is $0$ otherwise. 
 It follows  from Theorem \ref{thm:ATL-dim} that for all $r, s$, 
\[
\dim \Hom_{\TLBC(-2, \lambda)}(r, s) =\dim \Hom_{\fsp_2}(M_\lambda \ot V^{\ot r}, M_\lambda \ot V^{\ot s}).
\]
Hence it suffices to show that the restriction of $\CF^{TLB}_\lambda$  to morphisms is injective.  
Only the case with $r+s=2N$ requires proof, and we will adapt the main idea in the treatment of $\U_q(\fsl_2)$ representations in \cite{ILZ} to the present context.  

Write $\CW(r, 2N-r)=\Hom_{\TLBC(-2, \lambda)}(r, 2N-r)$ for all $r$, and $\CW(2N)=\CW(0, 2N)$.
Let $E^0_{2N}$ be the subalgebra of $\CW(2N, 2N)$ spanned by diagrams without connectors, which is isomorphic to the Temperley-Lieb algebra $TL_{2N}(-2)$. Note that 
$\CF^{TLB}_\lambda(E^0_{2N})=\CF(E^0_{2N}) \cong E^0_{2N}$ as algebra. Furthermore, $\CW(2N)$ is a module for this algebra.  
We want to show that $\CF^{TLB}_\lambda(\CW(2N))\cong \CW(2N)$ as $TL_{2N}(-2)$-module.

Consider images of the standard polar Temperley-Lieb diagrams as shown in Figure \ref{fig:ATL2N}, which will be called standard 
polar Temperley-Lieb diagrams of type $B$. They form a basis of $\CW(2N)$. As in the situation of the proof of Theorem \ref{thm:ATL-dim},  
the standard $(0, 2N)$-diagrams with at most $i\le N$ connectors span a $TL_{2N}(-2)$-submodule $F_i\CW(2N)$ of $\CW(2N)$, and 
$\CW_{2i}(2N) = \frac{F_i \CW(2N)}{F_{i-1}\CW(2N)}$ is a simple $TL_{2N}(-2)$-module.  
Hence $\CF^{TLB}_\lambda(\CW_{2i}(2N)) = \frac{\CF^{TLB}_\lambda(F_i \CW(2N))}{\CF^{TLB}_\lambda(F_{i-1}\CW(2N))}\cong \CW_{2i}(2N)$ or $0$.  We show that it is non-zero for all $i$. 

Let $\BD_t$ be the standard $(0, 2N)$-diagram with $t$ connectors as depicted below. 
\[
\begin{picture}(180, 80)(-40, 20)
\put(-40, 60){$\BD_t\  =$}
{
\linethickness{1mm}
\put(0, 20){\line(0, 1){80}}
}
\put(0, 90){\uwave{\hspace{5mm}}}

\put(15, 100){\line(0, -1){20}}
\put(25, 100){\line(0, -1){20}}
\qbezier(15, 80)(20, 75)(25, 80)

\put(0, 70){\uwave{\hspace{12mm}}}

\put(35, 100){\line(0, -1){40}}
\put(45, 100){\line(0, -1){40}}
\qbezier(35, 60)(40, 55)(45, 60)

\put(0, 40){\uwave{\hspace{23mm}}}
\put(65, 100){\line(0, -1){70}}
\put(75, 100){\line(0, -1){70}}
\qbezier(65, 30)(70, 25)(75, 30)

\put(48, 80){$\dots$}
\put(10, 50){$\vdots$}

\put(85, 100){\line(0, -1){20}}
\put(95, 100){\line(0, -1){20}}
\qbezier(85, 80)(90, 75)(95, 80)

\put(100, 90){$\dots$}

\put(120, 100){\line(0, -1){20}}
\put(130, 100){\line(0, -1){20}}
\qbezier(120, 80)(125, 75)(130, 80)


\put(135, 25){.}
\end{picture}
\]
Then $\BD_t$ belongs to $F_t\CW(2N)$, but is not in $F_{t-1}\CW(2N)$. 
Postmultiplying $\I_{2t}\ot \underbrace{\cap \dots \cap}_{N-t}$ with $\BD_t$, we obtain the standard $(0, 2t)$-diagram $\wt\BD_t$ shown below
up to a scalar factor. 
\[
\begin{picture}(150, 80)(-40, 20)
\put(-40, 60){$\wt\BD_t\  =$}
{
\linethickness{1mm}
\put(0, 20){\line(0, 1){80}}
}
\put(0, 90){\uwave{\hspace{5mm}}}

\put(15, 100){\line(0, -1){20}}
\put(25, 100){\line(0, -1){20}}
\qbezier(15, 80)(20, 75)(25, 80)

\put(0, 70){\uwave{\hspace{12mm}}}

\put(35, 100){\line(0, -1){40}}
\put(45, 100){\line(0, -1){40}}
\qbezier(35, 60)(40, 55)(45, 60)

\put(0, 40){\uwave{\hspace{23mm}}}
\put(65, 100){\line(0, -1){70}}
\put(75, 100){\line(0, -1){70}}
\qbezier(65, 30)(70, 25)(75, 30)

\put(48, 80){$\dots$}
\put(10, 50){$\vdots$}

\put(85, 25){;}
\end{picture}
\begin{picture}(150, 80)(-40, 20)
\put(-40, 60){$\wt\BD_t^*\  =$}
{
\linethickness{1mm}
\put(0, 20){\line(0, 1){80}}
}

\put(0, 90){\uwave{\hspace{23mm}}}
\put(0, 70){\uwave{\hspace{12mm}}}
\put(0, 40){\uwave{\hspace{5mm}}}

\put(15, 20){\line(0, 1){25}}
\put(25, 20){\line(0, 1){25}}
\qbezier(15, 45)(20, 50)(25, 45)

\put(35, 20){\line(0, 1){55}}
\put(45, 20){\line(0, 1){55}}
\qbezier(35, 75)(40, 80)(45, 75)

\put(65, 20){\line(0, 1){75}}
\put(75, 20){\line(0, 1){75}}
\qbezier(65, 95)(70, 100)(75, 95)

\put(48, 50){$\dots$}
\put(5, 50){$\vdots$}

\put(85, 25){.}
\end{picture}
\]
Now premultiply the tensor product $\wt{\BD}_t\ot I_{2t}$ with the $(0, 4t)$-diagram 
\[
\begin{picture}(150, 40)(-40, -5)
\put(-40, 10){$\A_{4t}\ =$}
{
\linethickness{1mm}
\put(0, 0){\line(0, 1){30}}
}

\qbezier(10,0)(50, 50)(90, 0)
\qbezier(40,0)(50, 40)(60, 0)

\put(25, 10){$\dots$}
\put(25, 0){\tiny$2t$}

\put(95, 0){.}
\end{picture}
\] 
One obtains essentially (up to a non-zero scalar) the $(2t, 0)$-diagram $\wt{\BD}_t^*$ shown above.  
Denote by $m_+$ the highest weight vector of $M_\lambda$.  It is easy to show that 
\beq
\CF^{TLB}_\lambda(\wt{\BD}_t^*)(m_+\ot \underbrace{ v_{-1}\ot\dots\ot v_{-1}}_{2t})= (-2 Y)^t m_+\ne 0. 
\eeq

Let $\BD_{<t}$ be an element in $F_{t-1}\CW(2N)$.  We perform the same operations as above on $\BD_{<t}$ to obtain an element $\wt\BD_{<t}^* \in \Hom_{\TLBC(-2, \lambda)}(2t, 0)$, which can be expressed as a linear combination of $(2t, 0)$-diagrams with no more than $t-1$ connectors. Thus each $(2t, 0)$-diagram in $\wt\BD_{<t}^*$ with non-zero coefficient must have a 
component of the form $\{\cap\}$ which is not connected to the pole via a connector. 
Note that for all $i+j=2t-2$, we have 
$\id_{M_\lambda}\ot \id_V^{\ot i} \ot \CF(\cap)\ot \id_V^{\ot j} (m_+\ot \underbrace{ v_{-1}\ot\dots\ot v_{-1}}_{2t})=0$ because $\omega(v_{-1}, v_{-1})=0$. Thus 
\beq
\CF^{TLB}_\lambda(\wt\BD_{<t}^*)(m_+\ot \underbrace{ v_{-1}\ot\dots\ot v_{-1}}_{2t})=0. 
\eeq

Comparing the two equations above, we conclude that 
$
\CF^{TLB}_\lambda(\CW_{2t}(2N)) \ne 0
$
for all $t$, and hence by cellular theory \cite{GL96}, $\CF^{TLB}_\lambda(\CW_{2t}(2N))$  $\cong \CW_{2t}(2N)
$.
This shows that $\CF^{TLB}_\lambda$ is injective when restricted to morphisms, completing the proof of the theorem. 
\end{proof}

\bigskip

\noindent{\bf Acknowledgements}. 
We thank Tony Bracken, Mark Gould, Alistair Savage, Catherina Stroppel and Yang Zhang for discussions. 





\begin{thebibliography}{9999}

\bibitem{Betal}
Balagovic, M.;  Daugherty, Z.;   Entova--Aizenbud, I.;  Halacheva, I.;   Hennig, J.; Im, M. S.;  Letzter, G.;  Norton, E.;  Serganova, V.;  Stroppel, C. ``The affine VW supercategory''.
 
\bibitem{BE}Benkart, G.;  Elduque, A., ``Cross products, invariants, and centralizers'',  {\sl J. Algebra \bf 500} (2018) 69 - 102.

 \bibitem{Bin} Bincer,  A., {\em Lie Groups and Lie Algebras - A Physicist's Perspective}.  
 University of Oxford press, Oxford, Illustrated edition, 2012.
 
 \bibitem{BCNR}
 J. Brundan, J. Comes, D. Nash, A. Reynolds, 
 ``A basis theorem for the affine oriented Brauer category and its cyclotomic quotients''.
{\sl Quantum Topology \bf 8} (2017), 75 - 112.

\bibitem{BSW}
Brundan, Jonathan; Savage, Alistair; Webster, Ben, ``Foundations of Frobenius Heisenberg categories'', 
{\em J. Algebra \bf 578} (2021), 115 - 185. 

\bibitem{B} Bracken, A. J.;  {\em Group-theoretical applications in a tri-local model for baryons}. University of Adelaide PhD thesis, 1970.  

\bibitem{BG} Bracken, A. J.; Green, H. S. ``Vector operators and a polynomial identity for SO(n)''. {\sl J. Mathematical Phys. \bf 12} (1971), 2099--2106.

\bibitem{BGZ90} Bracken,  A. J.; Gould,  M. D.;   Zhang, R. B.
``Quantum supergroups and solutions of the Yang-Baxter equation''.
{\sl Modern Physics Letters \bf A5} (1990) No. 11,  831 - 840.

\bibitem{Br37} Richard Brauer, ``On algebras which are connected with the semisimple continuous groups'', {\sl Ann. of Math. (2) \bf 38} (1937), no. 4, 857--872. 

\bibitem{DLZ}
Deligne, P.; Lehrer, G. I.; Zhang, R. B. ``The first fundamental theorem of invariant theory for the orthosymplectic super group'', {\sl Adv. Math. \bf 327} (2018), 4--24.

\bibitem{DM}
Deligne, Pierre; Morgan, John W. {\em Notes on supersymmetry (following Joseph Bernstein)}.
Quantum fields and strings: a course for mathematicians,
Vol. 1, 2  (Princeton, NJ, 1996/1997), 41–97,  Amer. Math. Soc.,  Providence,  RI, 1999.

\bibitem{ES}  Ehrig, Michael;  Stroppel, Catharina, ``Nazarov--Wenzl algebras, coideal subalgebras
and categorified skew Howe duality.'' {\sl Adv. Math. \bf 331}  (2018) 58--142.


\bibitem{Go} Gould, M. D. ``Characteristic identities for semisimple Lie algebras''. {\sl J. Austral. Math. Soc. Ser. \bf B 26} (1985), no. 3, 257--283.

\bibitem{GZB} Gould, M. D.;  Zhang, R. B.;  Bracken, A. J.  ``Generalized Gel’fand invariants and characteristic identities for quantum groups.'' {\sl J. Math. Phys. \bf 32} (1991) 2298--2303. 

\bibitem{GL96}  Graham, J.~J. and   Lehrer, G.~I., ``Cellular algebras'', {\sl Invent. Math. \bf123} (1996), no. 1, 1--34.

\bibitem{GL98} Graham, J.~J. and  Lehrer, G.~I.,  ``The representation theory of affine Temperley-Lieb algebras'', {\sl Enseign. Math. (2) \bf 44} (1998), no. 3-4, 173--218. 

\bibitem{GL03}  Graham, J.~J. and Lehrer, G.~I., ``Diagram algebras, Hecke algebras and decomposition numbers at roots of unity'',
{\sl Ann. Sci. \'Ecole Norm. Sup. (4) \bf 36} (2003), no. 4, 479--524.

\bibitem{G}  Green, H. S. ``Characteristic identities for generators of GL(n), O(n) and SP(n)''. {\sl J. Mathematical Phys. \bf 12} (1971), 2106--2113.

\bibitem{ILZ} Iohara, K.; Lehrer, G. I.; Zhang, R. B. ``Equivalence of a tangle category and a category of infinite dimensional $\U_q(\mathfrak{sl}_2)$-modules.'' {\sl  Represent. Theory \bf 25} (2021), 265--299.  

\bibitem{IWD}
Isaac, Phillip S.; Werry, Jason L,;  Gould, Mark D . ``Characteristic identities for Lie (super)algebras". {\sl J. Phys.: Conf. Ser.} {\bf 597} (2015) 012045. 

\bibitem{JG} Jarvis, P. D.; Green, H. S. ``Casimir invariants and characteristic identities for generators of the general linear, special linear and orthosymplectic graded Lie algebras.'' {\sl J. Math. Phys. \bf  20} (1979), no. 10, 2115--2122.

\bibitem{K} Kassel, Christian,  {\em Quantum groups}. Graduate Texts in Mathematics, {\bf 155}. Springer-Verlag, New York, 1995.

\bibitem{Ki1}
Kirillov, A. A., ``Family algebras'', {\sl Electron. Res. Announc. Amer. Math. Soc. \bf 6} (2000), 7 - 20.

\bibitem{Ki2} Kirillov, A. A., ``Introduction to family algebras'', {\sl Moscow Math. J. \bf 1}(1) (2001), 49 - 63.

\bibitem{Ko} Kostant, B.,  ``On the tensor product of a finite and an infinite dimensional representation'',
{\sl J. Funct. Anal. \bf20}(4) (1975), 257 - 285.

\bibitem{Kg} G. Kuperberg, ``Spiders for rank 2 Lie algebras'', {\sl Comm. Math. Phys. \bf 180} (1996), no. 1, 109 - 151. 

\bibitem{LZ06} Lehrer, G. I.; Zhang, R. B. ``Strongly multiplicity free modules for Lie algebras and quantum groups". {\sl J. Algebra \bf 306} (2006), no. 1, 138--174.

\bibitem{LZ10}  Lehrer, G. I.; Zhang, R. B., ``A Temperley-Lieb analogue for the BMW algebra'',
{\sl Representation theory of algebraic groups and quantum groups}, 155--190, {\sl Progr. Math., \bf 284}, Birkh{\"a}user/Springer, New York, 2010.

\bibitem{LZ12}  Lehrer, G. I.; Zhang, R. B., ``The second fundamental theorem of invariant theory for the orthogonal group''. {\sl Ann. of Math. (2) \bf 176} (2012), no. 3, 2031--2054. 

\bibitem{LZ15} Lehrer, G. I.; Zhang, R. B.  ``The Brauer category and invariant theory'', 
{\sl J. Europ. Math. Soc. \bf 17}  (2015) 2311-- 2351. 

\bibitem{LZ17} Lehrer, G. I.; Zhang, R. B. ``The first fundamental theorem of invariant theory for the orthosymplectic supergroup.'' {\sl Comm. Math. Phys. \bf 349} (2017), no. 2, 661--702.

\bibitem{LZ17-Ecate}  Lehrer, Gustav I.; Zhang, Ruibin ``Invariants of the special orthogonal group and an enhanced Brauer category.'' {\sl  Enseign. Math. \bf 63} (2017), no. 1 - 2, 181--200.

\bibitem{LZ17-Pf} Lehrer, G. I.; Zhang, R. B. ``Invariants of the orthosymplectic Lie superalgebra and super Pfaffians.'' {\sl  Math. Z. \bf 286} (2017), no. 3--4, 89--917.

\bibitem{LZ21}  Lehrer, G. I.; Zhang, R. B. ``The second fundamental theorem of invariant theory for the orthosymplectic supergroup.'' {\sl  Nagoya Math. J. \bf 242} (2021), 52--76. 

\bibitem{LZ23}  Lehrer, G. I.; Zhang, R. B. ``Diagram categories and invariant theory for classical groups and supergroups''. To appear in 
Proceedings of the $9^{th}$ International Congress of Chinese Mathematicians, June 2019, Beijing. 

\bibitem{LZZ20}
Lehrer, G., Zhang, H., Zhang, R.  ``First fundamental
theorems of invariant theory for quantum supergroups''.
{\sl European J. of Math. \bf 6} (2020) No. 3, 928 - 976. 

\bibitem{McS}  Peter J. McNamara, Alistair Savage, ``The spin Brauer category '', arXiv2312:11766.

\bibitem{Mor11} Morrison, Scott, ``The braid group surjects onto $G_2$ tensor space'',
{\sl Pacific J. Math. \bf 249 } (2011), no. 1, 189--198.

\bibitem{Rn}  Rozhkovskaya, N. ``Commutativity of quantum family algebras''. 
{\sl Lett. Math. Phys. \bf 63} (2003), no. 2, 87 - 103.


\bibitem{Ros96} Rost, M., ``On the Dimension of a Composition Algebra'', {\sl Doc. Math. \bf 1}  (1996) No. 10, 209 - 214.

\bibitem{Ros04} Rost, M., ``On vector product algebras'',
https://www.math.uni-bielefeld.de/~rost/data/vpg.pdf

\bibitem{RSo} Rui, Hebing; Song, Linliang.  ``Affine Brauer category and parabolic category O in types B, C, D''. {\sl Math. Z. \bf 293} (2019), no. 1-2, 503--550. 

\bibitem{RS} Rui, Hebing; Su, Yucai,  ``Affine walled Brauer algebras and super Schur-Weyl duality''. {\sl Adv. Math. \bf 285} (2015), 28--71.

\bibitem{RT} Reshetikhin, N.; Turaev, V. G. “Ribbon graphs and their invariants derived from quantum groups.” {\sl Comm. Math. Phys. \bf 127} (1990), 1-26.

\bibitem{We05} Tuba, Imre and Wenzl, Hans, ``On braided tensor categories of type BCD''
{\sl J. Reine Angew. Math. \bf 581} (2005), 31--69.

\bibitem{T} Turaev, V.G. {\em Quantum invariants of knots and 3-manifolds}. de Gruyter Studies in Mathematics, 18. Walter de Gruyter, Berlin, 1994.

\bibitem{Z94} Zhang, R. B.  ``Three-manifold invariants arising from $\U_q(\osp(1|2))$''.  {\sl  Modern Physics Letters \bf A9} (1994), No. 16, 1453 - 1465.  

\bibitem{Z95} Zhang, R. B.  ``Quantum supergroups and topological invariants 
of three-manifolds''. {\sl Rev. Math. Physics \bf 7} (1995), No. 05,  809 - 831. 

\bibitem{Z98} Zhang, R. B.  ``Structure and representations of the quantum general linear supergroup''. {\sl Comm. Math. Phys. \bf 195} (1998), no. 3, 525--547.

\bibitem{Z08} Zhang, R. B. ``Orthosymplectic Lie superalgebras in superspace analogues of quantum Kepler problems.''  {\sl Comm. Math. Phys. \bf 280} (2008), no. 2, 545--562. 

\bibitem{ZGB} Zhang, R. B.;   Gould,  M. D.;  Bracken,  A. J.   ``Quantum group invariants and link polynomials'', 
{\sl Commun. Math. Physics \bf 37} (1991) 13 -- 27.  

\bibitem{Zy}Zhang, Yang,   ``On the second fundamental theorem of invariant theory for the orthosymplectic supergroup.''  {\sl J. Algebra \bf 501} (2018), 394--434. 

\end{thebibliography}
\end{document}